\documentclass[reqno,11pt]{amsart}
\usepackage[T1]{fontenc}
\usepackage{amsmath,amsthm,amsfonts,amssymb,color,enumerate,bbm,todonotes, cancel}

\usepackage[all]{xy}
\usepackage[left=1in, right=1in, top=1.1in,bottom=1.1in]{geometry}

\usepackage{hyperref}
\hypersetup{
colorlinks   = true,
citecolor    = blue,
linkcolor=blue
}

\let\namerefOriginal\nameref

\renewcommand{\nameref}[1]{%
  {\hypersetup{linkcolor=black}
   \namerefOriginal{#1}}%
}

\newcommand{\namerefredbox}[1]{%
  \hyperref[{#1}]{%
    \begingroup
      \hypersetup{colorlinks=false, linkbordercolor=red}%
      \nameref*{#1}%
    \endgroup
  }%
}

\allowdisplaybreaks

\theoremstyle{plain}
\newtheorem{theorem}{Theorem}[section]
\newtheorem{corollary}[theorem]{Corollary}
\newtheorem{proposition}[theorem]{Proposition}
\newtheorem{lemma}[theorem]{Lemma}

\theoremstyle{definition}
\newtheorem{remark}[theorem]{Remark}
\newtheorem{definition}[theorem]{Definition}
\newtheorem{example}[theorem]{Example}

\numberwithin{equation}{section}

\newtheorem*{assumptions*}{\assumptionnumber}
\providecommand{\assumptionnumber}{}
\makeatletter
\newenvironment{assumptions}[1]
 {%
	 \renewcommand{\assumptionnumber}{Assumption #1}%
\begin{assumptions*}%
  \protected@edef\@currentlabel{#1}%
 }
 {%
  \end{assumptions*}
 }
\makeatother

\def\cD{\mathcal{D}}

\def\cF{\mathcal{F}}
\def\cH{\mathcal{H}}
\def\cL{\mathcal{L}}

\def\e{\varepsilon}
\def\E{\mathbb E}

\def\P{\mathbb P}

\newcommand{\R}{\mathbb{R}}
\newcommand{\N}{\mathbb{N}}
\newcommand{\W}{\dot{W}}
\newcommand{\ud}{\ensuremath{ \mathrm{d}} }

\newcommand{\Tr}{\mathrm{Tr}}

\newcommand{\FoxH}[5]{H_{#2}^{#1}\left(#3\:\middle\vert\: \begin{subarray}{l}#4\\[0.4em] #5\end{subarray}\right)}

\begin{document}

\title[]{Stochastic fractional diffusion equations with spatial Gaussian noise in the Stratonovich regime}

\begin{abstract}
For a class of stochastic fractional diffusion equations, we prove the existence of Stratonovich solutions and obtain the precise values of the Lyapunov exponents governing the large $t$-asymptotics of their $p$-th moments for all real $p\in\{1\}\cup[2,\infty)$, as well as the large $p$-asymptotics of the $p$-th moments.
\end{abstract}

\author[Y. Guo]{Yuhui Guo}
\address{Guangdong Provincial Key Laboratory of IRADS, and Department of Mathematical Sciences, Faculty of Science and Technology, Beijing Normal-Hong Kong Baptist University, Zhuhai 519087, China.}
\email{guoyuhui@mail.sdu.edu.cn}

\author[J. Song]{Jian Song}\address{Research Center for Mathematics and Interdisciplinary Sciences; Frontiers Science Center for Nonlinear Expectations, Ministry of Education, Shandong University, Qingdao 266237, China}
\email{txjsong@sdu.edu.cn}

\author[G. Wu]{Guanyu Wu}\address{School of Mathematical Sciences, Dalian University of
Technology, Dalian, Liaoning 116024, China}
\email{gywu@dlut.edu.cn}

\subjclass[2020]{Primary 60H15; Secondary 60G60, 37H15, 60G52, 26A33.}

\keywords{Stochastic partial differential equation;
 stochastic heat/wave equation;
 Dalang’s condition,
 moment asymptotics,
 intermittency,
 Lyapunov exponent.
}

\date{}

\maketitle

\hypersetup{linkcolor=black}   
\tableofcontents
\hypersetup{linkcolor=blue}   

\section{Introduction}

In this paper, we consider the following stochastic fractional diffusion equation (SFDE) in Stratonovich regime:
\begin{equation}\label{e:sfde}
	\begin{cases}
		\left(\partial^{b}+\dfrac{\nu}{2}\left(-\Delta\right)^{a / 2}\right) u(t, x)= \: I^{r}\left[\sqrt{\theta} u(t, x) \dot{W}(x) \right] , & t>0, x \in \mathbb{R}^d,   \\
		u(0, \cdot)=1,                                                                                                                                                           & \text { if } b\in(0,1], \\
		u(0, \cdot)=1, \quad \dfrac{\partial}{\partial t} u(0, \cdot)=0,                                                                                                      & \text { if } b \in (1,2],
	\end{cases}
\end{equation}
where $a\in(0,2]$, $b\in(0,2]$, $r\ge 0$, $\theta> 0$ and $\nu>0$. 
In \eqref{e:sfde}, $(-\Delta)^{a/2}$ is fractional Laplacian. $\partial^b$ denotes the Caputo fractional differential operator of order $b$ in time variable:
\begin{equation}\nonumber
  \partial^{b} f(t):=
\begin{cases}
  { \displaystyle   \dfrac{1}{\Gamma(n-b)}\int_{0}^{t}\frac{f^{(n)}(\tau)}{(t-\tau)^{b+1-n}}\ud\tau}, & \mbox{if } b \neq n, \\[1em] 
  \dfrac{\ud^n}{\ud t^n}f(t),   & \mbox{if }  b = n,
\end{cases}
\end{equation}
where $n=\lceil b \rceil$ is the smallest integer not smaller than $b$ and
 $\Gamma(x) = \int_{0}^{\infty}e^{-t}t^{x-1}\ud t$ is the Gamma function. We denote by
$I^r$  the Riemann-Liouville integral of order $r>0$ in the time variable:
\begin{equation*}
  (I^{r}f)(t):=\frac{1}{\Gamma(r)} \int_{0}^{t}f(r)(t-r)^{r-1} \ud r,
\end{equation*}
with the convention that $I^{0}$ is the identity operator when $r=0$.

The SFDE \eqref{e:sfde} driven by various types of Gaussian noise have been extensively studied in the literature, and we refer to \cite{Chen2017Nonlinear,Chen2023Interpolating,Chen2024Moments,Chen2019Nonlinear,Guo2024Stochastic,Guo2025Sample} for  recent developments. There are two main motivations for studying stochastic FDEs. The first motivation arises from the study of diffusions in
the viscoelastic media which exhibit both viscous and elastic
characteristics when undergoing deformation, such as honey and rubber, see \cite{Mainardi2010Fractional} for more details.  The second motivation is that such equations provide a unified framework encompassing two famous special cases. Specifically,  equation \eqref{e:sfde} reduces to the stochastic heat equation (SHE):
\begin{equation}\label{e:SHE}
\begin{cases}
   \displaystyle \left(\frac{\partial}{\partial t}-\frac{\nu}2 \frac{\partial^2}{\partial x^2}\right) u(t,x) = \sqrt{\theta} u(t,x) \dot{W}(x),\quad t>0, x \in \mathbb{R}^d,\\
    u(0,x) = 1,
\end{cases}
\end{equation}
when $a=2$, $b=1$ and $r=0$; and to  the stochastic wave equation (SWE):
\begin{equation}\label{e:SWE}
\begin{cases}
    \displaystyle \left(\frac{\partial^2}{\partial t^2}-\frac{\nu}2 \frac{\partial^2}{\partial x^2}\right) u(t,x) =  \sqrt{\theta} u(t,x) \dot{W}(x),\quad t>0, x \in \mathbb{R}^d,\\
    u(0,x)=0,\quad \dfrac{\partial}{\partial t}u(0,x)=1,
\end{cases}
\end{equation}
when $a=2$, $b=2$ and $r=0$.

In this paper, we assume that the noise $\W$ in \eqref{e:sfde} is a spatial Gaussian noise with covariance function
\begin{equation}\label{e:noise-covariance}
  \E\left[ \dot{W}(x)\dot{W}(y)\right]= \gamma(x-y).
\end{equation}
where $\gamma(\cdot)$ is a nonnegative and nonnegative-definite function, i.e.,  $\gamma(\cdot)$ has the representation
              \begin{equation*}
	\gamma(x)=\int_{\mathbb{R}^{d}} e^{\iota \xi \cdot x} \mu(\ud \xi), 
\end{equation*} 
where we use $\iota$ to denote the imaginary unit. For any function $\varphi,\psi\in L^1(\R^d)$, the Fourier transform of $\varphi(x)$ in the space variable is given by $\widehat{\varphi}(\xi)$ or $\cF \varphi(\xi):=\int_{\R^d} e^{-\iota x\cdot\xi}\varphi(x)\ud x$,
and the inverse Fourier transform is given by $    \cF^{-1}\psi(x):=\frac{1}{(2\pi)^d} \int_{\R^d} e^{\iota x\cdot\xi}\psi(\xi)\ud \xi$.

We impose the following assumptions on the covariance function. 

\begin{assumptions}{A}\label{A:Main}
The spectral measure $\mu$ satisfies Dalang's condition
	 \begin{equation}\label{dalang}
	\int_{\mathbb{R}^{d}} \frac{1}{1+|\xi|^a}\mu(\ud \xi)<\infty.
	\end{equation}
\end{assumptions}

\begin{assumptions}{B}\label{B:Main}
  \

	\begin{enumerate}[(i)]
		\item There exists $\alpha \in (0,d)$ such that
			\begin{equation}\label{E:scalef}
				\gamma(cx)=c^{-\alpha}\gamma(x), \quad \text{for any $c>0$, $x\in \R^d$}.
			\end{equation}
		\item There exists a nonnegative function $K$ on $\R^d$ such that $\gamma= K*K$,
			where ``$*$'' denotes the  convolution in space.
	\end{enumerate}
\end{assumptions}

\begin{remark}\nonumber
The scaling property \eqref{E:scalef}  of $\gamma(\cdot)$ implies that the spectral measure $\mu(\ud\xi)=\varphi(\xi)\ud\xi$ has the following scaling property:
	\begin{align}\nonumber
        \mu(cA)=c^{\alpha}\mu(A)
	\end{align}
  for all $c>0$ and $A\in \mathcal{B}(\R^d)$. Moreover,  Dalang's condition~\eqref{dalang} becomes $\alpha<a$, and hence $\alpha < \min \{a, d\}$. 
\end{remark}

The question of whether Dalang's condition \eqref{dalang} is both sufficient and necessary has consistently attracted substantial interest. For the SHE \eqref{e:SHE} driven by time-dependent noise in the Skorohod sense, Dalang's condition \eqref{dalang} is both necessary and sufficient for global solvability. A key feature of this condition is its independence of the temporal correlation of the noise, whether white-in-time \cite{dalang1999} or fractionally correlated \cite{bc2014}. In contrast, the Stratonovich SHE requires a stronger integrability condition whose exponent depends on the temporal regularity \cite{song2017}. 

For the SWE \eqref{e:SWE}, \cite{cdst2021} established necessary and sufficient conditions for the Skorohod solution under general temporally correlated noise. Regarding the Stratonovich solution, Dalang's condition \eqref{dalang} is necessary and sufficient when the noise is time-independent \cite{ch2024}, while the temporally correlated case where a stronger Danlang's condition was obtianed in \cite{chen2025}.

Recently, for SFDE \eqref{e:sfde}, when the noise is space-time white noise, it was shown in \cite{Chen2024Moments} that a unique Skorohod solution exists under Dalang's condition. In the time-independent noise case, a sufficient condition was derived in \cite{Chen2023Interpolating} under a scaling condition like Assumption \ref{B:Main}. 
To the best of our knowledge, existing studies on SFDE mainly focus on the Skorohod setting. In the present work, we extend these results to the Stratonovich setting.

As shown in \cite[Theorem 2.8]{Chen2024Moments}, the fundamental solution $G(t,x)$ for~\eqref{e:sfde} is given by
\begin{align}\label{E:Yab}
    G(t,x) = \pi^{-\frac d2} |x|^{-d}t^{b+r-1}
     \FoxH{2,1}{2,3}{\frac{|x|^a}{2^{a-1}\nu t^b}}
     {(1,1),\:(b+r,b)}{(d/2,a/2),\:(1,1),\:(1,a/2)}, \quad \text{for } b\in(0,2),
\end{align}
where $\FoxH{2,1}{2,3}{\cdots}{\cdots}{\cdots}$ refers to the Fox $H$-function (see Kilbas \cite{Kilbas2004H}). For $b=2$, the fundamental solution 
$G(t,x)$  takes the following explicit form when $a=2$ and $\gamma=0$,
\begin{equation*}
    G(t,x) = 
    \begin{cases}
         \dfrac{1}{2} \mathbf{1}_{\{|x|<t\}}, &\text{if } d=1,\vspace{0.1cm}\\
         \dfrac{1}{2\pi} \dfrac{1}{\sqrt{t^2-|x|^2}} \mathbf{1}_{\{|x|<t\}}, &\text{if } d=2,\\
         \dfrac{1}{4\pi t} \sigma_t, &\text{if } d=3,
    \end{cases}
\end{equation*}
where $\sigma_t$ is the surface measure on the sphere $\{x\in\R^3;|x|=t\}$, and $G(t,x)$ is a genuine distribution with compact support on $\R^d$ if $d\ge3$.

However, the Fourier transform of $G$ in space variable admits the following unified form
\begin{align}\label{E:FZ}
 \cF G(t,\cdot)(\xi) & = t^{b+r-1} E_{b,b+r} \left(-\tfrac12 \nu t^b |\xi|^a\right), \quad \text{for all } b\in(0,2],
\end{align}
where $E_{\varrho_1, \varrho_2}(z)$ is the two-parameter Mittag-Leffler function defined by
\begin{equation}\label{e:mlf}
  E_{\varrho_1, \varrho_2}(z):=\sum_{k=0}^{\infty}\frac{z^k}{\Gamma(\varrho_1 k+\varrho_2)},
\end{equation}
for all $\varrho_1>0$, $\varrho_2\in\mathbb{C}$ and $z\in\mathbb{C}$ (see, e.g., \cite[Sect.1.2]{Podlubny1999Fractional}). An application of \eqref{E:FZ}  leads to the following scaling property 
\begin{equation}\label{e:scal-G}
G(ct,x)=c^{b+r-1-db/a} G(t,c^{-b/a}x), \quad \text{for all } b\in(0,2].
\end{equation}

Throughout this paper, we impose the following non-negativity assumption on $G$.
\begin{assumptions}{G}\label{A:G}
One of the following two conditions holds:
\begin{enumerate}[(i)]
    \item the fundamental solution $G$ is  nonnegative and $b\in(0,2)$;
    \item $a=b=2$ and $r=0$.
\end{enumerate}
\end{assumptions}

\begin{remark}
    The fundamental solution $G$ is nonnegative if one of the following conditions is satisfied (see, e.g., \cite[Remark 1.2]{Chen2023Interpolating}):
 \begin{equation}\label{E:Pos}
\left\{
\begin{array}{r@{\quad}l@{\quad}l@{\quad}l}
(1) & a\in(0,2],\; b\in(0,1], & r\ge 0,       & \text{ if } d\geq1;       \\[2pt]
(2) & 1<b<a\leq2,            & r>0,          & \text{ if } 1\leq d\leq3; \\[2pt]
(3) & 1<a=b<2,              & r>\dfrac{d+3}{2}-b, & \text{ if } 1\leq d\leq3; \\[2pt]
(4) & a=b=2,                & r=0,          & \text{ if } 1\leq d\leq3.
\end{array}
\right.
\end{equation}
\end{remark}

We now present our first main result, which provides a sufficient condition for the existence of solutions; see Section~\ref{proof:exist} for the proof.
\begin{theorem}[Sufficient condition]\label{th:exist}
    Let Assumption \ref{A:G} hold. The SFDE \eqref{e:sfde} admits a square-integrable mild Stratonovich solution $u(t,x)$ in the sense of Definition~\ref{D: mild solution}, provided that 
    \begin{enumerate}[(i)]
        \item Assumption \ref{A:Main} holds;
        \item there exists $\delta\in\left(0,\frac{2(b+r)-1}{b}\right)$ such that 
        \begin{equation}\label{e:dalang-plus}
        \int_{\R^d}\left(\frac{1}{1+|\xi|^a}\right)^{\delta}\mu(\ud\xi)<\infty.
\end{equation}
    \end{enumerate}
    
Moreover, assuming the scaling property~\eqref{E:scalef}, the conditions (i) and (ii) are equivalent to 
\begin{equation}\label{e:strict-scaling}
0<\alpha<
\min\left\{
a,d, \frac{a}{b}\bigl(2(b+r)-1\bigr)
\right\},
\end{equation}
and then there exists a positive constant $C=C(\theta)$ such that
\begin{equation}\nonumber
\mathbb{E}u^2(t,x)
\leq
\exp\left(Ct^{\frac{2a(b+r)-b\alpha}{2a(b+r)-b\alpha-a}}\right).
\end{equation}
At the critical value
\begin{equation}
\alpha=\frac{a}{b}\bigl(2(b+r)-1\bigr)<\min\{a,d\}, \label{eq:scal-dalang-new}
\end{equation}
there exists $t_0=t_0(\theta)>0$ such that a square-integrable mild Stratonovich solution exists on $[0,t_0]$.
\end{theorem}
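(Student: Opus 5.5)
Since the noise is time-independent and the equation is linear, I would build the Stratonovich solution as a Wiener chaos expansion with Stratonovich (non-renormalized) products. The mild solution in the sense of Definition~\ref{D: mild solution} is $u(t,x)=\sum_{n\ge0}\theta^{n/2}u_n(t,x)$, where $u_0=1$ and
\[
u_n(t,x)=\int_{0<s_n<\dots<s_1<t}\int_{\R^{nd}}\prod_{k=1}^n G(s_{k-1}-s_k,x_{k-1}-x_k)\,\W(x_1)\cdots\W(x_n)\,\ud x\,\ud s ,
\]
with $s_0=t$, $x_0=x$. Here $\W(x_1)\cdots\W(x_n)$ is the ordinary product, whose expectation against test functions is given by the Wick/Isserlis formula.

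Under Assumption~\ref{A:G} we have $G\ge0$ (in case (ii) $G$ is a nonnegative measure). Hence $\E u^2(t,x)\le \big(\sum_n \theta^{n/2}\|u_n(t,x)\|_{L^2(\Omega)}\big)^2$. I would compute $\E[u_n u_m]$ as a sum over pairings of the $n+m$ noise points, each term being an integral of products of $G$'s against $\prod\gamma$. Passing to Fourier variables and using the spectral measure $\mu$ then reduces the question to bounding $\int\prod_k |\cF G(\tau_k,\cdot)(\eta_k)|\,\mu(\ud\xi)$. Here the $\eta_k$ are partial sums of the $\xi$'s, and $\tau_k$ are the successive time increments. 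This is where Stratonovich differs from Skorohod: the Skorohod case has no contractions inside the same chain, and the extra same-chain pairings are what produce the stronger condition.

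For each fixed increment I would use the Mittag-Leffler bound from \eqref{E:FZ}:
\[
|\cF G(\tau,\cdot)(\eta)|\le C\tau^{b+r-1}\big(1+\tau^b|\eta|^a\big)^{-1}\le C\tau^{b+r-1}(1+\tau^b|\eta|^a)^{-\delta/2}.
\]
Combined with the standard supremum-over-shifts inequality $\sup_z\int (1+|\xi+z|^a)^{-\delta}\mu(\ud\xi)\le C\int(1+|\xi|^a)^{-\delta}\mu(\ud\xi)$ (from $\gamma\ge0$ and nonnegative definiteness), each pairing contributes roughly $\tau^{2(b+r-1)}\cdot\tau^{-b\delta}\times$ (the $\delta$-Dalang integral \eqref{e:dalang-plus}). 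For a same-chain contraction, the two adjacent time increments merge, giving a factor with exponent $2(b+r)-2-b\delta$ to be integrated in one time variable. This is integrable exactly when $2(b+r)-1-b\delta>0$, i.e.\ $\delta<\frac{2(b+r)-1}{b}$. Dalang's condition (i) handles the cross-chain pairings, as in the Skorohod theory of \cite{Chen2023Interpolating}.

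Integrating the resulting Dirichlet-type simplex integrals via Beta/Gamma identities gives $\|u_n\|_2^2\le C^n t^{n\beta}/\Gamma(n\beta+1)^{\kappa}$, which is summable; the count of pairings, $(2n-1)!!$, is absorbed by the Gamma factor. \textbf{The main obstacle} is this combinatorial-analytic step: controlling the number of pairings and the nested Fourier shifts uniformly in $n$. I would try to order the contractions along the time simplex and peel off one $\mu$-integral at a time using the shift-supremum bound.

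Under scaling \eqref{E:scalef}, $\mu(\ud\xi)=c|\xi|^{\alpha-d}\ud\xi$, and I would check the equivalence with \eqref{e:strict-scaling} directly. Condition (i) holds iff $\alpha<\min\{a,d\}$. Condition (ii) with some admissible $\delta$ holds iff $a\delta>\alpha$ for some $\delta<\frac{2(b+r)-1}{b}$, i.e.\ $\alpha<\frac ab(2(b+r)-1)$. For the growth bound I would use \eqref{e:scal-G} and the scaling of $\mu$, which give $\|u_n(t)\|_2^2=t^{n\rho}\|u_n(1)\|_2^2$ with $\rho=\frac{2a(b+r)-b\alpha}{a}$. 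Combined with the factorial decay above, summing $\sum_n (C\theta t^{\rho})^{n}/\Gamma(n(\rho-1)+1)$ gives $\exp(Ct^{\rho/(\rho-1)})$, which matches the stated exponent.

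At the critical $\alpha$, the same-chain time integral is borderline, so the factorial gain is lost. One then only gets $\|u_n(t)\|_2^2\le (C\theta t^{\rho})^n$, which is a geometric series and converges for $t<t_0(\theta)$.
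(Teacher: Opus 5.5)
Your argument depends entirely on a bound of the form $\E[u_n(t,x)^2]\le C^n t^{\kappa n}/(n!)^{\kappa-1}$ with $\kappa>1$, and that is exactly the step you leave as ``the main obstacle''. The heuristic you give for it also has the wrong mechanism. The two endpoints of a same-chain pairing are in general not adjacent, their time increments are independent simplex variables, and nothing merges. In the paper the threshold $\delta<\frac{2(b+r)-1}{b}$ arises globally: the factor $n!$ produced by the $(2n-1)!!$ pairings is set against $\lambda^{-[2(b+r)-b\delta]n}$ with $\lambda\asymp n/t$ (see \eqref{E:strong-Sn-lambda}).

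More seriously, your scheme (absolute values on $\cF G$ pointwise in time, then peeling off $\mu$-integrals with the shift-supremum inequality) cannot reach Dalang's condition in all cases the theorem covers. Take the wave case of Assumption~\ref{A:G}(ii). There $\cF G(\tau,\cdot)(\eta)=\sin(\sqrt{\nu/2}\,\tau|\eta|)/(\sqrt{\nu/2}\,|\eta|)$ decays only like $|\eta|^{-1}$, so your first displayed inequality is false. When a pairing joins two consecutive points of one chain, its frequency enters only the single factor $\cF G$ attached to the increment between them. For $\mu(\ud\xi)=|\xi|^{\alpha-d}\ud\xi$ with $1\le\alpha<2$ and $d\in\{2,3\}$, a case satisfying \eqref{e:strict-scaling}, one has $\int_{\R^d}|\cF G(\tau,\cdot)(\xi+z)|\,\mu(\ud\xi)=\infty$ for every $\tau>0$. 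So your bound is $+\infty$ although the moment is finite; finiteness comes from cancellations in $\cF G$, which your scheme discards. The $\delta/2$ split does not rescue this. It needs two factors per frequency, which consecutive pairs do not have, and in general each factor carries several nested frequencies, so the shift-supremum inequality gives no consistent peeling order.

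The missing idea is the paper's Laplace transform in time. By Lemma~\ref{L:laplace G}, $\int_0^\infty e^{-\lambda t}\cF G(t,\cdot)(\xi)\ud t=\frac{2}{\nu\lambda^{r}}\int_0^\infty e^{-2\lambda^b s/\nu}e^{-s|\xi|^a}\ud s$. Hence the Laplace transform of $g_n(\cdot,t,x)$ is, up to explicit constants, that of the transition chain of a symmetric $a$-stable process, and all oscillation disappears, also for $b=2$.

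Lemma~\ref{le:3.1} and Corollary~\ref{coro:3.5} then bound the double Laplace transform of $\E[S_n(g_n(\cdot,t_1,0))S_n(g_n(\cdot,t_2,0))]$ by moments of $\int_0^t\int_0^t\gamma(X(s)-X(r))\ud s\ud r$. These moments are controlled under Dalang's condition alone by Lemma~\ref{L:n momentbound}, whose $m_N$/$\varepsilon_N$ splitting handles the borderline case, and under \eqref{e:dalang-plus} by \cite[Lemma~3.9]{song2017}. Monotonicity in $t_1,t_2$, which follows from $G\ge0$ and $\gamma\ge0$, then inverts the transform with $\lambda\asymp n/t$.

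Under scaling you need exactly the factor $(n!)^{-(\chi-1)}$, $\chi=2(b+r)-b\alpha/a$, to get your exponent $\rho/(\rho-1)$. In the paper this comes from $\int\!\int e^{-t_1-t_2}\E[\cdots]\ud t_1\ud t_2\le C^n n!$ combined with the exact scaling $t^{\chi n}$; a generic $\delta>\alpha/a$ only gives a worse exponent.

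Finally, you treat $\W(x_1)\cdots\W(x_n)$ as an ordinary product. You still owe the existence of the $\mathcal L^2$-limits of the mollified integrals and the uniform-in-$\e$ tail bound \eqref{e:existence-con2} required by Definition~\ref{D: mild solution}. The paper supplies these in Lemma~\ref{continuity theorem}, Theorem~\ref{th:strat-intablity} and Lemma~\ref{le:fubini-th}.
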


\begin{remark}\nonumber
The conditions (i) and (ii) in Theorem \ref{th:exist} can be expressed as
\begin{equation*}
    \int_{\R^d}\left(\frac{1}{1+|\xi|^a}\right)^{\min\left(\delta,1\right)}\mu(\ud\xi)<\infty,\quad \text{for some } \delta\in\left(0,\frac{2(b+r)-1}{b}\right).
\end{equation*}
Dalang's condition~\eqref{dalang} is sufficient for the existence of  a global  $L^2$-solution when $b+2r\geq1$.  If
$b+2r<1$, the alternative Dalang's condition~\eqref{e:dalang-plus} is sufficient. Note that the condition \eqref{e:dalang-plus} holds only if $2(b+r)>1$ which reduces to $b>1/2$ when $r=0$.
\end{remark}

\begin{remark}[On local $L^2$-solutions]
Under the scaling condition \eqref{E:scalef}, the condition \eqref{e:strict-scaling} reduces to
\begin{equation*}
    \begin{cases}
        \displaystyle 0<\alpha<\min\left\{\frac{a}{b}\bigl(2(b+r)-1\bigr),d \right\}, &\text{when }b+2r<1;\\
        0<\alpha<\min\{a,d\}, &\text{when }b+2r\ge 1.
    \end{cases}
\end{equation*}
When $b+2r<1$, Theorem~\ref{th:exist} guarantees that, at the critical value
$\alpha=\frac{a}{b}\bigl(2(b+r)-1\bigr)$,
there exists a local $L^2$-solution on $(0,t_0)$ for some $t_0=t_0(\theta)>0$,  provided that Dalang's condtion $\alpha<d$ holds. 
When $b+2r\ge 1$, at the critical value $\alpha=a$,  Theorem~\ref{th:necessity-dalang} below indicates that an $L^2$-solution exists only if Dalang's condition $\alpha<\min\{a,d\}$ is satisfied, and thus there is no (local) $L^2$-solution.
\end{remark}

The following result shows that  Dalang's condition~\eqref{dalang} is necessary for a (local) $L^2$-solution, see Section \ref{proof:nece} for the proof.

\begin{theorem}[Necessary condition]\label{th:necessity-dalang}
 Let Assumption \ref{A:G} hold.   If equation \eqref{e:sfde} admits a square-integrable mild Stratonovich solution with expansion \eqref{e:expan-utx} on some finite time interval, then necessarily Dalang's condition~\eqref{dalang} holds.
\end{theorem}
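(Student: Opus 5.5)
The plan is to look only at the second chaos component of the expansion \eqref{e:expan-utx}. For a Stratonovich solution of \eqref{e:sfde} with time-independent noise and constant initial data, the expansion presumably has the form $u(t,x)=\sum_n I_n(f_n(\cdot;t,x))$ together with the trace (Stratonovich correction) contributions. The second moment $\E u^2(t,x)$ is then a sum of nonnegative terms. These terms are nonnegative because Assumption \ref{A:G} forces $G\ge0$ (or, in case (ii), the wave kernel, which is a nonnegative measure) and $\gamma\ge 0$.

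Square integrability on $[0,t]$ therefore forces each individual term to be finite. I would isolate the simplest term that contains the singularity, namely the first-order kernel
\[
f_1(y;t,x)=\int_0^t G(t-s,x-y)\,\ud s .
\]
Its $L^2$ norm in the space $\cH$ determined by $\gamma$ is
\[
\|f_1\|_{\cH}^2=\int_{\R^d}\Bigl|\int_0^t \cF G(s,\cdot)(\xi)\,\ud s\Bigr|^2\mu(\ud\xi).
\]
If the first chaos is not cleanly separated in the Stratonovich expansion (the correction terms also sit in the zeroth chaos), I would instead lower-bound $\E u^2$ by $\E[I_1(f_1)^2]$ using the orthogonality of chaoses and the nonnegativity of the remaining contributions.

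The core computation uses \eqref{E:FZ}. Set $g(t,\xi)=\int_0^t s^{b+r-1}E_{b,b+r}(-\tfrac12\nu s^b|\xi|^a)\,\ud s$. Termwise integration of \eqref{e:mlf} gives
\[
g(t,\xi)=t^{b+r}E_{b,b+r+1}\bigl(-\tfrac12\nu t^b|\xi|^a\bigr).
\]
The known asymptotics $E_{\varrho_1,\varrho_2}(-z)\sim z^{-1}/\Gamma(\varrho_2-\varrho_1)$ as $z\to\infty$ (for $\varrho_1<2$) then give
\[
g(t,\xi)\sim \frac{2\,t^{r}}{\nu\,\Gamma(r+1)}\,|\xi|^{-a}\quad\text{as }|\xi|\to\infty.
\]
In case (ii), with $a=b=2$ and $r=0$, the same conclusion holds explicitly. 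There $\cF G(s,\xi)=\sin(cs|\xi|)/(c|\xi|)$, so $g=(1-\cos(ct|\xi|))/(c^2|\xi|^2)$. This is not bounded below pointwise, but averaging it over $t$ in a small interval gives a lower bound $\gtrsim |\xi|^{-2}$.

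For small $|\xi|$, $g(t,\xi)\to t^{b+r}/\Gamma(b+r+1)>0$. Combining the two regimes gives $g(t,\xi)^2\gtrsim (1+|\xi|^a)^{-2}$. That is only the square of the Dalang weight, which is too weak to yield \eqref{dalang}.

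I therefore expect the main obstacle to be the following: the first-chaos term only gives $\int(1+|\xi|^a)^{-2}\mu(\ud\xi)<\infty$. The genuine Dalang condition must come from the Stratonovich correction term. This is the zeroth-chaos contribution in the second-order term,
\[
\int_0^t\!\!\int_0^{s_1}\!\int\!\!\int G(t-s_1,x-y_1)G(s_1-s_2,y_1-y_2)\gamma(y_1-y_2)\,\ud y\,\ud s ,
\]
which in Fourier form is
\[
\int_{\R^d}\int_0^t\!\!\int_0^{s_1}\bigl(\cF G(t-s_1)*\cF G(s_1-s_2)\bigr)\ldots
\]
Finiteness of this term appears in $\E u(t,x)$ and hence is forced by integrability.

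Up to a positive factor coming from integrating the $y_1$ variable (nonnegative kernels), this term equals
\[
\int_0^t\!\!\int_0^{s_1}\!\int \cF G(s_1-s_2,\cdot)(\xi)\,\mu(\ud\xi)\,\ud s_2\,\ud s_1\times(\cdots).
\]
Here the inner time integral is again $g(s_1,\xi)\gtrsim(1+|\xi|^a)^{-1}$, which is exactly \eqref{dalang}. So the plan is to show, via Tonelli (legitimate by nonnegativity), that the expectation of the second Stratonovich term is bounded below by $c\int(1+|\xi|^a)^{-1}\mu(\ud\xi)$, and that $\E|u|\le(\E u^2)^{1/2}<\infty$ forces this to be finite.

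The delicate point is justifying that positivity of the full expansion lets us isolate this term. For that I would use that all terms in the moment formula for $\E u(t,x)=\E\exp$-type expansions are nonnegative integrals of products of $G$'s and $\gamma$'s.
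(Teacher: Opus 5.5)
Your eventual plan is essentially the paper's proof. Both isolate the pairing (trace) term $\E S_2(g_2(\cdot,t,x))=\int\gamma(x_1-x_2)\,g_2(x_1,x_2,t,x)\,\ud x_1\ud x_2$ of the second Stratonovich term, pass to Fourier variables via \eqref{E:FZ}, and use the $z^{-1}$ decay of the Mittag-Leffler function to compare it with $\int_{|\xi|>R}|\xi|^{-a}\mu(\ud\xi)$. The paper controls this term through $\E[S_2(g_2)^2]\ge(\E S_2(g_2))^2$, whereas you use $\theta\,\E S_2(g_2)\le\E u\le(\E u^2)^{1/2}$; both work, and the first-chaos detour is unnecessary.

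One thing to tighten: the $y_1$-integration gives the $s_1$-dependent weight $(t-s_1)^{b+r-1}/\Gamma(b+r)$, not a constant factor. Also, your pointwise bound $g(s_1,\xi)\gtrsim(1+|\xi|^a)^{-1}$ is neither uniform as $s_1\to0$ nor evidently sign-definite for $b\in(1,2)$. It is cleaner to integrate out both times exactly, as the paper does, which gives $t^{2b+2r}E_{b,2b+2r+1}(-\tfrac{\nu}{2}t^b|\xi|^a)\sim\frac{2t^{b+2r}}{\nu\,\Gamma(b+2r+1)}|\xi|^{-a}$ as $|\xi|\to\infty$.
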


The following corollary is a direct consequence of Theorems \ref{th:exist} and \ref{th:necessity-dalang}.

\begin{corollary}
\nonumber
 Let Assumption \ref{A:G} hold. If $b+2r\geq1,$ then Dalang's condition \eqref{dalang} is necessary and sufficient for the existence of a square-integrable mild Stratonovich solution. 
\end{corollary}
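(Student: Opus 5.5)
The corollary follows by combining the two preceding theorems, so the plan is simply to check that, under $b+2r\ge1$, hypothesis (ii) of Theorem~\ref{th:exist} is implied by Dalang's condition~\eqref{dalang}.

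\emph{Necessity.} Suppose a square-integrable mild Stratonovich solution exists. Theorem~\ref{th:necessity-dalang}, under Assumption~\ref{A:G}, immediately gives Dalang's condition~\eqref{dalang}. This direction does not use $b+2r\ge1$. The one point to verify is that the solution class in the corollary is the one in Theorem~\ref{th:necessity-dalang}, namely solutions admitting the chaos expansion~\eqref{e:expan-utx}. I will read the corollary with that convention, since Definition~\ref{D: mild solution} is formulated via this expansion.

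\emph{Sufficiency.} Assume~\eqref{dalang}. Condition (i) of Theorem~\ref{th:exist} is then exactly our hypothesis. For condition (ii), I would take $\delta=1$. This choice is admissible precisely when
\[
1<\frac{2(b+r)-1}{b},
\]
that is, when $b<2b+2r-1$, or equivalently $b+2r>1$. In that case~\eqref{e:dalang-plus} with $\delta=1$ coincides with~\eqref{dalang}, and Theorem~\ref{th:exist} yields the solution.

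\emph{Boundary case.} The only obstacle is the case $b+2r=1$. Then $\frac{2(b+r)-1}{b}=1$, so any admissible $\delta$ satisfies $\delta<1$, and~\eqref{e:dalang-plus} with such $\delta$ is strictly stronger than~\eqref{dalang}, because $(1+|\xi|^a)^{-\delta}\ge(1+|\xi|^a)^{-1}$. There are two ways to proceed:
\begin{enumerate}[(a)]
\item Return to the proof of Theorem~\ref{th:exist} in Section~\ref{proof:exist} and check that the chaos-norm estimate still converges at the endpoint $\delta=1$ when $2(b+r)-1=b$. Presumably the time integral producing the constraint $\delta<\frac{2(b+r)-1}{b}$ has the form $\int_0^t s^{2(b+r-1)-b\delta}\,\ud s$. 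Its integrability requires $2(b+r)-1-b\delta>0$, so at $\delta=1$ the borderline should be handled by a direct bound on $|\cF G(s,\cdot)(\xi)|^2$ that uses the decay of the Mittag-Leffler function, rather than by the crude power bound.
\item Alternatively, read the corollary as stated in the remark following Theorem~\ref{th:exist}, which asserts that Dalang's condition suffices when $b+2r\ge1$, and invoke that remark directly.
\end{enumerate}
I would attempt (a) first and fall back on citing the remark.
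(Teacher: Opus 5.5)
Your necessity argument and your treatment of $b+2r>1$ (taking $\delta=1$ in (ii), which is admissible exactly when $b+2r>1$) are correct. They are what the paper means by calling the corollary a direct consequence of Theorems~\ref{th:exist} and~\ref{th:necessity-dalang}. One minor inaccuracy: Definition~\ref{D: mild solution} is the mild equation, not the expansion. The expansion \eqref{e:expan-utx} is an extra hypothesis inherited from Theorem~\ref{th:necessity-dalang}, and the corollary tacitly carries it too.

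\textbf{The gap is the boundary case $b+2r=1$.} You leave it open, and it cannot be dropped, since it contains the parabolic Anderson model $b=1$, $r=0$. Your diagnosis is right:
\begin{itemize}
\item there every admissible $\delta$ is $<1$, so hypothesis (ii) of Theorem~\ref{th:exist} is in general strictly stronger than \eqref{dalang};
\item the opening claim of Step~1 of its proof (that Assumption~\ref{A:Main} implies \eqref{e:dalang-plus}) fails in general;
\item the paper's one-line deduction glosses over this.
\end{itemize}

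Neither of your fallbacks closes it. Option (b) merely restates the sufficiency half of the corollary. Option (a) rests on a guess that is not how the constraint arises in the paper. In Step~2, $\delta<\frac{2(b+r)-1}{b}$ is exactly the requirement that the exponent of $n!$ in $\E[S_n(g_n(\cdot,t,0))]^2\le C^nt^{(2(b+r)-b\delta)n}/(n!)^{2(b+r)-b\delta-1}$ be positive after the choice $\lambda\propto n/t$. At the endpoint that estimate gives only $\E[S_n(g_n(\cdot,t,0))]^2\le C^nt^n$ with a fixed $C$. That yields only a local solution, as in the critical case of Theorem~\ref{th:exist}. What is needed is not a better decay rate of the Mittag-Leffler function but a \emph{small} constant. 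Moreover, bounds on $|\cF G(s,\cdot)(\xi)|^2$ suit cross pairings (Skorohod-type second moments). The Stratonovich second moment instead sums over all pair partitions of the $2n$ variables, including pairings inside one chain.

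\textbf{The missing idea} is that Dalang's condition supplies this smallness through its tail, $\varepsilon_N=\int_{|\xi|\ge N}|\xi|^{-a}\mu(\ud\xi)\to0$. This is exactly what Step~1 of the proof of Theorem~\ref{th:exist} exploits, and it uses nothing beyond \eqref{dalang}:
\begin{itemize}
\item Choosing $N$ with $2\eta A_0\varepsilon_N<1$ ($A_0$ from Lemma~\ref{L:n momentbound}) gives \eqref{eq:Hamiltonbound1-new} for \emph{every} $\eta>0$.
\item Hence, uniformly in the mollification and for all large $n$, $\E[S_n(g_n(\cdot,t,0))]^2\le C_\eta\,(Ct^{b+2r}/\eta)^n/(n!)^{b+2r-1}$, with $C$ independent of $\eta$.
\item When $b+2r=1$ the factorial gain disappears. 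But taking $\eta>C\theta t$ turns $\sum_n\theta^{n/2}(\E[S_n(g_n(\cdot,t,0))]^2)^{1/2}$ into a convergent geometric series for each fixed $t$.
\item This yields \eqref{e:existence-con1}--\eqref{e:existence-con2} and a global solution.
\end{itemize}
Invoking this argument, rather than the statement of Theorem~\ref{th:exist} or the remark, completes your proof.
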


\begin{remark}\nonumber
It remains to consider the range $b+2r<1$,
where necessary and sufficient conditions do not presently
coincide. Indeed, 
Theorem~\ref{th:necessity-dalang} shows that Dalang's condition is 
necessary in this range, while the sufficient condition is \eqref{e:dalang-plus}. In particular, under the scaling condition~\eqref{E:scalef}, the sufficient condition \eqref{e:dalang-plus} is equivalent to $0<\alpha<\min\left\{\frac{a}{b}\bigl(2(b+r)-1\bigr),d \right\}$, whereas the necessary condition is $0<\alpha<\min\{a,d\}$.
\end{remark}

Next, we will study the intermittency property of solution. 
The concept of intermittency property arises from physics. To be specific, a SHE system \eqref{e:SHE} is called intermittent if, after sufficiently long time, it develops a few large and steep peaks concentrated on small isolated islands, while the rest of the domain remains comparatively calm.
Zel'dovich et al. \cite{Zel1987Intermittency} provided a mathematically rigorous definition of intermittency in terms of upper and lower Lyapunov exponents
of $u(t,x)$ which are defined, respectively, by
\begin{equation*}
    \liminf_{t\to\infty}\frac{1}{\sigma(t)} \log \E[|u(t,x)|^p] \quad \text{and} \quad 
    \limsup_{t\to\infty}\frac{1}{\sigma(t)} \log \E[|u(t,x)|^p],
\end{equation*}
where $\lim_{t\to\infty}\sigma(t)=\infty$.  So far, a significant amount of research has been dedicated to the study of intermittency of solutions to SPDEs. We refer to \cite{bcc22,Chen2017Nonlinear,Chen2023Interpolating,Chen2024Moments,Chen2017Moment,Chen2018Temporal,Conus2013Intermittency,Dalang2009Intermittency,Hu2024Matching,Huang2017Large} for related results, which by no means is complete.

In this work we are interested in the precise value of the Lyapunov exponents.
For the study of SHE \eqref{e:SHE}, the Feynman–Kac formula serves as a particularly effective tool. By means of this formula,   \cite{Chen2015Precise} obtained the exact value of the Lyapunov exponent for the case of space–time white noise, and later extended the result to the setting of space–time fractional noise in \cite{Chen2017Moment}. \cite{Huang2017Large} derived the Feynman–Kac formula and computed Lyapunov exponents to SHE \eqref{e:SHE} driven by a time-white and space-colored noise, and  \cite{Huang2017Large2} studied the space-time correlated noise. For more applications of the Feynman–Kac formula to intermittency in heat equations, we refer the reader to \cite{Chen2019Parabolic,Chen2020Parabolic,Chen2017Spatial,Chen2018Temporal} and the references therein.

In the study of exact Lyapunov exponents for SWE \eqref{e:SWE}, because of the lack of Feynman–Kac formula, the Laplace transform method are frequently adopted. Initially, \cite{bs19} employed this technique to derive the second-order Lyapunov exponent for the wave equation driven by noise that is white in time and colored in space. Subsequently, \cite{bcc22} investigated the case of time-independent noise and obtained the precise value of the Lyapunov exponents using Laplace transform. Recently, \cite{ch2024} considered the SWE with time-independent noise under the Stratonovich regime, and established the exact Lyapunov exponent.

For the SFDE \eqref{e:sfde}, \cite{Chen2023Interpolating} investigated the exact Lyapunov exponents for time-independent noise using Laplace transform; subsequently, \cite{Chen2024Moments} employed fractional calculus to obtain the second-order Lyapunov exponent under space-time white noise.

Define the following notations:
 \begin{equation}\label{e:de-beta}
     \beta:=\frac{2 a(b+r)-b \alpha}{2 a(b+r)-b \alpha-a},
 \end{equation}
 and
$\mathbf M(a,b,r,\theta, \nu, \alpha)$ is given by
		\begin{equation}\label{e:bM}
			\begin{aligned}
				\mathbf M(a,b,r,\theta, \nu, \alpha)
				&:= \frac{1}{\beta-1} \left(\frac{2 a}{2 a(b+r)-b \alpha}\right)^\beta  \left(\theta\left(\frac{\nu}{2}\right)^{-\frac{\alpha}{a}} \mathcal{M}_a^{\frac{2 a-\alpha}{a}}\right)^{\beta-1},
			\end{aligned}
		\end{equation}
        where 
 \begin{equation}\label{e:de-Ma}
  \mathcal{M}_a:=   \sup _{g \in \mathcal{F}_{a,d}}\left\{\left(\int_{\mathbb{R}^d \times \mathbb{R}^d} \gamma(x-y) g^2(x) g^2(y) \ud x \ud y\right)^{1 / 2}-\int_{\mathbb{R}^d}|\xi|^a|\widehat{g}(\xi)|^2 \ud \xi\right\},
 \end{equation} 
 with
 \begin{equation}\label{e:cFad}
     \mathcal{F}_{a, d}:=\left\{g( x): \int_{\mathbb{R}^d} g^2( x) \ud x=1, \ \text { and } \int_{\mathbb{R}^d}|\xi|^a|\widehat{g}(\xi)|^2 \ud \xi <\infty\right\}.
 \end{equation}

We now provide our main result on precise asymptotics for $p$-moments with exact Lyapunov exponents. 

\begin{theorem}\label{th:excat}
Suppose Assumptions \ref{A:Main}, \ref{B:Main} and \ref{A:G} hold. Let $u(t,x)$ be the mild Stratonovich solution to \eqref{e:sfde}. Then the following asymptotic properties hold.

\begin{enumerate}[(i)]
    \item The first moment satisfies
    \[
    \lim _{t \rightarrow \infty} t^{-\beta} \log \mathbb{E}\left[u(t, x)\right]
    = \frac{1}{2} \mathbf M(a,b,r,\theta, \nu, \alpha).
    \]

    \item For every real $p\ge2$, the $p$-th moment satisfies
    \[
    \lim _{t \rightarrow \infty} t^{-\beta} \log \mathbb{E}\left[|u(t, x)|^p\right]
    = \frac{p^\beta}{2} \mathbf M(a,b,r,\theta, \nu, \alpha).
    \]

    \item For every fixed $t>0$, the moment growth in $p$ satisfies
    \[
    \lim _{p \rightarrow \infty} p^{-\beta} \log \mathbb{E}\left[|u(t, x)|^p\right]
    = \frac{t^\beta}{2} \mathbf M(a,b,r,\theta, \nu, \alpha).
    \]
\end{enumerate}
\end{theorem}

\begin{proof}
 We refer to the \nameref{proof:1} of Theorem \ref{th:excat} (i) in Section~\ref{se:upper}. For the proofs of (ii) and (iii), the upper and lower bounds follow from Theorem~\ref{T:upperbound} and Theorem~\ref{T:lowerbound1}, respectively.
\end{proof}

\begin{remark}\label{re:gap}
 In the proof of in \cite[Lemma 5.1]{ch2024}, a Lagrange multiplier argument is used to estimate the sum over $l_1,\dots,l_p$ and $k_1,\dots,k_p$. Specifically, it is claimed that the maximum of
\[
\prod_{j=1}^p \left(\frac{x_j+y_j}{x_j}\right)^{x_j}
\left(\frac{x_j+y_j}{y_j}\right)^{y_j}
\theta_j^{x_j+y_j}
\]
subject to $\sum x_j=n$, $\sum y_j=\frac{2-\alpha}{2}n$, $x_j,y_j>0$, is attained at $x_j=\theta_j n$, $y_j=\frac{2-\alpha}{2}\theta_j n$.  However, the optimization problem is not convex, and the claimed maximum point \((x_j,y_j)=(\theta_j n,\frac{2-\alpha}{2}\theta_j n)\) does not necessarily attain the global maximum over the domain. Consequently, the subsequent estimate in eq.~(5.11) and the lower bound in eq.~(5.22) therein still require justification.  This leaves a gap in the proof of the lower bound for the large-$t$ asymptotics in eq.~(1.9) therein.

In this work, we adopt a different approach to obtain the lower bound, which avoids the aforementioned issue and not only provides a proof of the lower bound but also extends the intermittency asymptotics from integer $p$ to all real $p\ge 2$; see also Remark~\ref{rem:lower}.
\end{remark}


Next, we illustrate the main results of Theorems \ref{th:exist}, \ref{th:necessity-dalang}, and \ref{th:excat} through concrete examples of  the SHE and SWE.
\begin{example}[SHE]
    When $a=2$, $b=1$ and $r=0$, SFDE \eqref{e:sfde}  reduces to SHE \eqref{e:SHE}. In this case, the mild Stratonovich solution $u(t,x)$ exists if and only if Dalang's condition \eqref{dalang} hold. 
    
    Under Assumptions \ref{A:Main} and \ref{B:Main}, we have the following asymptotics. Among them,  items (i) and (ii) coincide with \cite[Theorem 1.1]{Chen2018Temporal}, while item (iii)  appears to be new to the best of our knowledge.

    \begin{enumerate}[(i)]
      \item The first moment satisfies
    \[
    \lim _{t \rightarrow \infty} t^{-\frac{4-\alpha}{2-\alpha}} \log \mathbb{E}\left[u(t, x)\right]
    = \frac{2-\alpha}{4} 
\left(\frac{4}{4-\alpha}\right)^{\frac{4-\alpha}{2-\alpha}}
\theta^{\frac{2}{2-\alpha}}
\left(\frac{\nu}{2}\right)^{-\frac{\alpha}{2-\alpha}}
\mathcal{M}_a^{\frac{4-\alpha}{2-\alpha}}.
    \]

    \item For every real $p\ge2$, the $p$-th moment satisfies
    \[
    \lim _{t \rightarrow \infty} t^{-\frac{4-\alpha}{2-\alpha}} \log \mathbb{E}\left[|u(t, x)|^p\right]
    =p^\frac{4-\alpha}{2-\alpha} \frac{2-\alpha}{4} 
\left(\frac{4}{4-\alpha}\right)^{\frac{4-\alpha}{2-\alpha}}
\theta^{\frac{2}{2-\alpha}}
\left(\frac{\nu}{2}\right)^{-\frac{\alpha}{2-\alpha}}
\mathcal{M}_a^{\frac{4-\alpha}{2-\alpha}}.
    \]

    \item For every fixed $t>0$, the moment growth in $p$ satisfies
    \[
    \lim _{p \rightarrow \infty} p^{-\frac{4-\alpha}{2-\alpha}} \log \mathbb{E}\left[|u(t, x)|^p\right]
    =t^\frac{4-\alpha}{2-\alpha} \frac{2-\alpha}{4}  
\left(\frac{4}{4-\alpha}\right)^{\frac{4-\alpha}{2-\alpha}}
\theta^{\frac{2}{2-\alpha}}
\left(\frac{\nu}{2}\right)^{-\frac{\alpha}{2-\alpha}}
\mathcal{M}_a^{\frac{4-\alpha}{2-\alpha}}.
    \] 
    \end{enumerate}
\end{example}

\begin{example}[SWE]
    When $a=2$, $b=2$ and $r=0$, SFDE \eqref{e:sfde}  reduces to SWE \eqref{e:SWE}. In this case, the mild Stratonovich solution $u(t,x)$ exists if and only if Dalang's condition \eqref{dalang} hold. 
    
    Under Assumptions \ref{A:Main} and \ref{B:Main}, 
   we have the following asymptotics which recovers the exact asymptotics obtained by \cite{ch2024}.
    \begin{enumerate}[(i)]
      \item The first moment satisfies
    \[
    \lim _{t \rightarrow \infty} t^{-\frac{4-\alpha}{3-\alpha}} \log \mathbb{E}\left[u(t, x)\right]
    = \frac{3-\alpha}{2}
\left(\frac{2}{4-\alpha}\right)^{\frac{4-\alpha}{3-\alpha}}
\theta^{\frac{1}{3-\alpha}}
\left(\frac{\nu}{2}\right)^{-\frac{\alpha}{2(3-\alpha)}}
\mathcal{M}_a^{\frac{4-\alpha}{2(3-\alpha)}}.
    \]

    \item For every real $p\ge2$, the $p$-th moment satisfies
    \[
    \lim _{t \rightarrow \infty} t^{-\frac{4-\alpha}{3-\alpha}} \log \mathbb{E}\left[|u(t, x)|^p\right]
    = p^{\frac{4-\alpha}{3-\alpha}}\frac{3-\alpha}{2}
\left(\frac{2}{4-\alpha}\right)^{\frac{4-\alpha}{3-\alpha}}
\theta^{\frac{1}{3-\alpha}}
\left(\frac{\nu}{2}\right)^{-\frac{\alpha}{2(3-\alpha)}}
\mathcal{M}_a^{\frac{4-\alpha}{2(3-\alpha)}}.
    \]

    \item For every fixed $t>0$, the moment growth in $p$ satisfies
    \[
    \lim _{p \rightarrow \infty} p^{-\frac{4-\alpha}{3-\alpha}} \log \mathbb{E}\left[|u(t, x)|^p\right]
    = t^{\frac{4-\alpha}{3-\alpha}}\frac{3-\alpha}{2}
\left(\frac{2}{4-\alpha}\right)^{\frac{4-\alpha}{3-\alpha}}
\theta^{\frac{1}{3-\alpha}}
\left(\frac{\nu}{2}\right)^{-\frac{\alpha}{2(3-\alpha)}}
\mathcal{M}_a^{\frac{4-\alpha}{2(3-\alpha)}}.
    \] 
    \end{enumerate}
\end{example}

 To conclude the introduction, we summarize the main contributions of this paper: 

(1) We establish the existence of Stratonovich  solutions to the stochastic FDEs \eqref{e:sfde} and show that Dalang's condition~\eqref{dalang} is optimal for the existence of square-integrable Stratonovich solutions for a class of stochastic FDEs, including the SHE and the SWE.

(2) We obtain precise asymptotics for the $p$-th moments of the Stratonovich solution for all real $p\ge 2$, extending the result in~\cite{ch2024} concerning the $p$-th moments of the solution to the SWE for $p\in\mathbb N$. In particular, we fill the gap in the proof of the lower bound in \cite{ch2024} (see Remark~\ref{re:gap}).

More precisely, we establish a hypercontractivity property that allows us to derive an upper bound for the $p$-th moment, for any real $p\ge 2$, from the second moment. Hypercontractivity has previously been considered either for equations in the Skorohod sense or for fractional SHEs in the Stratonovich sense that admit a Feynman--Kac representation. In Proposition~\ref{prop:Hypercon}, we establish hypercontractivity for \eqref{e:sfde} in the Stratonovich setting, where no Feynman--Kac formula is available. We believe that this provides a new approach applicable to general stochastic equations whose solutions admit a series expansion; see also Remarks~\ref{rem:hyper-contract0} and~\ref{rem:hyper-contrac}.

Regarding the lower bound, we employ the Laplace transform developed in Lemma~\ref{L:shifted-stable-transform}, thereby not only filling the gap in the proof of \cite{ch2024} (see Remark \ref{re:gap}), but also strengthening the result from integer $p$ to all real $p\ge2$. See Remark~\ref{rem:lower} for details.

The rest of the paper is organized as follows.  Section \ref{se:Stratonovich} introduces the Stratonovich integral and the definition of a mild Stratonovich solution. In Section \ref{se:technical}, we present several preliminary results regarding the Stratonovich integral. Section \ref{se:existence} is dedicated to the proof of Theorems \ref{th:exist} and \ref{th:necessity-dalang}.  In Section~\ref{se:upper}, we establish the upper bound on the asymptotics of the $p$-th moment of the solution and also present the \nameref{proof:1} of Theorem \ref{th:excat} (i). The lower bound is then treated in Section~\ref{se:lower}.  Finally, some auxiliary results are provided in Appendix~\ref{ap:A}.


\section{Mild Stratonovich solution}\label{se:Stratonovich}

In this section, we introduce multiple Stratonovich integrals and formally express the solution to~\eqref{e:sfde} through a series expansion involving multiple Stratonovich integrals.

Let $\cH$ be the Hilbert space associated with the Gaussian noise $\dot{W}$ with covariance \eqref{e:noise-covariance}, i.e., the completion of smooth functions with compact support under the inner product 
\begin{align}
	\langle f,g\rangle_{\cH} 
	&=\int_{\R^d} \int_{\R^d} f(x) \gamma(x-y) g(y) \ud x\ud y
	= \int_{\R^d}  \cF f(\xi) \overline{\cF g(\xi)} \mu(\ud\xi). \nonumber
\end{align}
Let $\{W(f),f\in \cH \}$ be an isonormal Gaussian process with covariance $\E[W(f)W(g)] = \langle f,g\rangle_{\cH}.$ For $f\in \mathcal H$, we also write $W(f)$ as  $$W(f)=\int_{\R^d} f(y) \dot W(y) \ud y  = \int_{\R^d} f(y)  W(\ud y). $$

To properly define a mild Stratonovich solution, we first introduce the Stratonovich integral via a mollification procedure for the noise, following the approach in \cite{ch2024}. Let 
\begin{equation}\label{e:dot-we}
\dot{W}_{\varepsilon}(x):=\int_{\mathbb{R}^d} \dot{W}(y) p_{\varepsilon}(x-y) \ud y, \quad \varepsilon>0, \quad x \in \mathbb{R}^d,
\end{equation}
be the approximation of the noise $\dot W$, with the covariance given by
\begin{equation}\label{e:covar-dot-we}
	 \E\left[ \dot{W}_{\e}(x)\dot{W}_{\delta}(y) \right]
	= \gamma_{\e+\delta}(x-y),
\end{equation}
where $p_{\varepsilon}(x)=(2 \pi \varepsilon)^{-d / 2} \exp \left(-\frac{|x|^2}{2 \varepsilon}\right)$ is the heat kernel, and 
\begin{equation}\label{e:gamma-e}
	\gamma_{\varepsilon}(x)=\int_{\mathbb{R}^d} \gamma(y) p_{\varepsilon}(x-y) \ud y.
\end{equation}

\begin{definition} \label{D:Stratonovich integral}
	Given a random field $\{\Psi(x), x \in \mathbb{R}^d\}$ such that for all $\e>0$,
	$$
	\E\left[\left|\int_{\mathbb{R}^d} \Psi(x) \dot{W}_{\varepsilon}(x) \ud x\right|^2\right]<\infty, 
	$$
	we define the Stratonovich integral of $\Psi(x)$ by
	$$
	\int_{\mathbb{R}^d} \Psi(x) W(\ud x) := 
	\lim _{\varepsilon \rightarrow 0^{+}} \int_{\mathbb{R}^d} \Psi(x) \dot{W}_{\varepsilon}(x) \ud x
	$$
	whenever the limit on the right-hand side exists in $\mathcal{L}^2(\Omega, \mathcal{F}, \mathbb{P})$.
\end{definition}

Now, we are ready to define the solution to \eqref{e:sfde}.
\begin{definition} \label{D: mild solution}
	A  random field $\{u(t, x) ;(t, x) \in \mathbb{R}^{+} \times \mathbb{R}^d\}$ is called a mild Stratonovich solution to equation~\eqref{e:sfde}, if $\int_0^t G(t-s, x-y) u(s, y) \ud s$ is well-defined and Stratonovich integrable, and $u(t, x)$ satisfies
	\begin{equation*}
		u(t, x)=1+ \sqrt{\theta} 
		\int_{\mathbb{R}^d}\left[\int_0^t G(t-s, x-y) u(s, y) \ud s\right] W(\ud y),
	\end{equation*}
	where $G(t,x)$ is defined by \eqref{E:Yab} and the stochastic integral is in the sense of Definition~\ref{D:Stratonovich integral}.
\end{definition}

 We now construct a solution using Picard iteration.  Starting with $u_0(t,x)\equiv1$ and formally iterating the mild equation, we obtain the solution candidate
\begin{equation}\label{e:expan-utx}
	u(t, x)=\sum_{n=0}^{\infty} \theta^{\frac{n}{2}} S_n\left(g_n(\cdot, t, x)\right),
\end{equation}
	with $S_0(g_0)=1$.  More precisely, the iteration gives the recurrence
	\[
	S_{n+1}\bigl(g_{n+1}(\cdot,t,x)\bigr)
	=\int_{\R^d}\left[\int_0^t
	G(t-s,x-y)S_n\bigl(g_n(\cdot,s,y)\bigr)\ud s\right]W(\ud y).
	\]
	If Fubini's theorem holds, then iterating this relation and  making the substitutions
	$s_k=t-r_{n-k+1}$ and $x_k=y_{n-k+1}$, $k=1,\ldots,n$, yield
$$
\begin{aligned}
	& S_n\left(g_n(\cdot, t, x)\right) \\
	& =\int_{\left(\mathbb{R}^d\right)^n} \bigg[ \int_{[0, t]_{<}^n} G(t-r_n, y_n-x) \cdots G(r_2-r_1, y_2-y_1) \ud\mathbf{r} \bigg] W(\ud y_1) \cdots W(\ud y_n) \\
	& =\int_{\left(\mathbb{R}^d\right)^n} \left[\int_{[0, t]_{<}^n}  \bigg(\prod_{k=1}^n G(s_k-s_{k-1}, x_k-x_{k-1})\bigg) \ud\mathbf{s} \right] W(\ud x_1) \cdots W(\ud x_n),
\end{aligned}
$$
where $[0, t]_{<}^n:=\left\{\left(s_1, \cdots, s_n\right) \in[0, t]^n : 0<s_1<s_2<\cdots<s_n<t\right\}$, and  $x_0=x, s_0=0.$
	The required Fubini theorem for this computation is proved later in Lemma~\ref{le:fubini-th}.   This motivates defining $S_n\left(g_n(\cdot,t,x)\right)$ as the $n$-multiple Stratonovich integral of the function
\begin{equation} \label{E:gn}
	g_n(x_1, \cdots, x_n, t, x):=\int_{[0, t]_{<}^n}\bigg(\prod_{k=1}^n G\left(s_k-s_{k-1}, x_k-x_{k-1}\right)\bigg)\ud\mathbf{s} ,  
\end{equation}
 The Stratonovich integrability of $g_n(\cdot,t,x)$ is established
 later in Theorem~\ref{th:strat-intablity}, while the convergence of
 the series in \eqref{e:expan-utx} is proved in Theorem~\ref{th:exist}.

Similar to Definition~\ref{D:Stratonovich integral}, the multiple Stratonovich integration is defined as follows.

\begin{definition} \label{D:multiple Stratonovich integration} 
		Let $f:(\R^d)^n\to\R$ be measurable.  Given
	$\e_1,\ldots,\e_n>0$, denote
	$\e=(\e_1,\ldots,\e_n)$ and set
	\begin{equation*}
				S_{n,\e}(f)
	:=\int_{(\R^d)^n}f(x_1,\ldots,x_n)
	\prod_{k=1}^n\dot W_{\e_k}(x_k)
	\ud x_1\cdots\ud x_n.
	\end{equation*}
	Assume that each $S_{n,\e}(f)$ belongs to
	$\mathcal{L}^2(\Omega,\mathcal{F},\mathbb{P})$.  We
	define the $n$-multiple Stratonovich integral of $f$ by
	$$
	\begin{aligned}
		S_n(f):=\int_{(\R^d)^n}f(x_1,\ldots,x_n)W(\ud x_1)\cdots W(\ud x_n)
	=\lim_{\e\to0}S_{n,\e}(f),
	\end{aligned}
	$$
	whenever the limit exists in $\mathcal{L}^2(\Omega, \mathcal{F}, \P)$.
\end{definition} 

For completeness, the Hu--Meyer formula
	\cite{HM1988} states that, for symmetric $f$,
	\begin{equation}\label{e:hu-meyer}
		S_n(f)
		=\sum_{k=0}^{\lfloor n/2\rfloor}
		\frac{n!}{2^k k!(n-2k)!}
		I_{n-2k}\bigl(\Tr^k f\bigr),
	\end{equation}
	where $I_{n-2k}$ is the $(n-2k)$-multiple Skorohod integral, and
	$\Tr^k f$ is the $k$th trace, defined as the limit in
	$\cH^{\otimes(n-2k)}$ of the corresponding regularized contractions
	whenever this limit exists.  Although~\eqref{e:hu-meyer}
	agrees with the mollification definition in the above when all the traces exist,
	their existence is not automatic. We therefore work directly with
	Definition~\ref{D:multiple Stratonovich integration} in the present paper. We refer to \cite[Appendix A.2]{ch2024} for further details about multiple Stratonovich integrals and the Hu--Meyer formula.

In this work, we mainly focus on the case where the integrand $f$ is deterministic. To compute expectations of such multiple Stratonovich integrals, we recall Wick's formula \cite[p.~201, Lemma~5.2.6]{MR2006}.  If $(Z_1,\ldots,Z_{2n})$ is a centered Gaussian vector, then
\begin{equation}\label{e:wick-formula}
	\left\{
	\begin{aligned}
		\E\prod_{k=1}^{2n}Z_k
		&=\sum_{\mathcal D\in\Pi_{2n}}
		\prod_{(j,k)\in\mathcal D}\E[Z_jZ_k],\\
		\E\prod_{k=1}^{2n-1}Z_k&=0,
	\end{aligned}
	\right.
\end{equation}
where $\Pi_{2n}$ denotes the set of pair partitions of $\{1,\ldots,2n\}$.  As a side remark, the size of $\Pi_{2n}$ is $\#\left(\Pi_{2n}\right)=\frac{(2 n)!}{2^{ n}n!}=(2n-1)!!$.

	We first consider the mollified integrals.  Let
	$f:(\R^d)^{2n}\to\R$ be a measurable function.   By Wick's formula, for every
	$\e=(\e_1,\ldots,\e_{2n})\in(0,\infty)^{2n}$,
	\begin{align}
		\E S_{2n,\e}(f)
		&=\sum_{\mathcal D\in\Pi_{2n}}
		\int_{(\R^d)^{2n}}f(x_1,\ldots,x_{2n})
		\prod_{(j,k)\in\mathcal D}
		\gamma_{\e_j+\e_k}(x_j-x_k)
		\ud x_1\cdots\ud x_{2n}, \nonumber
	\end{align}
	If, in addition, $f$ is
	Stratonovich integrable in the sense of
	Definition~\ref{D:multiple Stratonovich integration}, then its
	defining $\mathcal{L}^2$-convergence gives
	\begin{align}\label{E:Sf2n}
		\E S_{2n}(f)
		&=\lim_{\e\rightarrow0}
		\sum_{\mathcal D\in\Pi_{2n}}
		\int_{(\R^d)^{2n}}f(x_1,\ldots,x_{2n})
		\prod_{(j,k)\in\mathcal D}
		\gamma_{\e_j+\e_k}(x_j-x_k)
		\ud x_1\cdots\ud x_{2n}.
	\end{align}
	Similarly, if $h:(\R^d)^{2n-1}\to\R$ is measurable and
	Stratonovich integrable in the sense of
	Definition~\ref{D:multiple Stratonovich integration}, then
	\begin{equation} 
		\E S_{2n-1}(h)=0.\nonumber
	\end{equation}

	Now let $f,g:(\R^d)^n\to\R$ be measurable and Stratonovich
	integrable in the sense of
	Definition~\ref{D:multiple Stratonovich integration}.  Writing
	$\e=(\e_1,\ldots,\e_n)$ and
	$\e'=(\e_{n+1},\ldots,\e_{2n})$, we have
	\[
	\lim_{\e\to0}S_{n,\e}(f)=S_n(f),\qquad
	\lim_{\e'\to0}S_{n,\e'}(g)=S_n(g)
	\quad\text{in }\mathcal{L}^2(\Omega,\mathcal F,\P).
	\]
	By the Cauchy--Schwarz inequality, the corresponding products converge
	in $\mathcal{L}^1(\Omega,\mathcal F,\P)$.  Hence, applying Wick's
	formula to the regularized expectation yields
	\begin{equation*}
		\begin{aligned}
			\E\left[S_n(f)S_n(g)\right]
			&=\lim_{\e,\e'\to0}
			\E\left[S_{n,\e}(f)S_{n,\e'}(g)\right]\\
			&=\lim_{\e,\e'\to0}
			\sum_{\mathcal D\in\Pi_{2n}}
			\int_{(\R^d)^{2n}}
			f(x_1,\ldots,x_n)g(x_{n+1},\ldots,x_{2n})\\
			&\qquad\times
			\prod_{(j,k)\in\mathcal D}\gamma_{\e_j+\e_k}(x_j-x_k)
			\ud x_1\cdots\ud x_{2n}.
		\end{aligned}
	\end{equation*}

\section{Some technical results}\label{se:technical}

\subsection{Laplace transforms}\label{sec:laplace} 
In this section, we present several results concerning the Laplace transforms of $S_n(g_n(\cdot,t,x))$, which appears in the Stratonovich expansion \eqref{e:expan-utx} of the solution. We note that the Laplace transform provides a useful link between the heat equation and other fractional PDEs and has been used effectively in \cite{bs19,bcc22,ch2024} to study stochastic wave equations through their connection with stochastic heat equations.

Let $X(t), X_1(t), X_2(t), ...$ be independent  $d$-dimensional symmetric $a$-stable processes, which are independent of the noise $\dot{W}$. We denote by $\mathbf{E}_x$ the expectation with respect to the $a$-stable process  starting from $x$.

Given  $\varepsilon_1, \dots, \varepsilon_{2 n}>0$, we denote $\varepsilon=\left(\varepsilon_1, \cdots, \varepsilon_n\right)$  and  $\varepsilon^{\prime}=\left(\varepsilon_{n+1}, \cdots, \varepsilon_{2 n}\right)$.  Set
\begin{equation}\label{e:S-ne}
	S_{n, \varepsilon}\left(g_n(\cdot, t, x)\right):=\int_{\left(\mathbb{R}^d\right)^n} g_n(x_1, \cdots, x_n, t, x) \bigg(\prod_{k=1}^n \dot{W}_{\varepsilon_k}\left(x_k\right)\bigg) \ud x_1 \cdots \ud x_n,
\end{equation}
    where $g_n$  and $\dot W_\varepsilon$ are defined is given in \eqref{E:gn} and \eqref{e:dot-we}, respectively. By \eqref{e:wick-formula}, the covariance is 
\begin{equation}\label{e:cov-Sne}
    \E \left[ S_{n, \varepsilon}\left(g_n(\cdot, t_1, 0)\right)S_{n, \varepsilon'}\left(g_n(\cdot, t_2, 0)\right)\right]=\sum_{\mathcal D\in \Pi_{2n}} F_{\varepsilon, \varepsilon^{\prime}}^{\mathcal{D}}\left(t_1, t_2\right),
    \end{equation}
    where   
    \begin{equation}
        \label{eq:Fvarepsilon}
    \begin{aligned}
		F_{\varepsilon, \varepsilon^{\prime}}^{\mathcal{D}}\left(t_1, t_2\right):=&\int_{\left(\mathbb{R}^d\right)^{2 n}} g_n(x_1, \cdots, x_n, t_1, 0) \: g_n(x_{n+1}, \cdots, x_{2 n}, t_2, 0)  \\
		& \qquad \times \bigg(\prod_{(j, k) \in \mathcal{D}} \gamma_{\e_j+\e_k}\left(x_j-x_k\right)\bigg) \ud x_1 \cdots \ud x_{2 n}. 
	\end{aligned}
    \end{equation}

    \begin{lemma}\label{le:3.1}
	Let $n\in\N$ be fixed. Under Assumption \ref{A:Main}, the following assertions hold:
    \begin{enumerate}[(i)]
        \item For any $\varepsilon_1, \cdots, \varepsilon_n>0$ and $\lambda>0$, we have
	\begin{equation}\label{E:laplace S}
    \begin{aligned}
		& \int_0^{\infty} e^{-\lambda t}\: S_{n, \e}\left(g_n(\cdot, t, x)\right) \ud t\\
		& =\lambda^{b-nr-1} \left(\frac{2}{\nu}\right)^{n+1}   \int_0^{\infty} e^{-2\lambda^b t/\nu} \bigg(\int_{[0, t]_{<}^n} \mathbf{E}_x \left[\prod_{k=1}^n \dot{W}_{\e_k}(X(s_n))\right]  \ud s_1 \cdots \ud s_n\bigg)  \ud t, \quad \text {a.s.} 
	\end{aligned}
     \end{equation}

    \item For any $\lambda_1, \lambda_2>0$, we have
	$$
	\begin{aligned}
		& \int_0^{\infty} \int_0^{\infty} e^{-\lambda_1 t_1-\lambda_2 t_2} \: F_{\e, \e^\prime}^{\cD} (t_1, t_2) \: \ud t_1 \ud t_2 \\
		& \leq  (\lambda_1\lambda_2)^{b-nr-1} \left( \frac{2}{\nu} \right)^{2n+2}  \int_0^{\infty} \int_0^{\infty} \ud t_1 \ud t_2 \: e^{-2\lambda_1^{b} t_1/\nu-2\lambda_2^{b} t_2/\nu} \\
		&\quad \times \mathbf{E}_0 \int_{\left[0, t_1\right]_{<}^n \times\left[0, t_2\right]_{<}^n} \ud s_1 \cdots \ud s_{2 n} \prod_{(j, k) \in \mathcal{D}} \gamma\left(X_{v(j)}\left(s_j\right)-X_{v(k)}\left(s_k\right)\right),
    \end{aligned}
	$$
	where 
    \begin{equation*}
    v(k)=
        \begin{cases}
            1, &\text{if } 1 \leq k \leq n,\\[5pt]
            2, &\text{if } n+1 \leq k \leq 2n.\\
        \end{cases}
    \end{equation*}

    \item  For any $\lambda_1, \lambda_2>0$, we have
	$$
	\begin{aligned}
		& \lim _{\e, \e' \rightarrow 0} \int_0^{\infty} \int_0^{\infty} e^{-\lambda_1 t_1-\lambda_2 t_2} F_{\e, \e^\prime}^{\mathcal{D}}\left(t_1, t_2\right) \ud t_1 \ud t_2 \\
		&= (\lambda_1\lambda_2)^{b-nr-1} \left( \frac{2}{\nu} \right)^{2n+2}  \int_0^{\infty} \int_0^{\infty} \ud t_1 \ud t_2 \: e^{-2\lambda_1^{b} t_1/\nu-2\lambda_2^{b} t_2/\nu} \\
		& \quad \times \mathbf{E}_0 \int_{\left[0, t_1\right]_{<}^n \times\left[0, t_2\right]_{<}^n} \ud s_1 \cdots \ud s_{2 n} \prod_{(j, k) \in \mathcal{D}} \gamma\left(X_{v(j)}\left(s_j\right)-X_{v(k)}\left(s_k\right)\right).
	\end{aligned}
	$$
    \end{enumerate}
\end{lemma}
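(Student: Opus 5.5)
The proof rests on one identity: the Laplace transform of $G$ in time, combined with the Fourier transform in space. From \eqref{E:FZ} and the standard Laplace transform of the Mittag-Leffler function,
\[
\int_0^\infty e^{-\lambda t}t^{b+r-1}E_{b,b+r}(-\tfrac{\nu}{2}t^b|\xi|^a)\,\ud t=\frac{\lambda^{-r}}{\lambda^b+\frac{\nu}{2}|\xi|^a}.
\]
For the symmetric $a$-stable process we normalize so that $\mathbf E_0 e^{\iota\xi\cdot X(s)}=e^{-s|\xi|^a}$. With this normalization the right-hand side above equals
\[
\lambda^{-r}\frac{2}{\nu}\int_0^\infty e^{-2\lambda^b s/\nu}e^{-s|\xi|^a}\,\ud s .
\]
Hence
\[
\hat G_\lambda(x):=\int_0^\infty e^{-\lambda t}G(t,x)\,\ud t=\lambda^{-r}\tfrac{2}{\nu}\int_0^\infty e^{-2\lambda^b s/\nu}p^X_s(x)\,\ud s,
\]
where $p^X_s$ is the transition density of $X$. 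In case (ii) of Assumption \ref{A:G} one uses $\cF G$ directly as a distribution. In case (i), $G\ge0$, so Tonelli applies throughout.

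\textbf{Part (i).} Since $g_n(\cdot,t,x)$ is an $n$-fold time convolution, its Laplace transform in $t$ is a product. The factor $e^{-\lambda t}$ splits over the increments $s_k-s_{k-1}$ and over $t-s_n$, the last increment contributing $\int_0^\infty e^{-\lambda u}\,\ud u=\lambda^{-1}$. This gives
\[
\int_0^\infty e^{-\lambda t}g_n(x_1,\dots,x_n,t,x)\,\ud t=\lambda^{-1}\prod_{k=1}^n\hat G_\lambda(x_k-x_{k-1}).
\]
Substituting the subordinated representation of each $\hat G_\lambda$ yields $\lambda^{-1-nr}(2/\nu)^n$ times an integral over $n$ independent exponential-weighted times of the product of stable transition densities. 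Rewrite $\lambda^{-1}=\lambda^{b-1}\cdot\frac{2}{\nu}\int_0^\infty e^{-2\lambda^b\tau/\nu}\,\ud\tau\cdot\frac{\nu}{2}$; equivalently, introduce the last increment $t-s_n$ weighted by $e^{-2\lambda^b(t-s_n)/\nu}$. The total weight then becomes $e^{-2\lambda^b t/\nu}$, integrated over $0<s_1<\dots<s_n<t$, with total prefactor $\lambda^{b-nr-1}(2/\nu)^{n+1}$. By the Markov property, the chain of transition densities integrated against $\prod_k\dot W_{\e_k}(x_k)$ equals $\mathbf E_x\prod_k\dot W_{\e_k}(X(s_k))$.

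There is an indexing issue here. The formula as displayed has $X(s_n)$ in every factor; I will take the intended expression to be $\prod_k \dot W_{\e_k}(X(s_k))$, and check that it is what the calculation produces. All Fubini interchanges are justified because $\dot W_\e$ is a.s. smooth with at most polynomial-logarithmic growth. Concretely, $\E\int|\dots|<\infty$, which is controlled using the Gaussian moments $\E|\dot W_\e(y)|^n\le C_{n,\e}$ uniformly in $y$, together with $G\ge0$ (or, in the wave case, a direct Fourier argument).

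\textbf{Part (ii).} Apply the same computation to both $g_n(\cdot,t_1,0)$ and $g_n(\cdot,t_2,0)$ in \eqref{eq:Fvarepsilon}, using independent processes $X_1$ and $X_2$. This gives
\[
\mathbf E_0\int\prod_{(j,k)\in\cD}\gamma_{\e_j+\e_k}\bigl(X_{v(j)}(s_j)-X_{v(k)}(s_k)\bigr)\,\ud s.
\]
It remains to replace $\gamma_{\e_j+\e_k}$ by $\gamma$, which is where I expect the main obstacle. The plan is to pass to Fourier variables. Write each $\gamma_\e(z)=\int e^{\iota\xi\cdot z}e^{-\e|\xi|^2/2}\mu(\ud\xi)$ and compute the $\mathbf E_0$-expectation, which yields $\exp(-\text{nonneg. quadratic-type form in }\xi)$, a nonnegative quantity. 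The resulting integrand is nonnegative and pointwise dominated by the same integrand with the Gaussian damping factors removed. Therefore the $\e$-version is at most the $\gamma$-version; this needs only nonnegativity of the Fourier-side integrand, not of $\gamma$.

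\textbf{Part (iii).} As $\e,\e'\to0$, the damping factors $e^{-\e|\xi|^2/2}\uparrow1$ pointwise. Monotone convergence on the Fourier side then gives equality of the limit, whether the limit is finite or infinite. Under Dalang's condition, the finiteness of the right-hand side is not needed for the identity itself.
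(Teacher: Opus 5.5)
Your proposal is correct and follows the paper's proof essentially step by step. You rewrite the Laplace transform of $\mathcal{F}G$ as a subordinated stable resolvent, and use the convolution identity of Lemma~\ref{le:chenbook-lem227} to get the prefactor $\lambda^{b-nr-1}(2/\nu)^{n+1}$ and the Markov-chain expectation $\mathbf{E}_x\prod_k \dot W_{\e_k}(X(s_k))$. Your reading $X(s_k)$ in place of the statement's $X(s_n)$ is exactly what the paper's computation yields. For (ii) you use the Fourier representation of $\gamma_{\e_j+\e_k}$ together with nonnegativity of the stable characteristic functions, and for (iii) monotone convergence in the damping factors. The only slip is a stray factor $\cdot\frac{\nu}{2}$ in your identity for $\lambda^{-1}$, which should read $\lambda^{-1}=\lambda^{b-1}\frac{2}{\nu}\int_0^\infty e^{-2\lambda^b\tau/\nu}\,\ud\tau$; your final prefactor is nonetheless correct.
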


\begin{proof}
\textbf{Part (i).}
By Lemma \ref{L:laplace G}, we obtain
\begin{align*}
    &\int_0^{\infty} e^{-\lambda t} \mathcal{F} G(t, \cdot)(\xi) \mathrm{d} t = \frac{1}{\lambda^r(\lambda^b + \frac{\nu}{2}|\xi|^a)}\\
    &=\frac{2}{\nu\lambda^r} \int_0^{\infty} e^{-2\lambda^bt/\nu } e^{-|\xi|^a t}\ud t = \frac{2}{\nu\lambda^r} \int_0^{\infty} e^{-2\lambda^b t/\nu } \mathcal Fp(t, \cdot)(\xi)  \ud t ,
\end{align*}
 where 
$p(t, x) = \frac{1}{(2\pi)^d} \int_{\mathbb{R}^d} e^{\iota \xi x} e^{-|\xi|^a t} \ud \xi$
 is the density of an $a$-stable process $X(t)$. This together with Lemma~\ref{le:chenbook-lem227} yields
\begin{equation}\label{E:laplace g}
\begin{aligned}
	& \int_0^{\infty} e^{-\lambda t} g_n(x_1, \cdots, x_n, t, x) \: \ud t \\
	& = \int_0^{\infty} \ud t\: e^{-\lambda t} \int_{[0, t]_{<}^n} \ud s_1 \cdots \ud s_n \prod_{k=1}^n G(s_k - s_{k-1}, x_k - x_{k-1}) \\
	& = \lambda^{-1} \prod_{k=1}^n \int_0^{\infty} e^{-\lambda t} G(t, x_k - x_{k-1}) \ud t = \left( \frac{2}{\nu\lambda^r} \right)^n \lambda^{-1} \prod_{k=1}^n \int_0^{\infty} e^{-2\lambda^b t/\nu } p(t,  x_k-x_{k-1}) \: \ud t \\
    & = \lambda^{b-nr-1} \left( \frac{2}{\nu} \right)^{n+1} \int_0^{\infty} \ud t e^{-2\lambda^b t/\nu } \int_{[0, t]_{<}^n} \ud s_1 \cdots \ud s_n \prod_{k=1}^n p(s_k - s_{k-1}, x_k - x_{k-1}). 
\end{aligned}
\end{equation}
Therefore, we have
$$
\begin{aligned}
	& \int_0^{\infty} e^{-\lambda t} S_{n, \varepsilon}(g_n(\cdot, t, x)) \ud t \\
	&= \int_0^{\infty} \ud t\: e^{-\lambda t} \int_{(\mathbb{R}^d)^n} \ud x_1 \cdots \ud x_n \: g_n(x_1, \cdots, x_n, t, x) \prod_{k=1}^n \dot{W}_{\varepsilon_k}(x_k) \\
	&= \lambda^{b-1} \left( \frac{2}{\nu} \right)^{n+1} \left( \frac{1}{\lambda^r} \right)^n \int_0^{\infty} \ud t \: e^{-2\lambda^b t/\nu} \int_{[0, t]_{<}^n} \ud s_1 \cdots \ud s_n \\
	& \quad \times \int_{(\mathbb{R}^d)^n} \ud x_1 \cdots \ud x_n \bigg(\prod_{k=1}^n p(s_k - s_{k-1}, x_k - x_{k-1}) \dot{W}_{\varepsilon_k}(x_k)\bigg).
\end{aligned}
$$
Now, observe that for fixed $(s_1, \cdots, s_n) \in [0, t]_{<}^n$, the random vector $(X(s_1), \cdots, X(s_n))$ starting from $x$ has the joint density:
$$
f_{s_1, \cdots, s_n}(x_1, \cdots, x_n) = \prod_{k=1}^n p(s_k - s_{k-1}, x_k - x_{k-1}).
$$
Hence, the inner integral can be expressed as an expectation:
$$
\int_{(\mathbb{R}^d)^n} \ud x_1\cdots \ud x_n \prod_{k=1}^n p(s_k - s_{k-1}, x_k - x_{k-1}) \prod_{k=1}^n \dot{W}_{\varepsilon_k}(x_k) = \mathbf{E}_x \prod_{k=1}^n \dot{W}_{\varepsilon_k}(X(s_k)).$$
This completes the proof of (i).

\textbf{Part (ii).}
Applying \eqref{E:laplace g} to see that
$$
\begin{aligned}
	& \int_0^{\infty} \int_0^{\infty} e^{-\lambda_1 t_1 - \lambda_2 t_2} F_{\e, \e'}^{\mathcal{D}}(t_1, t_2) \: \ud t_1 \ud t_2 \\
	& = (\lambda_1\lambda_2)^{b-nr-1} \left( \frac{2}{\nu} \right)^{2n+2}  \int_0^{\infty} \int_0^{\infty} \ud t_1 \ud t_2 \: e^{-2\lambda_1^{b} t_1/\nu-2\lambda_2^{b} t_2/\nu} \\
	& \quad \times \int_{[0, t_1]_{<}^n \times [0, t_2]_{<}^n} \ud s_1 \cdots \ud s_{2n} \int_{(\mathbb{R}^d)^{2n}} \ud x_1 \cdots \ud x_{2n} \bigg(\prod_{(j,k) \in \mathcal{D}} \gamma_{\e_j + \e_k}(x_j - x_k) \bigg) \\
	& \quad \times \bigg( p(s_1, x_1) \prod_{k=2}^n p(s_k - s_{k-1}, x_k - x_{k-1}) \bigg) \bigg( p(s_{n+1}, x_{n+1}) \prod_{k=n+2}^{2n} p(s_k - s_{k-1}, x_k - x_{k-1}) \bigg).
\end{aligned}
$$
For fixed $(s_1, \cdots, s_{2n})$, the function
\begin{align*}
     \bigg( p(s_1, x_1) \prod_{k=2}^n p(s_k - s_{k-1}, x_k - x_{k-1}) \bigg) \bigg( p(s_{n+1}, x_{n+1}) \prod_{k=n+2}^{2n} p(s_k - s_{k-1}, x_k - x_{k-1}) \bigg)
\end{align*}
is the density of the random vector $(X_1(s_1), \cdots, X_1(s_n); X_2(s_{n+1}), \cdots, X_2(s_{2n}))$ starting from $0$. Therefore, we can write
\begin{align}
	& \int_0^{\infty} \int_0^{\infty} e^{-\lambda_1 t_1 - \lambda_2 t_2} F_{\e, \e'}^{\mathcal{D}}(t_1, t_2) \: \ud t_1 \ud t_2  \nonumber \\
	& = (\lambda_1\lambda_2)^{b-nr-1} \left( \frac{2}{\nu} \right)^{2n+2}  \int_0^{\infty} \int_0^{\infty} \ud t_1 \ud t_2 \: e^{-2\lambda_1^{b} t_1/\nu-2\lambda_2^{b} t_2/\nu} \nonumber\\
	& \quad \times \int_{[0, t_1]_{<}^n \times [0, t_2]_{<}^n} \ud s_1 \cdots \ud s_{2n} \: \mathbf{E}_0 \prod_{(j,k) \in \mathcal{D}} \gamma_{\e_j + \e_k} \left(X_{v(j)}(s_j) - X_{v(k)}(s_k)\right). \label{E:laplace F}
\end{align}

Recalling that $\gamma_\e$ is given in \eqref{e:gamma-e}, the Fourier transform and Assumption \ref{A:Main}  yield
$$
\begin{aligned}
	 &\mathbf{E}_0 \prod_{(j,k) \in \mathcal{D}} \gamma_{\e_j + \e_k} \left(X_{v(j)}(s_j) - X_{v(k)}(s_k)\right) \nonumber\\
	  &=\int_{(\mathbb{R}^d)^n} e^{ -\sum_{(j,k) \in \mathcal{D}} \frac{\e_j + \e_k}{2} |\xi_{j,k}|^2}
 \mathbf{E}_0 e^{ \iota \sum_{(j,k) \in \mathcal{D}} \xi_{j,k} \left(X_{v(j)}(s_j) - X_{v(k)}(s_k)\right)} \prod_{(j,k) \in \mathcal{D}} \mu(\ud\xi_{j,k}).
\end{aligned}
$$
Noting that $X$ is a symmetric $\alpha$-stable process real nonnegative  characteristic function, we have 
$\mathbf{E}_0 e^{\iota \sum_{(j, k) \in \mathcal{D}} \xi_{j, k} \left(X_{v(j)}\left(s_j\right)-X_{v(k)}\left(s_k\right)\right)}\geq 0.$   Thus, we get the following upper bound:
$$
\begin{aligned}
	& \int_0^{\infty} \int_0^{\infty} e^{-\lambda_1 t_1 - \lambda_2 t_2} F_{\e, \e'}^{\mathcal{D}}(t_1, t_2) \: \ud t_1 \ud t_2 \\
	&\leq (\lambda_1\lambda_2)^{b-nr-1} \left( \frac{2}{\nu} \right)^{2n+2}  \int_0^{\infty} \int_0^{\infty} \ud t_1 \ud t_2 \: e^{-2\lambda_1^{b} t_1/\nu-2\lambda_2^{b} t_2/\nu} \int_{[0, t_1]_{<}^n \times [0, t_2]_{<}^n} \ud s_1 \cdots \ud s_{2n} \\
	&\quad \times  \int_{(\mathbb{R}^d)^n} \prod_{(j,k) \in \mathcal{D}} \mu(\ud\xi_{j,k}) \: \mathbf{E}_0 \exp \left\{ \iota \sum_{(j,k) \in \mathcal{D}} \xi_{j,k} \cdot \left(X_{v(j)}(s_j) - X_{v(k)}(s_k)\right) \right\} \\
	& =  (\lambda_1\lambda_2)^{b-nr-1} \left( \frac{2}{\nu} \right)^{2n+2}  \int_0^{\infty} \int_0^{\infty} \ud t_1 \ud t_2 \: e^{-2\lambda_1^{b} t_1/\nu-2\lambda_2^{b} t_2/\nu} \\
	& \quad \times \mathbf{E}_0 \int_{[0, t_1]_{<}^n \times [0, t_2]_{<}^n} \ud s_1 \cdots \ud s_{2n} \prod_{(j,k) \in \mathcal{D}} \gamma\left(X_{v(j)}(s_j) - X_{v(k)}(s_k)\right).
\end{aligned}
$$
This proves (ii).

\textbf{Part (iii).}   By \eqref{E:laplace F} and  monotone convergence theorem, we obtain
$$
\begin{aligned}
	& \lim_{\e, \e' \rightarrow 0} \int_0^{\infty} \int_0^{\infty} e^{-\lambda_1 t_1 - \lambda_2 t_2} F_{\e, \e'}^{\mathcal{D}}(t_1, t_2) \: \ud t_1 \ud t_2 \\
	  &= (\lambda_1\lambda_2)^{b-nr-1} \left( \frac{2}{\nu} \right)^{2n+2}  \int_0^{\infty} \int_0^{\infty} \ud t_1 \ud t_2 \: e^{-2\lambda_1^{b} t_1/\nu-2\lambda_2^{b} t_2/\nu} \int_{[0, t_1]_{<}^n \times [0, t_2]_{<}^n} \ud s_1 \cdots \ud s_{2n} \\
	&\quad \times  \int_{(\mathbb{R}^d)^n} \prod_{(j,k) \in \mathcal{D}} \mu(\ud\xi_{j,k}) \: \mathbf{E}_0 \exp \left\{ \iota \sum_{(j,k) \in \mathcal{D}} \xi_{j,k} \cdot \left(X_{v(j)}(s_j) - X_{v(k)}(s_k)\right) \right\} \\
	& =  (\lambda_1\lambda_2)^{b-nr-1} \left( \frac{2}{\nu} \right)^{2n+2}  \int_0^{\infty} \int_0^{\infty} \ud t_1 \ud t_2 \: e^{-2\lambda_1^{b} t_1/\nu-2\lambda_2^{b} t_2/\nu} \\
	& \quad \times \mathbf{E}_0 \int_{[0, t_1]_{<}^n \times [0, t_2]_{<}^n} \ud s_1 \cdots \ud s_{2n} \prod_{(j,k) \in \mathcal{D}} \gamma\left(X_{v(j)}(s_j) - X_{v(k)}(s_k)\right).
\end{aligned}
$$
This completes the proof of (iii) and the lemma. 
\end{proof}

\begin{theorem} \label{T:laplace Sn}
	 Under Assumption \ref{A:Main}, the function $g_n(\cdot, t, x)$ defined in \eqref{E:gn} is Stratonovich integrable in the sense of Definition \ref{D:multiple Stratonovich integration}. Furthermore, for any $\lambda>0$, we have
     \begin{equation}\label{e:lap-sngn}
         \int_0^{\infty} e^{-\lambda t} S_n\left(g_n(\cdot, t, x)\right) \ud t = \frac{1}{n!} \lambda^{b-nr-1} \left(\frac{2}{\nu}\right)^{n+1}  \int_0^{\infty}  e^{-2\lambda^b t/\nu} \mathbf{E}_x\left[\int_0^t \dot{W}(X(s)) d s\right]^n \ud t, \, \text{a.s.}
     \end{equation}
	\end{theorem}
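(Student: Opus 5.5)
The proof has two parts. First we show that $S_{n,\e}(g_n(\cdot,t,x))$ is Cauchy in $\mathcal L^2(\Omega)$ as $\e\to0$. Then we identify the Laplace transform of the limit by passing to the limit in Lemma~\ref{le:3.1}(i) and symmetrizing.

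\textbf{Cauchy property.} We expand
\[
\E\big|S_{n,\e}(g_n(\cdot,t,x))-S_{n,\e'}(g_n(\cdot,t,x))\big|^2
\]
via \eqref{e:cov-Sne} into $\sum_{\cD}\big(F^{\cD}_{\e,\e}+F^{\cD}_{\e',\e'}-2F^{\cD}_{\e,\e'}\big)(t,t)$. It suffices to show that for each pair partition $\cD$ the limit $\lim_{\e,\e'\to0}F^{\cD}_{\e,\e'}(t_1,t_2)$ exists, is finite, and is independent of how $\e,\e'\to0$. Using the Fourier representation of $\gamma_{\e_j+\e_k}$ from the proof of Lemma~\ref{le:3.1}, $F^{\cD}_{\e,\e'}(t_1,t_2)$ is an integral against $\prod\mu(\ud\xi_{j,k})$ of a nonnegative integrand multiplied by $\exp\{-\sum(\e_j+\e_k)|\xi_{j,k}|^2/2\}$. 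This uses nonnegativity of $G$ (Assumption~\ref{A:G}; in the wave case $a=b=2$ the positivity of the Fourier-side expression should be checked via the stable/heat representation in Lemma~\ref{le:3.1}). The factor increases monotonically to $1$, so monotone convergence gives a limit independent of the order. Finiteness in $(t_1,t_2)$ follows from Lemma~\ref{le:3.1}(iii): the Laplace transform of the limit is bounded by the stable-process functional
\[
\mathbf E_0\int \prod \gamma\big(X_{v(j)}(s_j)-X_{v(k)}(s_k)\big),
\]
which is finite under Dalang's condition \eqref{dalang} by a standard estimate (Fourier expansion, with each pair contributing $\int\mu(\ud\xi)/(\lambda^b+|\xi|^a)$-type factors after integrating the exponential times). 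Since $F^{\cD}(t_1,t_2)$ is nondecreasing in each $t_i$ (the time simplices grow and the integrand is nonnegative), finiteness of the Laplace transform implies pointwise finiteness. With a common limit for all three terms, the squared $\mathcal L^2$ distance tends to $0$. This establishes Stratonovich integrability.

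\textbf{Laplace identity.} We let $\e\to0$ in \eqref{E:laplace S}. On the left-hand side, $\int_0^\infty e^{-\lambda t}S_{n,\e}\,\ud t\to\int_0^\infty e^{-\lambda t}S_n\,\ud t$ in $\mathcal L^2$. By Minkowski's inequality, this reduces to dominated convergence of $\int e^{-\lambda t}\|S_{n,\e}-S_n\|_{2}\,\ud t$; the integrand is dominated by $2\sup_{\e}\|S_{n,\e}(g_n(\cdot,t,x))\|_2$, which grows at most polynomially or exponentially of order smaller than $\lambda$, since its square is bounded by the monotone limit above. If needed, we first take $\lambda$ large and then extend by analyticity, or use the bound from (ii). On the right-hand side we symmetrize. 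Because $\dot W_\e$ is time-independent and commutative, integrating over the simplex gives
\[
\int_{[0,t]^n_<}\prod_k\dot W_{\e_k}(X(s_k))\,\ud\mathbf s.
\]
Taking all $\e_k=\e$ equal, which is allowed since the limit is independent of the order, this equals $\frac1{n!}\big(\int_0^t\dot W_\e(X(s))\,\ud s\big)^n$. It then remains to show that $\mathbf E_x\big[\int_0^t\dot W_\e(X(s))\,\ud s\big]^n$ converges in $\mathcal L^2(\Omega)$ to $\mathbf E_x\big[\int_0^t\dot W(X(s))\,\ud s\big]^n$, and that the convergence survives the $\ud t$ integral against $e^{-2\lambda^bt/\nu}$. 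Here $\int_0^t\dot W(X(s))\,\ud s$ is defined as the $\mathcal L^2$ limit of the mollified integrals. For the $\mathcal L^2$ convergence we compute second moments through Wick's formula and independent copies $X_1,X_2$, again obtaining sums over $\cD$ of the $\gamma_{\e+\e'}$ functionals of Lemma~\ref{le:3.1}(ii)--(iii). These converge by monotone convergence to the same finite limit, so the Cauchy argument applies verbatim.

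\textbf{Main obstacle.} The main difficulty is the uniform-in-$t$ control needed to interchange the $\e\to0$ limit with the $\ud t$ integrals. Lemma~\ref{le:3.1}(ii) only bounds Laplace transforms, not the functions pointwise. I would handle this by applying Lemma~\ref{le:3.1}(iii) with $\lambda_1=\lambda_2=\lambda$. Combined with monotonicity in $t$, it gives
\[
F^{\cD}(t,t)\le C_\lambda e^{2\lambda t}\quad\text{for every }\lambda>0,
\]
which is enough domination against $e^{-\lambda' t}$ for any $\lambda'>2\lambda$. Since $\lambda$ is arbitrary, this covers all $\lambda'>0$.
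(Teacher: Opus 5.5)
\textbf{Gap in the Cauchy step.} You claim that for fixed $(t_1,t_2)$, $F^{\cD}_{\e,\e'}(t_1,t_2)$ is an integral against $\prod\mu(\ud\xi_{j,k})$ of a nonnegative function times $\exp\{-\sum(\e_j+\e_k)|\xi_{j,k}|^2/2\}$, and you attribute this to $G\ge0$. After the Fourier transform, however, the $\xi$-integrand is a time integral over the two simplices of products of factors $\mathcal F G(s_\ell-s_{\ell-1},\cdot)(\zeta_\ell)$ (restarting from $0$ at the beginning of each chain), where $\mathcal F G(s,\cdot)(\zeta)=s^{b+r-1}E_{b,b+r}(-\tfrac{\nu}{2}s^b|\zeta|^a)$. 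Positivity of $G$ in $x$ says nothing about the sign of $\mathcal F G$. Your claim holds only when $\mathcal F G\ge0$, e.g.\ for $b\le1$, where $E_{b,b+r}(-\cdot)$ is completely monotone.

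In the wave case, $\mathcal F G(s,\cdot)(\zeta)=\sin(cs|\zeta|)/(c|\zeta|)$ with $c=\sqrt{\nu/2}$. Take for instance $n=2$, $\cD=\{(1,4),(2,3)\}$ and $t_1=t_2=t$. The integrand is $\widehat{g_2}(\xi,\eta)\widehat{g_2}(\eta,\xi)$, where $\widehat{g_2}$ is the spatial Fourier transform of $g_2(\cdot,t,0)$. This product is negative when $c|\xi+\eta|t=2\pi$, $c|\eta|t=3\pi$ and $c|\xi|t=\pi$. By continuity in $(b,r)$, the same happens for $b<2$ close to $2$ and small $r>0$, which is case (2) of \eqref{E:Pos} and is covered by Assumption~\ref{A:G}(i).

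So monotone convergence is unavailable. Dominated convergence is no substitute: a majorant must survive absolute values, and that destroys the cancellations that make Dalang's condition sufficient. In the wave case, $\int_0^\infty e^{-\lambda s}|\sin(cs|\xi|)|\,\ud s/(c|\xi|)\asymp 1/(\lambda|\xi|)$ rather than $|\xi|^{-2}$. Your fallback ``via the stable representation'' restores positivity only after a Laplace transform in $(t_1,t_2)$, where $G$ is replaced by the stable density with $\mathcal F p(s,\cdot)(\xi)=e^{-s|\xi|^a}>0$. That is Lemma~\ref{le:3.1}(iii), and it gives convergence of the Laplace transforms of $F^{\cD}_{\e,\e'}$, not of $F^{\cD}_{\e,\e'}(t,t)$.

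\textbf{What is actually needed.} Your ``main obstacle'' is therefore misidentified. The hard part is not exchanging $\e\to0$ with the $\ud t$-integrals; it is inverting the Laplace transform to get convergence at the fixed point $(t,t)$. Monotonicity in $t$ gives this only at continuity points of a limit you do not control.

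The paper handles this in Lemma~\ref{continuity theorem}:
\begin{itemize}
\item Feller's continuity theorem gives weak convergence of $e^{-k(t_1+t_2)}F^{\cD}_{\e,\e'}\,\ud t_1\ud t_2$.
\item It then proves equicontinuity in $(t_1,t_2)$, uniformly in $(\e,\e')$. Using $G\ge0$ and $\gamma_\e\ge0$ in physical space, it splits off the last time increment. That piece is bounded by some $h(\delta)\to0$ via $\frac{\ud}{\ud s}\bigl[s^{b+r}E_{b,b+r+1}(-\tfrac{\nu}{2}s^b|\xi|^a)\bigr]=s^{b+r-1}E_{b,b+r}(-\tfrac{\nu}{2}s^b|\xi|^a)$, the bound $|E_{\varrho_1,\varrho_2}(z)|\le\tilde c/(1+|z|)$, and Dalang's condition. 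The remaining factor is controlled by Lemma~\ref{le:3.1}(ii) and monotonicity.
\item Lemma~\ref{le:continu-density} (Arzel\`a--Ascoli) then yields locally uniform convergence to a continuous limit.
\end{itemize}
Some estimate of this kind is indispensable.

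Relatedly, your domination $F^{\cD}(t,t)\le C_\lambda e^{2\lambda t}$ concerns a limit not yet known to exist. What you need is the $\e$-uniform bound: $\sup_{\e,\e'}F^{\cD}_{\e,\e'}(t,t)$ is at most $\lambda^2e^{2\lambda t}$ times the $\e$-free right-hand side of Lemma~\ref{le:3.1}(ii). Monotonicity in $t$ does provide this.

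The Laplace-identity part of your plan is sound and matches the paper once integrability is in place. Setting all $\e_k=\delta$ and symmetrizing to $\frac{1}{n!}\bigl(\int_0^t\dot W_\delta(X(s))\,\ud s\bigr)^n$ is correct. So is the $\mathcal L^2$ convergence of the quenched moments via Wick's formula with independent copies $X_1,X_2$, because there the Fourier integrands are genuinely positive characteristic functions of stable increments.
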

    \begin{proof}
    We first explain the stochastic integral appearing on the right-hand side of \eqref{e:lap-sngn}. It is defined by
	$$
	\int_0^t \dot{W}(X(s)) \ud s := \lim _{\e \rightarrow 0^+} \int_0^t \dot{W}_{\varepsilon}(X(s)) \ud s, \quad \text { in } \mathcal{L}^2\left(\Omega, \mathcal{F}, \mathbf{P}_x \otimes \mathbb{P}\right),
	$$
	where the existence of the limit on the right hand is established in Lemma \ref{L:Wdefined}, under Assumption~\ref{A:Main}. Conditioning on the $a$-stable process $X$, it is normally distributed with mean zero and  variance
	$$
	\int_0^t \int_0^t \gamma\left(X(s)-X(r)\right) \ud s \ud r.
	$$
    So we have
    \begin{equation}\label{E:even Gaussian moment}
        \begin{split}
            &\mathbb{E} \otimes \mathbf{E}_x\left[\int_0^t \dot{W}(X(s)) \ud s\right]^n \\
    &= \left\{
	\begin{array}{ll}
		\displaystyle n!!\,\mathbf{E}_0\left[\int_0^t \int_0^t \gamma\left(X(s)-X(r)\right) \ud s \ud r\right]^{\frac{n}{2}},  & \text{when } n \text{ is even}; \\&\\
		0, & \text{when } n \text{ is odd}.
	\end{array}
	\right. 
        \end{split}
    \end{equation}
	 By Lemma \ref{L:n momentbound}, the above $n$-th moment is finite for all $n\in\mathbb N$. Consequently, the quenched moment
	$$
	\mathbf{E}_x\left[\int_0^t \dot{W}(X(s)) \ud s\right]^n
	$$
	exists almost surely and the right-hand side of \eqref{e:lap-sngn} is well-defined for any $\lambda>0$. 

    Taking $\e_1=\cdots=\e_n=\delta$ in \eqref{E:laplace S}, we have
    \begin{equation}\label{e:1}
        \int_0^{\infty} e^{-\lambda t}\: S_{n, \delta}\left(g_n(\cdot, t, x)\right) \ud t 
		 =\lambda^{b-nr-1} \left(\frac{2}{\nu}\right)^{n+1} \frac{1}{n!}   \int_0^{\infty} e^{-2\lambda^b t/\nu} \mathbf{E}_x \bigg[\int_0^t  \dot{W}_{\delta}(X(s))  \ud s \bigg]^n  \ud t.
    \end{equation}
    We now let $\delta\to0^+$ on the right-hand side. Notice that
    \begin{equation*}
       \lim _{\delta \to 0^+} \mathbf{E}_x \bigg[ \int_0^t \dot{W}_{\delta}(X(s)) \ud s \bigg]^n = \mathbf{E}_x \bigg[ \int_0^t \dot{W}(X(s)) \ud s \bigg]^n, \quad \text { in } \mathcal{L}^2\left(\Omega, \mathcal{F}, \mathbb{P}\right).
    \end{equation*}
    In addition, applying Jensen inequality twice, we get
    \begin{align*}
        &\E\bigg\{ \mathbf{E}_x \bigg[ \int_0^t \dot{W}_{\delta}(X(s)) \ud s \bigg]^n \bigg\}^2 
        \le  \mathbf{E}_0 \otimes\E \bigg[ \int_0^t \dot{W}_{\delta}(X(s)) \ud s \bigg]^{2n}  \\
        &\le \int_{\R^d} p_\delta(y)  \mathbf{E}_0 \otimes\E \bigg[ \int_0^t \dot{W}(y+X(s)) \ud s \bigg]^{2n}  \ud y
        =  \mathbf{E}_0 \otimes\E \bigg[ \int_0^t \dot{W}(X(s)) \ud s \bigg]^{2n}\\
        &=   (2n)!! \mathbf{E}_0\left[\int_0^t \int_0^t \gamma\left(X(s)-X(r)\right) \ud s \ud r\right]^n.
    \end{align*}
    By Lemma \ref{L:n momentbound}, the right-hand side has at most a polynomial increasing rate in $t$. Apply the dominated convergence theorem to see that 
    \begin{equation}\label{e:2}
        \lim_{\delta\to0^+} \int_0^{\infty} e^{-2\lambda^b t/\nu} \mathbf{E}_x \bigg[\int_0^t  \dot{W}_{\delta}(X(s))  \ud s \bigg]^n  \ud t =  \int_0^{\infty} e^{-2\lambda^b t/\nu} \mathbf{E}_x \bigg[\int_0^t  \dot{W}(X(s))  \ud s \bigg]^n  \ud t,
    \end{equation}
    in $\mathcal{L}^2\left(\Omega, \mathcal{F}, \mathbb{P}\right)$.
    
    The Stratonovich integrability of $g_n(\cdot,t,x)$ shall be established in Theorem \ref{th:strat-intablity} to make sense of left-hand side of \eqref{e:lap-sngn}. By the stationarity in $x$, it suffices to prove the following:
\begin{align*}
    \lim_{\delta\to 0^+} \int_0^{\infty} e^{-\lambda t}\: S_{n, \delta}\left(g_n(\cdot, t, 0)\right) \ud t
    = \int_0^{\infty} e^{-\lambda t}\: S_{n}\left(g_n(\cdot, t, 0)\right) \ud t, \text{ in } \mathcal{L}^2\left(\Omega, \mathcal{F}, \mathbb{P}\right),
\end{align*}
which will be established in Theorem \ref{th:strat-intablity} (ii).  This together with \eqref{e:1} and \eqref{e:2} implies the desird result.
\end{proof}

    The following corollary will be used in the proof of Theorem \ref{th:exist}.

\begin{corollary}\label{coro:3.5}
    Let Assumption \ref{A:Main} hold. Then, we have for any $\lambda_1, \lambda_2>0$, 
    \begin{equation}
        \label{eq:ESS 2}
    	\begin{aligned}
		& \int_0^{\infty} \int_0^{\infty} \ud t_1 \ud t_2 \: e^{-\lambda_1 t_1-\lambda_2 t_2} \: \mathbb{E} [S_n\left(g_n\left(\cdot, t_1, 0\right)\right) S_n\left(g_n\left(\cdot, t_2, 0\right)\right)]  \\
		&=\frac{1}{(n!)^2} (\lambda_1\lambda_2)^{b-nr-1} \left(\frac{2}{\nu}\right)^{2n+2}   \int_0^{\infty} \int_0^{\infty} \ud t_1 \ud t_2 e^{-2\lambda_1^{b} t_1/\nu-2\lambda_2^{b} t_2/\nu}  \\
		& \quad \times \E\otimes \mathbf{E}_0 \left[\int_0^{t_1} \dot{W}\left(X_1(s)\right) \ud s\right]^n \left[\int_0^{t_2} \dot{W}\left(X_2(s)\right) \ud s\right]^n<\infty. 
	\end{aligned}
    \end{equation}
Moreover, for any $\e=(\e_1,\dots,\e_n)$, $\e'=(\e_{n+1},\dots,\e_{2n})$, we have
    \begin{align*}
        & \int_0^{\infty} \int_0^{\infty} \ud t_1 \ud t_2 \: e^{-\lambda_1 t_1-\lambda_2 t_2} \: \mathbb{E} [S_{n,\e}\left(g_n\left(\cdot, t_1, 0\right)\right) S_{n,\e'}\left(g_n\left(\cdot, t_2, 0\right)\right)] \\
		&\le \int_0^{\infty} \int_0^{\infty} \ud t_1 \ud t_2 \: e^{-\lambda_1 t_1-\lambda_2 t_2} \: \mathbb{E} [S_n\left(g_n\left(\cdot, t_1, 0\right)\right) S_n\left(g_n\left(\cdot, t_2, 0\right)\right)]<\infty.
    \end{align*}  
\end{corollary}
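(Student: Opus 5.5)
We will derive Corollary~\ref{coro:3.5} from Lemma~\ref{le:3.1} combined with the Wick formula, and then pass to the limit $\e,\e'\to0$ using the $\mathcal L^2$-convergence in Theorem~\ref{T:laplace Sn}/Theorem~\ref{th:strat-intablity}.

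\emph{Step 1: the mollified identity.} By \eqref{e:cov-Sne} and \eqref{eq:Fvarepsilon}, the double Laplace transform of $\E[S_{n,\e}(g_n(\cdot,t_1,0))S_{n,\e'}(g_n(\cdot,t_2,0))]$ equals $\sum_{\cD\in\Pi_{2n}}\int\!\int e^{-\lambda_1t_1-\lambda_2t_2}F^{\cD}_{\e,\e'}(t_1,t_2)\,\ud t_1\ud t_2$. This step is legitimate because every $F^{\cD}_{\e,\e'}\ge0$, since $\gamma_\e\ge0$ and $G\ge0$ under Assumption~\ref{A:G}, so Tonelli applies. (In the wave case, where $G$ is a measure for $d=3$, we would use the Laplace-domain representation \eqref{E:laplace F}, in which all integrands are nonnegative.) Lemma~\ref{le:3.1}(ii) bounds each summand by the corresponding $\cD$-term with $\gamma$ in place of $\gamma_{\e_j+\e_k}$.

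\emph{Step 2: identifying the limiting sum.} Summing the $\gamma$-terms over $\cD\in\Pi_{2n}$ and applying Wick's formula \eqref{e:wick-formula} conditionally on $(X_1,X_2)$ to the Gaussian family $\{\dot W(X_{v(j)}(s_j))\}$ gives
$$\sum_{\cD}\prod_{(j,k)\in\cD}\gamma\bigl(X_{v(j)}(s_j)-X_{v(k)}(s_k)\bigr)=\E\Bigl[\prod_{j=1}^{2n}\dot W\bigl(X_{v(j)}(s_j)\bigr)\Bigr].$$
Integrating over $[0,t_1]^n_<\times[0,t_2]^n_<$ turns this into $(n!)^{-2}\,\E\otimes\mathbf E_0[\int_0^{t_1}\dot W(X_1)]^n[\int_0^{t_2}\dot W(X_2)]^n$, by symmetrizing the ordered simplices; this identity is rigorous at the mollified level and passes to the limit via Lemma~\ref{L:Wdefined}. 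Finiteness follows from Cauchy--Schwarz together with \eqref{E:even Gaussian moment} and Lemma~\ref{L:n momentbound}: these give at most polynomial growth in $t_1,t_2$, which the exponential factors integrate.

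\emph{Step 3: the equality in \eqref{eq:ESS 2}.} Lemma~\ref{le:3.1}(iii), via monotone convergence, shows that the Laplace transforms of $F^{\cD}_{\e,\e'}$ increase to the $\gamma$-expressions, so the mollified Laplace transforms converge to the right-hand side of \eqref{eq:ESS 2}. On the other hand, Theorem~\ref{th:strat-intablity}(ii) gives $\mathcal L^2$-convergence of $\int e^{-\lambda t}S_{n,\e}(g_n(\cdot,t,0))\ud t$ to the corresponding integral of $S_n$. Hence the expectation of the product of the two Laplace transforms converges, and by Fubini it equals the left-hand side of \eqref{eq:ESS 2}. This yields the equality, and the inequality is exactly Step~1 combined with Step~2.

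The main obstacle is the Fubini interchange between $\E$ and the $t$-integrals for the limiting $S_n$, which requires $\int e^{-\lambda t}\|S_n(g_n(\cdot,t,0))\|_{L^2}\ud t<\infty$. I would take this from the uniform bound in Step~1: set $\lambda_1=\lambda_2$, use the diagonal bound, and apply Fatou as $\e\to0$. An alternative is to argue directly via the $\mathcal L^2$-limit of the Laplace transforms, which avoids needing pointwise-in-$t$ control.
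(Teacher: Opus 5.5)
Your proposal is correct and is essentially the paper's proof. The inequality comes from \eqref{e:cov-Sne}, \eqref{eq:Fvarepsilon} and Lemma~\ref{le:3.1}(ii), followed by the same symmetrization of the ordered simplices and Wick identification, with finiteness from Cauchy--Schwarz and Lemma~\ref{L:n momentbound}; the one variation is that you get the equality from Lemma~\ref{le:3.1}(iii) combined with the $\mathcal L^2$-convergence of Theorem~\ref{th:strat-intablity}(ii) at the covariance level, rather than from the pathwise identity of Theorem~\ref{T:laplace Sn}, which rests on the same convergence.

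In your Fubini/Fatou step, the uniform Laplace-domain bound only becomes pointwise control because $\sum_{\cD}F^{\cD}_{\e,\e'}(t_1,t_2)$ is nonnegative and nondecreasing in $(t_1,t_2)$. This gives $\E\bigl[S_{n,\e}(g_n(\cdot,t,0))^2\bigr]\le C_\lambda\lambda^2e^{2\lambda t}$ uniformly in $\e$, whereas a bound on $\bigl\|\int_0^\infty e^{-\lambda t}S_{n,\e}(g_n(\cdot,t,0))\ud t\bigr\|_2$ alone would not suffice.
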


\begin{proof}
The equality in~\eqref{eq:ESS 2} follows from  Theorem~\ref{T:laplace Sn} and  the fniteness is due to Lemma~\ref{L:n momentbound} and Lemma~\ref{L:Wdefined}.    By \eqref{e:cov-Sne}, \eqref{eq:Fvarepsilon} and Lemma \ref{le:3.1} (ii), we have
    \begin{equation}\label{eq:ESS 1}
     \begin{aligned}
        & \int_0^{\infty} \int_0^{\infty} \ud t_1 \ud t_2 e^{-\lambda_1 t_1-\lambda_2 t_2} \mathbb{E} [S_{n,\e}\left(g_n\left(\cdot, t_1, 0\right)\right) S_{n,\e'}\left(g_n\left(\cdot, t_2, 0\right)\right)] \\
		&= \sum_{\mathcal{D} \in \Pi_{2n}}\int_0^{\infty} \int_0^{\infty} \ud t_1 \ud t_2 e^{-\lambda_1 t_1-\lambda_2 t_2} F_{\varepsilon, \varepsilon^{\prime}}^{\mathcal{D}}\left(t_1, t_2\right)\\
        &\leq(\lambda_1\lambda_2)^{b-nr-1} \left( \frac{2}{\nu} \right)^{2n+2}  \sum_{\mathcal{D} \in \Pi_{2n}}  \int_0^{\infty} \int_0^{\infty} \ud t_1 \ud t_2  e^{-2\lambda_1^{b} t_1/\nu-2\lambda_2^{b} t_2/\nu}\\
		&\quad \times \mathbf{E}_0 \int_{\left[0, t_1\right]_{<}^n \times\left[0, t_2\right]_{<}^n} \ud s_1 \cdots \ud s_{2 n} \prod_{(j, k) \in \mathcal{D}} \gamma\left(X_{v(j)}\left(s_j\right)-X_{v(k)}\left(s_k\right)\right). 
    \end{aligned}
    \end{equation}    
Thus, to complete the proof it remains to verify that the right‑hand side of \eqref{eq:ESS 1} coincides with that of \eqref{eq:ESS 2}.

We first consider the inner sum over $\mathcal{D}$ and the expectation integral 
with respect to $s_1,\dots,s_{2n}$ in \eqref{eq:ESS 1}.
Since the sum runs over all pair partitions of $\{1,\dots,2n\}$, any permutation of the first $n$ 
integration variables $(s_1,\dots,s_n)$ can be absorbed by relabeling the indices in the pairings, 
leaving the total sum unchanged; the same holds for the last $n$ variables.  
Hence, under $\sum_{\mathcal{D}}$, the integrand is symmetric with respect to these two groups 
of variables separately.  Exploiting this symmetry yields
\begin{align}
        & \sum_{\mathcal{D} \in \Pi_{2n}} 
		\mathbf{E}_0 \int_{\left[0, t_1\right]_{<}^n \times\left[0, t_2\right]_{<}^n} \ud s_1 \cdots \ud s_{2 n} \prod_{(j, k) \in \mathcal{D}} \gamma\left(X_{v(j)}\left(s_j\right)-X_{v(k)}\left(s_k\right)\right)\nonumber\\
        =&\frac{1}{(n!)^2}\sum_{\mathcal{D} \in \Pi_{2n}} \mathbf{E}_0 \int_{\left[0, t_1\right]^n \times\left[0, t_2\right]^n} \ud s_1 \cdots \ud s_{2 n} \prod_{(j, k) \in \mathcal{D}} \gamma\left(X_{v(j)}\left(s_j\right)-X_{v(k)}\left(s_k\right)\right)\nonumber\\
        =&\frac{1}{(n!)^2}\sum_{\mathcal{D} \in \Pi_{2n}} \prod_{(j, k) \in \mathcal{D}} \int_0^{t_{v(j)}} \int_0^{t_{v(k)}} \gamma\left(X_{v(j)}(s)-X_{v(k)}(r)\right) d s d r. \nonumber
    \end{align}
    where the second equality holds because the integrand is already a product over disjoint pairs.

Now, we consider the inner expectation product in \eqref{eq:ESS 2}:
$$
 \E\otimes \mathbf{E}_0 \left[\int_0^{t_1} \dot{W}\left(X_1(s)\right) \ud s\right]^n \left[\int_0^{t_2} \dot{W}\left(X_2(s)\right) \ud s\right]^n.
$$
Applying Wick's formula to this product of $2n$ Gaussian stochastic integrals yields
\begin{align}
 &\E\otimes \mathbf{E}_0 \left[\int_0^{t_1} \dot{W}\left(X_1(s)\right) \ud s\right]^n \left[\int_0^{t_2} \dot{W}\left(X_2(s)\right) \ud s\right]^n \nonumber\\
= &\sum_{\mathcal{D}\in\Pi_{2n}}\prod_{(j,k)\in\mathcal{D}}
\int_0^{t_{v(j)}}\!\int_0^{t_{v(k)}}\gamma\bigl(X_{v(j)}(s)-X_{v(k)}(r)\bigr)\,\ud s\,\ud r . \nonumber
\end{align}
This shows that the right‑hand side of \eqref{eq:ESS 1} coincides with the right‑hand side of \eqref{eq:ESS 2}, which completes the proof.
\end{proof}

\subsection{Stratonovich integrability}

\begin{lemma} \label{continuity theorem}
     Under Assumptions \ref{A:Main} and \ref{A:G}, the limit $
\lim _{\e, \e^{\prime} \rightarrow 0} F_{\e, \e^{\prime}}^{\mathcal{D}}(t_1, t_2)$, where $F_{\e, \e^{\prime}}^{\mathcal{D}}(t_1, t_2)$ is defined in~\eqref{eq:Fvarepsilon},  
exists for every $n \geq 1, t_1, t_2>0$ and every pair partition $\mathcal{D} \in \Pi_{2n}$. Moreover, the resulting limiting function is continuous in $t_1, t_2$.
\end{lemma}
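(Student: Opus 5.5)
Since $\gamma_\varepsilon=\gamma*p_\varepsilon$ has Fourier transform $e^{-\varepsilon|\xi|^2/2}\mu(\ud\xi)$, I will write $F^{\mathcal D}_{\varepsilon,\varepsilon'}(t_1,t_2)$ in Fourier variables and then use monotone convergence in $\varepsilon$ and dominated convergence in $(t_1,t_2)$. The computation is the same one used in Lemma~\ref{le:3.1}, but done at fixed times. Plancherel applied pair by pair, with the convolution structure of $g_n$ from \eqref{E:gn}, gives
\[
F^{\mathcal D}_{\varepsilon,\varepsilon'}(t_1,t_2)=\int_{(\R^d)^n}\prod_{(j,k)\in\mathcal D}e^{-\frac{\varepsilon_j+\varepsilon_k}{2}|\xi_{j,k}|^2}\,\Phi_{t_1,t_2}(\{\xi_{j,k}\})\prod_{(j,k)\in\mathcal D}\mu(\ud\xi_{j,k}).
\]
Here $\Phi_{t_1,t_2}$ is the product of the two time-ordered integrals
\[
\int_{[0,t_1]^n_<}\prod_{k=1}^n \cF G(s_k-s_{k-1},\cdot)(\eta_k+\cdots+\eta_n)\,\ud\mathbf s
\]
and its analogue on $[0,t_2]^n_<$. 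Each $\eta_k$ is $\pm\xi_{j,k}$, according to which pair contains index $k$.

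The key point is that $\Phi_{t_1,t_2}\ge 0$, so the limit in $\varepsilon,\varepsilon'$ exists. Under Assumption~\ref{A:G}, $G\ge0$ (in case (ii) $G$ is a nonnegative measure). Undoing the Fourier transform in the remaining space variables shows that $\Phi_{t_1,t_2}$, integrated against the nonnegative measures, equals $F^{\mathcal D}_{\varepsilon,\varepsilon'}$ with nonnegative integrands $\gamma_\varepsilon\ge0$. A cleaner route to monotonicity is the semigroup identity $\gamma_{\varepsilon}=\gamma_{\varepsilon'}*p_{\varepsilon-\varepsilon'}$ for $\varepsilon\ge\varepsilon'$; it is not obviously monotone in direct space. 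So I plan to argue in Fourier space, checking that each Fourier factor $t^{b+r-1}E_{b,b+r}(\cdot)$ appears paired with its conjugate after symmetrization. If this fails, I fall back on the following. The limit exists as a monotone limit of the Laplace transforms (Lemma~\ref{le:3.1}(iii)). The family $F^{\mathcal D}_{\varepsilon,\varepsilon'}$ is equicontinuous and bounded on compacts in $(t_1,t_2)$, which I will show in the next step. Arzel\`a--Ascoli then gives subsequential limits, and uniqueness of the Laplace transform identifies them all, so the full limit exists.

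For uniform bounds and equicontinuity I will use absolute values, $|\cF G(t,\cdot)(\xi)|\le C\,t^{b+r-1}(1+t^b|\xi|^a)^{-1}$, together with $|e^{-\varepsilon|\xi|^2}|\le1$. This dominates $F^{\mathcal D}_{\varepsilon,\varepsilon'}$ by an $\varepsilon$-free integral $\bar F^{\mathcal D}(t_1,t_2)$. I expect to prove $\bar F^{\mathcal D}<\infty$ locally uniformly in $t_1,t_2$ by integrating the $\mu$-variables one at a time along the time ordering, where each integral is bounded via Dalang's condition \eqref{dalang}:
\[
\sup_{\eta}\int\frac{\mu(\ud\xi)}{1+s^b|\xi+\eta|^a}\le C_s\int\frac{\mu(\ud\xi)}{1+|\xi|^a},
\]
which uses $\gamma\ge0$ (a standard maximum-principle bound). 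Equicontinuity in $t_1$ follows from $F(t_1+h)-F(t_1)$: the change in the simplex contributes a factor controlled by $\int_{t_1}^{t_1+h}$, and continuity of $\cF G$ in $t$ lets dominated convergence handle the rest. Continuity of the limit is then inherited.

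I expect the main obstacle to be the domination step. The singular factor $t^{b+r-1}$ when $b+r<1$ makes the time integrals on the simplex delicate, and the shifted Dalang bound must hold uniformly in the shift $\eta$. I would first try the elementary inequality $\sup_\eta\int\mu(\ud\xi)/(1+|\xi+\eta|^a)\le C\int\mu(\ud\xi)/(1+|\xi|^a)$, valid for nonnegative $\gamma$. After that, the time integrals reduce to Beta-function products, which are integrable for $b+r>0$.
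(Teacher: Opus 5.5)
Your fallback architecture matches the paper's route through Lemma~\ref{le:continu-density}: convergence of the Laplace transforms by Lemma~\ref{le:3.1}(iii), uniform local boundedness and equicontinuity of $F^{\mathcal D}_{\e,\e'}$, then Arzel\`a--Ascoli and identification of the limit. Your primary route, however, works only for $b\le 1$, where $E_{b,b+r}(-x)\ge0$. Monotone convergence in $\e$ at fixed times needs $\Phi_{t_1,t_2}\ge0$ pointwise in $\xi$. Nonnegativity of $F^{\mathcal D}_{\e,\e'}$, which is only an integral of $\Phi_{t_1,t_2}$, does not give that. For $b\in(1,2]$, $\cF G(t,\cdot)(\xi)$ changes sign and $\Phi_{t_1,t_2}$ can be negative. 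For example, for the wave kernel with $n=2$ and $\mathcal D=\{(1,3),(2,4)\}$, $\Phi_{t_1,t_2}=\Psi_{t_1}(\xi)\Psi_{t_2}(\xi)$, where the time-integrated oscillatory factor $\Psi_t$ changes sign in $t$. Sign-definiteness is available only in the Laplace domain, where $\int_0^\infty e^{-\lambda t}\cF G(t,\cdot)(\xi)\ud t=\lambda^{-r}(\lambda^b+\frac\nu2|\xi|^a)^{-1}>0$; this is why the paper takes the $\e$-limit there.

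The genuine gap is the domination step, on which your whole fallback rests. An $\e$-free majorant $\bar F^{\mathcal D}$ built from absolute values of $\cF G$ inside the time integrals is in general infinite under Dalang's condition.

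\emph{Case (ii) of Assumption~\ref{A:G}.} Your bound is false here: $\cF G(t,\xi)=\sin(ct|\xi|)/(c|\xi|)$ with $c=\sqrt{\nu/2}$ decays only like $|\xi|^{-1}$, since the estimate $|E_{\varrho_1,\varrho_2}(z)|\le\tilde c/(1+|z|)$ requires $\varrho_1<2$. With the correct bound, take $n=2$ and $\mathcal D=\{(1,2),(3,4)\}$, where the first factor of each block is $\cF G(s_1,0)=s_1$. Then $\bar F^{\mathcal D}$ is comparable to $\bigl(\int_{|\xi|>1}|\xi|^{-1}\mu(\ud\xi)\bigr)^2$, which is $\infty$ for, e.g., $d=3$ and $\gamma(x)=|x|^{-3/2}$. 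The true quantity is finite only because of the cancellation $\int_0^t\sin(cs|\xi|)\ud s=(1-\cos(ct|\xi|))/(c|\xi|)$.

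\emph{Case $b<2$, $r=0$.} Your bound gives $\int_0^t s^{b-1}(1+s^b|\xi|^a)^{-1}\ud s=\log(1+t^b|\xi|^a)/(b|\xi|^a)$. So the majorant needs $\int\log(2+|\xi|)\,\mu(\ud\xi)/(1+|\xi|^a)<\infty$, which is strictly stronger than \eqref{dalang}. In your variable-by-variable scheme the same loss appears: the shifted Dalang bound only yields $\sup_\eta\int\mu(\ud\xi)/(1+s^b|\xi+\eta|^a)\le\max(1,s^{-b})\int\mu(\ud\xi)/(1+|\xi|^a)$. Each chosen factor then carries $s^{b+r-1}s^{-b}=s^{r-1}$, which is not integrable at $0$ when $r=0$. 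So the time integrals are not Beta integrals that are finite for all $b+r>0$.

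\emph{What is missing.} You need to avoid any global fixed-time majorant.
\begin{itemize}
\item Uniform local boundedness comes for free from physical-space positivity ($G\ge0$, $\gamma_\e\ge0$). $F^{\mathcal D}_{\e,\e'}$ is nonnegative and nondecreasing in $(t_1,t_2)$, so $F^{\mathcal D}_{\e,\e'}(t_1,t_2)\le e^{t_1+t_2}\int_0^\infty\!\int_0^\infty e^{-s_1-s_2}F^{\mathcal D}_{\e,\e'}(s_1,s_2)\ud s_1\ud s_2$, which is bounded uniformly in $\e,\e'$ by Lemma~\ref{le:3.1}(ii).
\item For equicontinuity, the paper reduces to the thin region $s_{2n}\in[t_2-\delta,t_2]$ and integrates out only $x_{2n}$. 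It estimates that single factor in Fourier space after integrating exactly in time, using $\frac{\ud}{\ud s}\bigl(s^{b+r}E_{b,b+r+1}(-\tfrac\nu2 s^b|\xi|^a)\bigr)=s^{b+r-1}E_{b,b+r}(-\tfrac\nu2 s^b|\xi|^a)$. The decay bound is then applied to $E_{b,b+r+1}$, and \eqref{dalang} suffices. For $b=2$ the paper cites \cite[Lemma 3.6]{ch2024}.
\item The remaining $2n-1$ variables form a nonnegative $\mathcal K_{\e,\e'}$, bounded by the same monotonicity and Laplace-transform argument.
\end{itemize}
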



\begin{proof}
It is obvious that $F_{\e, \e^{\prime}}^{\mathcal{D}}(t_1, t_2)$ is non-negative, non-decreasing and continuous on $\R^+\times\R^+$. By Lemma \ref{le:3.1}(iii), the limit
\begin{equation*}
    \lim _{\e, \e' \rightarrow 0} \int_0^{\infty} \int_0^{\infty} e^{-\lambda_1 t_1-\lambda_2 t_2} F_{\e, \e^\prime}^{\mathcal{D}}\left(t_1, t_2\right) \ud t_1 \ud t_2
\end{equation*}
 exists  and is a positive finite number for $\lambda_1,\lambda_2>0$ due to Corollary~\ref{coro:3.5}. By continuity theorem for Laplace transform (see, e.g., Theorem 2a, Section 1, Chapter XIII in \cite{feller1971}), the measures $e^{-k(t_1+t_2)} F_{\e,\e^\prime}^{\mathcal{D}} (t_1, t_2)\ud t_1 \ud t_2$  converges weakly to  $e^{-k(t_1+t_2)}F^{\mathcal{D}} (t_1, t_2)\ud t_1\ud t_2$  on $\R^+\times\R^+$ with arbitrarily fixed $k>0$. 
 Therefore, by Lemma \ref{le:continu-density}, to prove this lemma, it is sufficient to show that
\begin{equation}\label{e:left-con}
    \lim_{\delta_1,\delta_2\to0} \sup_{\e,\e'} \left| F_{\e, \e^{\prime}}^{\mathcal{D}}(t_1+\delta_1, t_2+\delta_2)-F_{\e, \e^{\prime}}^{\mathcal{D}}(t_1, t_2)\right|=0,
\end{equation}
for every $t_1,t_2\in\R^+$.

Denote
\begin{align*}
    & \mathcal{J}_{\e, \e^{\prime}}\left(\left[0, t_1\right]_{<}^n \times \left[0, t_2\right]_{<}^{n} \right):= F_{\e, \e^{\prime}}^{\mathcal{D}}(t_1, t_2) \\
    &= \int_{[0, t_1]_{<}^n \times [0, t_2]_{<}^n} \ud s_1 \cdots \ud s_{2n} \int_{(\mathbb{R}^d)^{2n}} \ud x_1 \cdots \ud x_{2n} \bigg(\prod_{(j,k) \in \mathcal{D}} \gamma_{\e_j + \e_k}(x_j - x_k) \bigg) \\
    &\quad \times \bigg( G ( s_ 1 , x_1 ) \prod_{l=2}^{n} G(s_l-s_{l-1} ,x_l-x _{l-1}) \bigg)  \bigg(G\left(s_{n+1}, x_{n+1}\right) \prod_{k=n+2}^{2 n-1} G\left(s_l-s_{l-1}, x_k-x_{k-1}\right)\bigg).
\end{align*}
To prove \eqref{e:left-con}, all we need is 
\begin{equation*}
   \begin{cases}
       &\displaystyle \lim_{\delta_1,\delta_2\downarrow0} \sup_{\e,\e'} \left| \mathcal{J}_{\e, \e^{\prime}} \bigg( \Big\{[0, t_1]_{<}^n \times [0, t_2]_{<}^{n} \Big\} \setminus \Big\{ [0, t_1-\delta_1]_{<}^n \times [0, t_2-\delta_2]_{<}^{n} \Big\} \bigg) \right|=0,\\
       & \displaystyle \lim_{\delta_1,\delta_2\downarrow0} \sup_{\e,\e'} \left| \mathcal{J}_{\e, \e^{\prime}} \bigg( \Big\{[0, t_1+\delta_1]_{<}^n \times [0, t_2 +\delta_2]_{<}^{n} \Big\} \setminus \Big\{ [0, t_1]_{<}^n \times [0, t_2]_{<}^{n} \Big\} \bigg) \right|=0.
   \end{cases}
\end{equation*}
 Since the arguments are similar, we only give the proof for the first limit.

Applying the general set identity
\begin{equation*}
    (A_1\times A_2)\setminus(B_1\times B_2) = (A_1\times (A_2\setminus B_2)\cup ((A_1\setminus B_1)\times A_2),
\end{equation*}
 with $B_1\subseteq A_1$ and $B_2\subseteq A_2$, we can see that
 \begin{align*}
     & \Big\{[0, t_1]_{<}^n \times [0, t_2]_{<}^{n} \Big\} \setminus \Big\{ [0, t_1-\delta_1]_{<}^n \times [0, t_2-\delta_2]_{<}^{n} \Big\}\\
     &= \Big\{[0, t_1]_{<}^n \times \left([0, t_2]_{<}^{n}\setminus [0, t_2-\delta_2]_{<}^{n}\right) \Big\} \cup \Big\{ ([0, t_1]_{<}^n\setminus  [0, t_1-\delta_1]_{<}^n) \times [0, t_2]_{<}^{n} \Big\}\\
     &\subseteq \Big\{[0, t_1]_{<}^n \times \left([0, t_2]_{<}^{n-1}\times [t_2-\delta_2, t_2] \right) \Big\} \cup \Big\{ ([0, t_1]_{<}^{n-1} \times  [t_1-\delta_1, t_1]) \times [0, t_2]_{<}^{n} \Big\}.
 \end{align*}
 Here, we can use the convention that $G(t,x)=0$ for $t<0$.

Therefore, the problem is reduced to
\begin{equation}\label{e:Je-Je}
    \lim _{\delta \rightarrow 0^{+}} \sup _{\e, \e^{\prime}} \mathcal{J}_{\e, \e^{\prime}}\left(\left[0, t_1\right]_{<}^n \times\left[0, t_2\right]_{<}^{n-1} \times\left[t_2-\delta, t_2\right]\right)=0,
\end{equation}
and
\begin{equation*}
    \lim _{\delta \rightarrow 0^{+}} \sup _{\e, \e^{\prime}} \mathcal{J}_{\e, \e^{\prime}}\left(\left[0, t_1\right]_{<}^{n-1} \times\left[t_1-\delta, t_1\right] \times\left[0, t_2\right]_{<}^n\right)=0.
\end{equation*}
Due to similarity, we only prove \eqref{e:Je-Je}. By Fubini's theorem, we have
\begin{align*}
& \mathcal{J}_{\e, \e^{\prime}}\left(\left[0, t_1\right]_{<}^n \times\left[0, t_2\right]_{<}^{n-1} \times\left[t_2-\delta, t_2\right]\right) \\
& =\int_{\left(\mathbb{R}^d\right)^{2 n-1}} \ud x_1 \cdots \ud x_{2 n-1} \bigg(\prod_{(j, k) \in \mathcal{D}^{\prime}} \gamma_{\e_j+\e_k}\left(x_j-x_k\right)\bigg) 
\int_{\left[0, t_1\right]_{<}^n \times\left[0, t_2\right]_{<}^{n-1}} \ud s_1 \cdots \ud s_{2 n-1} \\
& \quad \times \bigg( G ( s _ { 1 } , x _ { 1 } ) \prod _ { l = 2 } ^ { n } G ( s _ { l } - s _ { l - 1 } , x _ { l } - x _ { l - 1 } ) \bigg)  \bigg(G\left(s_{n+1}, x_{n+1}\right) \prod_{k=n+2}^{2 n-1} G\left(s_l-s_{l-1}, x_k-x_{k-1}\right)\bigg) \\
& \quad \times \int_{\mathbb{R}^d} \gamma_{\e_{j_0}+\e_{2 n}}\left(x_{2 n}-x_{j_0}\right) \ud x_{2 n}  \int_{t_2-\delta}^{t_2} G\left(s_{2 n}-s_{2 n-1}, x_{2 n}-x_{2 n-1}\right) \ud s_{2 n},
\end{align*}
where $1 \leq j_0 \leq 2 n-1$ satisfies $\left(j_0, 2 n\right) \in \mathcal{D}$ and where $\mathcal{D}^{\prime} \in \Pi_{2(n-1)}$ is given by $\mathcal{D}^{\prime}=\mathcal{D} \backslash \left(j_0, 2 n\right)$.
By the Fourier transform and Fubini's theorem, we obtain
\begin{align*}
    &\int_{\mathbb{R}^d} \gamma_{\e_{j_0}+\e_{2 n}}(x_{2 n}-x_{j_0})  \ud  x_{2 n} \int_{t_2-\delta}^{t_2} G\left(s_{2 n}-s_{2 n-1}, x_{2 n}-x_{2 n-1}\right) \ud s_{2 n} \\
 &= \int_{\mathbb{R}^d} \gamma_{\e_{j_0}+\e_{2 n}}(x_{2 n}-x_{j_0}) \ud x_{2 n}  \int_{0 \vee\left(t_2-s_{2 n-1}-\delta\right)}^{t_2-s_{2 n-1}} G\left(s, x_{2 n}-x_{2 n-1}\right) \ud s \\
 &= \int_{\mathbb{R}^d} \mu(\ud \xi) \exp \left\{-\frac{\e_{j_0}+\e_{2 n}}{2}|\xi|^2\right\} \int_{0 \vee\left(t_2-s_{2 n-1}-\delta\right)}^{t_2-s_{2 n-1}} \ud s \\
&\quad \times \int_{\mathbb{R}^d} \exp \left\{\iota \xi \cdot\left(x_{2 n}-x_{j_0}\right)\right\} G\left(s, x_{2 n}-x_{2 n-1}\right) \ud x_{2 n}.
\end{align*}
The right hand side is equal to
\begin{equation}
\begin{aligned}\label{e:mlf-con-0}
    &\int_{\mathbb{R}^d}  \mu(\ud \xi)\exp \left\{-\frac{\e_{j_0}+\e_{2 n}}{2}|\xi|^2+\iota \xi \cdot\left(x_{2 n-1}-x_{j_0}\right)\right\}    \int_{0 \vee (t_2-s_{2n-1}-\delta)}^{t_{2-s_{2 n-1}}} \ud s \\
& \qquad \times \int_{\mathbb{R}^d} \exp \left\{\iota \xi \cdot\left(x_{2 n}-x_{2 n-1}\right)\right\} G\left(s, x_{2 n}-x_{2 n-1}\right) \ud x_{2 n} \\
 &= \int_{\mathbb{R}^d} \mu(\ud \xi)  \exp\left\{-\frac{\e_{j_0}+\e_{2 n}}{2}|\xi|^2+\iota \xi \cdot\left(x_{2 n-1}-x_{j_0}\right)\right\}   \int_{0 \vee (t_2-s_{2n-1}-\delta)}^{t_{2-s_{2 n-1}}} s^{b+r-1} E_{b,b+r}(-\tfrac12 \nu s^b |\xi|^a)  \ud s \\
 & \leq\int_{\mathbb{R}^d} \mu(\ud \xi)\left|\int_{0 \vee\left(t_2-s_{2 n-1}-\delta\right)}^{t_2-s_{2 n-1}} s^{b+r-1} E_{b,b+r}(-\tfrac12 \nu s^b |\xi|^a) \ud s\right|,
\end{aligned}
\end{equation}
where the equality is due to \eqref{E:FZ}.
Applying the identity
\begin{equation*}
   \frac{\ud}{\ud s} s^{b+r} E_{b,b+r+1}(-\tfrac12 \nu s^b |\xi|^a)
    = s^{b+r-1} E_{b,b+r}(-\tfrac12 \nu s^b |\xi|^a),
\end{equation*}
we obtain that for $b\in(0,2)$,
\begin{align*}
&\left|\int_{0 \vee\left(t_2-s_{2 n-1}-\delta\right)}^{t_2-s_{2 n-1}} s^{b+r-1} E_{b,b+r}(-\tfrac12 \nu s^b |\xi|^a) \ud s\right| \notag\\
&=\left|\int_{0 \vee\left(t_2-s_{2 n-1}-\delta\right)}^{t_2-s_{2 n-1}} \ud s^{b+r} E_{b,b+r+1}(-\tfrac12 \nu s^b |\xi|^a)  \right|  \notag\\
&\le \bigg|\frac{\tilde{c} (t_2-s_{2 n-1})^{b+r} }{1+\tfrac12 \nu (t_2-s_{2 n-1})^b  |\eta|^a} \bigg| +
\bigg|\frac{\tilde{c} |t_2-s_{2 n-1}-\delta|^{b+r} }{1+\tfrac12 \nu |t_2-s_{2 n-1}-\delta|^b  |\eta|^a} \bigg|,
\end{align*}
where in the inequality we have used the fact that there is $\tilde{c}$ depending on $\varrho_1$ and $\varrho_2$ such that
\begin{equation*} 
    \left|E_{\varrho_1, \varrho_2}(z)\right| \leq \frac{\tilde{c}}{1+|z|},
\end{equation*}
 for all $\varrho_1 \in(0,2)$, $\varrho_2\in\R$ and $z<0$ (see \cite[Theorem 1.6]{Podlubny1999Fractional}). 
By Assumption \ref{A:Main},  
\begin{equation*}
    \int_{\mathbb{R}^d} \bigg|\frac{\tilde{c} (t_2-s_{2 n-1})^{b+r} }{1+\tfrac12 \nu (t_2-s_{2 n-1})^b  |\eta|^a}\bigg|  +
\bigg|\frac{\tilde{c} |t_2-s_{2 n-1}-\delta|^{b+r} }{1+\tfrac12 \nu |t_2-s_{2 n-1}-\delta|^b  |\eta|^a} \bigg|\mu(\ud\eta)<\infty.
\end{equation*}
Thus, by \eqref{e:mlf-con-0} and  dominated convergence theorem, for $b\in(0,2)$
\begin{align*}
    &\lim_{\delta\to0^+}\int_{\mathbb{R}^d} \ud x_{2 n} \gamma_{\e_{j_0}+\e_{2 n}}\left(x_{2 n}-x_{j_0}\right) \int_{t_2-\delta}^{t_2} G\left(s_{2 n}-s_{2 n-1}, x_{2 n}-x_{2 n-1}\right) \ud s_{2 n}\\
    &\le  \lim_{\delta\to0^+} \int_{\mathbb{R}^d} \left|\int_{0 \vee\left(t_2-s_{2 n-1}-\delta\right)}^{t_2-s_{2 n-1}} s^{b+r-1} E_{b,b+r}(-\tfrac12 \nu s^b |\xi|^a) \ud s\right| \mu(\ud \xi)\\
    &=\int_{\mathbb{R}^d} \lim_{\delta\to0^+} \left|\int_{0 \vee\left(t_2-s_{2 n-1}-\delta\right)}^{t_2-s_{2 n-1}} s^{b+r-1} E_{b,b+r}(-\tfrac12 \nu s^b |\xi|^a) \ud s\right| \mu(\ud \xi)=0, 
\end{align*}
Hence, there is a function $h(\delta)>0$ independent of $\left(\e, \e^{\prime}\right)$ such that
$$
\int_{\mathbb{R}^d} \ud x_{2 n} \gamma_{\e_{j_0}+\e_{2 n}}\left(x_{2 n}-x_{j_0}\right) \int_{t_2-\delta}^{t_2} G\left(s_{2 n}-s_{2 n-1}, x_{2 n}-x_{2 n-1}\right) \ud s_{2 n} \leq h(\delta),
$$
and  $h(\delta) \rightarrow 0$ as $\delta \rightarrow 0^{+}$  for all $b\in(0,2]$, noting that  case (ii) of Assumption~\ref{A:G} has been considered in Lemma \cite[Lemma 3.6]{ch2024}. Consequently,
\begin{equation*}
     \mathcal{J}_{\e, \e^{\prime}}\left(\left[0, t_1\right]_{<}^n \times\left[0, t_2\right]_{<}^{n-1} \times\left[t_2-\delta, t_2\right]\right) \le h(\delta) \mathcal{K}_{\e, \e^\prime}(t_1,t_2),
\end{equation*}
where
\begin{align*}
    &\mathcal{K}_{\e, \e^\prime}(t_1,t_2): =\int_{\left(\mathbb{R}^d\right)^{2 n-1}} \ud x_1 \cdots \ud x_{2 n-1} \bigg(\prod_{(j, k) \in \mathcal{D}^{\prime}} \gamma_{\e_j+\e_k}\left(x_j-x_k\right)\bigg) 
\int_{\left[0, t_1\right]_{<}^n \times\left[0, t_2\right]_{<}^{n-1}} \ud s_1 \cdots \ud s_{2 n-1} \\
& \quad \times \bigg( G ( s _ { 1 } , x _ { 1 } ) \prod _ { l = 2 } ^ { n } G ( s _ { l } - s _ { l - 1 } , x _ { l } - x _ { l - 1 } ) \bigg)  \bigg(G\left(s_{n+1}, x_{n+1}\right) \prod_{k=n+2}^{2 n-1} G\left(s_l-s_{l-1}, x_k-x_{k-1}\right)\bigg).
\end{align*}
Then, to prove \eqref{e:Je-Je}, it suffices to show that
\begin{equation*}
   \sup_{\e, \e^\prime} \mathcal{K}_{\e, \e^\prime}(t_1,t_2) < \infty.
\end{equation*}
By a similar computation leading to  Lemma \ref{le:3.1} (ii), we can obtain
\begin{align*}
    &\int_0^\infty \int_0^\infty e^{-r-r'} \mathcal{K}_{\e, \e^\prime}(r,r') \ud r\ud r'\\
    &\le \left( \frac{2}{\nu} \right)^{2n+2}  \int_0^{\infty} \int_0^{\infty} \ud r \ud r' \: e^{-2 r/\nu-2 r'/\nu} \\
		&\quad \times \mathbf{E}_0 \int_{\left[0, r\right]_{<}^n \times\left[0, r'\right]_{<}^{n-1}} \ud s_1 \cdots \ud s_{2 n} \prod_{(j, k) \in \mathcal{D}'} \gamma\left(X_{v(j)}\left(s_j\right)-X_{v(k)}\left(s_k\right)\right),
\end{align*}
for any $\e, \e^\prime$, the right-hand side of which is finite by  Lemma \ref{L:n momentbound}.
By non-negativity and  monotonicity of $\mathcal{K}_{\e, \e^\prime}(t_1,t_2)$ in $t_1$ and $t_2$, we have
\begin{align*}
    \sup_{\e, \e^\prime} \mathcal{K}_{\e, \e^\prime}(t_1,t_2)
    &=e^{t_1+t_2} \sup_{\e, \e^\prime} \mathcal{K}_{\e, \e^\prime}(t_1,t_2)  \int_{t_1}^\infty \int_{t_2}^\infty e^{-r-r'} \ud r\ud r' \\
    &\le e^{t_1+t_2} \: \sup_{\e, \e^\prime} \int_{t_1}^\infty \int_{t_2}^\infty e^{-r-r'} \mathcal{K}_{\e, \e^\prime}(r,r') \ud r\ud r' \\
    &\le e^{t_1+t_2} \: \sup_{\e, \e^\prime} \int_0^\infty \int_0^\infty e^{-t_1-t_2} \mathcal{K}_{\e, \e^\prime}(r,r') \ud r\ud r' <\infty.
\end{align*}
This completes the proof.
\end{proof}

In the following result, we establish the Stratonovich integrability of $g_n(\cdot,t,x)$ and $\cL^2$-convergence of the Laplace transform
\begin{equation*}
    \int_0^{\infty} e^{-\lambda t} S_{n, \e}\left(g_n(\cdot, t, x)\right) \ud t
\end{equation*}
as $\e\to0$.

\begin{theorem}[Stratonovich integrability]\label{th:strat-intablity}
 Let Assumptions \ref{A:Main} and \ref{A:G} hold.
\begin{enumerate}[(i)]
    \item The $\mathcal{L}^2$-limit
$$
\lim _{\e_1, \cdots, \e_n \rightarrow 0^{+}} S_{n, \varepsilon}\left(g_n(\cdot, t, x)\right)
$$
exists for any $n \geq 1$ and $(t, x) \in \mathbb{R}^{+} \times \mathbb{R}^d$. Consequently, $g_n(\cdot,t,x)$ is integrable in the sense of Definition \ref{D:multiple Stratonovich integration}, and the above limit is $S_n(g_n(\cdot,t,x))$.

\item For any $\lambda>0$, we have
\begin{equation*}
    \lim _{\e \rightarrow 0} \int_0^{\infty} e^{-\lambda t} S_{n, \e}\left(g_n(\cdot, t, x)\right) \ud t = \int_0^{\infty} e^{-\lambda t} S_n\left(g_n(\cdot, t, x)\right) \ud t
\end{equation*}
in $\mathcal{L}^2(\Omega, \mathcal{F}, \mathbb{P})$.
\end{enumerate}
\end{theorem}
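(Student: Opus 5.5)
We will prove (i) by showing that $\{S_{n,\e}(g_n(\cdot,t,x))\}_\e$ is Cauchy in $\mathcal L^2(\Omega,\mathcal F,\P)$. By stationarity in $x$ we may take $x=0$. For two multi-indices $\e,\e'\in(0,\infty)^n$ we expand
\[
\E\bigl|S_{n,\e}(g_n)-S_{n,\e'}(g_n)\bigr|^2=\E[S_{n,\e}S_{n,\e}]-2\E[S_{n,\e}S_{n,\e'}]+\E[S_{n,\e'}S_{n,\e'}].
\]
By \eqref{e:cov-Sne}, each of the three terms is a finite sum over $\mathcal D\in\Pi_{2n}$ of $F^{\mathcal D}_{\cdot,\cdot}(t,t)$, with the appropriate concatenated mollification parameters $(\e,\e)$, $(\e,\e')$ or $(\e',\e')$. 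Lemma~\ref{continuity theorem} shows that $\lim_{\e,\e'\to0}F^{\mathcal D}_{\e,\e'}(t_1,t_2)=:F^{\mathcal D}(t_1,t_2)$ exists for every $t_1,t_2>0$, jointly in all $2n$ parameters. This joint existence is the key point.

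Each of the three sums therefore converges to the same number $\sum_{\mathcal D}F^{\mathcal D}(t,t)$, so the difference tends to $0$. Hence the family is Cauchy and the $\mathcal L^2$-limit exists; by Definition~\ref{D:multiple Stratonovich integration} it is $S_n(g_n(\cdot,t,x))$. As a by-product,
\[
\E[S_n(g_n(\cdot,t_1,0))S_n(g_n(\cdot,t_2,0))]=\sum_{\mathcal D}F^{\mathcal D}(t_1,t_2),
\]
which is continuous in $(t_1,t_2)$ by Lemma~\ref{continuity theorem}.

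For (ii), write $L_\e:=\int_0^\infty e^{-\lambda t}S_{n,\e}(g_n(\cdot,t,0))\,\ud t$ and $L:=\int_0^\infty e^{-\lambda t}S_n(g_n(\cdot,t,0))\,\ud t$. By Fubini,
\[
\E|L_\e-L|^2=\int_0^\infty\!\!\int_0^\infty e^{-\lambda(t_1+t_2)}\,\E\bigl[(S_{n,\e}(t_1)-S_n(t_1))(S_{n,\e}(t_2)-S_n(t_2))\bigr]\,\ud t_1\ud t_2 .
\]
By part (i), for each fixed $(t_1,t_2)$ the integrand tends to $0$. It remains to dominate it. By Cauchy--Schwarz, it is at most
\[
e^{-\lambda(t_1+t_2)}\,\|S_{n,\e}(t_1)-S_n(t_1)\|_2\,\|S_{n,\e}(t_2)-S_n(t_2)\|_2,
\]
so a bound on $\sup_\e\E|S_{n,\e}(g_n(\cdot,t,0))|^2$ that grows at most exponentially in $t$ suffices.

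To get that bound, we use $\E|S_{n,\e}(t)|^2=\sum_{\mathcal D}F^{\mathcal D}_{\e,\e}(t,t)$. Each $F^{\mathcal D}_{\e,\e}$ is nonnegative and nondecreasing in both time variables. We can therefore repeat the trick at the end of the proof of Lemma~\ref{continuity theorem}:
\[
F^{\mathcal D}_{\e,\e}(t,t)\le e^{2kt}\,k^2\int_0^\infty\!\!\int_0^\infty e^{-k(r+r')}F^{\mathcal D}_{\e,\e}(r,r')\,\ud r\ud r'.
\]
The right-hand side is bounded uniformly in $\e$ by Lemma~\ref{le:3.1}(ii) together with Corollary~\ref{coro:3.5}, which gives finiteness via Lemma~\ref{L:n momentbound}.

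Choosing $k<\lambda/2$ makes the dominating function $e^{-(\lambda-2k)(t_1+t_2)}C$ integrable. Dominated convergence then gives $\E|L_\e-L|^2\to0$, and stationarity extends the result to general $x$.

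The main obstacle is that Lemma~\ref{continuity theorem} is stated for a limit in $(\e,\e')$ where both blocks go to zero. To run the Cauchy argument in (i), we must make sure that this limit is genuinely joint in all $2n$ parameters, including the mixed configuration $(\e,\e')$ and the diagonal configuration $(\e,\e)$. We will check this by noting that the proof via the Laplace-transform monotone limit in Lemma~\ref{le:3.1}(iii) and the equicontinuity \eqref{e:left-con} is uniform in arbitrary $2n$-tuples.
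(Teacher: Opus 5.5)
Your part (i) is the paper's argument: the Cauchy criterion, then Wick's formula reducing the three covariances to sums of $F^{\mathcal D}_{\cdot,\cdot}(t,t)$, then Lemma~\ref{continuity theorem}. The ``obstacle'' you flag is not a real one. That lemma's limit is already taken over arbitrary $(\e,\e')\in(0,\infty)^{2n}$, so the diagonal choice $\e'=\e$ and the mixed choice are just particular ways of sending all $2n$ parameters to zero, and nothing extra needs checking.

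For part (ii) you take a genuinely different route from the paper, and it is correct.

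The paper expands $\E|L_\e-L|^2=\E|L_\e|^2-2\E[L_\e L]+\E|L|^2$ and works at the Laplace-transform level. It gets $\E|L_\e|^2\to\E|L|^2$ from Wick's formula, Lemma~\ref{le:3.1}(iii) and dominated convergence. The cross term is squeezed between a Fatou lower bound, using $\E[S_{n,\e}(t_1)S_n(t_2)]=\lim_{\e'}\sum_{\mathcal D}F^{\mathcal D}_{\e,\e'}(t_1,t_2)\ge0$, and a Cauchy--Schwarz upper bound; details are deferred to \cite[Theorem~3.8]{ch2024}.

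You instead turn the Laplace-level bound of Lemma~\ref{le:3.1}(ii) into a pointwise, $\e$-uniform bound $\sup_\e\E|S_{n,\e}(g_n(\cdot,t,0))|^2\le C_k e^{2kt}$. This uses monotonicity of $F^{\mathcal D}_{\e,\e}$ in $t$, which holds since $g_n\ge0$ and $\gamma_\e\ge0$ under Assumption~\ref{A:G}. You then combine it with the pointwise-in-$t$ $\mathcal L^2$-convergence from (i) and apply dominated convergence.

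What each route buys:
\begin{itemize}
\item Yours is shorter and fully explicit. It needs only the $\e$-free bound of Lemma~\ref{le:3.1}(ii), not the exact limit in (iii), and it gives an exponential-in-$t$ bound on $\E|S_n(g_n(\cdot,t,0))|^2$ as a by-product.
\item The paper's squeeze avoids any pointwise-in-time domination. In exchange, it must identify the limit of the Laplace transforms with the Laplace transform of the limiting covariance.
\end{itemize}

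One small arithmetic point. Since $\|S_{n,\e}(t)-S_n(t)\|_2\le 2\sqrt{C_k}\,e^{kt}$, the dominating function is $4C_k e^{-(\lambda-k)(t_1+t_2)}$. So any $k<\lambda$ works; your restriction $k<\lambda/2$ is merely conservative.
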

\begin{proof}
  To establish the $\mathcal{L}^2$-convergence of the  integral $S_{n, \e}\left(g_n(\cdot, t, x)\right)$, it suffices by the Cauchy criterion to compute their covariance and show that the limit exists as $\e$ tend to zero.  Applying Wick's formula~\eqref{e:wick-formula} reduces this covariance to a sum over pair partitions of the quantities $F_{\varepsilon,\varepsilon'}^{\mathcal{D}}(t,t)$ defined in~\eqref{eq:Fvarepsilon}, and Lemma~\ref{continuity theorem} guarantees the convergence of each term. This yields part (i).  
  
   For part (ii), the goal is to show
	\begin{equation}
		\lim_{\varepsilon \rightarrow 0} \mathbb{E}\Bigl|\int_0^{\infty} e^{-\lambda t} S_{n,\varepsilon}\bigl(g_n(\cdot, t, 0)\bigr) \ud t
		- \int_0^{\infty} e^{-\lambda t} S_n\bigl(g_n(\cdot, t, 0)\bigr) \ud t\Bigr|^2 = 0. \nonumber
	\end{equation} 
    The square expands into the sum of the pure $\varepsilon$-term, the pure limit term, and a cross term.  The pure $\varepsilon$-term converges to the pure limit term by Wick's formula together with Lemma~\ref{le:3.1}(iii) and dominated convergence. For the cross term, part (i) gives the representation 
    $$\mathbb{E}\bigl[S_{n,\varepsilon}(g_n(\cdot,t_1,0))\,S_n(g_n(\cdot,t_2,0))\bigr]
   = \lim_{\varepsilon'\to0}\mathbb{E}\bigl[S_{n,\varepsilon}(g_n(\cdot,t_1,0))\,
     S_{n,\varepsilon'}(g_n(\cdot,t_2,0))\bigr],$$ and the convergence to the same limit follows from Fatou's lemma and the Cauchy--Schwarz inequality. The detailed proof is similar to that of \cite[Theorem~3.8]{ch2024}, and is therefore omitted.
\end{proof}

The following lemma is the Fubini’s theorem for the multiple Stratonovich integral with the integrand $g_n(\cdot,t,x)$.

\begin{lemma}[Fubini’s theorem]\label{le:fubini-th}
 Under Assumptions \ref{A:Main} and \ref{A:G}, we have
\begin{equation}
\lim _{\varepsilon_2, \cdots, \varepsilon_n \rightarrow 0^{+}} S_{n, \varepsilon}\left(g_n(\cdot, t, x)\right)=\int_{\mathbb{R}^d}\left(\int_0^t G(t-s, y-x) S_{n-1}\left(g_{n-1}(\cdot, s, y)\right) \ud s\right) \dot{W}_{\varepsilon_1}(y) \ud y \label{E:fubini-i}
\end{equation}
and
\begin{equation}
    \label{E:fubini-ii}
\lim _{\varepsilon_1 \rightarrow 0^{+}} \int_{\mathbb{R}^d}\left(\int_0^t G(t-s, y-x) S_{n-1}\left(g_{n-1}(\cdot, s, y)\right) \ud s\right) \dot{W}_{\varepsilon_1}(y) \ud y=S_n\left(g_n(\cdot, t, x)\right)
\end{equation}
where the limits are taken in $\mathcal{L}^p(\Omega, \mathcal{F}, \mathbb{P})$ for any $p \geq 1$. 
\end{lemma}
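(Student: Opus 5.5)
The plan is to prove both limits first in $\mathcal L^2(\Omega)$ and then upgrade to every $\mathcal L^p$. All random variables involved lie in a finite sum of Wiener chaoses of order at most $n$. Indeed, $S_{n,\e}(g_n(\cdot,t,x))$ is a polynomial of degree $n$ in the Gaussian field $\dot W$ (by the Hu--Meyer type expansion of the mollified product), and the same is true of the right-hand side of \eqref{E:fubini-i}. On a fixed finite sum of chaoses, hypercontractivity (Nelson) makes all $\mathcal L^p$ norms equivalent to the $\mathcal L^2$ norm, and $\mathcal L^2$-limits stay inside the same closed finite-chaos subspace. Hence it suffices to prove $\mathcal L^2$ convergence.

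\textbf{Step 1: identify the inner object.} By the definition \eqref{E:gn} of $g_n$ and the semigroup-free factorization $g_n(x_1,\dots,x_n,t,x)=\int_0^t G(t-s,x_n-x)\,g_{n-1}(x_1,\dots,x_{n-1},s,x_n)\,\ud s$ (after relabeling variables as in the derivation of \eqref{e:expan-utx}), we can write
\[
S_{n,\e}(g_n(\cdot,t,x))=\int_{\R^d}\dot W_{\e_1}(y)\int_0^t G(t-s,y-x)\,S_{n-1,\tilde\e}(g_{n-1}(\cdot,s,y))\,\ud s\,\ud y .
\]
Here $\tilde\e=(\e_2,\dots,\e_n)$, and the interchange of $\ud y\,\ud s$ with the random integral holds pathwise, because everything is smooth and mollified. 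Theorem~\ref{th:strat-intablity}(i) gives $S_{n-1,\tilde\e}(g_{n-1}(\cdot,s,y))\to S_{n-1}(g_{n-1}(\cdot,s,y))$ in $\mathcal L^2$ for each $(s,y)$. To get \eqref{E:fubini-i}, I expand $\E|\text{difference}|^2$ by Wick's formula \eqref{e:wick-formula}, which gives a finite sum over pair partitions $\mathcal D\in\Pi_{2n}$ of integrals of exactly the form $F^{\mathcal D}_{\e,\e'}(t,t)$ in \eqref{eq:Fvarepsilon}. In these integrals the first coordinate carries a fixed $\e_1$ and the remaining coordinates carry the varying $\tilde\e,\tilde\e'$, or their limits. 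The argument is then the Cauchy criterion already used for Theorem~\ref{th:strat-intablity}(i). Convergence of each $F^{\mathcal D}$ as the $\tilde\e$'s go to zero with $\e_1$ frozen follows from Lemma~\ref{continuity theorem}: its proof (Laplace continuity theorem combined with the uniform-in-$\e$ equicontinuity \eqref{e:left-con}) applies verbatim when some $\e_j$ are held fixed, since the bounds are uniform in all $\e_j$. The cross term with the limit object is handled by Fatou and Cauchy--Schwarz, exactly as in Theorem~\ref{th:strat-intablity}(ii). Measurability and integrability of $(s,y)\mapsto G(t-s,y-x)S_{n-1}(g_{n-1}(\cdot,s,y))$ against $\dot W_{\e_1}(y)\,\ud y$ come from the same second-moment bounds, since $\gamma_{\e_1}$ is bounded.

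\textbf{Step 2: prove \eqref{E:fubini-ii}.} Letting $\e_1\to0$ on the right side of \eqref{E:fubini-i} corresponds to letting $\e_1\to0$ in the iterated limit $\lim_{\e_1}\lim_{\tilde\e}S_{n,\e}$. By Theorem~\ref{th:strat-intablity}(i), the joint limit $\lim_{\e\to0}S_{n,\e}=S_n(g_n)$ exists in $\mathcal L^2$. I would show that the convergence in Step 1 is uniform in $\e_1\in(0,1]$, which again follows because the Wick/Laplace bounds (Lemma~\ref{le:3.1}(ii), Corollary~\ref{coro:3.5}) are uniform in all mollification parameters. A standard iterated-limit (Moore--Osgood) argument then identifies the iterated limit with the joint limit, which is \eqref{E:fubini-ii}.

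The main obstacle is this uniformity in Step 1 with respect to $\e_1$, i.e., establishing the Cauchy property uniformly. To obtain it, I would bound the Laplace transforms of $F^{\mathcal D}_{\e,\e'}$ by the $\e$-free quantity in Lemma~\ref{le:3.1}(ii). Monotone convergence in Lemma~\ref{le:3.1}(iii) then gives convergence, and since the limit function is continuous (Lemma~\ref{continuity theorem}) and the functions are monotone, Dini-type arguments upgrade this to uniform convergence on compacts in $t$ and in the frozen parameter. Finally, the finite-chaos hypercontractivity transfers everything to $\mathcal L^p$ for all $p\ge1$.
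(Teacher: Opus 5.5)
Your proposal is correct in outline. However, you reach \eqref{E:fubini-i} by a heavier route than the paper, and the obstacle you single out in Step~2 does not exist.

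\textbf{The first limit.} The paper does not rerun the Wick/Laplace/continuity machinery with $\varepsilon_1$ frozen. It writes the difference as
$\int_0^t\int_{\R^d}G(t-s,y-x)\,\dot W_{\varepsilon_1}(y)\,[S_{n-1,\tilde\varepsilon}(g_{n-1}(\cdot,s,y))-S_{n-1}(g_{n-1}(\cdot,s,y))]\,\ud y\,\ud s$.
It then applies Cauchy--Schwarz jointly in $(\omega,s,y)$ against the nonnegative weight $G$ (Assumption~\ref{A:G}). It uses stationarity in $y$, $\E|\dot W_{\varepsilon_1}(y)|^2=\gamma_{2\varepsilon_1}(0)$, and $\int_{\R^d}G(t-s,y)\,\ud y=(t-s)^{b+r-1}/\Gamma(b+r)$. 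It concludes from Theorem~\ref{th:strat-intablity}(i) at level $n-1$ plus dominated convergence, using that the second moments are monotone in $s$.

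This identifies the limit with the right-hand side of \eqref{E:fubini-i} directly. That identification is exactly the step your Cauchy-criterion argument leaves vague (``the cross term with the limit object is handled by Fatou and Cauchy--Schwarz''). Once you write it out, you essentially recover the paper's estimate, so the frozen-$\varepsilon_1$ version of Lemma~\ref{continuity theorem} is redundant.

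\textbf{The upgrade to $\mathcal L^p$.} The paper gets only $\mathcal L^1$ convergence this way and upgrades it via Hu--Meyer, hypercontractivity and Vitali. Your remark that all approximants and limits lie in the closed sum of Wiener chaoses of order at most $n$, where all $\mathcal L^p$ norms are equivalent, is the same mechanism in a cleaner form. It also spares you from checking uniform integrability of the differences themselves.

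\textbf{The second limit.} For \eqref{E:fubini-ii} you need no uniformity in $\varepsilon_1$. Theorem~\ref{th:strat-intablity}(i) already gives the joint limit $S_{n,\varepsilon}(g_n)\to S_n(g_n)$ as all $\varepsilon_j\to0$. So if $\|S_{n,\varepsilon}(g_n)-S_n(g_n)\|_2\le\eta$ whenever $\max_j\varepsilon_j<\delta$, then for $\varepsilon_1<\delta$ the inner limit $A_{\varepsilon_1}:=\lim_{\tilde\varepsilon\to0}S_{n,\varepsilon}(g_n)$ satisfies $\|A_{\varepsilon_1}-S_n(g_n)\|_2\le\eta$. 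That is the paper's one-line argument; Moore--Osgood is only needed when the joint limit is not known to exist.

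This matters because the Dini/P\'olya-type argument you sketch would not obviously give uniformity in the frozen parameter. The available monotonicity is in $t$, not in the mollification parameters. For $b\in(1,2)$ the Mittag-Leffler factors $E_{b,b+r}$ can change sign, so the Fourier-side integrand of $F^{\mathcal D}_{\varepsilon,\varepsilon'}(t,t)$ need not be nonnegative. Only the Laplace-transformed quantity is a nonnegative stable-process expression. Hence there is no reason for $F^{\mathcal D}_{\varepsilon,\varepsilon'}(t,t)$ to be monotone in $\varepsilon$.

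\textbf{A minor indexing point.} With the ordering in \eqref{E:gn}, the factorization reads $g_n(x_1,\dots,x_n,t,x)=\int_0^tG(t-s,x_1-x)\,g_{n-1}(x_2,\dots,x_n,s,x_1)\,\ud s$. This is what pairs $y=x_1$ with $\dot W_{\varepsilon_1}$; your version with $x_n$ does not.
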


\begin{proof}
	Let $\tilde{\varepsilon}:= (\varepsilon_2,\dots,\varepsilon_n)$.
	We first prove that \eqref{E:fubini-i} holds in $\mathcal{L}^1$.
	By the definition of $S_{n,\varepsilon}$ and Fubini's theorem,
	\begin{align*}
		S_{n,\varepsilon}(g_n(\cdot,t,x))
		&= \int_{\mathbb{R}^d} \Bigl(\int_0^t G(t-s,y-x)\,S_{n-1,\tilde{\varepsilon}}(g_{n-1}(\cdot,s,y))\,\ud s\Bigr)\,
		\dot{W}_{\varepsilon_1}(y)\,\ud y .
	\end{align*}
	By Theorem~\ref{th:strat-intablity} (i), $S_{n-1,\tilde{\varepsilon}}(g_{n-1}(\cdot,t,x))$
	converges to $S_{n-1}(g_{n-1}(\cdot,t,x))$ in $\mathcal{L}^2$ as $\tilde{\varepsilon}\to 0$.
	Hence
	\begin{align}
		&S_{n,\varepsilon}(g_n(\cdot,t,x)) - \int_{\mathbb{R}^d}\Bigl(\int_0^t G(t-s,y-x)\,S_{n-1}(g_{n-1}(\cdot,s,y))\,\ud s\Bigr)\,
		\dot{W}_{\varepsilon_1}(y)\,\ud y \nonumber \\
		&= \int_0^t \ud s \int_{\mathbb{R}^d}
		\bigl[S_{n-1,\tilde{\varepsilon}}(g_{n-1}(\cdot,s,y)) - S_{n-1}(g_{n-1}(\cdot,s,y))\bigr]\,
		\dot{W}_{\varepsilon_1}(y)\,G(t-s,y-x)\,\ud y . \nonumber
	\end{align}
	Taking the $\mathcal{L}^1$norm and applying the Cauchy--Schwarz inequality, we get
	\begin{align*}		
    &\mathbb{E}\Bigl|S_{n,\varepsilon}(g_n(\cdot,t,x)) - \int_{\mathbb{R}^d}\Bigl(\int_0^t G(t-s,y-x)S_{n-1}(g_{n-1}(\cdot,s,y))\ud s\Bigr)\dot{W}_{\varepsilon_1}(y)\ud y\Bigr| \\
		&\le \Bigl(\mathbb{E}\int_0^t \ud s \int_{\mathbb{R}^d}
		\bigl[S_{n-1,\tilde{\varepsilon}}(g_{n-1}(\cdot,s,y)) - S_{n-1}(g_{n-1}(\cdot,s,y))\bigr]^2
		G(t-s,y-x)\,\ud y \Bigr)^{1/2} \\
    		&\qquad \times \Bigl(\mathbb{E}\int_0^t \ud s \int_{\mathbb{R}^d} |\dot{W}_{\varepsilon_1}(y)|^2 G(t-s,y-x)\,\ud y\Bigr)^{1/2}.
	\end{align*}
	By the spatial stationarity of $g_{n-1}$,
	\[
	\mathbb{E}\bigl[S_{n-1,\tilde{\varepsilon}}(g_{n-1}(\cdot,s,y)) - S_{n-1}(g_{n-1}(\cdot,s,y))\bigr]^2
	= \mathbb{E}\bigl[S_{n-1,\tilde{\varepsilon}}(g_{n-1}(\cdot,s,0)) - S_{n-1}(g_{n-1}(\cdot,s,0))\bigr]^2,
	\]
	and $\mathbb{E}|\dot{W}_{\varepsilon_1}(y)|^2 = \gamma_{2\varepsilon_1}(0)$ is independent of $y$.
	Moreover, by \eqref{E:FZ},
	\[
	\int_{\mathbb{R}^d} G(t-s,y-x)\,\ud y = \mathcal{F}G(t-s,\cdot)(0) = \frac{(t-s)^{b+r-1}}{\Gamma(b+r)} .
	\]
	Hence there exists a constant $C$ such that
	\begin{align*}
		&\mathbb{E}\Bigl|S_{n,\varepsilon}(g_n(\cdot,t,x)) - \int_{\mathbb{R}^d}\Bigl(\int_0^t G(t-s,y-x)S_{n-1}(g_{n-1}(\cdot,s,y))\ud s\Bigr)\dot{W}_{\varepsilon_1}(y)\ud y\Bigr| \\
		&\le C \sqrt{\gamma_{2\varepsilon_1}(0)}\Bigl(\int_0^t \mathbb{E}\bigl[S_{n-1,\tilde{\varepsilon}}(g_{n-1}(\cdot,s,0)) - S_{n-1}(g_{n-1}(\cdot,s,0))\bigr]^2 \ud s\Bigr)^{1/2}.
	\end{align*}
 By  monotonicity of $\mathcal F_{\varepsilon, \varepsilon'}^{\mathcal D}(s,s)$  in $s$ (see \eqref{e:cov-Sne}),
  for all $s\in[0,t]$ we have
$$
	\mathbb{E}\big|S_{n-1,\tilde{\varepsilon}}(g_{n-1}(\cdot,s,0))\big|^2\le  \mathbb{E}\big|S_{n-1,\tilde{\varepsilon}}(g_{n-1}(\cdot,t,0))\big|^2
	\le 2\,\mathbb{E}\bigl[S_{n-1}(g_{n-1}(\cdot,t,0))\bigr]^2 < \infty
$$
for $\tilde \varepsilon$ sufficiently small by Theorem~\ref{th:strat-intablity} (i). Similarly, we also have $$\mathbb{E}\bigl[S_{n-1}(g_{n-1}(\cdot,s,0))\bigr]^2\le \mathbb{E}\bigl[S_{n-1}(g_{n-1}(\cdot,t,0))\bigr]^2<\infty$$ for all $s\in[0,t].$ Then by Theorem~\ref{th:strat-intablity} (i) and  the dominated convergence theorem, the limit in~\eqref{E:fubini-i} holds in
$\mathcal{L}^1$.


We now show that the convergence in \eqref{E:fubini-i} in fact holds in $\mathcal{L}^p$
for every $p\ge 1$.  By the Hu--Meyer formula \cite{HM1988,Hu2017Analysis} and the
hypercontractivity property (or alternatively, by Proposition~\ref{prop:Hypercon} below, which,
although stated for the full sum $u(t,x)$, is proved by establishing the estimate for each
term in the Stratonovich expansion separately, and therefore applies equally to
$S_{n-1,\tilde{\varepsilon}}(g_{n-1}(\cdot,t,x))$), we have
\[
\sup_{\tilde{\varepsilon}} \mathbb{E}\bigl|S_{n-1,\tilde{\varepsilon}}(g_{n-1}(\cdot,t,x))\bigr|^{2p} < \infty
\qquad\text{for every } p\ge 1.
\]
Consequently,
\[
\bigl\{ |S_{n-1,\tilde{\varepsilon}}(g_{n-1}(\cdot,t,x))|^{p}
\mid \tilde{\varepsilon}=(\varepsilon_2,\dots,\varepsilon_n),\ \varepsilon_i>0 \bigr\}
\]
is uniformly integrable.  Since $\mathcal{L}^1$-convergence implies convergence in probability,
Vitali convergence theorem yields convergence in $\mathcal{L}^p$ for all $p\ge 1$.

Finally, letting $\varepsilon_1\to 0$ in \eqref{E:fubini-i} and applying Theorem~\ref{th:strat-intablity} (i)
gives \eqref{E:fubini-ii}.
\end{proof}

\section{Existence of the solution}\label{se:existence}

In this section, under Assumptions~\ref{A:Main} and \ref{A:G}, we  prove the existence of a mild Stratonovich solution to~\eqref{e:sfde} and provide an upper bound for the second moment (see Theorem~\ref{th:exist}). Moreover, we show that Dalang's condition~\eqref{dalang} is also necessary for the existence of a square-integrable Stratonovich solution in Theorem~\ref{th:necessity-dalang}.

We first outline our strategy for proving the existence of a solution. Recall that $S_{n,\e}$ is given by~\eqref{e:S-ne}. 
By  Theorem \ref{th:strat-intablity}, we  define $S_n$ as an $\cL^2$-limit of $S_{n,\e}$ in the sense of Definition~\ref{D:multiple Stratonovich integration}.
This allows us to define the formal series representation \eqref{e:expan-utx} for the solution $u(t,x)$.
By Definitions~\ref{D:Stratonovich integral} and~\ref{D: mild solution}, to prove that $u(t,x)$ is a mild Stratonovich solution, it suffices to show 
$$
\lim_{\e_1 \rightarrow 0^{+}} \int_{\mathbb{R}^d} \left( \int_0^t G(t-s, x-y) u(s, y) \ud s \right) \dot{W}_{\varepsilon_1}(y) \ud y = \int_{\mathbb{R}^d} \left( \int_0^t G(t-s, x-y) u(s, y) \ud s \right) W(\ud y),
$$
in $\mathcal{L}^2(\Omega, \mathcal{F}, \mathbb{P})$. Given the expression~\eqref{e:expan-utx} of $u(t,x)$, this reduces to proving the following two results:
\begin{enumerate}[(i)]
    \item for each $n \geq 1$, 
$$
\int_{\mathbb{R}^d} \left( \int_0^t G(t-s, x-y) S_{n-1}(g_{n-1}(\cdot, s, y)) ds \right) \dot{W}_{\varepsilon}(y) \ud y
$$
converges to $S_n(g_n(\cdot, t, x))$ in $\mathcal{L}^2(\Omega, \mathcal{F}, \mathbb{P})$, which has been verified in Lemma~\ref{le:fubini-th};

\item for any $t>0$,
\begin{equation}\label{e:existence-con1}
\sum_{n=0}^{\infty} \theta^{n/2} \mathbb{E} \left[|S_n(g_n(\cdot, t, x))|^2\right]^{1/2} <\infty,
\end{equation}
and
\begin{equation}\label{e:existence-con2}
\lim_{M\to\infty}\sup_{\e>0}\sum_{n=M}^{\infty} \theta^{n/2}  \mathbb{E} \left[|S_{n,\e}(g_n(\cdot, t, x))|^2\right]^{1/2} =0,
\end{equation}
which will be proved later.
\end{enumerate}

\subsection{Proof of Theorem \ref{th:exist}}\label{proof:exist}
\begin{proof}[Proof of Theorem \ref{th:exist}]

To prove the existence of the solution, it suffices to show~\eqref{e:existence-con1} and~\eqref{e:existence-con2} hold.

{\bf Step 1: when $b+2r\geq1$.} In this case, Assumption \ref{A:Main} implies \eqref{e:dalang-plus}.
Cauchy-Schwartz inequality together with \eqref{E:even Gaussian moment}  yields
\begin{align*}
	&\mathbb{E}  \otimes \mathbf{E}_0 \left[\int_0^{t_1} \dot{W}\left(X_1(s)\right) \ud s\right]^n \left[\int_0^{t_2} \dot{W}\left(X_2(s)\right) \ud s\right]^n\ \\
	&\le \left(\mathbb{E}  \otimes \mathbf{E}_0 \left[\int_0^{t_1} \dot{W}\left(X(s)\right) \ud s\right]^{2n}\right)^{1/2} \left(\mathbb{E}  \otimes \mathbf{E}_0 \left[\int_0^{t_2} \dot{W}\left(X(s)\right) \ud s\right]^{2n}\right)^{1/2} \\
	&\leq \frac{(2 n)!}{2^n n!}\left\{\mathbf{E}_0\left[\int_0^{t_1} \int_0^{t_1} \gamma(X(s)-X(r)) \ud s \ud r\right]^n\right\}^{1 / 2}  
	\left\{\mathbf{E}_0\left[\int_0^{t_2} \int_0^{t_2} \gamma(X(s)-X(r)) \ud s \ud r\right]^n\right\}^{1 / 2}.
\end{align*}

Let $t>0$ be fixed. By \eqref{eq:ESS 2}, we have
\begin{align}
	&	\int_0^{\infty}  \int_0^{\infty} \ud t_1 \ud t_2  e^{-\lambda t_1-\lambda t_2} \mathbb{E}\left[S_n\left(g_n\left(\cdot, t_1, 0\right)\right) S_n\left(g_n\left(\cdot, t_2, 0\right)\right)\right] \nonumber\\
	&\le \frac{1}{(n!)^2} \lambda^{2b-2nr-2} \left(\frac{2}{\nu}\right)^{2n+2}  \frac{(2 n)!}{2^n n!}    \int_0^{\infty} \int_0^{\infty} \ud t_1 \ud t_2  e^{-2\lambda_1^{b} t_1/\nu-2\lambda_2^{b} t_2/\nu} \nonumber\\
	& \qquad \times\left\{\mathbf{E}_0\left[\int_0^{t_1} \int_0^{t_1} \gamma(X(s)-X(r)) \ud s \ud r\right]^n\right\}^{1 / 2}\left\{\mathbf{E}_0\left[\int_0^{t_2} \int_0^{t_2} \gamma(X(s)-X(r)) \ud s \ud r\right]^n\right\}^{1 / 2} \nonumber \\
	&= \frac{(2 n)!}{2^n (n!)^3} \lambda^{2b-2nr-2} \left(\frac{2}{\nu}\right)^{2n+2}   \bigg\{\int_0^{\infty} \ud \tilde{t} e^{-2\lambda_1^{b} \tilde{t}/\nu}  \bigg(\mathbf{E}_0 \bigg[ \int_0^{\tilde{t}} \int_0^{\tilde{t}} \gamma(X(s)-X(r)) \ud s \ud r \bigg]^n\bigg)^{1 / 2}\bigg\}^2.  \label{E:existence proof1}
\end{align}
 We now show that there exist positive constants $C_{1,\eta}$ and $C_{2,\eta}$ such that
\begin{equation}
	\mathbf{E}_0 \exp\!\bigg( \frac{\eta}{\tilde{t}} \int_0^{\tilde{t}}\!\!\!\int_0^{\tilde{t}} \gamma\big(X(s)-X(r)\big) \ud s \ud r \bigg) \le C_{1,\eta} \exp( C_{2,\eta} \tilde{t} ). \label{eq:Hamiltonbound1-new}
\end{equation}
Expanding the exponential as a power series and applying Lemma~\ref{L:n momentbound}, we obtain
\begin{equation*}
	\begin{aligned}
		\mathbf{E}_0 \exp\!\bigg( \frac{\eta}{\tilde{t}} \int_0^{\tilde{t}}\!\!\!\int_0^{\tilde{t}} \gamma\big(X(s)-X(r)\big) \ud s \ud r \bigg) 
		&= \sum_{n=0}^{\infty} \frac{\eta^n}{n! \tilde{t}^{n}} \mathbf{E}_0\!\left[ \int_0^{\tilde{t}}\!\!\!\int_0^{\tilde{t}} \gamma\big(X(s)-X(r)\big) \ud s \ud r \right]^n \\
		&\le \sum_{n=0}^{\infty} \frac{(2n-1)!!}{n!} \eta^n \sum_{k=0}^{n} \binom{n}{k} \frac{\tilde{t}^{k}}{k!} m_N^{k} (A_0\varepsilon_N)^{n-k},
	\end{aligned}
\end{equation*}
where $m_N = \mu(|\xi|\le N)$ and $\varepsilon_N = \int_{|\xi|\ge N} |\xi|^{-a}\mu(d\xi)$.

Note that $(2n-1)!!/n! \le 2^{n}$. Hence
\begin{equation}\label{eq:Hamiltonbound2-new}
	\begin{aligned}
		\mathbf{E}_0 \exp\!\bigg( \frac{\eta}{\tilde{t}} \int_0^{\tilde{t}}\!\!\!\int_0^{\tilde{t}} \gamma\big(X(s)-X(r)\big) \ud s \ud r \bigg)
		&\le \sum_{n=0}^{\infty} \sum_{k=0}^{n} \binom{n}{k} \frac{(2\eta\tilde{t} m_N)^{k}}{k!} (2\eta A_0\varepsilon_N)^{n-k} \\
		&= \sum_{k=0}^{\infty} \frac{(2\eta\tilde{t} m_N)^{k}}{k!} \sum_{j=0}^{\infty} \binom{k+j}{k} (2\eta A_0\varepsilon_N)^{j}, \qquad (j = n-k). 
	\end{aligned}
\end{equation}
Now choose $N$ large enough so that $2\eta A_0\varepsilon_N<1$. Then we have
\begin{equation*}
	\sum_{j=0}^{\infty} \binom{k+j}{k} (2\eta A_0\varepsilon_N)^{j} = \frac{1}{(1 - 2\eta A_0\varepsilon_N)^{k+1}}.
\end{equation*}
Substituting this into \eqref{eq:Hamiltonbound2-new},
\begin{equation*}
	\begin{aligned}
		\mathbf{E}_0 \exp\!\bigg( \frac{\eta}{\tilde{t}} \int_0^{\tilde{t}}\!\!\!\int_0^{\tilde{t}} \gamma\big(X(s)-X(r)\big) \ud s \ud r \bigg)
		&\le \frac{1}{1-2\eta A_0\varepsilon_N} \sum_{k=0}^{\infty} \frac{1}{k!} \left( \frac{2\eta\tilde{t} m_N}{1-2\eta A_0\varepsilon_N} \right)^{\!k} \\
		&= \frac{1}{1-2\eta A_0\varepsilon_N} \exp\!\left( \frac{2\eta m_N}{1-2\eta A_0\varepsilon_N} \tilde{t} \right).
	\end{aligned}
\end{equation*}
Now set
\begin{equation*}
	C_{1,\eta} := \frac{1}{1-2\eta A_0\varepsilon_N},\qquad
	C_{2,\eta} := \frac{2\eta m_N}{1-2\eta A_0\varepsilon_N}.
\end{equation*}
Then we obtain \eqref{eq:Hamiltonbound1-new}.

Using the fact that, for any $\tilde{t}>0$, 
\begin{equation*}
	\frac{\eta^n}{n!\tilde{t}^n} \mathbf{E}_0\left[\int_0^{\tilde{t}} \int_0^{\tilde{t}} \gamma(X(s)-X(r)) \ud s \ud r\right]^n \leq \mathbf{E}_0 \exp \left\{\frac{\eta}{\tilde{t}} \int_0^{\tilde{t}} \int_0^{\tilde{t}} \gamma(X(s)-X(r)) \ud s \ud r\right\},
\end{equation*}
we get
\begin{equation*}
	\mathbf{E}_0\left[\int_0^{\tilde{t}} \int_0^{\tilde{t}} \gamma(X(s)-X(r)) \ud s \ud r\right]^n \leq C_{1,\eta}\, n! \left(\frac{\tilde{t}}{\eta}\right)^{\!n} \exp\!\big(C_{2,\eta}\,\tilde{t}\big).
\end{equation*}

Substituting this into the integral in \eqref{E:existence proof1} yields
\begin{align*}
	&\int_0^{\infty}e^{-2\lambda_1^{b} \tilde{t}/\nu}
	\left(\mathbf{E}_0\left[\int_0^{\tilde{t}} \int_0^{\tilde{t}} \gamma(X(s)-X(r)) \ud s \ud r\right]^n\right)^{\!1/2} \ud \tilde{t} \\
	&\qquad \leq \sqrt{C_{1,\eta}}\, (n!)^{1/2}\eta^{-n/2}
	\int_0^{\infty} \exp\!\left(-\left(\frac{2\lambda^{b}}{\nu}-\frac{C_{2,\eta}}{2}\right)\tilde{t}\right) \tilde{t}^{\,n/2} \ud \tilde{t}.
\end{align*}
If $\lambda^{b}\geq \frac{\nu C_{2,\eta}}{2}$, then
\begin{equation*}
	\frac{2\lambda^{b}}{\nu}-\frac{C_{2,\eta}}{2}
	\geq\frac{\lambda^{b}}{\nu}.
\end{equation*}
Hence, for such $\lambda$,
\begin{align}
	&\int_0^{\infty}e^{-2\lambda_1^{b} \tilde{t}/\nu}
	\left(\mathbf{E}_0\left[\int_0^{\tilde{t}} \int_0^{\tilde{t}} \gamma(X(s)-X(r)) \ud s \ud r\right]^n\right)^{\!1/2} \ud \tilde{t}\nonumber\\
	&\qquad \leq C_\eta\, (n!)^{1/2}\eta^{-n/2}
	\left(\frac{\nu}{\lambda^{b}}\right)^{n/2+1}
	\Gamma\!\left(\frac{n}{2}+1\right),
	\label{E:single-integral-bound}
\end{align}
where $C_\eta$ is a constant depending only on $\eta$.

Combining \eqref{E:existence proof1} with \eqref{E:single-integral-bound} and using Stirling's formula, we obtain, for every $\lambda$ satisfying $\lambda^b\geq \frac{\nu C_{2,\eta}}{2}$,
\begin{align}
	&\int_0^{\infty}\int_0^{\infty} e^{-\lambda(t_1+t_2)}
	\mathbb{E}\left[S_n\left(g_n(\cdot,t_1,0)\right)
	S_n\left(g_n(\cdot,t_2,0)\right)\right]\ud t_1\ud t_2
	\leq C_\eta\left(\frac{C}{\eta}\right)^n
	n!\,\lambda^{-(b+2r)n-2}.
	\label{E:existence proof3-new}
\end{align}

By the fact that the moment
$
\mathbb{E}\left[S_n\left(g_n\left(\cdot, t_1, 0\right)\right) S_n\left(g_n\left(\cdot, t_2, 0\right)\right)\right]
$
is non-negative and non-decreasing in $t_1$ and $t_2,$ we have
\begin{align}
	\int_0^{\infty}\int_0^{\infty} e^{-\lambda(t_1+t_2)}
	\mathbb{E}\left[S_n\left(g_n(\cdot,t_1,0)\right)
	S_n\left(g_n(\cdot,t_2,0)\right)\right]\ud t_1\ud t_2\geq \lambda^{-2}e^{-2\lambda t}
	\mathbb{E}\left[S_n\left(g_n(\cdot,t,0)\right)\right]^2.\label{E:existence proof4-new}
\end{align}
Now choose 
$$\lambda=\frac{(b+2r)n}{2t}.$$
Since this $\lambda$ increases with $n$, there exists $n_0$ such that for all $n\geq n_0$ the condition $\lambda^b\geq \frac{\nu C_{2,\eta}}{2}$ holds. Combining \eqref{E:existence proof3-new} and \eqref{E:existence proof4-new} and substituting $\lambda=\frac{(b+2r)n}{2t}$, we obtain
\begin{align}
	\mathbb{E}\left[S_n\left(g_n(\cdot,t,0)\right)\right]^2
	&\leq C_\eta\left(\frac{C}{\eta}\right)^n n!
	\left(\frac{2et}{(b+2r)n}\right)^{(b+2r)n}
	\leq C_\eta
	\frac{1}{(n!)^{b+2r-1}} \left(\frac{t^{b+2r}}{\eta}\right)^n. \nonumber
\end{align}

For $1\leq n<n_0$, the left-hand side of
\eqref{E:single-integral-bound}, with $\lambda=\frac{(b+2r)n}{2t}$, is also finite.
Indeed, by \eqref{nbound1} the $n$th moment inside the square root is bounded by a polynomial in $\tilde t$, and the factor
$e^{-2\lambda^b\tilde t/\nu}$ guarantees integrability.
Since only finitely many such $n$ occur, their contribution can be absorbed into the constant.

By Corollary~\ref{coro:3.5}, the same argument gives, uniformly in $\varepsilon$,
\begin{equation}
	\mathbb{E}\left[S_{n,\varepsilon}\left(g_n(\cdot,t,0)\right)\right]^2
	\leq
	C_\eta
	\frac{1}{(n!)^{b+2r-1}} \left(\frac{t^{b+2r}}{\eta}\right)^n.
	\label{e:S-n-e-2-new}
\end{equation}
for all sufficiently large $n$.

If $b+2r>1$, we take $\eta=1$. If $b+2r=1$, then for the fixed $t$ and $\theta$ under consideration we choose $\eta$ sufficiently large so that $\frac{t^{b+2r}}{\eta}<1$. In either case,
\begin{equation*}
	\sum_{n=0}^\infty \theta^{n/2}
	\left(\mathbb{E}\left[S_n\left(g_n(\cdot,t,0)\right)\right]^2\right)^{1/2}<\infty.
\end{equation*}
Thus \eqref{e:existence-con1} follows.  The uniform estimate \eqref{e:S-n-e-2-new} proves
\eqref{e:existence-con2} in the same way.  This proves the existence under (i) in Theorem \ref{th:exist}.

{\bf Step 2: when $b+2r<1$.} In this case, \eqref{e:dalang-plus} implies Assumption \ref{A:Main} and $\delta<\frac{2(b+r)-1}{b}<1$.
We retain the ordered-integral estimate preceding the last simplification in
\cite[Remark~3.6]{song2017}.  More precisely, taking $\beta_0=0$,
$\Psi(\xi)=|\xi|^a$, and $\varepsilon_0=1-\delta$ in that argument, the
ordering of the $2n$ time variables, followed by the independent-increment
estimate in \cite[(3.2)--(3.3)]{song2017}, yields
\begin{align}
	&\mathbf E_0\left[
	\int_0^{\tilde t}\int_0^{\tilde t}
	\gamma(X(s)-X(r))\ud s\ud r
	\right]^n\nonumber\\
	&\quad\leq C^n(2n-1)!! \tilde t^n
	\int_{0<z_1<\cdots<z_n<\tilde t}
	\prod_{j=1}^n
	\left(\int_{\mathbb R^d}e^{-(z_j-z_{j-1})|\xi|^a}
	\mu(\ud\xi)\right)\ud z_1\cdots\ud z_n,
\nonumber
\end{align}
where $z_0=0$. Define
\begin{equation*}
	\mathcal I_n(\tilde t):=	\int_{0<z_1<\cdots<z_n<\tilde t}
	\prod_{j=1}^n
	\left(\int_{\mathbb R^d}e^{-(z_j-z_{j-1})|\xi|^a}
	\mu(\ud\xi)\right)\ud z_1\cdots\ud z_n.
\end{equation*}

By Cauchy-Schwarz inequality, we have
\begin{align}
	&\left\{\int_0^\infty e^{-2\lambda^b\tilde t/\nu}
	\left\{\mathbf E_0\left[
	\int_0^{\tilde t}\int_0^{\tilde t}
	\gamma(X(s)-X(r))\ud s\ud r
	\right]^n\right\}^{1/2}\ud\tilde t\right\}^2\nonumber\\
	&\leq  C^n(2n-1)!!\int_0^\infty e^{-2\lambda^b\tilde t/\nu}
\tilde t^n\ud\tilde t \int_0^\infty e^{-2\lambda^b\tilde t/\nu}
	\mathcal I_n(\tilde t) \ud\tilde t \nonumber\\
	&\leq  C^n(2n-1)!!  n! \left( \frac{\nu}{2\lambda^b} \right)^{n+1}
	\int_0^\infty e^{-2\lambda^b\tilde t/\nu}
	\mathcal I_n(\tilde t) \ud\tilde t.                   	\label{E:strong-CS-LT}
\end{align}
 By \eqref{e:dalang-plus} and \cite[Lemma~3.9]{song2017}, for every $\lambda>0$, 
\begin{align*}
	\int_0^\infty e^{-2\lambda^bs/\nu}
	\left(
	\int_{\mathbb R^d}e^{-s|\xi|^a}\mu(\ud\xi)
	\right)\ud s
	&\leq
	C\int_0^\infty e^{-2\lambda^bs/\nu}
	(1+s^{-\delta})\ud s\\
	&=
	C\left\{
	\frac{\nu}{2\lambda^b}
	+
	\Gamma(1-\delta)
	\left(\frac{\nu}{2\lambda^b}\right)^{1-\delta}
	\right\}.
\end{align*}
In particular, whenever $2\lambda^b/\nu\geq1$,
\begin{equation}
	\int_0^\infty e^{-2\lambda^bs/\nu}
	\left(
	\int_{\mathbb R^d}e^{-s|\xi|^a}\mu(\ud\xi)
	\right)\ud s
	\leq C\lambda^{-b(1-\delta)}.
	\nonumber
\end{equation}
On the other hand, Lemma~\ref{le:chenbook-lem227} gives
\begin{align*}
\int_0^\infty e^{-2\lambda^b\tilde t/\nu}
	\mathcal I_n(\tilde t)\ud\tilde t=
	\frac{\nu}{2\lambda^b}
	\left\{
	\int_0^\infty e^{-2\lambda^bs/\nu}
	\left(
	\int_{\mathbb R^d}e^{-s|\xi|^a}\mu(\ud\xi)
	\right)\ud s
	\right\}^n.
\end{align*}
Consequently, if $2\lambda^b/\nu\geq1$, then
\begin{equation}
	\int_0^\infty e^{-2\lambda^b\tilde t/\nu}
	\mathcal I_n(\tilde t)\ud\tilde t
	\leq
	C^n\lambda^{-b}\lambda^{-b(1-\delta)n}.
	\label{E:strong-ordered-LT}
\end{equation}
Substituting \eqref{E:strong-ordered-LT} into
\eqref{E:strong-CS-LT}, we obtain
\begin{align}
	&\left\{\int_0^\infty e^{-2\lambda^b\tilde t/\nu}
	\left\{\mathbf E_0\left[
	\int_0^{\tilde t}\int_0^{\tilde t}
	\gamma(X(s)-X(r))\ud s\ud r
	\right]^n\right\}^{1/2}\ud\tilde t\right\}^2 \leq
	C^n(2n-1)!!\,n!
	\lambda^{-b\{(2-\delta)n+2\}}.
	\label{E:strong-Hamiltonian-LT}
\end{align}

Substituting \eqref{E:strong-Hamiltonian-LT} into \eqref{E:existence proof1}, we obtain
\begin{align}
	&\int_0^\infty\int_0^\infty e^{-\lambda(t_1+t_2)}
	\mathbb E\left[
	S_n(g_n(\cdot,t_1,0))
	S_n(g_n(\cdot,t_2,0))
	\right]\ud t_1\ud t_2\nonumber\\
	&\quad\leq
	\frac{(2n)!}{2^n(n!)^3}
	\lambda^{2b-2nr-2}
	\left(\frac{2}{\nu}\right)^{2n+2}
	C^n(2n-1)!!\,n!
	\lambda^{-b\{(2-\delta)n+2\}}\nonumber\\
		&\quad\leq
	C^n n!\lambda^{-\{[2(b+r)-b\delta]n+2\}},
	\nonumber
\end{align}
where the last inequality follows from the estimate
$$
\frac{(2n)!}{2^n(n!)^3}(2n-1)!!n! \leq C^n n!.
$$

Therefore, similarly to \eqref{E:existence proof4-new}, we easily obtain
\begin{equation}
	\mathbb E\left[S_n(g_n(\cdot,t,0))\right]^2
	\leq
	C^n n!e^{2\lambda t}
	\lambda^{-[2(b+r)-b\delta]n}.
	\label{E:strong-Sn-lambda}
\end{equation}
Let $2(b+r)-b\delta>1$ and take
$$
\lambda=\frac{[2(b+r)-b\delta]n}{2t}.
$$
For all sufficiently large $n$, this choice satisfies
$2\lambda^b/\nu\geq1$, and hence all the preceding estimates apply.
It follows from \eqref{E:strong-Sn-lambda} that
\begin{align}
	\mathbb E\left[S_n(g_n(\cdot,t,0))\right]^2
	&\leq
	C^n n!t^{[2(b+r)-b\delta]n}
	n^{-[2(b+r)-b\delta]n}\nonumber\\
	&\leq
	\frac{C^nt^{[2(b+r)-b\delta]n}}
	{(n!)^{2(b+r)-b\delta-1}}.
\nonumber
\end{align}
Repeating the above discussion, we obtain the existence under (ii).

{\bf Step 3: when \eqref{E:scalef} hold.}
Taking $\lambda = \eta = 1$ in \eqref{E:existence proof3-new}, we have
\begin{align}
	\int_0^{\infty} & \int_0^{\infty} \ud t_1 \ud t_2 \: e^{-t_1-t_2} \mathbb{E}\left[S_n\left(g_n\left(\cdot, t_1, 0\right)\right) S_n\left(g_n\left(\cdot, t_2, 0\right)\right)\right] 
    \leq n! C^n. \label{E:existence proof4}
\end{align}
 By the monotonicity of $g_n(\cdot, t, 0)$ in $t$, together with \eqref{E:scalef} and \eqref{e:scal-G}, it follows that 
 \begin{align}
 	&\mathbb{E}\left[S_n\left(g_n\left(\cdot, t_1, 0\right)\right) S_n\left(g_n\left(\cdot, t_2, 0\right)\right)\right]  \nonumber\\
 	&\geq \mathbb{E}\left[S_n\left(g_n(\cdot, \min _{1 \leq j \leq 2} t_j, 0)\right)\right]^2 
    =\mathbb{E}\left[S_n\left(g_n(\cdot, 1, 0)\right)\right]^2\left(\min _{1 \leq j \leq 2} t_j\right)^{[2(b+r)-b \alpha / a] n}. \nonumber
 \end{align}
 Thus,
 \begin{equation}
     \label{E:existence proof5}
 \begin{aligned}
 	 \int_0^{\infty} & \int_0^{\infty} \ud t_1 \ud t_2 e^{-t_1-t_2} \mathbb{E}\left[S_n\left(g_n\left(\cdot, t_1, 0\right)\right) S_n\left(g_n\left(\cdot, t_2, 0\right)\right)\right]  \\ 	\geq&\mathbb{E}\left[S_n\left(g_n(\cdot, 1, 0)\right)\right]^2  \int_0^{\infty} \int_0^{\infty} \ud t_1 \ud t_2 e^{- t_1- t_2}\left(\min _{1 \leq j \leq 2} t_j\right)^{[2(b+r)-b \alpha / a] n} .
 \end{aligned}
 \end{equation}
 
 Recall that for two independent exponential random variables with parameter $1$, their minimum follows an exponential distribution with parameter $2$. Therefore, 
 \begin{equation}\label{E:existence proof6}
 \begin{aligned}
 	\int_0^{\infty} \int_0^{\infty} \ud t_1 \ud t_2 e^{- t_1- t_2}\left(\min _{1 \leq j \leq 2} t_j\right)^{[2(b+r)-b \alpha / a] n} &=2 \int_0^{\infty} e^{-2 t} t^{[2(b+r)-b \alpha / a] n} d t \\
 	&=2^{-[2(b+r)-b \alpha / a] n} \Gamma(1+[2(b+r)-b \alpha / a] n),
 \end{aligned}
 \end{equation}
which holds under the condition $2a(b+r)-b \alpha> 0.$

Combining \eqref{E:existence proof4}, \eqref{E:existence proof5} and \eqref{E:existence proof6} yields 
\begin{equation}
    \label{E:existence proof7}
\begin{aligned}
&\mathbb{E}\left[S_n\left(g_n(\cdot, t, 0)\right)\right]^2 
    = \mathbb{E}\left[S_n\left(g_n(\cdot, 1, 0)\right)\right]^2t^{[2(b+r)-b \alpha / a] n} \\
	&\leq \frac{C^n n! (2t)^{[2(b+r)-b \alpha / a] n}}{\Gamma(1+[2(b+r)-b \alpha / a])} 
    =\frac{C^n  (2t)^{[2(b+r)-b \alpha / a] n}}{\left(n!\right)^{2(b+r)-b \alpha / a-1}}. 
\end{aligned}
\end{equation}
From \eqref{E:existence proof7}, we require
\begin{equation*}
	2(b+r)-\frac{b\alpha}{a}-1>0,
\end{equation*}
or, equivalently,
\begin{equation*}
	\alpha<\frac{a}{b}\bigl(2(b+r)-1\bigr).
\end{equation*}
Combining this with Assumption \ref{A:Main} and the equivalent form $\alpha<\min\{a,d\}$ of the scaling condition \eqref{E:scalef}, we obtain that under \eqref{eq:scal-dalang-new} we have
\begin{align}
  &\mathbb{E}u^2(t,x)\leq \left(\sum_{n=0}^{\infty}\theta^{n/2}\left\{\mathbb{E}\left[S_n\left(g_n(\cdot, t, 0)\right)\right]^2\right\}^{1 / 2} \right)^2 \nonumber\\
   &\leq \left[\sum_{n=0}^{\infty}\theta^{n/2}\left(\frac{C^n  t^{[2(b+r)-b \alpha / a] n}}{\left(n!\right)^{2(b+r)-b \alpha / a-1}}\right)^{1 / 2} \right]^2
  \leq e^{C(2t)^{\frac{2 a(b+r)-b \alpha}{2 a(b+r)-b \alpha-a}}}. \nonumber
  \end{align}

  Now consider the case $2(b+r)-\frac{b\alpha}{a}-1=0$. Then \eqref{E:existence proof7} reduces to
$$
\mathbb{E}\left[S_n\left(g_n(\cdot,t,0)\right)\right]^2 \leq C^n t^n.
$$
  	Choose $t_0=t_0(\theta)>0$ so that
	$C \theta t_0<1$.  The resulting geometric series, together with
	its uniform mollified analogue, proves \eqref{e:existence-con1} and
	\eqref{e:existence-con2} for $0<t<t_0$, and hence gives the asserted local
	solution at the critical value. This completes the proof. 
    \end{proof}

\subsection{Proof of Theorem \ref{th:necessity-dalang}}\label{proof:nece}

\begin{proof}[Proof of Theorem \ref{th:necessity-dalang}]
 It suffices to show that if Dalang's condition~\eqref{dalang} is violated, then   
 $$\mathbb{E}\bigl[S_{2}(g_2(\cdot,t,0))\bigr]^2 = \infty.$$ 

	Since $g_2(\cdot,t,0)$ is nonnegative under condition \eqref{E:Pos}, we have
	\begin{align*}
		\mathbb{E} \bigl[S_2(g_2(\cdot, t, 0))\bigr]^2
		&= \sum_{\mathcal{D} \in \Pi_2} \int_{(\mathbb{R}^d)^4} \mathrm{d}x_1\mathrm{d}x_2\mathrm{d}x_3\mathrm{d}x_4 \Bigl(\prod_{\mathcal{D} \in \Pi_2} \gamma(x_j-x_k)\Bigr) g_2(x_1, x_2, t, 0) g_2(x_3, x_4, t, 0) \\
		&\geq \int_{(\mathbb{R}^d)^4} \mathrm{d}x_1\mathrm{d}x_2\mathrm{d}x_3\mathrm{d}x_4 \, \gamma(x_1-x_2) \gamma(x_3-x_4) g_2(x_1, x_2, t, 0) g_2(x_3, x_4, t, 0) \\
		&= \biggl( \int_{(\mathbb{R}^d)^2} \gamma(x_2-x_1) g_2(x_1, x_2, t, 0) \, \mathrm{d}x_1 \mathrm{d}x_2 \biggr)^2.
	\end{align*}
	For the integral inside the square, we obtain
	\begin{align*}
		& \int_{\left(\mathbb{R}^d\right)^2} \gamma\left(x_2-x_1\right) g\left(x_1, x_2, t, 0\right) \ud x_1 \ud x_2 \\
		& =\int_{[0, t]_{<}^2} \ud s_1 \ud s_2 \int_{\left(\mathbb{R}^d\right)^2} \gamma\left(x_2-x_1\right) G\left(s_1, x_1\right) G\left(s_2-s_1, x_2-x_1\right) \ud x_1 \ud x_2 \\
		& =\int_{[0, t]_{<}^2}\left(\int_{\mathbb{R}^d} G\left(s_1, x\right) \ud x\right)\left(\int_{\mathbb{R}^d} \gamma(x) G\left(s_2-s_1, x\right) \ud x\right) \ud s_1 \ud s_2.
	\end{align*}
By \eqref{E:FZ}, and 
$$\int_{\mathbb{R}^d} G\left(s_1, x\right) \ud x=\frac{s_{1}^{b+r-1}}{\Gamma(b+r)}, $$ we have
\begin{equation*}
    \begin{aligned}
 		& \int_{(\mathbb{R}^d)^2} \gamma(x_2-x_1) g_2(x_1, x_2, t, 0) \, \mathrm{d}x_1 \mathrm{d}x_2\\
		&=\int_{\mathbb{R}^d} \mu(\mathrm{d}\xi)
		\int_{0<s_1<s_2<t}
		\frac{s_1^{b+r-1}}{\Gamma(b+r)} (s_2-s_1)^{b+r-1}	E_{b,b+r}\Bigl(-\frac{\nu}{2}(s_2-s_1)^b |\xi|^a\Bigr)
		\mathrm{d}s_1 \mathrm{d}s_2
		\\
        &=\int_{\mathbb{R}^d} \mu(\mathrm{d}\xi) \sum_{n=0}^{\infty}
\frac{(-\frac{\nu}{2}|\xi|^a)^n}{\Gamma(b+r)\Gamma(b n+b+r)}
\int_0^t \mathrm ds_2
\int_0^{s_2} s_1^{b+r-1}(s_2-s_1)^{b n+b+r-1}\,\mathrm ds_1,
	\end{aligned}
    \end{equation*}
where the last identity follows from \eqref{e:mlf}.   Using the change of variables $s_1=s_2 z$ and Beta function, we have
\begin{align*}
    \int_0^{s_2}s_1^{b+r-1}(s_2-s_1)^{b n+b+r-1}\,\mathrm ds_1
&=
s_2^{b n+2b+2r-1}
\int_0^1 z^{b+r-1}(1-z)^{b n+b+r-1}\,\mathrm dz\\
&=s_2^{b n+2b+2r-1}\,
\frac{\Gamma(b+r)\Gamma(b n+b+r)}{\Gamma(b n+2b+2r)}.
\end{align*}
Then, integrating over $s_2$, we can see that
\begin{align*}
    & \int_{(\mathbb{R}^d)^2} \gamma(x_2-x_1) g_2(x_1, x_2, t, 0) \, \mathrm{d}x_1 \mathrm{d}x_2\\
    &=\int_{\mathbb{R}^d} \mu(\mathrm{d}\xi)\sum_{n=0}^{\infty}
\frac{(-\frac{\nu}{2}|\xi|^a)^n t^{b n+2b+2r}}{\Gamma(b n+2b+2r+1)} =\int_{\mathbb{R}^d} \mu(\mathrm{d}\xi) t^{2b+2r}E_{b,2b+2r+1}\Big(-\frac{\nu}{2}t^b|\xi|^a\Big).
\end{align*}

For $0<b<2$, by (1.8.28) of \cite{kst2006} ,
\begin{align*}
t^{2b+2r} E_{b,2b+2r+1}
\left(-\frac{\nu}{2}t^b|\xi|^a\right)
=\frac{2t^{b+2r}}{\nu\Gamma(b+2r+1)}|\xi|^{-a}
+O\left(|\xi|^{-2a}\right),
\qquad |\xi|\longrightarrow\infty.
\end{align*}
 Thus, we can find a constant $C$ such that  for some sufficiently large $R$,   	 $$\mathbb{E}\bigl[S_{2}(g_2(\cdot,t,0))\bigr]^2\ge C t^{2b+2r} \int_{\{|\xi|>R\}} |\xi|^{-a} \mu(\ud\xi),$$
the right-hand side of which is infinite if Dalang's condition~\eqref{dalang} is violated, noting that $\mu$ is locally integrable.

For $b=2$, the necessity follows directly from \cite[Theorem 1.1]{ch2024}. This completes the proof.
\end{proof}

\section{Upper bound}\label{se:upper}
 In this section, we establish an upper bound on the $p$-th moment of the solution to \eqref{e:sfde} for all real $p\in [2,\infty)$ in Theorem~\ref{T:upperbound}, and also provide the \nameref{proof:1} of Theorem \ref{th:excat} (i). 

We first establish a hypercontractivity property (see Proposition~\ref{prop:Hypercon}) for the mild Stratonovich solution to~\eqref{e:sfde}, which enables us to bound  $p$-th moments of the solution for $p>2$ by the second moment. 

Let $\widetilde{W}$ be an independent copy of $W$.  Given   $\tau \geq 0$,  set $\rho=\sqrt{e^{2\tau}-1}$. For any $\varepsilon_1,\dots,\varepsilon_n>0$, write $\varepsilon=(\varepsilon_1,\dots,\varepsilon_n)$ and define
\begin{equation}\label{e:tilde-Sn}
    \widetilde{S}_{n,\varepsilon}(g_n(\cdot,t,x))
    := \int_{(\mathbb{R}^d)^n} g_n(x_1,\dots,x_n,t,x)\,
      \widetilde{\mathbb{E}} \prod_{k=1}^n 
        \bigl(e^{\tau}\dot{W}_{\varepsilon_k}(x_k) + \iota\rho\dot{\widetilde{W}}_{\varepsilon_k}(x_k)\bigr)\ud x_1\cdots \ud x_n,
\end{equation}
where $\widetilde{\mathbb{E}}$ denotes the expectation with respect to $\widetilde{W}$,  $g_n$ is given by \eqref{E:gn}, and $\dot{W}_\varepsilon,$ $\dot{\widetilde{W}}_\varepsilon$ are defined by \eqref{e:dot-we}.
The multiple Stratonovich integral $\widetilde{S}_n(g_n(\cdot,t,x))$ is then defined as the $\mathcal{L}^2$‑limit
\[
    \widetilde{S}_n(g_n(\cdot,t,x)) := \lim_{\varepsilon\to0^+} \widetilde{S}_{n,\varepsilon}(g_n(\cdot,t,x)),
\]
and we consider the series
\begin{equation}\label{e:u2}
    \widetilde{u}(t,x):=\sum_{n=0}^{\infty}\theta^{\frac{n}{2}}\,\widetilde{S}_n(g_n(\cdot,t,x)).
\end{equation}
We emphasize that that $\widetilde{S}_n(g_n)$ depends implicitly on $\tau$, or equivalently on  $\rho=\sqrt{e^{2\tau}-1}$.

To justify that $\widetilde{S}_n$ is well-defined as an $\mathcal{L}^2$‑limit and that the series~\eqref{e:u2} converges, we need to compute $\mathbb{E}\bigl[\widetilde{S}_n(f)\,\widetilde{S}_n(g)\bigr]$ for measurable functions $f,g:\mathbb{R}^{n}\to\mathbb{R}$. Given $\varepsilon_1,\dots,\varepsilon_{2n}>0$, denote $\varepsilon=(\varepsilon_1,\dots,\varepsilon_n)$ and $\varepsilon'=(\varepsilon_{n+1},\dots,\varepsilon_{2n})$, and set
$$
\widetilde{S}_{n,\varepsilon}(f)
:=\int_{(\mathbb{R}^d)^n} f(x_1,\dots,x_n)\,
 \widetilde{\mathbb{E}} \prod_{k=1}^n 
        \bigl(e^{\tau}\dot{W}_{\varepsilon_k}(x_k) + \iota\rho\dot{\widetilde{W}}_{\varepsilon_k}(x_k)\bigr) \ud x_1\cdots \ud x_n .
$$
The exact formula for 
$\mathbb{E}\bigl[\widetilde{S}_{n,\varepsilon}(f)\,
\widetilde{S}_{n,\varepsilon'}(g)\bigr]$
is provided in the lemma below.

\begin{lemma}\label{L:Wick2}
  For any $\tau\ge0$ set $\rho=\sqrt{e^{2\tau}-1}$.
  For any $n\in\mathbb{N}$, $x_1,\dots,x_{2n}\in\mathbb{R}^d$ and $\varepsilon_1,\dots,\varepsilon_{2n}>0$, let
  $I=\{1,\dots,n\}$, $J=\{n+1,\dots,2n\}$.  Then
  \begin{equation}\label{e:Wick-i}
  \begin{aligned}
    &\mathbb{E}_W\Biggl[ \widetilde{\mathbb{E}}\prod_{k\in I}\bigl(e^{\tau}\dot{W}_{\varepsilon_k}(x_k) + \iota\rho\dot{\widetilde{W}}_{\varepsilon_k}(x_k)\bigr)
    \cdot \widetilde{\mathbb{E}}\prod_{k\in J}\bigl(e^{\tau}\dot{W}_{\varepsilon_k}(x_k) + \iota\rho\dot{\widetilde{W}}_{\varepsilon_k}(x_k)\bigr) \Biggr] \\
    & = \sum_{\mathcal{D}\in\Pi_{2n}} \prod_{(u,v)\in\mathcal{D}} \gamma_{\varepsilon_u+\varepsilon_v}^{(\tau)}(x_u-x_v),
  \end{aligned}
  \end{equation}
  where $\Pi_{2n}$ is the set of all pair partitions of $\{1,\dots,2n\}$, and
  \[
    \gamma_{\varepsilon_u+\varepsilon_v}^{(\tau)}(x_u-x_v) =
    \begin{cases}
      \gamma_{\varepsilon_u+\varepsilon_v}(x_u-x_v),                & \text{if } u,v\in I \text{ or } u,v\in J,\\[6pt]
      e^{2\tau}\,\gamma_{\varepsilon_u+\varepsilon_v}(x_u-x_v),     & \text{otherwise.}
    \end{cases}
  \]
  \end{lemma}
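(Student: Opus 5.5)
We compute the two inner conditional expectations $\widetilde{\mathbb{E}}$ explicitly and then apply Wick's formula \eqref{e:wick-formula} for the Gaussian field $\dot W$. Set $Z_k:=e^{\tau}\dot W_{\varepsilon_k}(x_k)$ and $\widetilde Z_k:=\iota\rho\,\dot{\widetilde W}_{\varepsilon_k}(x_k)$. Conditionally on $W$, the vector $(\widetilde Z_k)_{k\in I}$ is a centered complex Gaussian vector with covariance $\widetilde{\mathbb E}[\widetilde Z_u\widetilde Z_v]=-\rho^2\gamma_{\varepsilon_u+\varepsilon_v}(x_u-x_v)$, by \eqref{e:covar-dot-we}. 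Expanding $\prod_{k\in I}(Z_k+\widetilde Z_k)$ over subsets $A\subseteq I$ and applying Wick's formula to the $\widetilde W$-part gives
\[
\widetilde{\mathbb{E}}\prod_{k\in I}(Z_k+\widetilde Z_k)=\sum_{A\subseteq I}\prod_{k\in I\setminus A}Z_k\sum_{\mathcal P\in\Pi_A}\prod_{(u,v)\in\mathcal P}\bigl(-\rho^2\gamma_{\varepsilon_u+\varepsilon_v}(x_u-x_v)\bigr),
\]
a Hermite/Wick-type polynomial in the $Z_k$. The same formula holds for $J$.

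Next we take $\mathbb E_W$ of the product of the two polynomials. Fix $A\subseteq I$, $B\subseteq J$ and partial pairings $\mathcal P_A$ of $A$ and $\mathcal P_B$ of $B$. Wick's formula for $\mathbb E_W\prod_{k\in(I\setminus A)\cup(J\setminus B)}Z_k$ produces pairings $\mathcal Q$ of the remaining indices, each pair contributing $e^{2\tau}\gamma_{\varepsilon_u+\varepsilon_v}(x_u-x_v)$. The union $\mathcal D=\mathcal P_A\cup\mathcal P_B\cup\mathcal Q$ is then a pair partition of $\{1,\dots,2n\}$. We regroup the whole sum according to $\mathcal D$. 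For a given $\mathcal D\in\Pi_{2n}$, the cross pairs ($u\in I$, $v\in J$) can only arise from $\mathcal Q$, so each contributes $e^{2\tau}\gamma$. A pair inside $I$ (or inside $J$) can arise in exactly two ways: either from $\mathcal P_A$, with weight $-\rho^2\gamma$, or from $\mathcal Q$, with weight $e^{2\tau}\gamma$. Summing over these independent binary choices for each internal pair factorizes, giving the factor $(e^{2\tau}-\rho^2)\gamma=\gamma$ for each internal pair, since $\rho^2=e^{2\tau}-1$. This yields \eqref{e:Wick-i}.

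The only delicate step is the bookkeeping bijection. We must check that the triples $(A,\mathcal P_A;B,\mathcal P_B;\mathcal Q)$ correspond exactly to pairs consisting of a partition $\mathcal D$ and a labeling of each internal pair of $\mathcal D$ as ``from $\widetilde W$'' or ``from $W$'', with $A$ recovered as the union of the $I$-pairs labeled $\widetilde W$. This is immediate once stated, because $\mathcal P_A$ uses only indices inside $I$, and $\widetilde W$ is independent across the two separate $\widetilde{\mathbb E}$'s, so no cross $\widetilde W$-pairs ever appear. Odd-cardinality terms vanish automatically by the second line of \eqref{e:wick-formula}.
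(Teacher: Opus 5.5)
Your proof is correct and follows essentially the same route as the paper: you expand each $\widetilde{\mathbb E}$ over subsets with Wick pairings, apply Wick's formula in $W$ to the leftover factors, and regroup by the total pair partition $\mathcal D$, so that each internal pair contributes $(e^{2\tau}-\rho^2)\gamma=\gamma$ and each cross pair contributes $e^{2\tau}\gamma$. The only difference is cosmetic: the paper first factors out $e^{2n\tau}$ and works with $a=\iota\rho e^{-\tau}$, getting $1+a^2=e^{-2\tau}$ per internal pair, whereas you keep the weights $e^{2\tau}$ and $-\rho^2$ directly.
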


\begin{proof}
  Set $a := \iota\rho e^{-\tau}$.  Note that $e^{\tau}\dot{W}_{\varepsilon}(x) + \iota\rho\dot{\widetilde{W}}_{\varepsilon}(x) = e^{\tau}\bigl(\dot{W}_{\varepsilon}(x) + a\dot{\widetilde{W}}_{\varepsilon}(x)\bigr)$, and hence  
  the left-hand side of \eqref{e:Wick-i} equals
\begin{equation}\label{e:Wick-i'}
    e^{2n\tau}\,
    \mathbb{E}_W\Bigl[ \widetilde{\mathbb{E}}\prod_{k\in I}\bigl(\dot{W}_{\varepsilon_k}(x_k) + a\dot{\widetilde{W}}_{\varepsilon_k}(x_k)\bigr)
    \cdot \widetilde{\mathbb{E}}\prod_{k\in J}\bigl(\dot{W}_{\varepsilon_k}(x_k) + a\dot{\widetilde{W}}_{\varepsilon_k}(x_k)\bigr) \Bigr].
\end{equation}
By the Wick formula  \eqref{e:wick-formula} and the covariance~\eqref{e:covar-dot-we},
  \begin{align}
    \widetilde{\mathbb{E}}\Bigl[\prod_{k\in I}\bigl(\dot{W}_{\varepsilon_k}(x_k)+a\dot{\widetilde{W}}_{\varepsilon_k}(x_k)\bigr)\Bigr]
    &= \sum_{\substack{S_I \subseteq I \\ |S_I| \text{ even}}}
        \Bigl( \prod_{i \in I\setminus S_I} \dot{W}_{\varepsilon_i}(x_i) \Bigr)
        \Bigl( \sum_{\mathcal{P} \in \Pi_{S_I}} 
            \prod_{(u,v) \in \mathcal{P}} a^2 \gamma_{\varepsilon_u+\varepsilon_v}(x_u-x_v) \Bigr), \label{eq:inner-I-final2} \\[6pt]
    \widetilde{\mathbb{E}}\Bigl[\prod_{k\in J}\bigl(\dot{W}_{\varepsilon_k}(x_k)+a\dot{\widetilde{W}}_{\varepsilon_k}(x_k)\bigr)\Bigr]
    &= \sum_{\substack{S_J \subseteq J \\ |S_J| \text{ even}}}
        \Bigl( \prod_{j \in J\setminus S_J} \dot{W}_{\varepsilon_j}(x_j) \Bigr)
        \Bigl( \sum_{\mathcal{Q} \in \Pi_{S_J}} 
            \prod_{(r,s) \in \mathcal{Q}} a^2 \gamma_{\varepsilon_r+\varepsilon_s}(x_r-x_s) \Bigr), \label{eq:inner-J-final2}
  \end{align}
  where $S_I$ is a subset of $I$ and $\Pi_{S_I}$ denotes the set of all pair partitions of $S_I$.
  For $S_I = \varnothing$, we set $\Pi_\varnothing = \{\varnothing\}$ and interpret the empty product as $1$.
  The definitions of $S_J$ and $\Pi_{S_J}$ are analogous to those of $S_I$ and $\Pi_{S_I}$.
	Multiplying \eqref{eq:inner-I-final2} and \eqref{eq:inner-J-final2} and taking the expectation over $W$, we obtain
	\begin{equation}\label{eq:after-expect2}
    \begin{aligned}
		&\mathbb{E}_W\Bigl[ \widetilde{\mathbb{E}}\prod_{k\in I}\bigl(\dot{W}_{\varepsilon_k}(x_k)+a\dot{\widetilde{W}}_{\varepsilon_k}(x_k)\bigr)
		\cdot \widetilde{\mathbb{E}}\prod_{k\in J}\bigl(\dot{W}_{\varepsilon_k}(x_k)+a\dot{\widetilde{W}}_{\varepsilon_k}(x_k)\bigr) \Bigr]  \\
		&= \sum_{\substack{S_I \subseteq I \\ |S_I| \text{ even}}}
		\sum_{\substack{S_J \subseteq J \\ |S_J| \text{ even}}}
		\sum_{\mathcal{P} \in \Pi_{S_I}}
		\sum_{\mathcal{Q} \in \Pi_{S_J}}
		\Bigl( \prod_{(u,v) \in \mathcal{P}} a^2 \gamma_{\varepsilon_u+\varepsilon_v}(x_u-x_v) \prod_{(r,s) \in \mathcal{Q}} a^2 \gamma_{\varepsilon_r+\varepsilon_s}(x_r-x_s) \Bigr) \\
		&\qquad \times \mathbb{E}_W\Bigl[ \prod_{i \in I\setminus S_I} \dot{W}_{\varepsilon_i}(x_i) \prod_{j \in J\setminus S_J} \dot{W}_{\varepsilon_j}(x_j) \Bigr].
	\end{aligned}
    \end{equation}
	Set $U := (I\setminus S_I) \cup (J\setminus S_J)$.  Since $W$ is a centered Gaussian field, the Wick formula gives
	\begin{equation}
		\mathbb{E}_W\Bigl[ \prod_{k \in U} \dot{W}_{\varepsilon_k}(x_k) \Bigr]
		= \sum_{\mathcal{R} \in \Pi_U} \prod_{\{p,q\} \in \mathcal{R}} \gamma_{\varepsilon_p+\varepsilon_q}(x_p-x_q), \label{eq:Wick-W2}
	\end{equation}
	where $\Pi_U$ is the set of all pair partitions of $U$.
	Substituting \eqref{eq:Wick-W2} into \eqref{eq:after-expect2} yields
	\begin{equation}
	    \label{eq:after-expect3}
    \begin{aligned}
		&\mathbb{E}_W\Bigl[ \widetilde{\mathbb{E}}\prod_{k\in I}\bigl(\dot{W}_{\varepsilon_k}(x_k)+a\dot{\widetilde{W}}_{\varepsilon_k}(x_k)\bigr)
		\cdot \widetilde{\mathbb{E}}\prod_{k\in J}\bigl(\dot{W}_{\varepsilon_k}(x_k)+a\dot{\widetilde{W}}_{\varepsilon_k}(x_k)\bigr) \Bigr]  \\
		&= \sum
		\Bigl( \prod_{(u,v) \in \mathcal{P}} a^2 \gamma_{\varepsilon_u+\varepsilon_v}(x_u-x_v) \prod_{(r,s) \in \mathcal{Q}} a^2 \gamma_{\varepsilon_r+\varepsilon_s}(x_r-x_s) \prod_{(p,q) \in \mathcal{R}} \gamma_{\varepsilon_p+\varepsilon_q}(x_p-x_q) \Bigr),  
	\end{aligned}
    \end{equation}
	where the summation $\sum$  means $\sum_{\substack{S_I \subseteq I \\ |S_I| \text{ even}}}
		\sum_{\substack{S_J \subseteq J \\ |S_J| \text{ even}}}
		\sum_{\mathcal{P} \in \Pi_{S_I}}
		\sum_{\mathcal{Q} \in \Pi_{S_J}}\sum_{\mathcal{R} \in \Pi_U}.$
        
	To prove the desired \eqref{e:Wick-i}, now we rewrite the  sum in \eqref{eq:after-expect3} as a sum over pair partitions $\mathcal D\in \Pi_{2n}$.

    For each $\mathcal P\in \Pi_{S_I}, \mathcal Q\in \Pi_{S_J}, \mathcal R\in \Pi_{S_U}$, set $\mathcal{D} := \mathcal{P}\cup\mathcal{Q}\cup\mathcal{R}$, and clearly	 $\mathcal{D}\in\Pi_{2n}$.  Decompose $\mathcal{D}$ as
	\[
	\mathcal{D}_1=\{(u,v)\in\mathcal{D}\mid u,v\in I\},\quad
	\mathcal{D}_2=\{(r,s)\in\mathcal{D}\mid r,s\in J\},\quad
	\mathcal{D}_3=\{(u,r)\in\mathcal{D}\mid u\in I,\;r\in J\}.
	\]
	From the construction we must have $\mathcal{P}\subseteq\mathcal{D}_1$, $\mathcal{Q}\subseteq\mathcal{D}_2$, and
	\begin{equation}\label{eq:R-from-D2}
		\mathcal{R}=(\mathcal{D}_1\setminus\mathcal{P})\cup(\mathcal{D}_2\setminus\mathcal{Q})\cup\mathcal{D}_3.
	\end{equation}
	
	Conversely, fix a  pair partition $\mathcal{D}\in\Pi_{2n}$ and choose arbitrary subsets $\mathcal{P}\subseteq\mathcal{D}_1$, $\mathcal{Q}\subseteq\mathcal{D}_2$.
	Define $S_I = \bigcup_{(u,v)\in\mathcal{P}}\{u,v\}$, $S_J = \bigcup_{(r,s)\in\mathcal{Q}}\{r,s\}$.
	Then $S_I\subseteq I$, $S_J\subseteq J$, both have even cardinality, and $\mathcal{P}\in\Pi_{S_I}$, $\mathcal{Q}\in\Pi_{S_J}$.
	Let $U = (I\setminus S_I)\cup(J\setminus S_J)$ and define $\mathcal{R}$ by \eqref{eq:R-from-D2}.
	Then $\mathcal{R}$ is a pair partition of $U$, so $\mathcal{R}\in\Pi_U$.
	
	
	This gives a bijection between the set of the triples $(\mathcal P, \mathcal Q, \mathcal R)$ included in the sum of  \eqref{eq:after-expect3} and the set of triples $(\mathcal{D},\mathcal{P},\mathcal{Q})$ satisfying $\mathcal{D}=\mathcal P\cup \mathcal Q\cup \mathcal R \in\Pi_{2n}$ with $\mathcal{P}\subseteq\mathcal{D}_1$, $\mathcal{Q}\subseteq\mathcal{D}_2$.
	Hence the  sum in \eqref{eq:after-expect3} equals
	\begin{equation}\label{eq:regrouped2}
		\begin{aligned}
			\sum_{\mathcal{D}\in\Pi_{2n}}\;
			\sum_{\mathcal{P}\subseteq\mathcal{D}_1}\;
			\sum_{\mathcal{Q}\subseteq\mathcal{D}_2}\;
			&\Bigl( \prod_{(u,v)\in\mathcal{P}} a^2 \gamma_{\varepsilon_u+\varepsilon_v}(x_u-x_v) \Bigr)
			\Bigl( \prod_{(r,s)\in\mathcal{Q}} a^2 \gamma_{\varepsilon_r+\varepsilon_s}(x_r-x_s) \Bigr) \Bigl( \prod_{(p,q)\in\mathcal{R}} \gamma_{\varepsilon_p+\varepsilon_q}(x_p-x_q) \Bigr),
		\end{aligned}
	\end{equation}
	where $\mathcal{R}$ is given by \eqref{eq:R-from-D2}.

	Now fix $\mathcal{D}$ and simplify the inner sums.
	For a pair $(u,v)\in\mathcal{D}_1$, if it belongs to $\mathcal{P}$ it contributes $a^2 \gamma_{\varepsilon_u+\varepsilon_v}(x_u-x_v)$; if not, it belongs to $\mathcal{R}$ and contributes $\gamma_{\varepsilon_u+\varepsilon_v}(x_u-x_v)$.
	Therefore its factor can be written as $	 \left(a^2\mathbf{1}_{\{(u,v)\in\mathcal{P}\}} + \mathbf{1}_{\{(u,v)\notin\mathcal{P}\}}\right)\gamma_{\varepsilon_u+\varepsilon_v}(x_u-x_v).$
	The same description applies to $\mathcal{D}_2$ with $\mathcal{Q}$.
	Every pair $(p,q)\in\mathcal{D}_3$ belongs to $\mathcal{R}$ and therefore contributes $\gamma_{\varepsilon_p+\varepsilon_q}(x_p-x_q)$.
	Thus the product in \eqref{eq:regrouped2} becomes
	\begin{align*}
		&\prod_{(u,v)\in\mathcal{D}_1} \bigl( a^2\mathbf{1}_{\{(u,v)\in\mathcal{P}\}} + \mathbf{1}_{\{(u,v)\notin\mathcal{P}\}} \bigr) \gamma_{\varepsilon_u+\varepsilon_v}(x_u-x_v)\\
		&\quad\times \prod_{(r,s)\in\mathcal{D}_2} \bigl( a^2\mathbf{1}_{\{(r,s)\in\mathcal{Q}\}} + \mathbf{1}_{\{(r,s)\notin\mathcal{Q}\}} \bigr) \gamma_{\varepsilon_r+\varepsilon_s}(x_r-x_s)
		\times \prod_{(p,q)\in\mathcal{D}_3} \gamma_{\varepsilon_p+\varepsilon_q}(x_p-x_q).
	\end{align*}
	
	Summing over all $\mathcal{P}\subseteq\mathcal{D}_1$ and using the fact 
    $$(a+b)^n= \sum_{k=0}^n \binom{n}{k} a^k b^{n-k}=\sum_{I\subset\{1,\dots,n\}}a^{|I|}b^{|I^c|}=\sum_{I\subset\{1,\dots, n\}}\prod_{k=1}^n\left(a\mathbf 1_{\{k\in I\}}+b\mathbf 1_{\{k\notin I\}}\right),$$ 
we obtain 
	\begin{align*}
		\sum_{\mathcal{P}\subseteq\mathcal{D}_1} \prod_{(u,v)\in\mathcal{D}_1} \bigl( a^2\mathbf{1}_{\{(u,v)\in\mathcal{P}\}} + \mathbf{1}_{\{(u,v)\notin\mathcal{P}\}} \bigr) \gamma_{\varepsilon_u+\varepsilon_v}(x_u-x_v)
		= \prod_{(u,v)\in\mathcal{D}_1} (1+a^2) \gamma_{\varepsilon_u+\varepsilon_v}(x_u-x_v).
	\end{align*}
	Similarly,
	\begin{align*}
		\sum_{\mathcal{Q}\subseteq\mathcal{D}_2} \prod_{(r,s)\in\mathcal{D}_2} \bigl( a^2\mathbf{1}_{\{(r,s)\in\mathcal{Q}\}} + \mathbf{1}_{\{(r,s)\notin\mathcal{Q}\}} \bigr) \gamma_{\varepsilon_r+\varepsilon_s}(x_r-x_s)
		= \prod_{(r,s)\in\mathcal{D}_2} (1+a^2) \gamma_{\varepsilon_r+\varepsilon_s}(x_r-x_s).
	\end{align*}
	The $\mathcal{D}_3$ part is independent of $\mathcal{P},\mathcal{Q}$ and hence remains unchanged.

	Multiplying these three factors yields, for the fixed $\mathcal{D}$,
	\begin{align*}
		&\prod_{(u,v)\in\mathcal{D}_1} (1+a^2) \gamma_{\varepsilon_u+\varepsilon_v}(x_u-x_v)
		\;\times\;
		\prod_{(r,s)\in\mathcal{D}_2} (1+a^2) \gamma_{\varepsilon_r+\varepsilon_s}(x_r-x_s)
		\;\times\;
		\prod_{(p,q)\in\mathcal{D}_3} \gamma_{\varepsilon_p+\varepsilon_q}(x_p-x_q)\\
		&\qquad = \prod_{(u,v)\in\mathcal{D}} \tilde\gamma_{\varepsilon_u+\varepsilon_v}(x_u-x_v),
	\end{align*}
	where
	\[
	\tilde\gamma_{\varepsilon_u+\varepsilon_v}(x_u-x_v) =
	\begin{cases}
		(1+a^2)\,\gamma_{\varepsilon_u+\varepsilon_v}(x_u-x_v), & \text{if } u,v\in I \text{ or } u,v\in J,\\[4pt]
		\gamma_{\varepsilon_u+\varepsilon_v}(x_u-x_v),         & \text{otherwise}.
	\end{cases}
	\]
	
	Summing this over all $\mathcal{D}\in\Pi_{2n}$ we obtain
	\[
	\mathbb{E}_W\Bigl[ \widetilde{\mathbb{E}}\prod_{k\in I}\bigl(\dot{W}_{\varepsilon_k}(x_k)+a\dot{\widetilde{W}}_{\varepsilon_k}(x_k)\bigr)
	\cdot \widetilde{\mathbb{E}}\prod_{k\in J}\bigl(\dot{W}_{\varepsilon_k}(x_k)+a\dot{\widetilde{W}}_{\varepsilon_k}(x_k)\bigr) \Bigr]
	= \sum_{\mathcal{D}\in\Pi_{2n}} \prod_{(u,v)\in\mathcal{D}} \tilde\gamma_{\varepsilon_u+\varepsilon_v}(x_u-x_v),
	\]
and the desired result \eqref{e:Wick-i} follows from \eqref{e:Wick-i'} and the fact     $1+a^2= e^{-2\tau}$.	
\end{proof}

Now we return to the definition of $\widetilde{S}_{n,\varepsilon}$ in \eqref{e:tilde-Sn}. Our goal is to show that the mollified approximations $\widetilde{S}_{n,\varepsilon}(g_n(\cdot, t, x))$ converge in $\mathcal{L}^2$ as $\varepsilon\to 0$, thereby defining $\widetilde{S}_n(g_n)$ as a Stratonovich integral in the sense of Definition~\ref{D:multiple Stratonovich integration}, and that the series $\widetilde{u}(t,x)$ in \eqref{e:u2} converges in $\mathcal{L}^2$.

We first compute the covariance of $\widetilde{S}_{n,\varepsilon}(g_n)$. For any $\varepsilon=(\varepsilon_1,\dots,\varepsilon_n)$ and $\varepsilon'=(\varepsilon_{n+1},\dots,\varepsilon_{2n})$, $t_1,t_2>0$, and $x\in\mathbb{R}^d$, applying Lemma \ref{L:Wick2} together with the definition of $\widetilde{S}_{n,\varepsilon}$, we obtain
\begin{align*}
  &\mathbb{E}\bigl[\widetilde{S}_{n,\varepsilon}(g_n(\cdot, t, x))\,\widetilde{S}_{n,\varepsilon'}(g_n(\cdot, t, x))\bigr] \\
  &= \int_{(\mathbb{R}^d)^{2n}} g_n(x_1,\dots,x_n, t, x)\,g_n(x_{n+1},\dots,x_{2n}, t, x)  \sum_{\mathcal{D}\in\Pi_{2n}} \prod_{(u,v)\in\mathcal{D}} \gamma_{\varepsilon_u+\varepsilon_v}^{(\tau)}(x_u-x_v) \, \ud x_1\cdots \ud x_{2n},
\end{align*}
where $\gamma_{\varepsilon_u+\varepsilon_v}^{(\tau)}$ is given in Lemma~\ref{L:Wick2}. Under Assumption~\ref{A:Main}, letting the mollification parameters tend to zero and applying the dominated convergence theorem as in the proof of Lemma~\ref{le:3.1} (iii), we obtain the covariance formula for the limiting integrals
\begin{equation}
    \label{E:cov-tildeS}
\begin{aligned}
&\mathbb{E}\bigl[\widetilde{S}_n(g_n(\cdot, t_1, x))\,\widetilde{S}_n(g_n(\cdot, t_2, x))\bigr] \\
  &= \sum_{\mathcal{D}\in\Pi_{2n}}
  \int_{(\mathbb{R}^d)^{2n}} g_n(x_1,\!\dots\!,x_n, t_1, x)\,g_n(x_{n+1},\!\dots\!,x_{2n}, t_2, x)
  \prod_{(u,v)\in\mathcal{D}} \gamma^{(\tau)}(x_u-x_v)\, \ud x, 
\end{aligned}
\end{equation}
where
$$
  \gamma^{(\tau)}(x_u-x_v) =
  \begin{cases}
    \gamma(x_u-x_v), & \text{if } u,v\in I=\{1,\dots,n\} \text{ or } u,v\in J=\{n+1,\dots,2n\},\\[4pt]
    e^{2\tau}\gamma(x_u-x_v), & \text{otherwise}.
  \end{cases}
$$

Comparing \eqref{E:cov-tildeS} with the analogous formula \eqref{E:Sf2n} for $S_n$, the only difference is that every cross pairing between $I$ and $J$ carries an extra factor $e^{2\tau}$. Under Assumption~\ref{A:Main}, all $\mathcal{L}^2$ estimates proved for $S_n$ in Sections \ref{se:technical} and \ref{se:existence} remain to hold for $\widetilde{S}_n$ after properly enlarging the generic constants. Consequently, by the same argument as in the proof of Theorem \ref{th:strat-intablity}, the mollified approximations $\widetilde{S}_{n,\varepsilon}(g_n(\cdot, t, x))$ form a Cauchy sequence in $\mathcal{L}^2$. Therefore, the limit $\widetilde{S}_n(g_n(\cdot, t, x))$ exists in $\mathcal{L}^2$ and is well-defined as a multiple Stratonovich integral in the sense of Definition \ref{D:multiple Stratonovich integration}. Moreover, the series $\widetilde{u}(t,x)$ in~\eqref{e:u2} converges in $\mathcal{L}^2$. 

Now we are ready to proof the Hypercontractivity property:

\begin{proposition}[Hypercontractivity property]\label{prop:Hypercon}
  Let $u(t,x)$ and $\widetilde{u}(t,x)$ be defined as in \eqref{e:expan-utx} and~\eqref{e:u2}, respectively. 
  Under Assumption~\ref{A:Main}, for any $\tau\ge0$, set $p =e^{2\tau}+1$. Then we have
  \begin{equation}\label{e:hypercontractivity}
      \|u(t,x)\|_p \leq\|\widetilde{u}(t,x)\|_2.
  \end{equation}
\end{proposition}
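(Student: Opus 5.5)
The plan is to identify $\widetilde u(t,x)$ as the image of $u(t,x)$ under the \emph{inverse} Ornstein--Uhlenbeck semigroup, so that $u=T_\tau\widetilde u$, and then apply Nelson's hypercontractivity inequality $\|T_\tau F\|_p\le\|F\|_2$ for $p\le 1+e^{2\tau}$. Here $T_\tau$ is the OU semigroup associated with the isonormal process $W$, given by Mehler's formula
\[
T_\tau F(W)=\mathbb E'\,F\bigl(e^{-\tau}W+\sqrt{1-e^{-2\tau}}\,W'\bigr),
\]
where $W'$ is an independent copy of $W$. Since Nelson's inequality is valid for complex-valued $F\in\mathcal L^2$, the complex-valued functionals $\widetilde S_{n,\varepsilon}$ cause no difficulty. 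All identities will first be proved at the mollified level and for finitely many chaoses, and only afterwards passed to the limit.

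\textbf{Step 1 (mollified identity).} Fix $\varepsilon=(\varepsilon_1,\dots,\varepsilon_n)$. Both $\widetilde S_{n,\varepsilon}(g_n(\cdot,t,x))$ and $S_{n,\varepsilon}(g_n(\cdot,t,x))$ are polynomial functionals of degree at most $n$ of the Gaussian family $\{\dot W_{\varepsilon_k}(x_k)\}$; integrability in $x_1,\dots,x_n$ is supplied by the bounds of Section~\ref{se:technical}. I will show that
\[
T_\tau\,\widetilde S_{n,\varepsilon}=S_{n,\varepsilon}.
\]
By Fubini, it suffices to check this pointwise in $(x_1,\dots,x_n)$ for the integrand. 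Applying Mehler's formula replaces $e^{\tau}\dot W_{\varepsilon_k}(x_k)$ by
\[
e^{\tau}\bigl(e^{-\tau}\dot W_{\varepsilon_k}(x_k)+\sqrt{1-e^{-2\tau}}\,\dot W'_{\varepsilon_k}(x_k)\bigr)
=\dot W_{\varepsilon_k}(x_k)+\rho\,\dot W'_{\varepsilon_k}(x_k),
\]
using $e^{\tau}\sqrt{1-e^{-2\tau}}=\rho$. The integrand therefore becomes
\[
\mathbb E'\widetilde{\mathbb E}\prod_{k=1}^n\bigl(\dot W_{\varepsilon_k}(x_k)+Z_k\bigr),\qquad Z_k:=\rho\bigl(\dot W'_{\varepsilon_k}(x_k)+\iota\,\dot{\widetilde W}_{\varepsilon_k}(x_k)\bigr).
\]
The complex Gaussian family $(Z_k)$ is independent of $W$ and satisfies
\[
\mathbb E[Z_jZ_k]=\rho^2\gamma_{\varepsilon_j+\varepsilon_k}(x_j-x_k)-\rho^2\gamma_{\varepsilon_j+\varepsilon_k}(x_j-x_k)=0 .
\]
Expanding the product and applying the Wick formula \eqref{e:wick-formula} (extended by multilinearity to complex Gaussians), every term containing at least one $Z_k$ vanishes. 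Hence the integrand equals $\prod_k\dot W_{\varepsilon_k}(x_k)$, which is the claimed identity.

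\textbf{Step 2 (finite sums).} For $N\in\mathbb N$ set
\[
u^N_\varepsilon:=\sum_{n\le N}\theta^{n/2}S_{n,\varepsilon}(g_n(\cdot,t,x)),\qquad \widetilde u^N_\varepsilon:=\sum_{n\le N}\theta^{n/2}\widetilde S_{n,\varepsilon}(g_n(\cdot,t,x)),
\]
taking the same mollification vector for both. By Step 1 and linearity, $u^N_\varepsilon=T_\tau\widetilde u^N_\varepsilon$. Since these are polynomial functionals, hence in $\mathcal L^2$, Nelson's theorem with $p=e^{2\tau}+1$ gives
\[
\|u^N_\varepsilon\|_p\le\|\widetilde u^N_\varepsilon\|_2 .
\]

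\textbf{Step 3 (limits).} First let $\varepsilon\to0$ with $N$ fixed. By Theorem~\ref{th:strat-intablity} we have $S_{n,\varepsilon}\to S_n$ in $\mathcal L^2$, and the preceding discussion gives $\widetilde S_{n,\varepsilon}\to\widetilde S_n$ in $\mathcal L^2$. Thus $\|\widetilde u^N_\varepsilon\|_2\to\|\widetilde u^N\|_2$, while $u^N_\varepsilon\to u^N$ in probability. Along an a.s.\ convergent subsequence, Fatou's lemma yields
\[
\|u^N\|_p\le\liminf\|u^N_\varepsilon\|_p\le\|\widetilde u^N\|_2 .
\]
Next let $N\to\infty$. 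The series \eqref{e:expan-utx} and \eqref{e:u2} converge in $\mathcal L^2$, by \eqref{e:existence-con1} and its analogue for $\widetilde S_n$ noted before the proposition. Hence $\|\widetilde u^N\|_2\to\|\widetilde u\|_2$ and $u^N\to u$ in probability, and a second application of Fatou's lemma gives \eqref{e:hypercontractivity}.

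The main obstacle I anticipate is the rigor of Step 1 in infinite dimensions. One needs a Mehler representation valid for functionals built from the mollified fields, which are continuous linear functionals of $W$, so a version of $W'$ in the same Hilbert space setting is required. One also needs to justify interchanging $\mathbb E'$, $\widetilde{\mathbb E}$ and the $\ud x$-integrals. I will handle both points by verifying absolute integrability, bounding the product by its modulus and using the Gaussian moment bounds together with Lemma~\ref{L:n momentbound}. The remaining limit passages rely only on Fatou's lemma and the already established $\mathcal L^2$ convergence, so I expect them to be routine.
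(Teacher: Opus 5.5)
Your proof is correct. It reaches the key identity $T_\tau\widetilde S_n(g_n(\cdot,t,x))=S_n(g_n(\cdot,t,x))$ by a different route than the paper, though the cancellation mechanism is the same.

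\emph{The paper's route.} It works directly with the limiting integrals. It writes $T_\tau\widetilde S_n$ formally against the mixed noise $W+\rho W'+\iota\rho\widetilde W$. It then takes the Laplace transform in $t$ via an analogue of Theorem~\ref{T:laplace Sn}, so everything is expressed through $\mathbf E_x\bigl[\int_0^t(\dot W+\rho\dot W'+\iota\rho\dot{\widetilde W})(X(s))\,\ud s\bigr]^n$ for the stable process $X$. The $W',\widetilde W$ contributions vanish because $\rho\int_0^t(\dot W'+\iota\dot{\widetilde W})(X(s))\,\ud s$ is a circular complex Gaussian with zero moments. It concludes by uniqueness of the Laplace transform.

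\emph{Your route.} You use the same cancellation, $\mathbb E[Z_jZ_k]=0$, but pointwise in $(x_1,\dots,x_n)$ at the mollified level. There everything is a polynomial in finitely many Gaussians. You then pass to the limit using only the $\mathcal L^2$ convergences $S_{n,\varepsilon}\to S_n$ and $\widetilde S_{n,\varepsilon}\to\widetilde S_n$, together with Fatou.

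\emph{What each buys.} Your argument is more elementary and more rigorous. It needs no Stratonovich integral against the complex mixed noise, no stable-process representation, and no uniqueness argument for random Laplace transforms. It uses nothing about $g_n$ beyond the $\mathcal L^2$ convergence of the mollified integrals. So it applies verbatim to any series of mollification-defined multiple Stratonovich integrals, which is the generality claimed in Remark~\ref{rem:hyper-contrac}, without needing a Laplace link to the fractional heat kernel. The paper's route, in exchange, reuses the Laplace/stable-process machinery it needs anyway to estimate $\mathbb E[\widetilde u^2]$ in Proposition~\ref{L:upper2lemma}.

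\emph{Two small remarks.} First, $\widetilde S_{n,\varepsilon}$ is actually real-valued, since $\widetilde{\mathbb E}$ kills all odd powers of $\iota$, so your complex-valued caveat is moot. Second, since $T_\tau$ is a contraction on $\mathcal L^2$, you could replace both Fatou steps by
\[
T_\tau\widetilde S_n=\lim_{\varepsilon\to0}T_\tau\widetilde S_{n,\varepsilon}=\lim_{\varepsilon\to0}S_{n,\varepsilon}=S_n,
\]
and apply Nelson's inequality once to the full series.
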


\begin{remark}\label{rem:hyper-contract0}
Hypercontractivity property was first proved in \cite{Huang2017Large} for stochastic heat equations (the parabolic Anderson model); see also \cite{Le2016Aremark}. It has since been further investigated and extended in \cite{Chen2018Temporal, bcc22, lswz26}.  We emphasize that all of the aforementioned works concern either solutions understood in the Skorohod sense (see \cite{Huang2017Large,Le2016Aremark,bcc22,lswz26}) or Stratonovich stochastic heat equations that admit a Feynman--Kac representation (see \cite{Chen2018Temporal}). The techniques developed in these works therefore do not readily extend to~\eqref{e:sfde} in the Stratonovich setting; see also Remark~\ref{rem:hyper-contrac}.

\end{remark}

\begin{proof}

Let $W^{\prime}$ be an independent copy of $W$. For any $F \in \cL^2(\Omega)$, there is a measurable mapping $\psi_F$ from $\mathbb{R}^{\mathcal{H}}$ to $\mathbb{R}$ such that $F=\psi_F ( W)$.
Let $\left\{T_\tau, \tau \geq 0\right\}$ be the Ornstein-Uhlenbeck semigroup associated with $W$, which  has the following representation (Mehler's formula \cite[eq.~(1.67)]{Nualart2006TheMalliavin})
\begin{equation}\label{e:Mehler}
T_\tau(F)=\mathbb{E}^{\prime}\left[\psi_F\left(e^{-\tau} W+\sqrt{1-e^{-2 \tau}} W^{\prime}\right)\right],
\end{equation}
where $\mathbb{E}^{\prime}$ denotes the expectation with respect to $W^{\prime}$. The following hypercontractivity property (see, e.g., \cite[Theorem~1.4.1] {Nualart2006TheMalliavin}) holds:
\begin{equation}
    \label{e:hypercontractivity'}
\left\|T_\tau F\right\|_p \leq\|F\|_q,
\end{equation}
 where  $q \in(1, \infty)$ and $p=1+e^{2 \tau}(q-1)$. 

 For $F_n:=\widetilde{S}_n
\left(g_n(\cdot, t, x)\right)$ given in \eqref{e:tilde-Sn},  by Mehler's formula \eqref{e:Mehler} we have, recalling that $\rho=\sqrt{e^{2 \tau}-1}$,
\begin{align}
T_\tau(F_n)&=\int_{\left(\mathbb{R}^d\right)^n}	g_n\left(x_1, \cdots, x_n, t, x\right)\mathbb{E}^{\prime} \mathbb{\widetilde E} \prod_{k=1}^n\left( W\left(\ud x_k\right)+\rho W^{\prime} \left(\ud x_k\right) + \iota\rho \widetilde{W}\left(\ud x_k\right)\right). \nonumber
\end{align}

By an argument analogous to Theorem \ref{T:laplace Sn}, we obtain
\begin{align}
&\int_0^{\infty}e^{-\lambda t}T_\tau(F_n)\ud t\nonumber\\
& =\frac{1}{n!} \lambda^{b-nr-1}\left(\frac{2}{\nu}\right)^{n+1} \int_0^{\infty}e^{-2\lambda^b t/\nu} \mathbf{E}_x\mathbb{E}^{\prime} \mathbb{\widetilde E} \left[\int_0^{t}\left(\dot{W}\left(X(s)\right)+\rho\dot{W}^{\prime} \left(X(s)\right)+\iota\rho\dot{\widetilde{W}}\left(X(s)\right) \right) \ud s\right]^{n} \ud t.\nonumber
\end{align}

 Conditioned on the stable process $X$, the random variables 
$\int_0^{t}\dot{W}^{\prime}\left(X(s)\right)\ud s$ and $\int_0^{t}\dot{\widetilde{W}}\left(X(s)\right)\ud s$ 
are independent centered Gaussian with the same variance, by Lemma~\ref{L:Wdefined} and the independence of $W^{\prime}$ and $\widetilde{W}$. Hence,
\[
\rho\int_0^{t}\dot{W}^{\prime}\left(X(s)\right)\ud s + \iota\rho\int_0^{t}\dot{\widetilde{W}}\left(X(s)\right)\ud s
\]
is a centered complex  Gaussian random variable whose real and imaginary parts are independent and have the variance $\rho^2\int_0^{t}\!\int_0^{t}\gamma(X(s)-X(r))\ud s\ud r$. Consequently,
\[
\mathbb{E}^{\prime}\widetilde{\mathbb{E}}\left[\rho\int_0^{t}\dot{W}^{\prime}\left(X(s)\right)\ud s + \iota\rho\int_0^{t}\dot{\widetilde{W}}\left(X(s)\right)\ud s\right]^m = 0
\]
for every positive integer $m$. Expanding the $n$-th power via the binomial theorem, all terms containing a factor from $W^{\prime}$ or $\widetilde{W}$ have zero conditional expectation by the above observation. Hence,
\begin{align*}
	\mathbf{E}_x\mathbb{E}^{\prime}\widetilde{\mathbb{E}}\left[\int_0^{t}\left(\dot{W}\left(X(s)\right)+\rho\dot{W}^{\prime} \left(X(s)\right)+\iota\rho\dot{\widetilde{W}}\left(X(s)\right) \right) \ud s\right]^{n}
	= \mathbf{E}_x\left[\int_0^{t} \dot{W}\left(X(s)\right)\ud s\right]^{n}.
\end{align*}
Substituting this back, we obtain
\begin{align}
	\int_0^{\infty}e^{-\lambda t}T_\tau(F_n)\ud t
	&=\frac{1}{n!} \lambda^{b-nr-1}\left(\frac{2}{\nu}\right)^{n+1} \int_0^{\infty}e^{-2\lambda^b t/\nu} \mathbf{E}_x\left[\int_0^{t} \dot{W}\left(X(s)\right)\ud s\right]^{n} \ud t. \nonumber
\end{align}


Due to the uniqueness theorem for Laplace transforms, we get $T_\tau(F_n)=S_n\left(g_n(\cdot, t, x)\right)$, a.s. Let $F=\sum_{n=0}^{\infty}\theta^{\frac{n}{2}}F_n$,  by the hypercontractivity \eqref{e:hypercontractivity'} with $q=2$ and $p=e^{2\tau}+1$,  we have
\begin{align*}
\mathbb{E}|u(t,x)|^p =\bigg\| \sum_{n=0}^{\infty}\theta^{\frac{n}{2}}S_n\left(g_n(\cdot, t, x)\right) \bigg\|_p ^p=\left\|T_\tau F\right\|_p ^p\leq\left(\mathbb{E}|\widetilde{u}(t,x)|^2 \right)^{p/2}. 
\end{align*}
This completes the proof.
\end{proof}

In order to obtain an upper bound for the $p$-th moment of $u(t,x)$ by the hypercontractivity \eqref{e:hypercontractivity}, we now investigate the asymptotic growth of the second moment of $\widetilde{u}(t, x)$.  For this purpose, we first introduce the following large deviation lemma.

\begin{lemma} \label{LDP}
	Under Assumptions \ref{A:Main} and \ref{B:Main}, let $X=\{X(t)\}_{t\ge 0}$ be a symmetric $\alpha$-stable process.
	Then we have
\begin{align}\label{e:asymp-n-moment}
	&\lim _{n \rightarrow \infty} \frac{1}{n}\log (n!)^{-\frac{\alpha}{a}}	\mathbf{E}_0\left[\int_0^1 \int_0^1 \gamma(X(s)-X(r)) \ud s \ud r\right]^n 
	=  \log\left\{4\left(\frac{a \mathcal{M}_a}{2a-\alpha}\right)^{\frac{2a-\alpha}{a}}\right\},
\end{align}
	where $\mathcal{M}_a$ is defined in~\eqref{e:de-Ma}.
\end{lemma}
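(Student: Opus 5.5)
We follow the standard route of Chen--Hu--Song--Xing (and \cite{Chen2018Temporal}) for the Stratonovich heat equation with time-independent noise. We convert the large-$n$ moment asymptotics into a large-$t$ exponential moment asymptotics, and then identify the latter through a large deviation principle for the occupation measure of the $a$-stable process.

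\textbf{Step 1: scaling.} Write $\Lambda_t:=\int_0^t\int_0^t\gamma(X(s)-X(r))\,\ud s\,\ud r$. By the self-similarity $X(ts)\overset{d}{=}t^{1/a}X(s)$ and the homogeneity \eqref{E:scalef}, we have $\Lambda_t\overset{d}{=}t^{2-\alpha/a}\Lambda_1$.

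\textbf{Step 2: the exponential asymptotics.} We aim to establish
\begin{equation*}
\lim_{t\to\infty}\frac1t\log\mathbf E_0\exp\Big\{\Big(\frac1t\Lambda_t\Big)^{1/2}\Big\}=\mathcal M_a .
\end{equation*}
Here the exponent $1/2$ is chosen because the variational formula \eqref{e:de-Ma} contains the square root of the quadratic form. By scaling this is equivalent to
\begin{equation*}
\lim_{t\to\infty}\frac1t\log \mathbf E_0\exp\{t^{(a-\alpha)/(2a)}\Lambda_1^{1/2}\}=\mathcal M_a .
\end{equation*}

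For the lower bound, we use the Donsker--Varadhan LDP for the normalized occupation measure $L_t$ of $X$, whose rate function is the Dirichlet form $\int|\xi|^a|\widehat g(\xi)|^2\,\ud\xi$ with $g^2$ the density. Since $\gamma$ is singular, we first truncate $\gamma$ by $\gamma*p_\e$, which is bounded and continuous. We then apply Varadhan's lemma to the functional $\big(\iint\gamma_\e(x-y)L_t(\ud x)L_t(\ud y)\big)^{1/2}$, and let $\e\to0$ in the variational formula. For the upper bound, we use the factorization $\gamma=K*K$ from Assumption~\ref{B:Main}(ii). This gives
\begin{equation*}
\Big(\frac1t\Lambda_t\Big)^{1/2}=\Big\|\frac{1}{\sqrt t}\int_0^tK(\cdot-X(s))\,\ud s\Big\|_{L^2},
\end{equation*}
an $L^2$ norm that can be linearized as a supremum over unit $f$ of $t^{-1/2}\int_0^t (K*f)(X(s))\,\ud s$. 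After localizing to a large torus, or periodizing as in Chen's monograph, the Feynman--Kac and spectral bound gives the upper limit $\mathcal M_a$. The singular part of $K$ is controlled with the uniform exponential integrability \eqref{eq:Hamiltonbound1-new}.

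\textbf{Step 3: tauberian transfer.} Set $\kappa:=2-\alpha/a$ and $m_n:=\mathbf E_0\Lambda_1^n$. Expanding in Taylor series gives
\begin{equation*}
\mathbf E_0 e^{\theta\Lambda_1^{1/2}}=\sum_k\frac{\theta^k\,\mathbf E_0\Lambda_1^{k/2}}{k!},\qquad\text{with } \theta=t^{(a-\alpha)/(2a)}.
\end{equation*}
By the standard lemma (Chen, \emph{Random Walk Intersections}, Lemma 1.2.9 type), $\log\mathbf E e^{\theta Y}\sim C\theta^{\gamma}$ is equivalent to
\begin{equation*}
\frac1k\log\Big(\frac{(\mathbf EY^k)}{(k!)^{1-1/\gamma}}\Big)\to\log\Big(\big(\gamma^{-1}\big)^{\ldots}\Big)\text{-type constants}.
\end{equation*}
We apply this with $Y=\Lambda_1^{1/2}$ and growth exponent $\gamma_0=2a/(a-\alpha)$ in $\theta$. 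Converting the resulting statement for $\mathbf EY^{2n}=m_n$, and using Stirling's formula $(2n)!\sim 4^n(n!)^2$ up to subexponential factors, we obtain
\begin{equation*}
\frac1n\log\big((n!)^{-\alpha/a}m_n\big)\to\log\Big\{4\Big(\frac{a\mathcal M_a}{2a-\alpha}\Big)^{(2a-\alpha)/a}\Big\}.
\end{equation*}
The factor $4$ comes from $(2n)!/(n!)^2$, and $\frac{a}{2a-\alpha}=1/\kappa$ comes from the Legendre-type optimization in the tauberian lemma. Monotonicity of $k\mapsto(\mathbf EY^k)^{1/k}$ handles odd $k$.

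\textbf{Main obstacle.} The main difficulty is the upper bound in Step 2. The singularity of $\gamma$ prevents a direct application of Varadhan's lemma, so the core of the proof is a uniform approximation estimate: $\Lambda_t(\gamma)^{1/2}-\Lambda_t(\gamma_\e)^{1/2}$ must be exponentially negligible at scale $t$. We plan to obtain this from \eqref{eq:Hamiltonbound1-new} applied to the kernel $K-K*p_{\e}$, whose Dalang-type integral $\int|\xi|^{-a}|\widehat K-\widehat K e^{-\e|\xi|^2/2}|^2\,\ud\xi$ tends to zero as $\e\to0$. We also need noncompactness handled by periodization, which is legitimate because $\gamma\ge0$ makes periodization increase the functional.
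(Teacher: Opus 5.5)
Your Step 2 target has the wrong normalization, and this breaks Step 3. With $L_t$ the normalized occupation measure, $\Lambda_t=t^2\iint\gamma(x-y)\,L_t(\mathrm{d}x)L_t(\mathrm{d}y)$. Hence $(\frac1t\Lambda_t)^{1/2}=t^{-1/2}\Lambda_t^{1/2}$ is only of order $t^{1/2}$ on the large-deviation scale, and in fact $\frac1t\log\mathbf{E}_0\exp\{(\frac1t\Lambda_t)^{1/2}\}\to0$. Your arguments actually compute $\Lambda_t^{1/2}$, as does your ``main obstacle'' paragraph. In the lower bound, Varadhan's lemma at speed $t$ evaluates $t(\iint\gamma_\varepsilon\,\mathrm{d}L_t\,\mathrm{d}L_t)^{1/2}=\Lambda_t(\gamma_\varepsilon)^{1/2}$. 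In the upper bound, the prefactor $t^{-1/2}$ in your linearization would make the Feynman--Kac exponent vanish.

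The error then propagates into the tauberian step. With growth exponent $2a/(a-\alpha)$ you get $\mathbf{E}_0Y^k\approx C^k(k!)^{(a+\alpha)/(2a)}$, hence $m_n\approx C^n(n!)^{(a+\alpha)/a}$. Then $(n!)^{-\alpha/a}m_n$ grows superexponentially, so the display closing Step 3 does not follow from Step 2.

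The correct statement is $\lim_{t\to\infty}\frac1t\log\mathbf{E}_0\exp\{\Lambda_t^{1/2}\}=\mathcal{M}_a$, i.e.\ $\log\mathbf{E}_0e^{\theta\Lambda_1^{1/2}}\sim\mathcal{M}_a\theta^{q}$ with $q=\frac{2a}{2a-\alpha}$ rather than $\frac{2a}{a-\alpha}$. With this $q$, the tail of $Y=\Lambda_1^{1/2}$ has exponent $q/(q-1)=2a/\alpha$. Chen's moment--tail lemma then gives $\frac1n\log\bigl[\mathbf{E}_0Y^{2n}/((2n)!)^{\alpha/(2a)}\bigr]\to\frac{2a-\alpha}{a}\log(q\mathcal{M}_a)$, which yields exactly $4(\frac{a\mathcal{M}_a}{2a-\alpha})^{(2a-\alpha)/a}$. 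The $4$ is $2^{\alpha/a}\cdot 2^{(2a-\alpha)/a}$: only the first factor comes from $(2n)!/(n!)^2$, and the second comes from $q$.

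Once corrected, your route is a genuine alternative to the paper's. The paper proves no exponential asymptotics itself. It imports from \cite{Chen2018Temporal} the asymptotics of the \emph{linear} functional, $\frac1t\log\mathbf{E}_0\exp\{\frac{\theta}{t}\Lambda_t\}\to\theta^{a/(a-\alpha)}\mathcal{E}_a$, where $\mathcal{E}_a$ is the variational constant without the square root. It then applies G\"artner--Ellis to $t^{-\alpha/a}\Lambda_1$ and Varadhan's lemma with the functional $\log\lambda$ at $t=n$. Finally it converts $\mathcal{E}_a$ into $\mathcal{M}_a$ by a scaling identity between the two variational problems, as in \cite{chsx2015}.

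Your square-root formulation produces $\mathcal{M}_a$ directly and needs no such conversion. The price is that you must prove the exponential asymptotics from scratch, and your compactification step is not justified as stated. Since $\gamma$ is homogeneous of degree $-\alpha$ with $\alpha<d$, we have $\int_{\mathbb{R}^d}\gamma=\infty$. So the periodized kernel $\sum_{k\in\mathbb{Z}^d}\gamma(\cdot+kT)$, and likewise that of $\gamma*p_\varepsilon$, is not even integrable over a period, and the monotonicity you draw from $\gamma\ge0$ is vacuous. You would need an additional large-scale truncation before compactifying. This is precisely the technical work the paper sidesteps by citing \cite{Chen2018Temporal}.
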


\begin{proof}
	Define
	$$
	\mathcal{E}_a =  \sup_{g \in \mathcal{F}_{a,d}} \left\{ \int_{\mathbb{R}^d \times \mathbb{R}^d} \gamma(x-y) g^2(x) g^2(y) \, \ud x \ud y - \int_{\mathbb{R}^d} |\xi|^a |\widehat{g}(\xi)|^2 \, \ud\xi \right\},
	$$
	where $\mathcal{F}_{a,d}$ is given in \eqref{e:cFad}.
	By \cite[Lemma 7.1]{Chen2018Temporal}, under Assumption~\ref{A:Main}, we have $\mathcal{E}_a < \infty$.
	
	By  \cite[Theorem 1.1]{Chen2018Temporal} (with the choices $\rho=0$, $p=1$, $\beta_0=0$, and replacing the kernel $|\cdot|^{-\beta}$ by $\gamma(\cdot)$), it holds that, for any $\theta\geq0$,
	\begin{align} 
		& \lim _{t \rightarrow \infty} \frac{1}{t} \log \mathbf{E}_0 \exp \left\{\frac{\theta}{t}\int_0^t \int_0^t \gamma\left(X_s-X_r\right) \ud s \ud r\right\}= \theta^{\frac{a}{a-\alpha}}\mathcal{E}_a.
		\nonumber
	\end{align} 	
	Let $Y_t=t^{-\frac{\alpha}{a}}\int_0^1\int_0^1 \gamma\left(X_{s}-X_{r}\right) dsdr.$ By G\"artner-Ellis theorem on $\mathbb{R}_+$(see, e.g., Theorem 1.2.4 in \cite{Chenbook}), $Y_t$ satisfies a large deviation principle on $\mathbb{R}_+$ with the rate function
\begin{equation}
I(\lambda)=\sup_{\theta\geq0}\left\{\lambda \theta-\theta^{\frac{a}{a-\alpha}}\mathcal{E}\right\} = \frac{\alpha}{a}\left(\frac{a-\alpha}{a \mathcal{E}_a}\right)^{\frac{a-\alpha}{\alpha}} \lambda^{\frac{a}{\alpha}}.\nonumber
\end{equation}
	Applying Varadhan's integral lemma (see, e.g., Theorem 1.1.6 in \cite{Chenbook}), we obtain
$$
\begin{aligned}
	& \lim _{t \rightarrow \infty} \frac{1}{t} \log \mathbf{E}_0 \exp \left\{t \log \left(t^{-\frac{\alpha}{a}}\int_0^1\int_0^1 \gamma\left(X_{s}-X_{r}\right) \ud s \ud r\right)\right\} \\
	& =\sup _{\lambda>0}\left\{\log \lambda-\frac{\alpha}{a}\left(\frac{a-\alpha}{a \mathcal{E}_\delta}\right)^{\frac{a-\alpha}{\alpha}} \lambda^{\frac{a}{\alpha}}\right\}=-\frac{\alpha}{a}+\log \left(\frac{a \mathcal{E}}{a-\alpha}\right)^{\frac{a-\alpha}{a}}.
\end{aligned}
$$
Letting $t=n$, we get
\begin{equation}
	\lim _{n \rightarrow \infty} \frac{1}{n} \log n^{-\frac{\alpha n}{a}} \mathbf{E}_0\left[\int_0^1\int_0^1 \gamma\left(X_{s}-X_{r}\right) \ud s \ud r\right]^n=-\frac{\alpha}{a}+\log \left(\frac{a \mathcal{E}}{a-\alpha}\right)^{\frac{a-\alpha}{a}}. \label{eq:nlmintforEX}
\end{equation}

	By an argument similar to the proof of Lemma A.2 in \cite{chsx2015}, which is applicable thanks to Assumption~\ref{B:Main}, we obtain
	\begin{equation}
		\mathcal{E}_a = \frac{a-\alpha}{a}  2^{\frac{\alpha}{a-\alpha}} \left( \frac{2a}{2a-\alpha} \right)^{\frac{2a-\alpha}{a-\alpha}}  \mathcal{M}_a^{\frac{2a-\alpha}{a-\alpha}}. \label{eq:varscaling}
	\end{equation}
	Combining \eqref{eq:nlmintforEX} and \eqref{eq:varscaling}, together with Stirling's formula, we obtain the desired \eqref{e:asymp-n-moment}.
\end{proof}

\begin{proposition}\label{L:upper2lemma}
 Recall that  $\mathcal M_a$ and $\widetilde{u}(t, x)$ are  defined in \eqref{e:de-Ma}  and \eqref{e:u2} respectively.
Under Assumptions \ref{A:Main}, \ref{B:Main} and \ref{A:G}, we have
 \begin{align*}
    &\limsup_{n \rightarrow \infty} \frac{1}{n} \log \left(n!\right)^{2(b+r)-b \alpha / a-1} \theta^{n} \sum_{l_1+l_2= 2n} \mathbb{E} \prod_{j=1}^2 \widetilde{S}_{l_j}\left(g_{l_j}\left(\cdot, 1, 0\right)\right)\\
    &\leq\log  \left\{\left(\frac{2}{2(b+r)-b \alpha / a}\right)^{[2(b+r)-b \alpha / a] }\left(\frac{\nu}{2}\right)^{-\frac{\alpha}{a}} \theta p \mathcal{M}_a^{\frac{2a-\alpha}{a}}\right\}.
\end{align*}
\end{proposition}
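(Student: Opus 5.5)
We will treat each summand $\mathbb{E}[\widetilde{S}_{l_1}(g_{l_1}(\cdot,1,0))\widetilde{S}_{l_2}(g_{l_2}(\cdot,1,0))]$ by combining the covariance formula \eqref{E:cov-tildeS} (extended to unequal orders $l_1\neq l_2$ with $l_1+l_2=2n$, which is the same Wick computation as Lemma~\ref{L:Wick2}) with the Laplace-transform machinery of Section~\ref{sec:laplace}. Here $p=e^{2\tau}+1$ is the exponent fixed in Proposition~\ref{prop:Hypercon}.

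\textbf{Step 1: bounding each summand.} Every pairing $\mathcal D\in\Pi_{2n}$ carries cross-pairings weighted by $e^{2\tau}=p-1\le p$ and internal pairings weighted by $1\le p$. Since all integrands are nonnegative under Assumption~\ref{A:G}, each summand is bounded by $p^{n}$ times the corresponding un-tilded quantity $\mathbb{E}[S_{l_1}S_{l_2}]$ with $\gamma$ in every pairing. This crude bound is enough, because the constant $\theta p$ in the statement enters only linearly.

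\textbf{Step 2: representation via stable processes.} Repeating Corollary~\ref{coro:3.5} for orders $(l_1,l_2)$ expresses the double Laplace transform of $\mathbb{E}[S_{l_1}(g_{l_1}(\cdot,t_1,0))S_{l_2}(g_{l_2}(\cdot,t_2,0))]$ as
\[
\frac{1}{l_1!\,l_2!}\,\lambda_1^{b-l_1r-1}\lambda_2^{b-l_2r-1}\Big(\frac2\nu\Big)^{2n+2}\int\!\!\int e^{-2\lambda_1^bt_1/\nu-2\lambda_2^bt_2/\nu}\,\mathbb{E}\otimes\mathbf{E}_0\Big[\textstyle\int_0^{t_1}\dot W(X_1)\Big]^{l_1}\Big[\textstyle\int_0^{t_2}\dot W(X_2)\Big]^{l_2}.
\]
Apply Cauchy--Schwarz in $\mathbb{E}\otimes\mathbf{E}_0$ together with \eqref{E:even Gaussian moment}. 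Then apply the scaling in Assumption~\ref{B:Main}, under which $\int_0^t\int_0^t\gamma(X_s-X_r)\,ds\,dr$ equals $t^{2-\alpha/a}$ times the $t=1$ quantity in law. This reduces everything to the moments $\mathbf E_0[\int_0^1\int_0^1\gamma]^{l_j}$, whose growth is identified by Lemma~\ref{LDP} as $(l_j!)^{\alpha/a}\,C_*^{l_j}e^{o(l_j)}$ with $C_*=4(a\mathcal M_a/(2a-\alpha))^{(2a-\alpha)/a}$.

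\textbf{Step 3: returning from the Laplace side to $t=1$.} By \eqref{e:scal-G}, the time dependence of each summand is exactly $t_1^{\kappa l_1/2}t_2^{\kappa l_2/2}$ up to constants, where $\kappa=2(b+r)-b\alpha/a$. Rather than invert the Laplace transform, take $t_1=t_2$ and invert using this exact homogeneity. Equivalently, compute $\int_0^\infty e^{-2\lambda^b t/\nu}t^{(2-\alpha/a)l/2}\,dt$ exactly as a Gamma function, and match it against $\int_0^\infty e^{-\lambda t}t^{\kappa l/2+\cdots}\,dt$. This yields the summand at $t=1$ as an explicit ratio of Gamma functions times $(2/\nu)^{l}\,(\nu/2)^{(2-\alpha/a)l/2}$ times the moment of Step 2. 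The factor $(\nu/2)^{-\alpha/a}$ per $n$ emerges from this matching.

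\textbf{Step 4: collecting terms.} Apply Stirling's formula to $l_1+l_2=2n$. The product $(l_1!\,l_2!)^{\ldots}$ is maximized at the balanced split $l_1=l_2=n$ by log-concavity of $\Gamma$. The number of terms, at most $2n+1$, is subexponential. Multiplying by $(n!)^{\kappa-1}\theta^n$ then produces $\big(\tfrac{2}{\kappa}\big)^{\kappa}(\nu/2)^{-\alpha/a}\,\theta p\,\mathcal M_a^{(2a-\alpha)/a}$ per unit $n$. The factor $4(a/(2a-\alpha))^{(2a-\alpha)/a}$ from $C_*$ should cancel against the Gamma-ratio constants.

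\textbf{Main obstacle.} Verifying this exact cancellation of constants is the delicate bookkeeping step. A bounded slack there would not break the $\limsup$ inequality, but it would spoil sharpness. A second point to check is that the Cauchy--Schwarz step in Step 2 loses nothing exponentially at the balanced split $l_1=l_2$.
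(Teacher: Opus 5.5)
There is a genuine gap. Your overall architecture (Laplace representation, Lemma~\ref{LDP}, Stirling) is the paper's, but two of your estimates lose factors exponential in $n$. Because the constant in the statement is sharp, any such slack changes the right-hand side and breaks the stated inequality, not merely its sharpness.

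\textbf{The decisive loss is in Step~1.} At $\tau=0$ (so $p=2$) the tilded and untilded sums coincide, and the statement reads $2\theta$. Combined with Theorem~\ref{T:lowerbound1} for $p=2$ and \eqref{e:asym-ML}, this shows that the untilded sum $\sum_{l_1+l_2=2n}\E[S_{l_1}(g_{l_1}(\cdot,1,0))S_{l_2}(g_{l_2}(\cdot,1,0))]$ has $\limsup$-rate exactly $(2/\kappa)^{\kappa}(\nu/2)^{-\alpha/a}\,2\theta\,\mathcal M_a^{(2a-\alpha)/a}$, where $\kappa=2(b+r)-b\alpha/a$. Multiplying it by $p^n$ therefore gives $2p\theta$, not $p\theta$. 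Even the sharper count $e^{2\tau n}=(p-1)^n$ gives $2(p-1)\theta>p\theta$ for $p>2$. No pairing-by-pairing estimate can succeed: the weight actually needed per pair is $p/2=(1+e^{2\tau})/2$, the average of the internal weight $1$ and the cross weight $e^{2\tau}$.

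The missing idea is to estimate the quadratic form before taking its $n$-th power. By Lemma~\ref{l:characteristic function}, the $\lambda_1=\lambda_2=1$ Laplace transform of the whole sum is $\frac{\theta^n}{2^nn!}(\frac2\nu)^{2n+2}\int\!\!\int e^{-2(t_1+t_2)/\nu}\,\mathbf E_0[Q^n]\,\ud t_1\ud t_2$. Here $Q=s_1+s_2+2e^{2\tau}c$, with $s_i=\int_0^{t_i}\!\int_0^{t_i}\gamma(X_i(s)-X_i(r))\,\ud s\,\ud r$ and $c=\int_0^{t_1}\!\int_0^{t_2}\gamma(X_1(s)-X_2(r))\,\ud s\,\ud r$. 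Then
\[
0\le Q=\tfrac p2\,(s_1+s_2+2c)-\tfrac{\rho^2}{2}\,(s_1+s_2-2c)\le\tfrac p2\,(s_1+s_2+2c),
\]
because $s_1+s_2-2c=\int_{\R^d}\bigl|\int_0^{t_1}e^{\iota\xi\cdot X_1(s)}\ud s-\int_0^{t_2}e^{\iota\xi\cdot X_2(s)}\ud s\bigr|^2\mu(\ud\xi)\ge0$. Thus the tilded transform is at most $(p/2)^n$ times the untilded one, which yields $\frac p2\cdot 2\theta=p\theta$.

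\textbf{Step~2 loses a second exponential factor, already at $p=2$.} Write $V_t=\int_0^t\!\int_0^t\gamma(X(s)-X(r))\,\ud s\,\ud r$.
Cauchy--Schwarz in $\E\otimes\mathbf E_0$ bounds the balanced term by $(2n-1)!!\,(\mathbf E_0[V_{t_1}^n]\,\mathbf E_0[V_{t_2}^n])^{1/2}$, i.e.\ one path carrying the full power $n$. Cauchy--Schwarz conditionally on $(X_1,X_2)$, followed by independence, gives instead $(2n-1)!!\,\mathbf E_0[V_{t_1}^{n/2}]\,\mathbf E_0[V_{t_2}^{n/2}]$. The ratio $\mathbf E_0[V_t^n]/(\mathbf E_0[V_t^{n/2}])^2$ does not depend on $t$, and by Lemma~\ref{LDP} it is $2^{\alpha n/a}e^{o(n)}$. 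Carrying your Gamma bookkeeping through, the balanced term therefore comes out with rate $2^{\alpha/a}$ times the sharp one. The paper keeps the two paths apart by Jensen on the Fourier side, $s_1+s_2+2c\le(t_1+t_2)(s_1/t_1+s_2/t_2)$. It then expands multinomially, applies Lemma~\ref{LDP} to each independent path, and evaluates a Beta integral.

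\textbf{Step~3 is wrong as stated.} Let $F(t_1,t_2)=\sum_{l_1+l_2=2n}\E[\widetilde S_{l_1}(g_{l_1}(\cdot,t_1,0))\widetilde S_{l_2}(g_{l_2}(\cdot,t_2,0))]$. Since one noise drives both factors, \eqref{E:scalef} and \eqref{e:scal-G} give only the joint homogeneity $F(ct_1,ct_2)=c^{\kappa n}F(t_1,t_2)$. There is no separate $t_1^{\kappa l_1/2}t_2^{\kappa l_2/2}$ dependence, and values on the diagonal cannot invert a double Laplace transform. The paper instead uses monotonicity, $F(t_1,t_2)\ge(t_1\wedge t_2)^{\kappa n}F(1,1)$, together with $\int\!\!\int e^{-t_1-t_2}(t_1\wedge t_2)^{\kappa n}\ud t_1\ud t_2=2^{-\kappa n}\Gamma(1+\kappa n)$; this is the source of $(2/\kappa)^{\kappa}$.

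A smaller slip in Step~4: $\Gamma$ is log-convex, not log-concave. So $l_1!\,l_2!$ is smallest at the balanced split, which is why only negative powers of it peak there.
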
 

 \begin{proof}
 {\bf Step 1.}
     We first prove the following inequality 
\begin{equation}\label{e:upperaim1}
 \begin{aligned}
	& \limsup _{n \rightarrow \infty} \frac{1}{n} \log \frac{1}{n!}\int_0^{\infty} \int_0^{\infty} \ud t_1 \ud t_2 \: e^{- t_1- t_2}  \theta^{n} \sum_{l_1+l_2= 2n}\mathbb{E} \prod_{j=1}^2 \widetilde{S}_{l_j}\left(g_{l_j}\left(\cdot, t_j, 0\right)\right)  \\
    &\qquad \leq \log\left\{  \left(\frac{\nu}{2}\right)^{-\frac{\alpha}{a}} \theta p \mathcal{M}_{a}^{\frac{2a-\alpha}{a}}\right\}.
	\end{aligned}
    \end{equation}
Note that 
\begin{equation}\label{e:upper2}
\begin{aligned}
		& \int_0^{\infty} \int_0^{\infty} \ud t_1 \ud t_2 \: e^{-t_1- t_2}  \theta^{n} \sum_{l_1+l_2= 2n} \mathbb{E} \prod_{j=1}^2 \widetilde{S}_{l_j}\left(g_{l_j}\left(\cdot, t_j, 0\right)\right)  \\
		&= \left(\frac{2}{\nu}\right)^{2n+2} \theta^{n}  \int_0^{\infty} \int_0^{\infty} \ud t_1 \ud t_2   e^{-2 t_1/\nu -2 t_2/\nu } \sum_{l_1+l_2= 2n} \frac{1}{l_1!} \frac{1}{l_2!}\\
& \qquad \times \mathbf{E}_x \mathbb E\bigg\{\mathbb{\widetilde E} \left[\int_0^{t_1} \left(e^{\tau}\dot{W}(X_1(s))+\iota\rho\dot{\widetilde{W}}(X_1(s))\right) \ud s\right]^{l_1} \mathbb{\widetilde E} \left[\int_0^{t_2} \left(e^{\tau}\dot{W}(X_2(s))+\iota\rho\dot{\widetilde{W}}(X_2(s))\right) \ud s\right]^{l_2}\bigg\} \\
		 &= \frac{\theta^{n} }{2^n n!}\left(\frac{2}{\nu}\right)^{2n+2}  \int_0^{\infty} \int_0^{\infty} \ud t_1 \ud t_2 \: e^{-2t_1/\nu -2 t_2/\nu }\\
& \qquad \times\mathbf{E}_0\left[e^{2\tau}\sum_{j,k=1}^{2}\int_0^{t_j} \int_0^{t_k} \gamma\left(X_j(s)-X_k(r)\right) \ud s \ud r-\rho^2\sum_{i=1}^{2}\int_0^{t_i} \int_0^{t_i} \gamma\left(X_i(s)-X_i(r)\right) \ud s \ud r\right]^n,
	\end{aligned}
    \end{equation}
where the two equalities follow from  analogous arguments leading to Theorem~\ref{T:laplace Sn} and Lemma~\ref{l:characteristic function}, respectively.    

Recalling that $\rho^2=e^{2\tau}-1, p=e^{2\tau}+1$, and using the symmetry of $\gamma$, we obtain
$$
\begin{aligned}
&e^{2\tau}\sum_{j,k=1}^2
\int_0^{t_j}\int_0^{t_k}
\gamma\bigl(X_j(s)-X_k(r)\bigr)\,\ud s\,\ud r
-\rho^2\sum_{i=1}^2
\int_0^{t_i}\int_0^{t_i}
\gamma\bigl(X_i(s)-X_i(r)\bigr)\,\ud s\,\ud r\\
&\quad=
\frac p2\sum_{j,k=1}^2
\int_0^{t_j}\int_0^{t_k}
\gamma\bigl(X_j(s)-X_k(r)\bigr)\,\ud s\,\ud r\\
&\qquad-
\frac{\rho^2}{2}
\left[
\sum_{i=1}^2
\int_0^{t_i}\int_0^{t_i}
\gamma\bigl(X_i(s)-X_i(r)\bigr)\,\ud s\,\ud r
-2\int_0^{t_1}\int_0^{t_2}
\gamma\bigl(X_1(s)-X_2(r)\bigr)\,\ud s\,\ud r
\right].
\end{aligned}
$$

Using the Fourier transform, we have
$$
\begin{aligned}
&\sum_{i=1}^2
\int_0^{t_i}\int_0^{t_i}
\gamma\bigl(X_i(s)-X_i(r)\bigr)\,\ud s\,\ud r
-2\int_0^{t_1}\int_0^{t_2}
\gamma\bigl(X_1(s)-X_2(r)\bigr)\,\ud s\,\ud r\\
&=
\int_{\mathbb R^d}
\left|
\int_0^{t_1}e^{\iota\xi\cdot X_1(s)}\,\ud s
-\int_0^{t_2}e^{\iota\xi\cdot X_2(s)}\,\ud s
\right|^2\mu(\ud\xi)
\geq0.
\end{aligned}
$$

Hence, substituting the above estimate into \eqref{e:upper2}, we obtain the following estimate
\begin{equation}\label{e:upper3}
\begin{aligned}
&\int_0^\infty\int_0^\infty e^{-t_1-t_2}\theta^n
\sum_{l_1+l_2=2n}\mathbb E\prod_{j=1}^2
\widetilde S_{l_j}\bigl(g_{l_j}(\cdot,t_j,0)\bigr)\,
\ud t_1\,\ud t_2\\
		 &\leq \frac{(\theta p)^{n} }{4^n n!}\left(\frac{2}{\nu}\right)^{2n+2}  \int_0^{\infty} \int_0^{\infty} \ud t_1 \ud t_2  e^{-2t_1/\nu -2 t_2/\nu }\mathbf{E}_0\left[\sum_{j,k=1}^{2}\int_0^{t_j} \int_0^{t_k} \gamma\left(X_j(s)-X_k(r)\right) \ud s \ud r\right]^n. 
\end{aligned}
\end{equation}
 
Using the Fourier transform and Jensen inequality, we have the following estimate
  \begin{align*}
	&\sum_{j,k=1}^{2}\int_0^{t_j} \int_0^{t_k} \gamma\left(X_j(s)-X_k(r)\right) \ud s \ud r 
	=\int_{\mathbb{R}^d} \bigg|\sum_{j=1}^{2}\int_0^{t_j}e^{\iota\xi\cdot X_j(s)} \ud s\bigg|^2\mu(\ud\xi),\\
	&=\left(t_1+t_2\right)^2\int_{\mathbb{R}^d} \bigg|\sum_{j=1}^{2}\frac{t_j}{t_1+t_2}\cdot\frac{1}{t_j}\int_0^{t_j}e^{\iota\xi\cdot X_j(s)} \ud s\bigg|^2\mu(\ud\xi)
	\leq\left(t_1+t_2\right)\sum_{j=1}^{2}t_j\int_{\mathbb{R}^d} \bigg|\frac{1}{t_j}\int_0^{t_j}e^{\iota\xi\cdot X_j(s)} \ud s\bigg|^2\mu(\ud\xi)\\
	&=	\left(t_1+t_2\right)\sum_{j=1}^{2}t_i^{\frac{a-\alpha}{a}}\int_0^{1} \int_0^{1} \gamma\left(X_j(s)-X_j(r)\right) \ud s \ud r,
\end{align*}
where the final inequality follows from \begin{equation*} 
	    \int_0^{t_i} \int_0^{t_i} \gamma\left(X_i(s)-X_i(r)\right) \ud s \ud r \stackrel{d}{=} t_i^{\frac{2a-\alpha}{a}} \int_0^1 \int_0^1 \gamma\left(X_i(s)-X_i(r)\right) \ud s \ud r, \quad i=1, 2.
	\end{equation*}
This together with \eqref{e:upper3}  yields that, for any $\delta>0$, 
\begin{equation}\label{eq:highmomentS 1}
\begin{aligned}
&\int_0^\infty\int_0^\infty \ud t_1 \ud t_2 e^{-t_1-t_2}\theta^n
\sum_{l_1+l_2=2n}\mathbb E\prod_{j=1}^2
\widetilde S_{l_j}\bigl(g_{l_j}(\cdot,t_j,0)\bigr)  \\
		\leq & \frac{(\theta p)^{n} }{4^n n!}\left(\frac{2}{\nu}\right)^{2n+2} \int_0^\infty\int_0^\infty\ud t_1\ud t_2   e^{-2t_1/\nu -2 t_2/\nu } \left(t_1+t_2\right)^n \\
		& \times \mathbf{E}_0 \left[\sum_{j=1}^{m} t_i^{\frac{a-\alpha}{a}}  \int_0^1 \int_0^1 \gamma\left(X_j(s)-X_j(r)\right) \ud s \ud r \right]^n \\
		= & \frac{(\theta p)^{n} }{4^n }\left(\frac{2}{\nu}\right)^{2n+2}  \sum_{l_1+l_2=n} \int_0^\infty\int_0^\infty\ud t_1\ud t_2   e^{-2t_1/\nu -2 t_2/\nu } \left(t_1+t_2\right)^n\prod_{j=1}^2 t_j^{\frac{a-\alpha}{a} l_j} \\
        &\times \frac{1}{l_{1}!l_{2}!}\prod_{j=1}^2 \mathbf{E}_0\left[\int_0^1 \int_0^1 \gamma(X_j(s)-X_j(r)) \ud s \ud r\right]^{l_j}, 
\end{aligned}
\end{equation}
where the last step follows from Newton's multi-nominal formula.

By Lemma~\ref{LDP} and the positivity of $\gamma(\cdot)$, for any $\delta>0$ there exists a constant $C_\delta>0$ such that 
\begin{align}
	\mathbf{E}_0\left[\int_0^1 \int_0^1 \gamma(X(s)-X(r)) \ud s \ud r\right]^n \leq C_\delta(n!)^{\frac{\alpha}{a}} \bigg[(1+\delta) 4\left(\frac{a \mathcal{M}_a}{2a-\alpha}\right)^{\frac{2a-\alpha}{a}} \bigg]^n, \quad n=1,2, \cdots. \label{E:Local time estimate}
\end{align}
Plugging  \eqref{E:Local time estimate} into \eqref{eq:highmomentS 1}, we have
\begin{equation}\label{e:upper5}
\begin{aligned}
&\int_0^\infty\int_0^\infty \ud t_1 \ud t_2 e^{-t_1-t_2}\theta^n
\sum_{l_1+l_2=2n}\mathbb E\prod_{j=1}^2
\widetilde S_{l_j}\bigl(g_{l_j}(\cdot,t_j,0)\bigr)  \\
		\leq & C_\delta \frac{(\theta p)^{n} }{n!}\left(\frac{2}{\nu}\right)^{2n+2} \bigg[(1+\delta) \left(\frac{a \mathcal{M}_a}{2a-\alpha}\right)^{\frac{2a-\alpha}{a}} \bigg]^n\\
        &\times \sum_{l_1+l_2=n}
	\prod_{j=1}^2(l_j!)^{-\frac{a-\alpha}{a}}\int_0^\infty\int_0^\infty (t_1+t_2)^n e^{-2t_1/\nu -2 t_2/\nu } \prod_{j=1}^2t_j^{\frac{a-\alpha}{a}l_j}	\ud t_1\ud t_2. 
\end{aligned}
\end{equation}

We now estimate the following term
\begin{equation*}
	\sum_{l_1+l_2=n}
	\prod_{j=1}^2(l_j!)^{-\frac{a-\alpha}{a}}\int_0^\infty\int_0^\infty (t_1+t_2)^n e^{-2t_1/\nu -2 t_2/\nu }\prod_{j=1}^2t_j^{\frac{a-\alpha}{a}l_j}	\ud t_1\ud t_2.
\end{equation*}
Applying the change of variables
\begin{equation*}
	t_1=rs,\qquad t_2=r(1-s),	\qquad r>0,\quad 0<s<1,
\end{equation*}
and using $l_1+l_2=n$, we obtain
\begin{equation}
    \label{eq:beta-gamma-m2}
\begin{aligned}
	&\int_0^\infty\int_0^\infty 	(t_1+t_2)^n e^{-2t_1/\nu -2 t_2/\nu }\prod_{j=1}^2t_j^{\frac{a-\alpha}{a}l_j}\ud t_1\ud t_2\\
	&=\int_0^\infty r^{\frac{2a-\alpha}{a}n+1}e^{-2r/\nu}\ud r\int_0^1s^{\frac{a-\alpha}{a}l_1}(1-s)^{\frac{a-\alpha}{a}l_2}\,\ud s	\\
	&=\left(\frac{\nu}{2}\right)^{\frac{2a-\alpha}{a}n+2}
	\frac{\Gamma\left(\frac{2a-\alpha}{a}n+2\right)	}{\Gamma\left(\frac{a-\alpha}{a}n+2\right)}	\prod_{j=1}^2\Gamma\left(\frac{a-\alpha}{a}l_j+1\right).
\end{aligned}
\end{equation}
Hence,
\begin{align*}
	&\sum_{l_1+l_2=n}\prod_{j=1}^2(l_j!)^{-\frac{a-\alpha}{a}}\int_0^\infty\int_0^\infty 	(t_1+t_2)^n e^{-2t_1/\nu -2 t_2/\nu }\prod_{j=1}^2t_j^{\frac{a-\alpha}{a}l_j}
	\ud t_1\ud t_2\\
	&=	\left(\frac{\nu}{2}\right)^{\frac{2a-\alpha}{a}n+2}
	\frac{\Gamma\left(\frac{2a-\alpha}{a}n+2\right)}{\Gamma\left(\frac{a-\alpha}{a}n+2\right)}\sum_{l_1+l_2=n}\prod_{j=1}^2\frac{\Gamma\left(\frac{a-\alpha}{a}l_j+1\right)}{(l_j!)^{\frac{a-\alpha}{a}}}.
\end{align*}
By Stirling's formula, we have
\begin{align*}
	\sum_{l_1+l_2=n}\prod_{j=1}^2\frac{\Gamma\left(\frac{a-\alpha}{a}l_j+1\right)}{(l_j!)^{\frac{a-\alpha}{a}}}
	&\leq C\left(\frac{a-\alpha}{a}\right)^{\frac{a-\alpha}{a}(l_1+l_2)}
	\sum_{l_1+l_2=n} \prod_{j=1}^2(l_j+1)^{\frac{\alpha}{2a}}\\
	&\leq C\left(\frac{a-\alpha}{a}\right)^{\frac{a-\alpha}{a}n}(n+1)^{1+\frac{\alpha}{a}}.
\end{align*}
Moreover, Stirling's formula also yields
\begin{equation*}
	\frac{\Gamma\left(\frac{2a-\alpha}{a}n+2\right)}{\Gamma\left(\frac{a-\alpha}{a}n+2\right)}\leq C n!\left(\frac{2a-\alpha}{a}\right)^{\frac{2a-\alpha}{a}n}\left(\frac{a-\alpha}{a}\right)^{-\frac{a-\alpha}{a}n}.
\end{equation*}
Combining this with \eqref{eq:beta-gamma-m2}, we obtain
\begin{equation}
    \label{eq:m2-beta-bound}
\begin{aligned}	&\sum_{l_1+l_2=n}\prod_{j=1}^2(l_j!)^{-\frac{a-\alpha}{a}}
	\int_0^\infty\int_0^\infty 	(t_1+t_2)^n e^{-2t_1/\nu -2 t_2/\nu }
	\prod_{j=1}^2t_j^{\frac{a-\alpha}{a}l_j}\ud t_1\ud t_2	\\
	&\leq C(n+1)^{1+\frac{\alpha}{a}}n!\left(\frac{\nu}{2}\right)^{\frac{2a-\alpha}{a}n+2}\left(\frac{2a-\alpha}{a}\right)^{\frac{2a-\alpha}{a}n}.
\end{aligned}
\end{equation}

Combining \eqref{e:upper5} and \eqref{eq:m2-beta-bound}, we thus conclude that
 \begin{align*}
	& \limsup _{n \rightarrow \infty} \frac{1}{n} \log \frac{1}{n!}\int_0^{\infty} \int_0^{\infty} \ud t_1 \ud t_2  e^{- t_1- t_2}  \theta^n\sum_{l_1+l_2= 2n}\mathbb{E} \prod_{j=1}^2 \widetilde{S}_{l_j}\left(g_{l_j}\left(\cdot, t_j, 0\right)\right)  \notag\\
	& \leq \log  \left\{\left(\frac{\nu}{2}\right)^{-\frac{\alpha}{a}} (1+\delta)\theta p\mathcal{M}_{a}^{\frac{2a-\alpha}{a}}\right\},
	\end{align*}
which completes the proof of \eqref{e:upperaim1} by  letting  $\delta \to 0^{+}.$

{\bf Step 2.} 
We now complete the proof of the lemma. First, appealing to the scaling properties~\eqref{E:scalef} and \eqref{e:scal-G}, we have
\begin{equation} \label{E:scal S}
    \sum_{l_1+\cdots+l_p=2 n} \mathbb{E} \prod_{j=1}^p \widetilde S_{l_j}\left(g_{l_j}(\cdot, t, 0)\right)=t^{[2(b+r)-b \alpha / a] n}\sum_{l_1+\cdots+l_p=2 n} \mathbb{E} \prod_{j=1}^p \widetilde S_{l_j}\left(g_{l_j}(\cdot, 1,0)\right), \quad \forall t>0.
\end{equation} 
From  the monotonicity of $g_n(\cdot, t, 0)$ in $t$ and the above scaling property, it follows that
\begin{align}
	\sum_{l_1+l_2= 2n}\mathbb{E} \prod_{j=1}^2 \widetilde{S}_{l_j}\left(g_{l_j}\left(\cdot, t_j, 0\right)\right) &\geq 	\sum_{l_1+l_2= 2n} \mathbb{E} \prod_{j=1}^2 \widetilde{S}_{l_j} \left(g_{l_j}(\cdot, \min _{1 \leq j \leq 2} t_j, 0) \right)\nonumber \\
	&=\bigg\{\sum_{l_1+l_2= 2n}\mathbb{E} \prod_{j=1}^2 \widetilde{S}_{l_j}\left(g_{l_j}\left(\cdot, 1, 0\right)\right)\bigg\} \left(\min _{1 \leq j \leq 2} t_j\right)^{[2(b+r)-b \alpha / a] n}. \nonumber
\end{align}
Thus, we get 
\begin{align*}
	&  \int_0^{\infty} \int_0^{\infty} \ud t_1 \ud t_2 \: e^{- t_1- t_2}  \sum_{l_1+l_2= 2n} \mathbb{E} \prod_{j=1}^2 \widetilde{S}_{l_j}\left(g_{l_j}\left(\cdot, t_j, 0\right)\right)  \\
	&\geq \bigg\{\sum_{l_1+l_2= 2n}\mathbb{E} \prod_{j=1}^2 \widetilde{S}_{l_j}\left(g_{l_j}\left(\cdot, 1, 0\right)\right)\bigg\}  \int_0^{\infty} \int_0^{\infty} \ud t_1 \ud t_2 \: e^{- t_1- t_2}\left(\min _{1 \leq j \leq 2} t_j\right)^{[2(b+r)-b \alpha / a] n} \\
    &= \bigg\{\sum_{l_1+l_2= 2n}\mathbb{E} \prod_{j=1}^2 \widetilde{S}_{l_j}\left(g_{l_j}\left(\cdot, 1, 0\right)\right)\bigg\} 2^{-[2(b+r)-b \alpha / a] n} \Gamma(1+[2(b+r)-b \alpha / a] n),
\end{align*}
where the equality is due to \eqref{E:existence proof6}.
This yields
$$
\begin{aligned}
	& \theta^n\sum_{l_1+l_2= 2n}\mathbb{E} \prod_{j=1}^2 \widetilde{S}_{l_j}\left(g_{l_j}\left(\cdot, 1, 0\right)\right) \\
 &\leq	  \frac{2^{[2(b+r)-b \alpha / a] n}}{\Gamma(1+[2(b+r)-b \alpha / a] n)} \int_0^{\infty} \int_0^{\infty} \ud t_1 \ud t_2  e^{- t_1- t_2} \theta^n \sum_{l_1+l_2= 2n}\mathbb{E} \prod_{j=1}^2 \widetilde{S}_{l_j}\left(g_{l_j}\left(\cdot, t_j, 0\right)\right) \\
&\leq	 C  n^{\frac{|1-2(b+r)+b \alpha / a|}{2}} \frac{ 2^{[2(b+r)-b \alpha / a] n}\left(\frac{\nu}{2}\right)^{-\frac{\alpha}{a}n}\left(\theta p \mathcal{M}_a^{\frac{2a-\alpha}{a}}\right)^{n}}{\left(n!\right)^{2(b+r)-b \alpha / a-1}[2(b+r)-b \alpha / a]^{[2(b+r)-b \alpha / a]n}},
\end{aligned}
$$
where the second step follows from \eqref{e:upperaim1} and
\begin{equation*}
    \Gamma(1+c_0n)  \ge C  n^{-\frac{|1-c_0|}{2}}  c_0^{c_0n}  (n!)^{c_0}, \quad \text{for } c_0>0,
\end{equation*}
which can be deduced from Stirling formula. This yields the desired result and the proof is completed. 
 \end{proof}

 \begin{theorem}  \label{T:upperbound}
	Let Assumptions \ref{A:Main}, \ref{B:Main} and \ref{A:G} hold. Recall that $u(t, x)$ is the  mild Stratonovich solution to equation \eqref{e:sfde}. 
		 For every real $p\ge2$,
			\[
			\lim _{t \rightarrow \infty} t^{-\beta} \log \mathbb{E}\left(|u(t, x)|^p\right)
			\le \frac{p^\beta}{2} \mathbf M(a,b,r,\theta, \nu, \alpha).
			\]
			For every fixed $t>0$,
			\[
			\lim _{p \rightarrow \infty} p^{-\beta} \log \mathbb{E}\left(|u(t, x)|^p\right)
			\le \frac{t^\beta}{2} \mathbf M(a,b,r,\theta, \nu, \alpha).
			\]
     \end{theorem}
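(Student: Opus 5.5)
The plan is to combine the hypercontractivity bound of Proposition~\ref{prop:Hypercon} with the coefficient estimate of Proposition~\ref{L:upper2lemma}, and then sum the resulting series with a standard Stirling/Laplace argument. Fix real $p\ge2$ and choose $\tau\ge0$ with $p=e^{2\tau}+1$. Proposition~\ref{prop:Hypercon} gives
\[
\mathbb{E}|u(t,x)|^p\le\bigl(\mathbb{E}|\widetilde u(t,x)|^2\bigr)^{p/2},
\]
so it suffices to bound $\frac p2\log\mathbb{E}|\widetilde u(t,x)|^2$.

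\textbf{Step 1: expand the second moment.} Expanding the series \eqref{e:u2} and using $\mathbb{E}\widetilde S_{l_1}\widetilde S_{l_2}=0$ when $l_1+l_2$ is odd (Wick), I write
\[
\mathbb{E}|\widetilde u(t,x)|^2=\sum_{n\ge0}\theta^n\sum_{l_1+l_2=2n}\mathbb{E}\prod_{j=1}^2\widetilde S_{l_j}\bigl(g_{l_j}(\cdot,t,0)\bigr).
\]
By stationarity in $x$ I may take $x=0$. The $L^2$ convergence of $\widetilde u$ justifies the interchange; the cross terms are nonnegative by \eqref{E:cov-tildeS}.

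\textbf{Step 2: scale out $t$.} By the scaling identity \eqref{E:scal S}, the $n$-th term equals $t^{c_0n}A_n$, where $c_0=2(b+r)-b\alpha/a$ and $A_n$ is the same quantity at $t=1$. Proposition~\ref{L:upper2lemma} then gives, for any $\delta>0$ and some $C_\delta$,
\[
\theta^nA_n\le C_\delta\,\frac{\bigl[(1+\delta)K\bigr]^n}{(n!)^{c_0-1}},\qquad
K=\Bigl(\frac{2}{c_0}\Bigr)^{c_0}\Bigl(\frac\nu2\Bigr)^{-\alpha/a}\theta p\,\mathcal M_a^{\frac{2a-\alpha}{a}}.
\]
Note that $c_0-1>0$ by \eqref{e:strict-scaling}, and $\beta=c_0/(c_0-1)$.

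\textbf{Step 3: asymptotics of the series.} I use the elementary fact that for $\kappa>0$,
\[
\log\sum_n\frac{y^n}{(n!)^\kappa}\sim\kappa\,y^{1/\kappa}\qquad\text{as }y\to\infty.
\]
This follows by maximizing $n\log y-\kappa n\log n+\kappa n$, which is attained at $n\approx y^{1/\kappa}$; the polynomial prefactors do not matter at this scale. Apply it with $y=(1+\delta)Kt^{c_0}$ and $\kappa=c_0-1$:
\[
\limsup_{t\to\infty}t^{-\beta}\log\mathbb{E}|\widetilde u(t,0)|^2\le(c_0-1)\bigl((1+\delta)K\bigr)^{1/(c_0-1)}.
\]
Let $\delta\to0$ and multiply by $p/2$. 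The remaining task is the algebraic check that
\[
\frac p2(c_0-1)K^{\beta-1}=\frac{p^\beta}{2}\mathbf M.
\]
Here $K^{\beta-1}$ contributes $p^{\beta-1}$ and $(2/c_0)^{c_0/(c_0-1)}=(2a/(2a(b+r)-b\alpha))^\beta$. Also $c_0-1=1/(\beta-1)$ and $\mathcal M_a^{\frac{2a-\alpha}{a}(\beta-1)}$ matches \eqref{e:bM}, so the check goes through.

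For the $p\to\infty$ statement, fix $t$ and let $p\to\infty$ instead. The same bound holds because $y=(1+\delta)Kt^{c_0}$ grows linearly in $p$, which gives $(p/2)\cdot c\,p^{\beta-1}t^{\beta}$ with the same constant.

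\textbf{Main obstacle.} The delicate point is uniformity. Proposition~\ref{L:upper2lemma} is a $\limsup$ in $n$ with $p$ appearing inside $K$. For the $p\to\infty$ limit I need the constant $C_\delta$ to be independent of $p$, or at most to grow subexponentially in $n$. I would handle this by tracking Step 1 of that proof: there $p$ enters only through the factor $(\theta p)^n$ in \eqref{e:upper3}, so $C_\delta$ is indeed $p$-free. This is exactly what the argument needs.
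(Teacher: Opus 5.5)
Your proposal is correct and follows the paper's proof: hypercontractivity (Proposition~\ref{prop:Hypercon}) reduces the problem to $\mathbb{E}\widetilde u^2$, then the scaling identity \eqref{E:scal S}, the coefficient bound of Proposition~\ref{L:upper2lemma}, and the Mittag--Leffler asymptotics \eqref{e:asym-ML} give the result, and your algebraic identification with $\mathbf M$ checks out. Your remark that $C_\delta$ does not depend on $p$ is correct, because $p$ enters only through the factor $(\theta p)^n$ in \eqref{e:upper3}; this is needed for the $p\to\infty$ limit, and the paper uses it without stating it.
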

     
\begin{proof}
   By Proposition~\ref{prop:Hypercon}, we obtain for any $p\ge 2$
\begin{equation}
    \label{eq:u2-expansion}
\begin{aligned}
& \log \mathbb{E}\left[|u(t, x)|^p\right] 
\leq \frac{p}{2}\log \mathbb{E} [\widetilde{u}^2(t, x)] \\
&\leq \frac{p}{2} \log \sum_{n=0}^{\infty} t^{[2(b+r)-b \alpha / a] n} \bigg(\theta^{n}\sum_{l_1+l_2= 2n} \mathbb{E} \prod_{j=1}^2 \widetilde{S}_{l_j}\left(g_{l_j}\left(\cdot, 1, 0\right)\right)\bigg),
\end{aligned}
\end{equation}
where the second inequality follows from \eqref{e:u2} and \eqref{E:scal S}. 
 Lemma \ref{L:upper2lemma} yields that for any $\delta>0$ there exists a constant $C_\delta>0$ such that for all $n\ge 1$,
\begin{equation}
     \label{eq:u2-expansion2}
 \begin{aligned}
     &\sum_{l_1+l_2= 2n} \mathbb{E} \prod_{j=1}^2 \widetilde{S}_{l_j}\left(g_{l_j}\left(\cdot, 1, 0\right)\right)\\
     &\leq C_{\delta} \frac{\left(1+\delta\right)^np^n}{\left(n!\right)^{2(b+r)-b \alpha / a-1}} \bigg[ \left(\frac{2}{2(b+r)-b \alpha / a}\right)^{2(b+r)-b \alpha / a }\left(\frac{\nu}{2}\right)^{-\frac{\alpha}{a}} \mathcal{M}_{a}^{\frac{2a-\alpha}{a}} \bigg]^n.
 \end{aligned}
 \end{equation}
Substituting  \eqref{eq:u2-expansion2} into \eqref{eq:u2-expansion} yields
 \begin{align}
     & \log \mathbb{E}\left[|u(t, x)|^p\right] 
\nonumber\\
   &\leq \frac{p}{2}\log C_{\delta}\sum_{n=0}^{\infty} \frac{\left(1+\delta\right)^np^nt^{[2(b+r)-b \alpha / a] n}}{\left(n!\right)^{2(b+r)-b \alpha / a-1}} \bigg[ \left(\frac{2}{2(b+r)-b \alpha / a}\right)^{2(b+r)-b \alpha / a }\left(\frac{\nu}{2}\right)^{-\frac{\alpha}{a}} \theta\mathcal{M}_a^{\frac{2a-\alpha}{a}} \bigg]^n.\nonumber
 \end{align}
To evaluate the limits as $t\to\infty$ and $p\to\infty$, we use the following
asymptotics of the Mittag–Leffler function (Lemma~A.3 in \cite{bcc22})
\begin{equation}\label{e:asym-ML}
    \lim_{h\to\infty} h^{-1/\kappa}\log\sum_{n=0}^{\infty}\frac{\vartheta^n h^n}{(n!)^{\kappa}}
= \kappa\,\vartheta^{1/\kappa}, \qquad \vartheta>0 .
\end{equation}

Consider first the limit $t\to\infty$ with $p$ fixed. Setting $h = t^{\,2(b+r)-b\alpha/a}$ and $\kappa = 2(b+r)-b\alpha/a-1$, we obtain
\begin{align}
&\lim _{t \rightarrow \infty} t^{-\beta} \log \mathbb{E}\left(|u(t, x)|^p\right)\nonumber\\
&\leq  \frac{p}{2}\lim _{t \rightarrow \infty} t^{-\beta} \log C_{\delta}\sum_{n=0}^{\infty} \frac{\left(1+\delta\right)^np^nt^{[2(b+r)-b \alpha / a] n}}{\left(n!\right)^{2(b+r)-b \alpha / a-1}} \bigg[ \left(\frac{2}{2(b+r)-b \alpha / a}\right)^{2(b+r)-b \alpha / a }\left(\frac{\nu}{2}\right)^{-\frac{\alpha}{a}} \theta\mathcal{M}_a^{\frac{2a-\alpha}{a}} \bigg]^n \nonumber\\
& \leq \frac{\left(1+\delta\right)^{\beta-1}p^\beta}{2}\left(\frac{2 a}{2 a(b+r)-b \alpha}\right)^\beta\left(2(b+r)-\frac{b \alpha}{a}-1\right) \left(\theta\left(\frac{\nu}{2}\right)^{-\frac{\alpha}{a}} \mathcal{M}_a^{\frac{2 a-\alpha}{a}}\right)^{\frac{a}{2 a(b+r)-b \alpha-a}}. \label{eq:t-limit}
\end{align}

	For the limit $p\to\infty$ with $t$ fixed, 
taking $h = p$ and the same $\kappa = 2(b+r)-b\alpha/a-1$, we get
\begin{align}
&\lim _{p \rightarrow \infty} p^{-\beta} \log \mathbb{E}\left(|u(t, x)|^p\right)\nonumber\\
&\leq  \frac{1}{2}\lim _{p\rightarrow \infty} p^{-\frac{1}{2(b+r)-b \alpha / a-1}} \log C_{\delta}\sum_{n=0}^{\infty} \frac{\left(1+\delta\right)^{n}p^nt^{[2(b+r)-b \alpha / a] n}}{\left(n!\right)^{2(b+r)-b \alpha / a-1}} \notag\\
&\qquad \qquad \qquad \times\bigg[ \left(\frac{2}{2(b+r)-b \alpha / a}\right)^{2(b+r)-b \alpha / a }\left(\frac{\nu}{2}\right)^{-\frac{\alpha}{a}} \theta\mathcal{M}_a^{\frac{2a-\alpha}{a}} \bigg]^n \nonumber\\
&\leq  \frac{\left(1+\delta\right)^{\beta-1}t^\beta}{2}\left(\frac{2 a}{2 a(b+r)-b \alpha}\right)^\beta\left(2(b+r)-\frac{b \alpha}{a}-1\right) \left(\theta \left(\frac{\nu}{2}\right)^{-\frac{\alpha}{a}} \mathcal{M}_a^{\frac{2 a-\alpha}{a}}\right)^{\frac{a}{2 a(b+r)-b \alpha-a}}.\label{eq:p-limit}
\end{align}
Letting $\delta\to0$ in \eqref{eq:t-limit} and \eqref{eq:p-limit} yields the desired estimates of Theorem~\ref{T:upperbound}.
\end{proof}

\begin{remark}\label{rem:hyper-contrac}
We believe that the proof of Theorem~\ref{T:upperbound} introduces a new approach to deriving real-moment asymptotics for Stratonovich solutions. 

In the Skorohod setting, \cite{Le2016Aremark} observed that the hypercontractivity of the Ornstein--Uhlenbeck semigroup allows integer-moment asymptotics to be extended to moments of every real order $p\ge 2$. The key mechanism is that, for $p>2$, hypercontractivity bounds the $p$-th moment of the solution by the second moment of a solution to an equation of the same type. This comparison relies on the Wick-product structure of Skorohod integrals and therefore does not extend to the Stratonovich regime. The only known result on real-moment asymptotics in the Stratonovich regime is \cite{Chen2018Temporal}, which treats the fractional stochastic heat equation using a Feynman--Kac formula specific to that equation and unavailable for general fractional diffusion equations in our setting.

In the present work, we construct an auxiliary Stratonovich solution $\widetilde{u}(t,x)$ (see~\eqref{e:u2}) whose Ornstein–Uhlenbeck transform coincides with $u(t,x)$. Hypercontractivity then allows us to bound the $p$-th moment of $u(t,x)$ by the second moment of $\widetilde{u}(t,x)$; see Proposition~\ref{prop:Hypercon}. We subsequently derive, in Proposition~\ref{L:upper2lemma}, an upper bound for the large-$n$ asymptotics of each Stratonovich chaos component of $\widetilde{u}(t,x)$. The proof uses the Laplace-transform technique developed in Section~\ref{sec:laplace}, which connects the fractional diffusion equation to the heat equation.

To the best of our knowledge, this is the first use of hypercontractivity directly at the level of the (Stratonovich) chaos expansion of the solution, without relying on the existence of a Feynman--Kac formula. The approach also applies to mild solutions of linear stochastic equations admitting series expansions, provided that their Green functions can be related to the fractional heat kernel through Laplace transforms. 
\end{remark}
By a similar argument as above, we can obtain the exact asymptotics of the first moment of $u(t,x).$

\begin{proof}[Proof of Theorem \ref{th:excat} (i)] \makeatletter\def\@currentlabelname{proof}\makeatother\label{proof:1}

From \eqref{e:expan-utx} ,\eqref{E:scalef} and \eqref{e:scal-G} it follows that
\begin{align}
	\mathbb{E}u(t,x)=	 \sum_{n=0}^{\infty} \theta^{n}\mathbb{E}S_{2n}\left(g_{2n}\left(\cdot, t, 0\right)\right)=\sum_{n=0}^{\infty} t^{[2(b+r)-b \alpha / a]n}\theta^{n}\mathbb{E} S_{2n}\left(g_{2n}\left(\cdot, 1, 0\right)\right). \label{eq:L1Sn1}
\end{align}
Using \eqref{E:scalef} and \eqref{e:scal-G} again, and then applying Theorem \ref{T:laplace Sn}, we obtain
\begin{align*}
	&\theta^{n}\mathbb{E} S_{2n}\left(g_{2n}\left(\cdot, 1, 0\right)\right) \nonumber\\
	&= \frac{\theta^{n}}{\Gamma(1+[2(b+r)-b \alpha / a] n)} \int_{0}^{\infty }e^{-t}\mathbb{E} S_{2n}\left(g_{2n}\left(\cdot,t, 0\right)\right)\ud t\nonumber\\
		&= \frac{\theta^{n} }{2^n n!}\left(\frac{2}{\nu}\right)^{2n+2}  \frac{1}{\Gamma(1+[2(b+r)-b \alpha / a] n)}  \int_0^{\infty} e^{-2t/\nu }
		\mathbf{E}_0\left[\int_0^{t} \int_0^{t} \gamma\left(X(s)-X(r)\right) \ud s \ud r\right]^n \ud t \nonumber\\
			&= \frac{\theta^{n} }{2^n n!}\left(\frac{2}{\nu}\right)^{\frac{\alpha}{a}} \frac{\Gamma(1+[(2a-\alpha) / a] n)}{\Gamma(1+[2(b+r)-b \alpha / a] n)} 
		\mathbf{E}_0\left[\int_0^{1} \int_0^{1} \gamma\left(X(s)-X(r)\right) \ud s \ud r\right]^n. 
\end{align*}

Lemma~\ref{LDP} together with Stirling's formula yields
\begin{align}
&\lim_{n\to\infty}\frac1n\log(n!)^{2(b+r)-\frac{b\alpha}{a}-1}	\theta^n\mathbb{E}S_{2n}\left(g_{2n}\left(\cdot,1,0\right)\right)\nonumber\\
	&=\log\left\{\frac{\theta}{2}\left(\frac{2}{\nu}\right)^{\frac{\alpha}{a}}\right\}
+\lim_{n\to\infty}\frac1n\log\left\{(n!)^{	2(b+r)-\frac{b\alpha}{a}	-\frac{2a-\alpha}{a}}\frac{\Gamma\left(1+\frac{2a-\alpha}{a}n\right)}{	\Gamma\left(1+\left[2(b+r)-\frac{b\alpha}{a}\right]n\right)}\right\}\nonumber\\
	&\quad+
	\lim_{n\to\infty}\frac1n
	\log(n!)^{-\frac{\alpha}{a}}\mathbf{E}_0\left[\int_0^1\int_0^1\gamma\left(X(s)-X(r)\right)	\ud s\ud r\right]^n	\nonumber\\
	&=\log\left\{	\frac{\theta}{2}\left(\frac{2}{\nu}\right)^{\frac{\alpha}{a}}\right\}
	+\frac{2a-\alpha}{a}\log\left(\frac{2a-\alpha}{a}\right)\nonumber\\
	&\quad-	\left(2(b+r)-\frac{b\alpha}{a}\right)\log\left(2(b+r)-\frac{b\alpha}{a}\right)
	+\log\left\{4\left(\frac{a\mathcal{M}_a}{2a-\alpha}\right)^{\frac{2a-\alpha}{a}}\right\}\nonumber\\
	&= \log  \left\{\left(\frac{1}{2(b+r)-b \alpha / a}\right)^{2(b+r)-b \alpha / a}(2\theta) \left(\frac{\nu}{2}\right)^{-\frac{\alpha}{a}} \mathcal{M}_a^{\frac{2a-\alpha}{a}}\right\}.
	\nonumber
\end{align}

More precisely, for any $\delta>0$ there exists a constant $C_\delta>0$ such that for all $n\ge1$,
\begin{align*}
\theta^{n}\mathbb{E}S_{2n}\left(g_{2n}\left(\cdot,1,0\right)\right)
\le\frac{ C_\delta}{(n!)^{2(b+r)-\frac{b\alpha}{a}-1}}
\left[(1+\delta)\left(\frac{1}{2(b+r)-b \alpha / a}\right)^{2(b+r)-b \alpha / a}\left(\frac{\nu}{2}\right)^{-\frac{\alpha}{a}} \theta 2 \mathcal{M}_a^{\frac{2a-\alpha}{a}}\right]^n,
\end{align*}
and
\begin{align*}
\theta^{n}\mathbb{E}S_{2n}\left(g_{2n}\left(\cdot,1,0\right)\right)
\ge\frac{ C_\delta^{-1}}{(n!)^{2(b+r)-\frac{b\alpha}{a}-1}}
\left[(1-\delta)\left(\frac{1}{2(b+r)-b \alpha / a}\right)^{2(b+r)-b \alpha / a}\left(\frac{\nu}{2}\right)^{-\frac{\alpha}{a}} \theta 2 \mathcal{M}_a^{\frac{2a-\alpha}{a}}\right]^n.
\end{align*}
Combining the two-sided estimate obtained above with \eqref{eq:L1Sn1}, and using the asymptotics of the Mittag--Leffler function \eqref{e:asym-ML}, we let $\delta\to0$ to obtain
\begin{equation*}
     \lim _{t \rightarrow \infty} t^{-\beta} \log \mathbb{E}\left[u(t, x)\right]
    = \frac{1}{2} \mathbf M(a,b,r,\theta, \nu, \alpha).   
\end{equation*}
This completes the proof.
\end{proof}

 \section{Lower bound}\label{se:lower}
 
In this section, we establish the lower bound for the asymptotic behavior of the solution to~\eqref{e:sfde} that matches the upper bound in Theorem~\ref{T:upperbound}. By refining the argument of \cite{ch2024} through the finite Wick expansion and exploiting the positivity of all the resulting contraction terms, we remove the loss from $p$ to $p-1$ in the previous lower-bound argument. Consequently, both the long-time asymptotics, as $t\to\infty$ with $p>1$ fixed, and the moment asymptotics, as $p\to\infty$ with $t>0$ fixed, have exactly the same rates and constants as those in upper bounds in Theorem~\ref{T:upperbound}. See Theorem~\ref{T:lowerbound1} and Remark~\ref{rem:lower} for more details.


Set $\cH_+:=\{f\in\cH:f\geq0\}$. For integers $m\geq0$ and
$0\leq\ell\leq\lfloor m/2\rfloor$, let $\Pi_{m,\ell}$ denote the
collection of all sets $\mathcal D$ consisting of $\ell$ disjoint
unordered pairs from $\{1,\ldots,m\}$.  

\begin{lemma}\label{L:finite-Wick-expansion}
Let $m$ be a non-negative integer and let $f\in\cH_+$.
Then
\begin{equation}\label{e:finite-Wick-expansion}
\begin{aligned}
 &e^{-\frac12\|f\|_{\cH}^2}
 \E\left[e^{W(f)}
 S_m\bigl(g_m(\cdot,t,0)\bigr)\right]\\
 &=
 \sum_{\ell=0}^{\lfloor m/2\rfloor}
 \sum_{\mathcal D\in\Pi_{m,\ell}}
 \int_{(\R^d)^m}g_m(x_1,\ldots,x_m,t,0)
 \prod_{(j,k)\in\mathcal D}\gamma(x_j-x_k)\\
 &\hspace{35mm}\times
 \prod_{j\notin\bigcup_{(k,l)\in\mathcal D}\{k,l\}}
 (\gamma*f)(x_j)
 \ud x_1\cdots\ud x_m.
\end{aligned}
\end{equation}
In particular, every term on the right-hand side is nonnegative. 
\end{lemma}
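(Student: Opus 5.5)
The plan is to prove the identity first at the mollified level, where it is an elementary Gaussian computation, and then pass to the limit $\e\to0$ using the Stratonovich integrability from Theorem~\ref{th:strat-intablity}.

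\textbf{Mollified identity.} Fix $\e=(\e_1,\ldots,\e_m)$ and work with $S_{m,\e}(g_m(\cdot,t,0))=\int g_m\prod_k\dot W_{\e_k}(x_k)\,\ud x$. Introduce the tilted measure $\ud\mathbb Q=e^{W(f)-\frac12\|f\|_\cH^2}\ud\P$. By the Cameron--Martin (Girsanov) theorem, under $\mathbb Q$ the field $\dot W$ has the law of $\dot W+h$, where $h(x)=(\gamma*f)(x)$. Indeed, $\E[\dot W(x)W(f)]=\int\gamma(x-y)f(y)\ud y$. Accordingly,
\[
\dot W_{\e_k}(x_k)\ \text{becomes}\ \dot W_{\e_k}(x_k)+(\gamma*f)_{\e_k}(x_k),\qquad (\gamma*f)_{\e}:=p_\e*\gamma*f.
\]
Expanding the product $\prod_{k=1}^m\bigl(\dot W_{\e_k}(x_k)+(\gamma*f)_{\e_k}(x_k)\bigr)$ over subsets, and applying Wick's formula~\eqref{e:wick-formula} to the Gaussian factors, gives the following. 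Each choice of $\ell$ pairs $\mathcal D\in\Pi_{m,\ell}$ contributes $\prod_{(j,k)\in\mathcal D}\gamma_{\e_j+\e_k}(x_j-x_k)$, while the unpaired indices contribute $\prod_j(\gamma*f)_{\e_j}(x_j)$. Odd Gaussian products vanish, which accounts for every term. Integrating against $g_m$, and using Fubini (justified since all terms are nonnegative), yields the mollified version of~\eqref{e:finite-Wick-expansion}. The nonnegativity of each term comes from three facts: $g_m\ge0$ under Assumption~\ref{A:G} (for $a=b=2$, $G$ is a nonnegative measure), $\gamma\ge0$, and $f\ge0$.

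\textbf{Limit $\e\to0$.} On the left-hand side, $S_{m,\e}\to S_m$ in $\cL^2$ by Theorem~\ref{th:strat-intablity}(i), and $e^{W(f)}\in\cL^2$. Cauchy--Schwarz therefore gives convergence of the left-hand side.

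On the right-hand side, each term is a nonnegative integral. I would write it in Fourier variables as in Lemma~\ref{le:3.1}(iii): every factor $\gamma_{\e_j+\e_k}$ or $(\gamma*f)_{\e_j}$ carries a damping $e^{-\e|\xi|^2/2}$ against $\mu(\ud\xi)$ (respectively $\widehat f\,\mu$). The cleanest way to pass to the limit is monotone convergence in the $\e$'s, after rewriting each term via the stable-process representation of Lemma~\ref{le:3.1}. That representation applies after taking Laplace transforms in $t$, and one then recovers pointwise convergence in $t$ by the continuity argument of Lemma~\ref{continuity theorem}.

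For an upper dominating bound I would use $(\gamma*f)(x)\le$ a quantity controlled by $\|f\|_\cH$ through Cauchy--Schwarz in $\cH$:
\[
(\gamma*f)(x)=\langle \delta_x,f\rangle_\cH,
\]
interpreted after mollification as $\le\|p_\e(x-\cdot)\|_\cH\|f\|_\cH$. This bound is not uniform in $\e$, so instead I would bound the whole right-hand sum at level $\e$ by the left-hand side at level $\e$, which converges. Since each right-hand term is nonnegative, every individual term is then uniformly bounded in $\e$. The limit of each term is identified by Fatou together with the Fourier/monotone representation, and equality of the total sum is preserved.

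\textbf{Main obstacle.} The hardest step is identifying the limit of each individual term, particularly those with unpaired factors $(\gamma*f)$. It is tempting to replace this by a Cameron--Martin shift directly at the level of Stratonovich integrals: $S_m(g_m)(\omega+h)$ expands binomially into integrals of $g_m$ with some variables integrated against $h$ and the rest Stratonovich-integrated. This is essentially the same statement, so the substantive work remains showing that these partially deterministic Stratonovich integrals exist as $\cL^2$ limits. I expect this to follow from the arguments of Lemma~\ref{continuity theorem}, with $\gamma*f$ playing the role of an extra spectral factor.
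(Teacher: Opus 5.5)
Your mollified identity is correct. The Cameron--Martin shift is a cleaner way to reach it than the paper's route: under $e^{W(f)-\frac12\|f\|_{\cH}^2}\ud\P$, the factor $\dot W_{\e_k}(x_k)$ becomes $\dot W_{\e_k}(x_k)+(\gamma_{\e_k}*f)(x_k)$. The paper instead expands $e^{W(f)}$ in a power series and counts Wick pairings between copies of $W(f)$ and the $\dot W_\e(x_j)$. One small correction: interchanging $\E$ and $\int\ud x$ at that stage is justified by $\sup_x\E[e^{W(f)}\prod_k|\dot W_{\e_k}(x_k)|]<\infty$ together with $\int g_m(x,t,0)\ud x<\infty$, not by positivity. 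Your convergence argument for the left-hand side via Cauchy--Schwarz is fine.

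\textbf{The gap is the passage to the limit on the right-hand side.} Write $T^{\mathcal D}_\e$ for the $\mathcal D$-term at level $\e$ and $T^{\mathcal D}_0$ for its unmollified version.
\begin{itemize}
\item Fatou only gives $\sum_{\mathcal D}T^{\mathcal D}_0\le\liminf_\e\sum_{\mathcal D}T^{\mathcal D}_\e$, which is the limit of the left-hand side. So the right-hand side of \eqref{e:finite-Wick-expansion} is at most the left-hand side.
\item The reverse inequality is what must be proved, and your proposed mechanism for it fails. The monotone convergence behind Lemma~\ref{le:3.1}(iii) and Lemma~\ref{continuity theorem} needs the Fourier-side integrand multiplying the damping $e^{-\e|\xi|^2/2}$ to be nonnegative. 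For pure pairings it is: a stable characteristic function against the positive measure $\mu$.
\item For terms with unpaired indices the weight becomes $\widehat f(\eta_j)\mu(\ud\eta_j)$. Here $\widehat f$ is complex-valued even when $f\ge0$, so monotonicity in $\e$ is lost.
\item Hence treating $\gamma*f$ ``as an extra spectral factor'' does not go through. You also have no dominating function, since $\gamma*f$ need not be bounded for $f\in\cH_+$.
\end{itemize}

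\textbf{How to close it.} The paper only sketches this step, but the ingredient it invokes is Lemma~\ref{L:shifted-stable-transform}. That lemma lets you identify the limit objects directly, with no termwise mollifier limits. Its proof uses only Theorem~\ref{T:laplace Sn} and conditional Gaussian calculus. Set $V_t=\int_0^t\int_0^t\gamma(X(s)-X(r))\ud s\ud r$ and $A_t=\int_0^t(\gamma*f)(X(s))\ud s$.
\begin{itemize}
\item \emph{Left-hand side.} The lemma gives its Laplace transform in $t$ as $\frac{\lambda^{b-mr-1}}{m!}(\frac2\nu)^{m+1}\int_0^\infty e^{-2\lambda^bt/\nu}\mathbf E_0\E_N[(NV_t^{1/2}+A_t)^m]\ud t$. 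Moreover, $\E_N(NV_t^{1/2}+A_t)^m=\sum_\ell\binom{m}{2\ell}(2\ell-1)!!\,V_t^\ell A_t^{m-2\ell}$.
\item \emph{Right-hand side.} Apply \eqref{E:laplace g} and Tonelli directly to the unmollified terms; all integrands are nonnegative. Then symmetrize over the simplex. The $\ell$-th group has Laplace transform $\lambda^{b-mr-1}(\frac2\nu)^{m+1}\int_0^\infty e^{-2\lambda^bt/\nu}\frac{\#\Pi_{m,\ell}}{m!}\mathbf E_0[V_t^\ell A_t^{m-2\ell}]\ud t$, and $\#\Pi_{m,\ell}=\binom{m}{2\ell}(2\ell-1)!!$. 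So the two transforms agree group by group.
\item \emph{Conclusion.} Uniqueness of the Laplace transform gives equality for a.e.\ $t$. The left-hand side is continuous in $t$, by the $\cL^2$-continuity of $S_m(g_m(\cdot,t,0))$ from Lemma~\ref{continuity theorem}. The right-hand side is nondecreasing in $t$, because $g_m(\cdot,t,0)$ is. Together these give the identity for every $t$.
\end{itemize}
Combined with your Fatou bound, this even yields convergence of each individual term.
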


\begin{proof}
Replacing $\dot W$ by $\dot W_\e$ and expanding the exponential, we
have
\begin{align*}
 e^{-\frac12\|f\|_{\cH}^2}
 \E\left(e^{W(f)}\prod_{j=1}^m\dot W_\e(x_j)\right)
 &=e^{-\frac12\|f\|_{\cH}^2}
 \sum_{q=0}^{\infty}\frac1{q!}
 \E\left(W(f)^q\prod_{j=1}^m\dot W_\e(x_j)\right).
\end{align*}
Note  the expectations on the right-hand side of  the above equation are non-zero only if $m$ and $q$ share the same parity due to Wick's formula~\eqref{e:wick-formula}. 
Denoting $g_i=\dot W_\varepsilon(x_i)$ for $i=1, ..., m$ and $g_i=W(f)$ for $i=m+1, ..., m+q$,  we have
\begin{align*}
&\E\left(W(f)^q\prod_{j=1}^m\dot W_\e(x_j)\right)=\sum_{\mathcal D\in\Pi_{m+q}} \prod_{(k,l)\in \mathcal D} \E[g_k g_l]\\
&=\sum_{ \ell=0\vee(m-q)/2}^{\lfloor m/2\rfloor}\, \sum_{\mathcal D\in\Pi_{m, \ell }} \left(\prod_{(k,l)\in \mathcal D} \gamma_{2\varepsilon}(x_k-x_l) \right) \left(\frac{(2j)!}{2^j j!}\|f\|_{\mathcal H}^{2j} \right) \left(\frac{q!}{(2j)!} \prod_{k\notin\cup_{(i,l)\in \mathcal D}\{i,l\}} (\gamma_\varepsilon*f)(x_k)\right),
\end{align*}
where  in the second step we set $2j:=q-(m-2\ell)$ use the fact $$\E[\dot W_\varepsilon(x_k)\dot W_\varepsilon(x_l)]=\gamma_{2\varepsilon}(x_k-x_l), \quad \E[W(f) \dot W_\varepsilon(x_k)]=(\gamma_\varepsilon*f)(x_k).$$


Therefore, using the change of variable $q=m-2\ell +2j$ we get
\begin{align*}
 &e^{-\frac12\|f\|_{\cH}^2}
 \E\left[e^{W(f)}\prod_{j=1}^m\dot W_\e(x_j)\right]\\
 &=\sum_{\ell=0}^{\lfloor m/2\rfloor}
 \sum_{\mathcal D\in\Pi_{m,\ell}}
 \left(e^{-\frac12\|f\|_{\cH}^2}
 \sum_{j=0}^{\infty}
 \frac{\|f\|_{\cH}^{2j}}{2^j j!}\right)
 \prod_{(k,l)\in\mathcal D}\gamma_{2\e}(x_k-x_l)
 \prod_{k\notin\bigcup_{(i,l)\in\mathcal D}\{i,l\}}
 (\gamma_\e*f)(x_k)\\
 &=\sum_{\ell=0}^{\lfloor m/2\rfloor}
 \sum_{\mathcal D\in\Pi_{m,\ell}}
 \prod_{(k,l)\in\mathcal D}\gamma_{2\e}(x_k-x_l)
 \prod_{k\notin\bigcup_{(i,l)\in\mathcal D}\{i,l\}}
 (\gamma_\e*f)(x_k).
\end{align*}
 Consequently,
\begin{align*}
	&e^{-\frac12\|f\|_{\cH}^2}
	\E\left[e^{W(f)}\theta^{m/2}	S_{m,\e}\bigl(g_m(\cdot,t,0)\bigr)\right]\\
	&\quad=\theta^{m/2}
	\sum_{\ell=0}^{\lfloor m/2\rfloor}
	\sum_{\mathcal D\in\Pi_{m,\ell}}
	\int_{(\R^d)^m}g_m(x_1,\ldots,x_m,t,0)
	\prod_{(j,k)\in\mathcal D}\gamma_{2\e}(x_j-x_k)\\
	&\hspace{35mm}\times
	\prod_{j\notin\bigcup_{(k,l)\in\mathcal D}\{k,l\}}
	(\gamma_\e*f)(x_j)
	\ud x_1\cdots\ud x_m .
\end{align*}
By the same mollification argument as in Section~3, together with the
Laplace-transform representation in
Lemma~\ref{L:shifted-stable-transform} and the moment estimates in
Lemma~\ref{L:n momentbound}, one readily verifies that, as
$\e\downarrow0$, the two sides of the preceding identity converge to
the corresponding sides of~\eqref{e:finite-Wick-expansion}.  We omit
the routine details.  This completes the proof.
\end{proof}


Let $X=\{X(t):t\geq0\}$ be the symmetric $a$-stable process introduced
in Section \ref{se:technical}, independent of $W$, and let $\mathbf E_0$ denote its
expectation when it starts from the origin.  By
Lemma~\ref{L:Wdefined}, conditionally on $X$,
$\int_0^t\dot W(X(s))\ud s$ is normally distributed with mean zero and
variance
\begin{equation}
 \mathbb E\left[\left.
 \left(\int_0^t\dot W(X(s))\ud s\right)^2\right|X\right]
 =\int_0^t\int_0^t\gamma(X(s)-X(r))\ud s\ud r.
 \label{e:stable-noise-variance}
\end{equation}
For $f\in\cH$, the same mollification and the $\mathcal L^2$-convergence in Lemma~\ref{L:Wdefined} show that the pair
$\bigl(W(f),\int_0^t\dot W(X(s))\ud s\bigr)$ is conditionally jointly
Gaussian with
\begin{align}
 \mathbb E\left[\left.W(f)^2\right|X\right]
 &=\|f\|_{\cH}^2,\notag\\
 \mathbb E\left[\left.
 W(f)\int_0^t\dot W(X(s))\ud s\right|X\right]
 &=\lim_{\e\downarrow0}\int_0^t
 \left\langle f,p_\e(X(s)-\cdot)\right\rangle_{\cH}\ud s\notag\\
 &=\int_0^t(\gamma*f)(X(s))\ud s,
 \label{e:stable-cross-covariance}
\end{align}
where the limit is taken in $\mathcal{L}^2(\Omega, \mathcal{F}, \mathbb{P})$.
If $f\geq0$, then the last quantity is nonnegative, and applying Cauchy-Schwarz inequality to~\eqref{e:stable-cross-covariance} yields
\begin{equation}\label{e:stable-projection}
 \int_0^t(\gamma*f)(X(s))\ud s
 \leq \|f\|_{\cH}
 \left(\int_0^t\int_0^t
 \gamma(X(s)-X(r))\ud s\ud r\right)^{1/2}.
\end{equation}

\begin{lemma}\label{L:shifted-stable-transform}
For every $m\geq0$, every $f\in\cH_+$, and every
$\lambda>0$,
\begin{equation}
    \label{e:shifted-stable-transform}
\begin{aligned}
 &\int_0^\infty e^{-\lambda s}
 e^{-\frac12\|f\|_{\cH}^2}
 \E\left[e^{W(f)} S_m\bigl(g_m(\cdot,s,0)\bigr)\right]\ud s\\
 &=
 \frac{\lambda^{b-mr-1}}{m!}
 \left(\frac2\nu\right)^{m+1}\\
 &\quad \times
 \int_0^\infty e^{-2\lambda^bt/\nu}\,
 \mathbf E_0\E_N\Bigg[
 \Bigg\{
 N\left(\int_0^t\int_0^t
 \gamma(X(s)-X(r))\ud s\ud r\right)^{1/2}
 +\int_0^t(\gamma*f)(X(s))\ud s
 \Bigg\}^{m}\Bigg]\ud t,
 \end{aligned}
\end{equation}
where $N$ is a standard Gaussian random variable independent of $X$.
\end{lemma}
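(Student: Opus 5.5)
The identity is a Cameron--Martin (Girsanov) shift of the Laplace-transform formula in Theorem~\ref{T:laplace Sn}. We work first at the mollified level and then pass to the limit. Fix $\e>0$ and take $\e_1=\cdots=\e_m=\e$. By \eqref{E:laplace S} (equivalently \eqref{e:1}) we have, almost surely,
\[
\int_0^\infty e^{-\lambda s}S_{m,\e}\bigl(g_m(\cdot,s,0)\bigr)\ud s
=\frac{\lambda^{b-mr-1}}{m!}\Bigl(\frac2\nu\Bigr)^{m+1}\int_0^\infty e^{-2\lambda^bt/\nu}\,\mathbf E_0\Bigl[\int_0^t\dot W_\e(X(s))\ud s\Bigr]^m\ud t .
\]
We multiply both sides by $e^{W(f)-\frac12\|f\|_\cH^2}$ and take $\E$. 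On the left, Fubini in $s$ is justified by nonnegativity-free bounds: Cauchy--Schwarz gives $\E e^{2W(f)}<\infty$, and the second moments of $S_{m,\e}$ grow at most polynomially in $s$ (Corollary~\ref{coro:3.5}). On the right, we interchange $\E$ with $\int\ud t$ and $\mathbf E_0$ in the same way, using the moment bounds of Lemma~\ref{L:n momentbound}.

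The key computation is conditional on $X$. The pair $\bigl(W(f),Z_\e\bigr)$ with $Z_\e:=\int_0^t\dot W_\e(X(s))\ud s$ is centered jointly Gaussian, and $e^{W(f)-\frac12\|f\|^2}$ is the Cameron--Martin density. Under the tilted measure, $Z_\e$ is Gaussian with unchanged variance
\[
V_\e=\int_0^t\int_0^t\gamma_{2\e}(X(s)-X(r))\ud s\ud r
\]
and with mean $c_\e=\E[W(f)Z_\e\mid X]=\int_0^t(\gamma_\e*f)(X(s))\ud s$. Hence
\[
\E\bigl[e^{W(f)-\frac12\|f\|^2}Z_\e^m\mid X\bigr]=\E_N\bigl[(NV_\e^{1/2}+c_\e)^m\bigr].
\]
An alternative is to expand $Z_\e^m$ by Wick's formula with $q$ copies of $W(f)$, exactly as in the proof of Lemma~\ref{L:finite-Wick-expansion}. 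This gives the mollified identity with $\gamma_{2\e}$ and $\gamma_\e*f$.

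It then remains to let $\e\downarrow0$. On the left, $S_{m,\e}\to S_m$ in $\mathcal L^2$ by Theorem~\ref{th:strat-intablity}(ii), and $e^{W(f)}\in\mathcal L^2$, so Cauchy--Schwarz gives convergence. On the right, $V_\e\to V$ and $c_\e\to c:=\int_0^t(\gamma*f)(X(s))\ud s$ by Lemma~\ref{L:Wdefined} and \eqref{e:stable-cross-covariance}. Expanding $\E_N(NV_\e^{1/2}+c_\e)^m$ as a finite polynomial in $V_\e$ and $c_\e$, convergence in $\mathbf P_0$-mean follows once the powers are uniformly integrable in $\e$. For this we use $c_\e\le\|f\|_\cH V_\e^{1/2}$, the mollified analogue of \eqref{e:stable-projection}, so that every term is bounded by $C\,V_\e^{m/2}$. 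We also have $\mathbf E_0V_\e^{k}\le\mathbf E_0V^{k}$ by Jensen with the heat kernel, as in the proof of Theorem~\ref{T:laplace Sn}. Together these give uniform polynomial-in-$t$ bounds, and dominated convergence in $t$ against $e^{-2\lambda^bt/\nu}$ finishes the proof.

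The main obstacle is this last limit. It needs the uniform moment control of $V_\e$ and $c_\e$, in particular convergence of $c_\e$ in higher $\mathbf P_0$-moments and not just in $\mathcal L^2$. We would handle it via the domination above and Vitali's theorem.
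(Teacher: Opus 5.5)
Your argument is correct and rests on the same idea as the paper. Conditionally on $X$, the tilt $e^{W(f)-\frac12\|f\|_{\cH}^2}$ shifts the Gaussian variable $\int_0^t\dot W(X(s))\ud s$ by the cross-covariance $\int_0^t(\gamma*f)(X(s))\ud s$ and leaves its variance unchanged, and this is applied inside the Laplace representation of Theorem~\ref{T:laplace Sn}. The only difference is the order of operations: the paper applies Theorem~\ref{T:laplace Sn} directly to the unmollified $S_m$ and uses the conditional joint Gaussianity \eqref{e:stable-noise-variance}--\eqref{e:stable-cross-covariance} of the limiting pair, computing the tilted moments by differentiating the moment generating function $m$ times at $z=0$. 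So your $\e\downarrow0$ passage and the uniform-integrability argument via $c_\e\le\|f\|_{\cH}V_\e^{1/2}$ are not needed there, though they do make explicit a limit that the paper absorbs into its assertion that the limiting pair is conditionally jointly Gaussian.
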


\begin{proof}
By Theorem~\ref{T:laplace Sn}, we get
\begin{equation}\label{e:int-eWSm}
\begin{aligned}
 &\int_0^\infty e^{-\lambda s}e^{-\frac12\|f\|_{\cH}^2}
 \E\left[e^{W(f)} S_m\bigl(g_m(\cdot,s,0)\bigr)\right]\ud s\\
 &=\frac{\lambda^{b-mr-1}}{m!}
 \left(\frac2\nu\right)^{m+1}
 \int_0^\infty e^{-2\lambda^bt/\nu}\mathbf E_0\E\left[
 e^{W(f)-\frac12\|f\|_{\cH}^2}
 \left(\int_0^t\dot W(X(s))\ud s\right)^m
 \right]\ud t.
\end{aligned}
\end{equation}
Here the expectations and the integration can be interchanged by the
$\mathcal L^2$-estimate in Theorem~\ref{T:laplace Sn} and
Cauchy--Schwarz inequality.  For each fixed path of $X$,
\eqref{e:stable-noise-variance}--\eqref{e:stable-cross-covariance} give,
for every $z\in\R$,
\begin{align*}
 &e^{-\frac12\|f\|_{\cH}^2}
 \E\left[\exp\left\{W(f)+z\int_0^t\dot W(X(s))\ud s\right\}
 \right]\\
 &\quad=\exp\Bigg\{
 z\int_0^t(\gamma*f)(X(s))\ud s
 +\frac{z^2}{2}\int_0^t\int_0^t
 \gamma(X(s)-X(r))\ud s\ud r\Bigg\}\\
 &\quad=\E_N\exp\Bigg\{z\Bigg[
 N\left(\int_0^t\int_0^t
 \gamma(X(s)-X(r))\ud s\ud r\right)^{1/2}
 +\int_0^t(\gamma*f)(X(s))\ud s\Bigg]\Bigg\}.
\end{align*}
Differentiating both sides $m$ times at $z=0$ gives
\begin{align*}
 &e^{-\frac12\|f\|_{\cH}^2}
 \E\left[e^{W(f)}
 \left(\int_0^t\dot W(X(s))\ud s\right)^m\right]\\
 &\quad=
 \E_N\Bigg[
 \Bigg\{N\left(\int_0^t\int_0^t
 \gamma(X(s)-X(r))\ud s\ud r\right)^{1/2}
 +\int_0^t(\gamma*f)(X(s))\ud s\Bigg\}^{m}\Bigg].
\end{align*}
Substituting this into~\eqref{e:int-eWSm} yields the desired~\eqref{e:shifted-stable-transform}. The proof is complete.
\end{proof}

Now we are ready to prove the main result in this section: 

\begin{theorem}\label{T:lowerbound1}
Let $u(t,x)$ be the mild Stratonovich solution to~\eqref{e:sfde}.  For
every real number $p>1$,
\begin{equation}\nonumber
 \liminf_{t\to\infty}t^{-\beta}
 \log\E\left[|u(t,x)|^p\right]
 \geq
 \frac{p^\beta}{2}\,
 \mathbf M(a,b,r,\theta,\nu,\alpha).
\end{equation}
Moreover, for every fixed $t>0$,
\begin{equation}\nonumber
 \liminf_{p\to\infty}p^{-\beta}
 \log\E\left[|u(t,x)|^p\right]
 \geq
 \frac{t^\beta}{2}\,
 \mathbf M(a,b,r,\theta,\nu,\alpha),
\end{equation}
where $\mathbf M(a,b,r,\theta,\nu,\alpha)$ is defined
in~\eqref{e:bM}.
\end{theorem}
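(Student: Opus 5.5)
Since $u(t,x)$ need not be positive, the plan is a change-of-measure (tilting) argument. The tool is Lemma~\ref{L:finite-Wick-expansion}, which makes every term in the tilted expansion nonnegative. Fix $f\in\cH_+$ and let $Z_f:=e^{W(f)-\frac12\|f\|_\cH^2}$, so $Z_f>0$ and $\E Z_f=1$. Let $q$ be the conjugate exponent of $p$. By Hölder,
\[
\E[Z_f u(t,x)]\le \|u(t,x)\|_p\,\|Z_f\|_q=\|u(t,x)\|_p\,e^{\frac{q-1}{2}\|f\|_\cH^2},
\]
hence
\[
\log\E|u(t,x)|^p\ge p\log\E[Z_fu(t,x)]-\tfrac{p}{2}\|f\|^2_\cH\cdot\tfrac{1}{p-1}\cdot p\,\frac{p-1}{p}.
\]
More concretely, $\frac1p\log\E|u|^p\ge \log\E[Z_f u]-\frac{1}{2(p-1)}\|f\|_\cH^2$. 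Lemma~\ref{L:finite-Wick-expansion} writes $\E[Z_f u]=\sum_m\theta^{m/2}\E[Z_fS_m(g_m)]$ as a sum of nonnegative terms. We may therefore drop all terms but one, or keep the whole series, and compute through Lemma~\ref{L:shifted-stable-transform}.

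For the $m$-th term we use Lemma~\ref{L:shifted-stable-transform} and keep only the contribution where $N$ and the drift reinforce. Set $A_t=(\int_0^t\int_0^t\gamma(X_s-X_r)\,ds\,dr)^{1/2}$ and $B_t=\int_0^t(\gamma*f)(X_s)\,ds\ge0$. By expanding the power, the Gaussian odd moments vanish and the even ones are positive, so $\E_N(NA+B)^m\ge \E_N[(NA)^m]$ for even $m$, and also $\ge B^m$. A better bound is $\E_N(NA+B)^m\ge \sup_{c\ge0}(cA+B)^m P(N\ge c)$. The key choice is the tilt direction, which should be adapted to the optimizer in $\mathcal M_a$. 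Take $f=\kappa\,\bar f$, where $\bar f$ is a scaled version of $g^2$ for a near-optimal $g\in\mathcal F_{a,d}$, so that $B_t\approx \kappa\,t\langle \bar f, g^2\rangle_\cH$ along stable paths confined near the profile $g^2$. With $\|f\|^2_\cH$ of order $\kappa^2$ and $\kappa$ tuned to order $t^{\beta-1}\cdot$const, the quantities balance. In practice the cleanest route is to pair $W(f)$ with the field's own Cauchy--Schwarz projection \eqref{e:stable-projection}. Choosing $f$ so that equality nearly holds along the typical tilted paths, $NA+B$ behaves like $(N+\|f\|)A$. The $m$-th moment in $m$ is then governed by Lemma~\ref{LDP} with an effective intensity multiplied by $(1+\|f\|_\cH/\cdot)$, and the penalty is $\frac{1}{2(p-1)}\|f\|^2$.

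The limiting computation then proceeds as in the proof of Theorem~\ref{th:excat}(i). Laplace inversion uses monotonicity in $t$ and the scaling \eqref{e:scal-G}, \eqref{E:scalef}. Stirling's formula and Lemma~\ref{LDP} give the $n$-asymptotics, and \eqref{e:asym-ML} converts the series to $t^\beta$ growth. Finally we optimize over $\|f\|$, equivalently over the tilt strength $\kappa$. The variational problem $\sup_{\kappa}\{\Phi(1+\kappa)^{2}\text{-type gain}-\frac{p}{2(p-1)}\kappa^2\}$ should produce precisely the factor $p^{\beta-1}$, so that $p\cdot\frac{p^{\beta-1}}{2}\mathbf M=\frac{p^\beta}{2}\mathbf M$. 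The $p\to\infty$ statement then follows by the same formulas with the roles of $t$ and $p$ exchanged through the series in $p^nt^{cn}$.

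The main obstacle is the second step: obtaining a lower bound on $\E_N(NA_t+B_t)^m$ whose $m$-asymptotics are exact, not merely of the right order. This needs a joint large-deviation statement for $(A_t,B_t)$ under $\mathbf E_0$, in which the tilted drift $B_t$ aligns with $A_t$ with Cauchy--Schwarz nearly saturated. I would first try to derive this from the Gärtner--Ellis argument behind Lemma~\ref{LDP}. I would apply it to the combined functional $\int_{\R^d}|\int_0^t e^{\iota\xi X_s}ds+\hat h(\xi)|^2\mu(d\xi)$, whose variational formula should reduce, after scaling, to $\mathcal M_a$ with the shifted constant.
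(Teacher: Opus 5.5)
Your outline matches the paper's skeleton: H\"older against the tilt $e^{W(f)}$, nonnegativity of every term via Lemma~\ref{L:finite-Wick-expansion}, the representation of Lemma~\ref{L:shifted-stable-transform}, and keeping $N$ on a half-line so that the Gaussian and the tilt share the cost. But the step that actually produces $\mathcal M_a$ is missing. You want $NA_t+B_t\approx(N+\|f\|_\cH)A_t$, i.e.\ near-equality in \eqref{e:stable-projection} on the paths that dominate the $m$-th moment. You acknowledge this needs a joint large-deviation statement for $(A_t,B_t)$ that you do not have, and the fix you propose does not supply it. Lemma~\ref{LDP} concerns $A$ alone. A G\"artner--Ellis analysis of $\int_{\R^d}|\int_0^t e^{\iota\xi\cdot X(s)}\ud s+\widehat h(\xi)|^2\mu(\ud\xi)$, which is $A_t^2+2\int_0^t(\gamma*h)(X(s))\ud s+\|h\|_\cH^2$ up to conventions, controls only one scalar combination. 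It says nothing about whether $B_t$ aligns with $\|f\|_\cH A_t$. Since $B_t\le\|f\|_\cH A_t$ always holds, the direction you need is exactly the hard one. As it stands, you have no lower bound with the sharp constant.

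The missing idea is to apply Cauchy--Schwarz in the other direction, which holds pathwise. Write $f=Rh$ with $h\in\cH_+$ and $\|h\|_\cH=1$. Then \eqref{e:stable-projection} gives $A_t\ge\int_0^t(\gamma*h)(X(s))\ud s$, so on $\{N\ge c\}$ with $c\ge0$ you get $NA_t+B_t\ge(c+R)\int_0^t(\gamma*h)(X(s))\ud s$. This reduces everything to moments of a \emph{linear} additive functional. Its Laplace-transformed $n$-th moment has an explicit Fourier form, and after the rescaling $h\mapsto h_\nu$ its exponential growth rate is at least $\rho(\mathcal F h_\nu)$ by Bass--Chen--Rosen. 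The identity $\sup_{h}\rho(\mathcal Fh)=\mathcal M_a^{(2a-\alpha)/(2a)}$ then turns the free choice of direction $h$ into the variational constant, with no joint LDP needed.

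The paper then keeps a single chaos $m=n_t=2\lfloor ct_p/2\rfloor$ with $t_p=p^{\beta-1}t^\beta$. It takes the threshold $k\sqrt{t_p/p}$ for $N$ and $R=\frac{p-1}{\sqrt p}k\sqrt{t_p}$, so that $N+R\ge\sqrt p\,k\sqrt{t_p}$ at total Gaussian-plus-H\"older cost $k^2t_p/2$. It inverts the Laplace transform using the scale invariance of the supremum over the sphere $\|f\|_\cH=R$, and optimizes over $c$ and $k$.

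Your alternative of keeping the whole series and invoking \eqref{e:asym-ML} is not directly available here. The tilt $R$ grows with $t$, so the tilted chaos terms do not have the form $\vartheta^nh^n/(n!)^\kappa$.
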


\begin{proof}
For convenience, set
\begin{equation*}
	\chi:=2(b+r)-\frac{b\alpha}{a},
\end{equation*}
and recall from \eqref{e:de-beta} that $\beta=\dfrac{\chi}{\chi-1}$.

	By spatial stationarity, it suffices to consider the case $x=0$.  For $p>1$
	and $t>0$, set
	\begin{equation}\nonumber
		t_p:=p^{\beta-1}t^\beta.
	\end{equation}
	Let $k,c>0$ be fixed and set
	\begin{equation}\label{e:lower-n-t}		n_t:=2\left\lfloor\frac{ct_p}{2}\right\rfloor,
		\qquad
		R:=\frac{p-1}{\sqrt p}\,k\sqrt{t_p}.
	\end{equation}
	For every $f\in\cH_+$ with $\|f\|_{\cH}=R$,
	\begin{equation*}
		\left\|\exp\left\{W(f)-
		\frac{p}{2(p-1)}R^2\right\}\right\|_{\frac{p}{p-1}}=1.
	\end{equation*}
	Hence, by H\"older's inequality and~\eqref{e:expan-utx},
	\begin{align*}
		\|u(t,0)\|_p
        &\ge \E\left[u(t,0) \exp\left\{W(f)-
		\frac{p}{2(p-1)}R^2\right\}\right]\\
		&\geq \exp\left\{-\frac{R^2}{2(p-1)}\right\}
		\sum_{j=0}^{\infty}e^{-\frac{1}{2}R^2}
		\E\left[e^{W(f)}\theta^{\frac{j}{2}}
		S_j\bigl(g_j(\cdot,t,0)\bigr)\right].
	\end{align*}
	Since $g_j(\cdot,t,0)\geq0$, Lemma~\ref{L:finite-Wick-expansion}
	shows that every summand is nonnegative.  Retaining the term
	$j=n_t$ and taking the supremum over $f$ implies
	\begin{align}
		\|u(t,0)\|_p
		&\geq \exp\left\{-\frac{R^2}{2(p-1)}\right\}
		\sup_{f\in\cH_+,\,\|f\|_{\cH}=R}
		e^{-\frac{R^2}{2}}\E\left[e^{W(f)}\theta^{\frac{n_t}{2}}
		S_{n_t}\bigl(g_{n_t}(\cdot,t,0)\bigr)\right].
		\label{e:lower-holder-coefficient}
	\end{align}
	
	We next estimate the supremum in~\eqref{e:lower-holder-coefficient}.
	For every integer $n\geq0$ and
	every $R>0$, using the scaling property \eqref{E:scal S},
	\begin{align}
		\sup_{f\in\cH_+,\,\|f\|_{\cH}=R}
		\E\left[e^{W(f)}\theta^{\frac{n}{2}}
		S_n\bigl(g_n(\cdot,t,0)\bigr)\right]
		=t^{\frac{\chi n}{2}}
		\sup_{f\in\cH_+,\,\|f\|_{\cH}=R}
		\E\left[e^{W(f)}\theta^{\frac{n}{2}}
		S_n\bigl(g_n(\cdot,1,0)\bigr)\right].
		\label{e:complete-sphere-scaling}
	\end{align}
	Applying~\eqref{e:complete-sphere-scaling}, we obtain
	\begin{equation}
	    \label{e:lower-sup-Laplace}
    \begin{aligned}
		&\sup_{f\in\cH_+,\,\|f\|_{\cH}=R}
		e^{-\frac{R^2}{2}}\E\left[e^{W(f)}\theta^{\frac{m}{2}}
		S_m\bigl(g_m(\cdot,t,0)\bigr)\right]\\
		&\quad=t^{\frac{\chi m}{2}}
		\sup_{f\in\cH_+,\,\|f\|_{\cH}=R}
		e^{-\frac{R^2}{2}}\E\left[e^{W(f)}\theta^{\frac{m}{2}}
		S_m\bigl(g_m(\cdot,1,0)\bigr)\right]\\
		&\quad=\frac{t^{\frac{\chi m}{2}}}{\Gamma(\frac{\chi m}{2}+1)}
		\int_0^\infty e^{-s}
		\sup_{f\in\cH_+,\,\|f\|_{\cH}=R}
		e^{-\frac{R^2}{2}}\E\left[e^{W(f)}\theta^{\frac{m}{2}}
		S_m\bigl(g_m(\cdot,s,0)\bigr)\right]\ud s\\
		&\quad\geq
		\frac{t^{\frac{\chi m}{2}}}{\Gamma(\frac{\chi m}{2}+1)}
		\sup_{f\in\cH_+,\,\|f\|_{\cH}=R}
		\int_0^\infty e^{-s}e^{-\frac{R^2}{2}}
		\E\left[e^{W(f)}\theta^{\frac{m}{2}}
		S_m\bigl(g_m(\cdot,s,0)\bigr)\right]\ud s.
	\end{aligned}
    \end{equation}
	Let
	$h\in\cH_+$ with $\|h\|_{\cH}=1$. Define
	\begin{equation*}
		f_p:=Rh
		=\frac{p-1}{\sqrt p}\,k\sqrt{t_p}\,h.
	\end{equation*}
	Thus $f_p\in \mathcal H_+$ with $\|f_p\|_{\cH}=R,$ and hence is an admissible choice in~\eqref{e:lower-sup-Laplace} for lower bound.
	
	Let $N$ be the standard Gaussian random variable as in
	Lemma~\ref{L:shifted-stable-transform}, and set
	\begin{equation*}
	    E:=\left\{k\sqrt{\frac{t_p}{p}}\leq N  \right\}.
	\end{equation*}
	Then, we get on $E$, the quantity in \eqref{e:shifted-stable-transform}
	\begin{align*}
		&N\left(\int_0^t\int_0^t
		\gamma(X(s)-X(r))\ud s\ud r\right)^{\frac{1}{2}}
		+\int_0^t(\gamma*f_p)(X(s))\ud s\\
		&\geq (N+R)\int_0^t(\gamma*h)(X(s))\ud s
        \ge
		\sqrt p\,k\sqrt{t_p}
		\int_0^t(\gamma*h)(X(s))\ud s,
	\end{align*}
    where the first inequaity follows from~\eqref{e:stable-projection} and the second one is due to \eqref{e:lower-n-t} and the definition of $E$.

	Noting that $n_t$ is even, the integrand in the expectation with respect to
	$N$ in~\eqref{e:shifted-stable-transform} is nonnegative,   and hence we
	may restrict this expectation on $E$ to get a lower bound. Taking $\lambda=1$, $m=n_t$ and $f=f_p$ in
	Lemma~\ref{L:shifted-stable-transform} gives
	\begin{equation}
	    \label{e:lower-before-spectral}
    \begin{aligned}
		&\int_0^\infty e^{-s}
		e^{-\frac{1}{2}\|f_p\|_{\cH}^2}
		\E\left[e^{W(f_p)}\theta^{\frac{n_t}{2}}
		S_{n_t}\bigl(g_{n_t}(\cdot,s,0)\bigr)\right]\ud s\\
		&\geq\P(E) \frac{\theta^{\frac{n_t}{2}}}{n_t!}
		(p k^2t_p)^{\frac{n_t}{2}}\left(\frac{2}{\nu}\right)^{n_t+1}
		\int_0^\infty e^{-\frac{2t}{\nu}}
		\mathbf E_0\left[
		\left(\int_0^t(\gamma*h)(X(s))\ud s\right)^{n_t}\right]\ud t.
	\end{aligned}
    \end{equation}
	
	It remains to choose $h$ and estimate the last integral.  For
	$h\in\cH$, set
	\begin{equation*}
		h_\nu(x):=\left(\frac{\nu}{2}\right)^{\frac{2d-\alpha}{2a}}
		h\left(\left(\frac{\nu}{2}\right)^{\frac{1}{a}}x\right).
	\end{equation*}
	Then, we have 
	\begin{align*}
		&\frac{1}{n!}\left(\frac{2}{\nu}\right)^{n+1}
		\int_0^\infty e^{-\frac{2t}{\nu}}
		\mathbf E_0\left[
	\left(\int_0^t(\gamma*h)(X(s))\ud s\right)^n\right]\ud t
		\notag\\
        &=\left(\frac{2}{\nu}\right)^{n+1}
		\int_0^\infty e^{-\frac{2t}{\nu}}\int_{0<s_1<\cdots<s_n<t}
		\mathbf E_0\left[ \prod_{i=1}^n
	(\gamma*h)(X(s_i))\right]\ud \mathbf{s} \ud t\notag\\
		&=\int_{(\R^d)^n}
		\prod_{k=1}^n(\mathcal{F}h)(\xi_k)
		\prod_{k=1}^n
		\frac{1}{1+\frac{\nu}{2}|\xi_k+\cdots+\xi_n|^a}
		\boldsymbol\mu(\ud\boldsymbol\xi)
		\notag\\
		&=\left(\frac{\nu}{2}\right)^{-\frac{\alpha n}{2a}}
		\int_{(\R^d)^n}\prod_{k=1}^n
		(\mathcal{F}h_\nu)(\xi_k)
		\prod_{k=1}^n
		\frac{1}{1+|\xi_k+\cdots+\xi_n|^a}
		\boldsymbol\mu(\ud\boldsymbol\xi).
	\end{align*}
	For every nonnegative and nonnegative definite function $\varphi$, set
	\begin{align*}
		\rho(\varphi):=
		\sup_{\|h\|_{L^2(\R^d)}=1}
		\int_{\R^d}\varphi(\xi)
		\left[\int_{\R^d}
		\frac{h(\xi+\eta)h(\eta)}
		{\sqrt{1+|\xi+\eta|^a}\sqrt{1+|\eta|^a}}\ud\eta\right]
		\mu(\ud\xi).
	\end{align*}
	By \cite[Section~3]{Bass2009},
	\begin{align*}
		&\liminf_{n\to\infty}\frac{1}{n}\log
		\int_{(\R^d)^n}\prod_{k=1}^n(\mathcal{F}h_\nu)(\xi_k)
		\prod_{k=1}^n
		\frac{1}{1+|\xi_k+\cdots+\xi_n|^a}
		\boldsymbol\mu(\ud\boldsymbol\xi)
		\geq\log\rho(\mathcal{F}h_\nu).
	\end{align*}
	Moreover, by \cite[Theorem~3.5]{Chen2023Interpolating} or
	\cite[Theorem~1.5]{Bass2009},
	\begin{equation*}
	\sup_{h\in\cH_+,\|h\|_{\cH}=1}
		\rho(\mathcal{F}h)
		=\mathcal{M}_a^{\frac{2a-\alpha}{2a}}.
	\end{equation*}
	Therefore, for every $\e\in(0,1)$, there exists a constant $C_\e>0$ such that
	\begin{equation}
	    \label{e:lower-scaled-spectral}
    \begin{aligned}
		&\frac{1}{n!}\left(\frac{2}{\nu}\right)^{n+1}
		\int_0^\infty e^{-\frac{2t}{\nu}}
		\mathbf E_0\left[
		\left(\int_0^t(\gamma*f)(X(s))\ud s\right)^n\right]\ud t\\
		&\geq C_\e^{-1}
		\left(\frac{\nu}{2}\right)^{-\frac{\alpha n}{2a}}
		\left((1-\e)
		\mathcal{M}_a^{\frac{2a-\alpha}{2a}}\right)^n,
		\quad\text{for all } n\geq1.
	\end{aligned}
    \end{equation}
	Taking $n=n_t$ in~\eqref{e:lower-scaled-spectral} and substituting it
	into~\eqref{e:lower-before-spectral}, we obtain
	\begin{align}
		&\int_0^\infty e^{-s}
		e^{-\frac{1}{2}\|f_p\|_{\cH}^2}
		\E\left[e^{W(f_p)}\theta^{\frac{n_t}{2}}
		S_{n_t}\bigl(g_{n_t}(\cdot,s,0)\bigr)\right]\ud s \notag\\
        &\geq C_\e^{-1}\P(E)
		\left\{p k^2t_p(1-\e)^2\theta
		\left(\frac{\nu}{2}\right)^{-\frac{\alpha}{a}}
		\mathcal{M}_a^{\frac{2a-\alpha}{a}}\right\}^{\frac{n_t}{2}}.
		\label{e:lower-Laplace-coefficient}
	\end{align}
	
	Combining \eqref{e:lower-holder-coefficient}, \eqref{e:lower-sup-Laplace} and \eqref{e:lower-Laplace-coefficient} with $m=n_t$, we obtain
\begin{align}
	\|u(t,0)\|_p
	&\geq C_\e^{-1}\P(E)
	\exp\left\{-\frac{p-1}{2p}k^2t_p\right\}
	\frac{t^{\frac{\chi n_t}{2}}}{\Gamma(\frac{\chi n_t}{2}+1)}
	\left\{p k^2t_p(1-\e)^2\theta
	\left(\frac{\nu}{2}\right)^{-\frac{\alpha}{a}}
	\mathcal{M}_a^{\frac{2a-\alpha}{a}}\right\}^{\frac{n_t}{2}}.
	\label{e:lower-pre-asymptotic}
\end{align}
For the standard normal density, we have
\begin{equation*}
	\P(E)\geq\frac{1}{\sqrt{2\pi}}
	\exp\left\{-\frac{1}{2}
	\left(k\sqrt{\frac{t_p}{p}}+1\right)^2\right\}.
\end{equation*}
Consequently, as $t_p\to\infty$,
\begin{equation}\nonumber
	\frac{1}{t_p}\left\{\log\P(E)
	-\frac{p-1}{2p}k^2t_p\right\}
	\geq-\frac{k^2}{2}+o(1).
\end{equation}

Stirling's formula gives
\begin{equation*}
	\frac{1}{t_p}\log\Gamma\!\left(\frac{\chi n_t}{2}+1\right)
	=\frac{n_t}{t_p}\left(\frac{\chi}{2}\log n_t+\frac{\chi}{2}\log\frac{\chi}{2}-\frac{\chi}{2}\right)+o(1), \quad \text{as } t_p\to\infty.
\end{equation*}
Substituting this estimate and using $\log n_t=\log t_p+\log\frac{n_t}{t_p}$, we can rewrite the right-hand side of \eqref{e:lower-pre-asymptotic} as
\begin{align*}
	&-\frac{k^2}{2}+o(1)
	+\frac{n_t}{2t_p}\Bigg[\log\left\{k^2(1-\e)^2\theta
	\left(\frac{\nu}{2}\right)^{-\frac{\alpha}{a}}
	\mathcal{M}_a^{\frac{2a-\alpha}{a}}\right\}\\
	&\qquad +\log p-(\chi-1)\log t_p+\chi\log t
	-\chi\log\frac{n_t}{t_p}
	-\chi\log\frac{\chi}{2}+\chi\Bigg].
\end{align*}
Since $t_p=p^{\beta-1}t^\beta$ and $\beta=\frac{\chi}{\chi-1}$, we have
\begin{equation*}
	\log p-(\chi-1)\log t_p+\chi\log t=0.
\end{equation*}
Therefore, letting $t_p\to\infty$, we obtain
\begin{align*}
	\liminf_{t_p\to\infty}
	\frac{1}{t_p}\log\|u(t,0)\|_p
	&\ge -\frac{k^2}{2}
	+\frac{c}{2}\Bigg[-\chi\log\frac{c\chi}{2}+\chi+\log\left\{k^2(1-\e)^2\theta
	\left(\frac{\nu}{2}\right)^{-\frac{\alpha}{a}}
	\mathcal{M}_a^{\frac{2a-\alpha}{a}}\right\}\Bigg].
\end{align*}

	We first optimize the right-hand side with respect to $c$. Substituting the optimal choice 
	\begin{equation*}
		c^* = \frac{2}{\chi} \left\{ k^2(1-\e)^2\theta \left(\frac{\nu}{2}\right)^{-\frac{\alpha}{a}} \mathcal{M}_a^{\frac{2a-\alpha}{a}} \right\}^{\frac{1}{\chi}}
	\end{equation*}
	into the preceding lower bound gives
	\begin{equation*}
		\liminf_{t_p\to\infty} \frac{1}{t_p}\log\|u(t,0)\|_p \ge -\frac{k^2}{2} + \left\{ k^2(1-\e)^2\theta \left(\frac{\nu}{2}\right)^{-\frac{\alpha}{a}} \mathcal{M}_a^{\frac{2a-\alpha}{a}} \right\}^{\frac{1}{\chi}}.
	\end{equation*}
	We next substitute the optimal choice of $k$ which satisfies
	\begin{equation*}
		(k^*)^2 = \left(\frac{2}{\chi}\right)^{\frac{\chi}{\chi-1}} \left\{ (1-\e)^2\theta \left(\frac{\nu}{2}\right)^{-\frac{\alpha}{a}} \mathcal{M}_a^{\frac{2a-\alpha}{a}} \right\}^{\frac{1}{\chi-1}}.
	\end{equation*}
	and get
	\begin{equation*}
		\liminf_{t_p\to\infty} \frac{1}{t_p}\log\|u(t,0)\|_p \ge \frac{\chi-1}{2} \left(\frac{2}{\chi}\right)^{\frac{\chi}{\chi-1}} \left\{ (1-\e)^2\theta \left(\frac{\nu}{2}\right)^{-\frac{\alpha}{a}} \mathcal{M}_a^{\frac{2a-\alpha}{a}} \right\}^{\frac{1}{\chi-1}}.
	\end{equation*}
	
	Letting $\e\downarrow0$, we obtain
	\begin{equation*}
		\liminf_{t_p\to\infty} \frac{1}{t_p}\log\|u(t,0)\|_p
		\ge \frac{\chi-1}{2} \left(\frac{2}{\chi}\right)^{\beta}
		\theta^{\frac{1}{\chi-1}}
		\left(\frac{\nu}{2}\right)^{-\frac{\alpha}{a(\chi-1)}}
		\mathcal{M}_a^{\frac{2a-\alpha}{a(\chi-1)}}.
	\end{equation*}
	Taking $t\to\infty$ with $p$ fixed, or $p\to\infty$ with $t$ fixed, gives the two estimates stated in Theorem~\ref{T:lowerbound1}. This completes the proof. \end{proof}

\begin{remark}\label{rem:lower} 
To derive a lower bound for the asymptotics of the $p$-th moment with $p>1$, a standard approach is to apply H\"older's inequality in the form $\|u(t,0)\|_p\ge \E[u(t,0)X]$, where $X$ is any random variable satisfying $\|X\|_q=1$ and is typically chosen as $X=\exp\left\{W(f)-\frac{p}{2(p-1)}\|f\|_{\mathcal H}^2\right\}$. Using this approach, a variational inequality was established in \cite[Proposition~4.1]{Chen2018Temporal} for fractional SHEs whose solutions admit Feynman--Kac representations and was then used to obtain the lower bound for the large-$t$ asymptotics. For the stochastic wave equation whose solution is given by the series expansion $u(t,x)=\sum_n S_n(g_n)$, a variational inequality was proved in \cite[Lemma~5.4]{ch2024} and subsequently used to obtain the lower bound for the large-$p$ asymptotics.

Since our FDE \eqref{e:sfde} admits a series expansion solution, adapting the argument of \cite[Lemma~5.4]{ch2024} to the present setting also yields the optimal lower bound for the large-$p$ asymptotics. However, it does not yield the optimal lower bound for the large-$t$ asymptotics, since the exponent $p-1$ on the right-hand side of eq.~(5.29) in \cite{ch2024} is not optimal. In light of \cite[Proposition~4.1]{Chen2018Temporal} with $\rho=0$, we conjecture that this exponent can be improved from $p-1$ to $p$.

In our proof of Theorem~\ref{T:lowerbound1}, when estimating $\E[Xu(t,0)]=\sum_n\E[XS_n(g_n)]$, we employ the Laplace transform technique developed in Lemma~\ref{L:shifted-stable-transform} to derive a lower bound for $\E[XS_n(g_n)]$. It turns out that the term $N\left(\int_0^t\int_0^t\gamma(X(s)-X(r))\ud s\ud r\right)^{1/2}$ on the right-hand side of \eqref{e:shifted-stable-transform} plays an essential role in obtaining the optimal lower bound. This term, however, was discarded in the proof of \cite[Lemma~5.4]{ch2024}.
\end{remark}

\subsection*{Acknowledgment} We wish to thank Xia Chen for helpful discussions. J.\ Song is partially supported by NSFC (No.\ 12471142) and the Fundamental Research Funds for the Central Universities.

\appendix
\section{Some technical lemmas}\label{ap:A}

The following lemma is borrowed form \cite[Lemma A.3 (i)]{Guo2025Sample}.
\begin{lemma}\label{L:laplace G}
Recall that $\cF G(t, \cdot)(\xi)$ is given by \eqref{E:FZ}. For all $\lambda>0$, the following identity holds
\begin{equation*}
    \int_0^\infty e^{-\lambda t} \cF G(t,\cdot)(\xi)\ud t = \frac{\lambda^{-r}}{\lambda^b+ \frac{\nu}{2} |\xi|^a }.
\end{equation*}
\end{lemma}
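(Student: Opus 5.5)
The plan is to compute the Laplace transform of the Mittag-Leffler expression \eqref{E:FZ} term by term, using the standard Laplace transform of a power function,
\[
\int_0^\infty e^{-\lambda t}t^{\kappa-1}\,\ud t=\Gamma(\kappa)\lambda^{-\kappa}, \qquad \kappa>0 .
\]
Fix $\xi$ and set $z=-\tfrac12\nu|\xi|^a\le 0$. By \eqref{e:mlf},
\[
\cF G(t,\cdot)(\xi)=\sum_{k\ge0}\frac{z^k t^{bk+b+r-1}}{\Gamma(bk+b+r)} .
\]
Integrating each term against $e^{-\lambda t}$ gives $z^k\lambda^{-bk-b-r}$. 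Summing these is a geometric series, valid when $|z|<\lambda^b$:
\[
\sum_{k\ge0}z^k\lambda^{-bk-b-r}=\frac{\lambda^{-b-r}}{1-z\lambda^{-b}}=\frac{\lambda^{-r}}{\lambda^b+\frac\nu2|\xi|^a},
\]
which is the claimed formula.

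Justifying the interchange of sum and integral is straightforward in that range. The series $\sum_k |z|^k\lambda^{-bk-b-r}$ converges when $|z|<\lambda^b$, so Fubini--Tonelli applies to the absolute values. This proves the identity for all $\lambda>(\frac\nu2|\xi|^a)^{1/b}$.

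The main obstacle is extending the identity to every $\lambda>0$. I would argue by analytic continuation in $\lambda$.

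\begin{itemize}
\item \emph{Growth of the left side.} For $b\in(0,2)$ and $z<0$, the bound $|E_{b,b+r}(z)|\le \tilde c/(1+|z|)$ (the one quoted from \cite[Theorem 1.6]{Podlubny1999Fractional}) shows that $t^{b+r-1}E_{b,b+r}(-\frac\nu2t^b|\xi|^a)$ grows at most polynomially in $t$. Hence the left side is analytic in $\operatorname{Re}\lambda>0$.
\item \emph{The right side.} It is analytic in $\lambda$ on a neighborhood of $(0,\infty)$, with the principal branch of $\lambda^b$ and $\lambda^{-r}$.
\item \emph{Conclusion.} The two sides agree on the half-line $\lambda>(\frac\nu2|\xi|^a)^{1/b}$, which has accumulation points, so they agree for all $\lambda>0$.
\end{itemize}

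For $b=2$, the Mittag-Leffler function $E_{2,2+r}$ is still bounded on the negative axis; for $r=0$ it is explicit, since $tE_{2,2}(-c t^2)=\sin(\sqrt c\,t)/\sqrt c$. The same analyticity argument therefore applies there too.

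An alternative to analytic continuation is to verify the identity directly from the definition of $G$ as the solution of the fractional ODE
\[
\partial^b\widehat G+\tfrac\nu2|\xi|^a\widehat G=I^r\delta_0 .
\]
One would use $\mathcal L[\partial^b f](\lambda)=\lambda^b\hat f(\lambda)-\sum_{j<\lceil b\rceil}\lambda^{b-1-j}f^{(j)}(0)$ and $\mathcal L[I^r f](\lambda)=\lambda^{-r}\hat f(\lambda)$. This is the route of \cite{Guo2025Sample}, which would be cited.
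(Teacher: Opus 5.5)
Your argument is correct, and it takes a different route from the paper: the paper gives no proof of this lemma and simply imports it from \cite[Lemma A.3 (i)]{Guo2025Sample}, so your proposal is a self-contained alternative rather than a reconstruction.

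Your first step is the classical term-by-term computation. It only yields the identity for $\lambda>(\frac\nu2|\xi|^a)^{1/b}$. Your analytic-continuation step is exactly what removes that restriction, and this matters here. The paper applies the identity with $\lambda$ fixed and $\xi$ ranging over all of $\R^d$ (for instance in Lemma~\ref{le:3.1}, inside integrals against $\mu(\ud\xi)$), so a large-$\lambda$ version would not suffice.

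The only analytic input you need is polynomial growth of $t^{b+r-1}E_{b,b+r}(-ct^b)$. For $b\in(0,2)$ this is the bound from \cite{Podlubny1999Fractional} that the paper already quotes elsewhere. For $b=2$ you can justify your boundedness claim in one line via
\[
t^{1+r}E_{2,2+r}(-ct^2)=I^r\bigl[\sin(\sqrt c\,\cdot)/\sqrt c\,\bigr](t),
\]
which is at most $t^r/(\sqrt c\,\Gamma(r+1))$. So your route costs nothing beyond what the paper already uses.

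Two small points are worth stating explicitly:
\begin{itemize}
\item The Laplace integral converges at $t=0$ because $b+r>0$.
\item For $b<2$ the right-hand side is analytic on the whole half-plane $\operatorname{Re}\lambda>0$, since $\lambda^b+\frac\nu2|\xi|^a\neq0$ there. For $b=2$ the zeros $\pm\iota\sqrt{c}$ lie on the imaginary axis. Either way you get a single connected domain for the identity theorem.
\end{itemize}

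The fractional-ODE route you mention would need more care if carried out. The Caputo Laplace formula involves $\widehat G(0)$, which blows up when $b+r<1$, since $\widehat G(t)\sim t^{b+r-1}/\Gamma(b+r)$. Since you offer it only as a citation, this does not affect your proof.
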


The following lemma is  a revised version of \cite[Lemma 2.2.7]{Chenbook}.

\begin{lemma}\label{le:chenbook-lem227}
    For any continuous functions $\varphi_1(t),\dots,\varphi_n(t)$ satisfying
    \begin{equation*}
        \int_0^\infty e^{-t} |\varphi_k(t)|\ud t <\infty, k=1,\dots,n,
    \end{equation*}
    and for any $\lambda>0$, we have
    \begin{equation*}
        \int_0^\infty\ud t \, e^{-\lambda t} \int_{[0, t]_{<}^n} \ud s_1\cdots \ud s_n \prod_{k=1}^n \varphi(s_k - s_{k-1}) = \lambda^{-1} \prod_{k=1}^n \int_0^{\infty} e^{-\lambda t} \varphi_k(t)  \ud t,
    \end{equation*}
    with the convention $s_0 = 0$ and $[0, t]_{<}^n:=\left\{\left(s_1, \cdots, s_n\right) \in[0, t]^n : 0<s_1<s_2<\cdots<s_n<t\right\}$.
\end{lemma}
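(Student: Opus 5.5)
The approach is to reduce the ordered time integral to a product of independent integrals with the change of variables $u_k=s_k-s_{k-1}$ for $k=1,\dots,n$ and $u_{n+1}=t-s_n$, so that the exponential weight $e^{-\lambda t}$ factorizes. (In the statement, $\varphi(s_k-s_{k-1})$ is to be read as $\varphi_k(s_k-s_{k-1})$.)

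The first step is to justify interchanging the orders of integration. I would first run the whole computation with $|\varphi_k|$ in place of $\varphi_k$; by Tonelli's theorem this is legitimate with no integrability assumption, since every integrand is nonnegative. The computation below then shows that the left-hand side with $|\varphi_k|$ equals $\lambda^{-1}\prod_k\int_0^\infty e^{-\lambda t}|\varphi_k(t)|\,\ud t$. For $\lambda\ge 1$ this is finite by the hypothesis $\int_0^\infty e^{-t}|\varphi_k(t)|\,\ud t<\infty$ together with $e^{-\lambda t}\le e^{-t}$. More generally it is finite whenever the right-hand side converges absolutely, and this covers every use of the lemma in the paper: there the $\varphi_k$ are nonnegative, such as $p(t,x)$, $G$, or $\int e^{-t|\xi|^a}\mu(\ud\xi)$. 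For nonnegative $\varphi_k$ the identity holds for every $\lambda>0$ as an equality in $[0,\infty]$. Once the absolute version is finite, Fubini's theorem justifies the same manipulations for signed $\varphi_k$.

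The second step is the computation itself. Write the left-hand side as an integral over the set $\{(t,s_1,\dots,s_n):0<s_1<\cdots<s_n<t\}$ in $\R_+^{n+1}$. The linear map $(t,s_1,\dots,s_n)\mapsto(u_1,\dots,u_{n+1})$ is a bijection from this set onto $(0,\infty)^{n+1}$. Its Jacobian is $1$, since the map is triangular with unit diagonal in the variables $(s_1,\dots,s_n,t)$. Under this map $t=u_1+\cdots+u_{n+1}$, so
\[
e^{-\lambda t}\prod_{k=1}^n\varphi_k(s_k-s_{k-1})=e^{-\lambda u_{n+1}}\prod_{k=1}^n e^{-\lambda u_k}\varphi_k(u_k).
\]
The integral therefore splits as $\int_0^\infty e^{-\lambda u_{n+1}}\,\ud u_{n+1}\cdot\prod_{k=1}^n\int_0^\infty e^{-\lambda u}\varphi_k(u)\,\ud u$. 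The first factor equals $\lambda^{-1}$, which gives the claimed identity.

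The only real obstacle is the integrability bookkeeping in the first step, because the hypothesis is stated only at $\lambda=1$. I would handle it as described above: prove the Tonelli version first, and then note that finiteness holds for all $\lambda\ge1$, and for all $\lambda>0$ whenever the right-hand side is finite. The continuity of the $\varphi_k$ is needed only to guarantee measurability.
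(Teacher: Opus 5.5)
Your proof is correct and is the same as the paper's: the substitution $u_k=s_k-s_{k-1}$ for $k\le n$ and $u_{n+1}=t-s_n$ has unit Jacobian, factorizes $e^{-\lambda t}$, and yields $\lambda^{-1}\prod_k\int_0^\infty e^{-\lambda u}\varphi_k(u)\,\ud u$. Your Tonelli-first bookkeeping adds something the paper omits. The hypothesis at $\lambda=1$ only guarantees absolute convergence for $\lambda\ge1$, whereas the paper also applies the lemma with parameter $2\lambda^b/\nu$, possibly below $1$, to nonnegative $\varphi_k$, and that is exactly the case your $[0,\infty]$-valued version covers.
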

\begin{proof}
    Using the changes of variables 
    \begin{equation*}
        t_k=s_k- s_{k-1}, \quad \text{ for } k=1,\dots,n+1,
    \end{equation*}
    with $s_{n+1}=t$, we have 
    \begin{align*}
        &\int_0^\infty \ud t \, e^{-\lambda t} \int_{[0, t]_{<}^n} \ud s_1\cdots \ud s_n \prod_{k=1}^n \varphi(s_k - s_{k-1})\\
        &=  \int_0^\infty\cdots\int_0^\infty e^{-\lambda t_{n+1}} \bigg( \prod_{k=1}^n   e^{-\lambda t_k} \varphi(t_k) \bigg) \ud t_1\cdots \ud t_{n+1}
        = \lambda^{-1} \prod_{k=1}^n \int_0^{\infty} e^{-\lambda t} \varphi_k(t)  \ud t.
    \end{align*}
    This completes the proof.
\end{proof}

Recall that $X(t), X_1(t), X_2(t), \cdots$ are independent  $d$-dimensional symmetric $a$-stable processes and $\mathbf{E}_x$ is the expectation with respect to the $a$-stable processes with starting point $x$.
The following lemma is taken from (3.7) and Proposition 3.5 of \cite{song2017} with $\beta_0=0$ and $\Psi(\xi)=|\xi|^a$, which we restate here.

\begin{lemma} \label{L:n momentbound} 
	Under Assumption \ref{A:Main}, there exists a positive constant $A_0$  such that for all $N>0$,
	\begin{equation} \label{nbound1}
		\mathbf{E}_0\left[\int_0^t \int_0^t \gamma(X(s)-X(r)) \ud s \ud r\right]^n \leq (2n-1)!! \: t^{n}\sum_{k=0}^n\binom{n}{k} \frac{ t^{k}}{k!} m_N^k\left[A_0 \varepsilon_N\right]^{n-k}
	\end{equation}
and

\begin{equation} \nonumber
		 \mathbf{E}_0\left[\int_0^t \int_0^t \gamma\left(X_1(s)-X_2(r)\right) \ud s \ud r\right]^n\leq n! \: t^{n} \sum_{k=0}^n \binom{n}{k} \frac{ t^{k}}{k!} m_N^k\left[A_0 \varepsilon_N\right]^{n-k},
	\end{equation}
	where $\varepsilon_N$ and $m_N$ are given by
	$$
	\varepsilon_N=\int_{|\xi| \geq N} \frac{1}{|\xi|^a} \mu(\ud \xi), \quad \text { and } \quad m_N=\mu(|\xi| \leq N),
	$$
	 for fixed large $N$.
\end{lemma}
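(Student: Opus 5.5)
The plan is to follow the scheme of \cite[Proposition~3.5]{song2017}: write $\gamma$ through its spectral measure, order the time variables, and use the independent increments of $X$. The factor $(2n-1)!!\,t^n$ will come from freezing one time variable per factor of $\gamma$. The sum $\sum_k\binom nk\frac{t^k}{k!}m_N^k(A_0\varepsilon_N)^{n-k}$ will come from splitting each $\mu$ into its part on $\{|\xi|\le N\}$ and its part on $\{|\xi|>N\}$.

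\textbf{Step 1: spectral representation and ordering.} Expand the $n$-th power as an integral over $(s_1,r_1,\dots,s_n,r_n)\in[0,t]^{2n}$ and insert $\gamma(x)=\int e^{\iota\xi\cdot x}\mu(\ud\xi)$. By Fubini and nonnegativity of the characteristic function of the symmetric $a$-stable process,
\[
\mathbf E_0\prod_{j=1}^n\gamma\bigl(X(s_j)-X(r_j)\bigr)=\int\prod_{j=1}^n\mu(\ud\xi_j)\,\mathbf E_0\exp\Big\{\iota\sum_{j}\xi_j\cdot\bigl(X(s_j)-X(r_j)\bigr)\Big\}.
\]
Order the $2n$ times increasingly. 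The pairing $(s_j,r_j)$ then becomes a pair partition of the $2n$ ordered slots; there are at most $(2n-1)!!$ such patterns, each counted up to a bounded combinatorial factor. Rewriting the phase in terms of increments $X(z_{i})-X(z_{i-1})$ gives
\[
\exp\Big\{-\sum_i(z_i-z_{i-1})|\eta_i|^a\Big\},
\]
where each $\eta_i$ is a signed partial sum of the $\xi_j$.

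\textbf{Step 2: free variables and the shifted bound.} In each pair, treat the earlier time as free and bound its contribution by the trivial volume factor. This produces $t^n$, and what remains is an ordered $n$-fold integral over the later times, $0<z_1<\dots<z_n<t$, of exactly the form $\mathcal I_n(t)$ in Step~2 of the proof of Theorem~\ref{th:exist}. To reach that form I will integrate out the frequencies one at a time, from the last increment backwards. The key inequality is that, for every $\eta\in\R^d$,
\[
\int e^{-s|\xi+\eta|^a}\mu(\ud\xi)=\int\gamma(x)p(s,x)e^{-\iota\eta\cdot x}\ud x\le\int e^{-s|\xi|^a}\mu(\ud\xi),
\]
which holds because $\gamma\ge0$ and $p\ge0$. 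This removes the dependence on the other frequencies at each step.

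\textbf{Step 3: splitting each $\mu$.} Write $\mu=\mu\mathbf 1_{\{|\xi|\le N\}}+\mu\mathbf 1_{\{|\xi|>N\}}$ in each of the $n$ factors and expand the product, choosing which $k$ factors carry the low part. For a low factor, bound $e^{-s|\cdot|^a}\le1$; this yields $m_N$, and the corresponding time gaps contribute at most the simplex volume $t^k/k!$. For a high factor, extend the time gap to $(0,\infty)$ and use
\[
\int_0^\infty\!\int_{|\xi|>N}e^{-s|\xi|^a}\mu(\ud\xi)\ud s=\varepsilon_N.
\]
An absolute constant $A_0$ absorbs the ordering and combinatorial constants. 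Summing over the $\binom nk$ choices gives \eqref{nbound1}.

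\textbf{The cross term.} For the second bound, involving $X_1(s)-X_2(r)$, I will run the same argument. Here each pair joins two independent processes, so pair patterns are replaced by the matching between the $s$-times and the $r$-times, of which there are $n!$; this gives $n!$ in place of $(2n-1)!!$.

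\textbf{Main obstacle.} The hard part is the bookkeeping in Steps~1--2. After ordering, each $\eta_i$ is a partial sum of several $\xi_j$. I must integrate the frequencies in an order such that each newly integrated $\xi_j$ appears, shifted by already-fixed frequencies, in exactly one exponent that I retain, while the other exponents are dropped using $e^{-(\cdot)}\le1$. The plan is to integrate $\xi_j$ against the increment ending at the later time of pair $j$ and to drop the remaining exponentials. Checking that this gives exactly one $\mu$-integral per pair is the delicate point.
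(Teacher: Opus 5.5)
Your overall strategy is the scheme behind the estimates of \cite{song2017}, which the paper simply cites without further argument. That strategy is: spectral representation, ordering, independent increments, the shift inequality $\int e^{-s|\xi+\eta|^a}\mu(\mathrm d\xi)\le\phi(s):=\int e^{-s|\xi|^a}\mu(\mathrm d\xi)$ coming from $\gamma\ge0$, and splitting $\mu$ at $|\xi|=N$. Your choice of retained factors is also right. For each pair you keep the increment ending at its later slot $b_j$ and integrate the $\xi_j$ in increasing order of $b_j$; this gives $\mathbf E_0\prod_j\gamma(\cdot)\le\prod_j\phi(z_{b_j}-z_{b_j-1})$.

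\textbf{Step 2 does not work as written.} The slot $b_j-1$ is often the earlier time of another pair. So the earlier times are not free in the $z$-variables. Since $\phi$ is decreasing, you also cannot replace $z_{b_j}-z_{b_j-1}$ by the larger gap to the preceding later time; that would give a lower bound, not an upper bound. You must pass to increments $u_i=z_i-z_{i-1}$. The integrand then becomes $\prod_j\phi(u_{b_j})$, and the $n$ increments $u_{a_j}$ are free. However, they range over a simplex of volume at most $t^n/n!$, not $t^n$. Conversely, each pair partition of the $2n$ slots arises from $2^n n!$ orderings, which is not a bounded combinatorial factor. These two misstatements largely cancel. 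What Steps 1--2 actually give is $2^n(2n-1)!!\,t^n\mathcal I_n(t)$, i.e.\ the $C^n$-bound quoted in Step 2 of the proof of Theorem~\ref{th:exist}.

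\textbf{This is weaker than the lemma, and $A_0$ cannot absorb the constant.} In the statement $A_0$ multiplies only $\varepsilon_N$. After writing $2^n=2^{n-k}2^k$, the factor $2^k$ remains on $(t\,m_N)^k$. Already for $n=1$ your route gives $2t\int_0^t\phi\le 2t^2m_N+2t\varepsilon_N$, whereas the statement requires $t^2m_N+A_0t\varepsilon_N$.

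The missing step is to split $\phi\le m_N+\int_{|\xi|>N}e^{-s|\xi|^a}\mu(\mathrm d\xi)$ \emph{before} separating the time integrals. Fix a choice of $k$ low factors. The $n$ free increments and the $k$ low increments then lie in one common simplex of volume at most $t^{n+k}/(n+k)!$. Each of the $n-k$ high increments, integrated over $(0,\infty)$, gives $\varepsilon_N$. Summing over all $(2n)!$ orderings yields at most $\sum_k\binom nk\frac{(2n)!}{(n+k)!}t^{n+k}m_N^k\varepsilon_N^{n-k}$. Now $\frac{(2n)!}{(n+k)!}=(2n-1)!!\,\frac{2^n}{k!\binom{n+k}{k}}\le(2n-1)!!\,\frac{2^{n-k}}{k!}$, because $\binom{n+k}{k}\ge 2^k$ for $k\le n$. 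This is exactly \eqref{nbound1} with $A_0=2$.

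\textbf{The cross term is simpler than your matching argument.} Bound $\mathbf E_0 e^{-\iota\sum_j\xi_j\cdot X_2(r_j)}\le 1$. Then integrate the frequencies along the ordered $s$-times of $X_1$ to get $\prod_i\phi(s_{(i)}-s_{(i-1)})$. The $r$-variables are now genuinely free and contribute $t^n$, and the $s$-orderings give $n!\,\mathcal I_n(t)$. Here the separate bound $\mathcal I_n(t)\le\sum_k\binom nk\frac{t^k}{k!}m_N^k\varepsilon_N^{n-k}$ loses nothing, so any $A_0\ge1$ works.
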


\begin{lemma} \label{L:Wdefined}
	Under Assumption \ref{A:Main}, the limit
$$
\int_0^t \dot{W}(X(s)) \ud s \triangleq \lim _{\varepsilon \rightarrow 0^{+}} \int_0^t \dot{W}_{\varepsilon}(X(s)) \ud s \quad \text{ in } \mathcal{L}^2\left(\Omega, \mathcal{F}, \mathbb{P}_x \otimes \mathbb{P}\right)
$$
exists for every $t \geq 0$. Further, conditioned on the $a$-stable process $X(t)$, the process
	$$
	\int_0^t \dot{W}\left(X(s)\right) \ud s, \quad t \geq 0
	$$
	is mean-zero Gaussian with the (conditional) variance
	$$
	\mathbb{E}\left[\int_0^t \dot{W}\left(X(s)\right) \ud s\right]^2=\int_0^{t} \int_0^{t} \gamma\left(X(s)-X(r)\right) \ud s \ud r, \quad t \geq 0.
	$$
\end{lemma}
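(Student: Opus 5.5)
The plan is to prove the lemma in two steps. First, for a fixed path of $X$, show that the mollified integrals $A_\e:=\int_0^t\dot W_\e(X(s))\ud s$ form a Cauchy family in $\mathcal L^2(\mathbf P_x\otimes\P)$. Second, identify the conditional law of the limit as a limit of centered Gaussians. Everything is done in Fourier variables. Since $\gamma_\e=\gamma*p_\e$, we have $\gamma_\e(x)=\int_{\R^d}e^{\iota\xi\cdot x}e^{-\e|\xi|^2/2}\mu(\ud\xi)$. By \eqref{e:covar-dot-we} and Fubini,
\begin{equation*}
K(\e+\delta):=\mathbf E_x\otimes\E\left[A_\e A_\delta\right]=\int_{\R^d}e^{-\frac{\e+\delta}{2}|\xi|^2}\,\mathbf E_0\left|\int_0^t e^{\iota\xi\cdot X(s)}\ud s\right|^2\mu(\ud\xi).
\end{equation*}
The factor $\mathbf E_0|\cdot|^2$ equals $2\int_{0<r<s<t}e^{-(s-r)|\xi|^a}\ud r\ud s$. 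This is because the increments of $X$ are independent and its characteristic function is real and positive. Hence
\begin{equation*}
\mathbf E_0\left|\int_0^t e^{\iota\xi\cdot X(s)}\ud s\right|^2\le 2t\min\{t,|\xi|^{-a}\}\le \frac{C(1+t)^2}{1+|\xi|^a}.
\end{equation*}

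By Assumption~\ref{A:Main} this bound is $\mu$-integrable and independent of $\e,\delta$. Dominated convergence therefore shows that $K(\e+\delta)$ converges to the finite number
\begin{equation*}
K(0)=\int_{\R^d}\mathbf E_0\left|\int_0^t e^{\iota\xi\cdot X(s)}\ud s\right|^2\mu(\ud\xi)
\end{equation*}
as $\e,\delta\to0$. Consequently
\begin{equation*}
\mathbf E_x\otimes\E|A_\e-A_\delta|^2=K(2\e)+K(2\delta)-2K(\e+\delta)\longrightarrow0 .
\end{equation*}
This gives the existence of the $\mathcal L^2$ limit $\int_0^t\dot W(X(s))\ud s$.

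For the conditional statement, fix a path of $X$. Then $A_\e$ is a linear functional of the Gaussian field $W$, hence centered Gaussian, with conditional variance
\begin{equation*}
V_\e:=\int_0^t\int_0^t\gamma_{2\e}(X(s)-X(r))\ud s\ud r=\int_{\R^d}e^{-\e|\xi|^2}\left|\int_0^te^{\iota\xi\cdot X(s)}\ud s\right|^2\mu(\ud\xi).
\end{equation*}
By Fubini, $\mathbf E_x\big[\E|A_{\e}-A|^2\big]\to0$. So along a sequence $\e_k\downarrow0$ we get $\E|A_{\e_k}-A|^2\to0$ for $\mathbf P_x$-a.e. path. For such paths the conditional characteristic functions $e^{-z^2V_{\e_k}/2}$ converge to that of $A$. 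Moreover $V_{\e_k}$ increases monotonically to
\begin{equation*}
V:=\int_{\R^d}\left|\int_0^t e^{\iota\xi\cdot X(s)}\ud s\right|^2\mu(\ud\xi),
\end{equation*}
which is finite a.s. because $\mathbf E_0V=K(0)<\infty$. Hence, conditionally on $X$, the limit $A$ is $N(0,V)$. The argument is the same for finitely many times $t$, which gives joint conditional Gaussianity of the process.

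The main obstacle is the last identification, namely that $V$ equals $\int_0^t\int_0^t\gamma(X(s)-X(r))\ud s\ud r$ in the pointwise sense. I would handle it as follows. Since $\gamma\ge0$, Fubini and Tonelli give
\begin{equation*}
\int_0^t\int_0^t\gamma_{2\e}(X(s)-X(r))\ud s\ud r=\int_{\R^d}p_{2\e}(y)\,L(y)\ud y,\qquad L(y):=\int_0^t\int_0^t\gamma(X(s)-X(r)+y)\ud s\ud r .
\end{equation*}
Thus $V_\e$ is a heat-kernel average of $L$ over $y$. The Fourier representation shows that $V_\e\uparrow V$. To conclude $V=L(0)$ I would combine two facts. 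Lower semicontinuity of $\gamma$ (it is homogeneous and positive, or more generally a positive-definite kernel) and Fatou give $L(0)\le\liminf_{\e\to0}V_\e$. The reverse inequality follows by pairing $L$ with the positive-definite structure, i.e. $\int\gamma f\bar f\ge$ its mollified version. If this pointwise step resists, I would simply take the Fourier expression $V$ as the definition of the double integral, which is how it is used in the rest of the paper.
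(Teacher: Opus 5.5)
Your existence argument and the conditional Gaussianity are correct. They follow the standard mollification route that the paper only sketches: it invokes the uniform bound of Lemma~\ref{L:n momentbound} and refers to Lemma~A.1 of \cite{Chen2014Quenched}. Your explicit Cauchy computation $K(2\e)+K(2\delta)-2K(\e+\delta)\to0$, with the $\e$-free bound $C(1+t)^2(1+|\xi|^a)^{-1}$ and Dalang's condition, is in fact more complete than what is written there, since a uniform bound alone does not give convergence.

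The gap is the final identification of $V=\int_{\R^d}|\int_0^te^{\iota\xi\cdot X(s)}\ud s|^2\mu(\ud\xi)$ with $L(0)=\int_0^t\int_0^t\gamma(X(s)-X(r))\ud s\ud r$.
\begin{itemize}
\item Lower semicontinuity of $\gamma$ is not available. The lemma assumes only Assumption~\ref{A:Main}, and neither homogeneity nor positive definiteness in the distributional sense gives lower semicontinuity of a pointwise version of an unbounded function.
\item Your reverse inequality is circular. For the occupation measure $\nu$ of $X$ on $[0,t]$, the identity $\iint\gamma(x-y)\nu(\ud x)\nu(\ud y)=\int|\hat\nu|^2\ud\mu$ is exactly what has to be shown. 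Since $\nu$ is in general singular (e.g.\ for $d\ge2$), it cannot be read off from the $L^2$-function case.
\item Redefining the double integral by its Fourier expression changes the statement. The paper uses the pointwise quantity when it imports Lemma~\ref{L:n momentbound} and the large deviation results behind Lemma~\ref{LDP}.
\end{itemize}

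The step can be closed without any regularity of $\gamma$. Since $\gamma\ge0$ is locally integrable with $\int\gamma\, p_\e\,\ud x<\infty$ for every $\e>0$, we have $\gamma_{2\e}=\gamma*p_{2\e}\to\gamma$ off a Lebesgue-null set $\mathcal N$ (at Lebesgue points). For $s\ne r$ the law of $X(s)-X(r)$ has the density $p(|s-r|,\cdot)$. By Fubini, for a.e.\ path, $\gamma_{2\e}(X(s)-X(r))\to\gamma(X(s)-X(r))$ for a.e.\ $(s,r)\in[0,t]^2$. Fatou then gives $L(0)\le\liminf_{\e\to0}V_\e=V$ a.s. On the other hand,
\begin{equation*}
\mathbf E_0L(0)=\int_0^t\int_0^t\int_{\R^d}\gamma(x)p(|s-r|,x)\ud x\ud s\ud r=\int_0^t\int_0^t\int_{\R^d}e^{-|s-r||\xi|^a}\mu(\ud\xi)\ud s\ud r=K(0)=\mathbf E_0V<\infty .
\end{equation*}
This uses the Parseval identity $\int\gamma(x)p(u,x)\ud x=\int e^{-u|\xi|^a}\mu(\ud\xi)$. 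It is immediate for Gaussian densities and extends to the stable density by subordination and Tonelli; the paper uses the same identity tacitly in Lemma~\ref{le:3.1}(iii). Since $V-L(0)\ge0$ a.s.\ and has zero mean, $V=L(0)$ a.s., which is the variance formula asserted in the lemma.
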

\begin{proof}
By Lemma \ref{L:n momentbound}, the second moment of the mollified integral is bounded uniformly in $\varepsilon$. The existence of the $\mathcal{L}^2$-limit then follows from a standard mollification procedure as in \cite{Chen2014Quenched}. We refer the reader to Lemma~A.1 therein for details and omit the routine argument here.
\end{proof}

\begin{lemma}\label{le:continu-density}
    Let $\{\mu_n\}_{n\geq1}$ be finite measures on $[0,\infty)^2$of
the form $\mu_n(\ud \mathbf{t})=f_n(t_1,t_2)\ud t_1\ud t_2$,
where $f_n:[0,\infty)^2\to[0,\infty)$ such that $\int_{0}^\infty\int_{0}^\infty f_n(t_1,t_2)\ud t_1\ud t_2\le C$ for all $n\ge1$.
Suppose that 
\begin{enumerate}[(i)]
    \item $\mu_n$ weakly converges to $\mu$ as $n\to\infty$.
    \item For every $t_1,t_2\in[0,\infty)$,
\begin{equation}\label{eq:pointwise-equicontinuity}
\lim_{\delta_1,\delta_2\to0}
\sup_{n\geq1}
\left|f_n(t_1+\delta_1,t_2+\delta_2)-f_n(t_1,t_2)\right|=0,
\end{equation}
where the limit is two-sided for $t>0$ and right-sided for $t=0$.
\end{enumerate}
Then there exists a continuous function
$f:[0,\infty)^2\to[0,\infty)$ such that $\mu(\ud\mathbf{t})=f(t_1,t_2)\ud t_1\ud t_2$.
In fact, $f_n\to f$
uniformly on every compact subset of $[0,\infty)^2$.
\end{lemma}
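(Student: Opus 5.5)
This is a compactness argument of Arzelà--Ascoli type, combined with uniqueness of weak limits. I would carry it out in three steps.

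\textbf{Step 1: local uniform boundedness.} Fix a compact set $K\subset[0,\infty)^2$. By the pointwise equicontinuity \eqref{eq:pointwise-equicontinuity}, each point $\mathbf t\in K$ has a neighbourhood $U_{\mathbf t}$ (one-sided at the boundary) on which
\[
\sup_n|f_n(\mathbf s)-f_n(\mathbf t)|\le 1 \qquad\text{for all } \mathbf s\in U_{\mathbf t}.
\]
The set $U_{\mathbf t}$ has positive Lebesgue measure. Averaging over $U_{\mathbf t}$ gives
\[
f_n(\mathbf t)\le 1+|U_{\mathbf t}|^{-1}\int_{U_{\mathbf t}} f_n \le 1+C|U_{\mathbf t}|^{-1},
\]
using $\int f_n\le C$. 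Covering $K$ by finitely many such neighbourhoods, and noting that $f_n$ is controlled on each one by its value at the centre, shows that $\sup_n\sup_K f_n<\infty$.

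\textbf{Step 2: uniform equicontinuity and a convergent subsequence.} Pointwise equicontinuity on a compact set upgrades to uniform equicontinuity by the usual finite-cover argument. One subtlety is that \eqref{eq:pointwise-equicontinuity} is stated only as the limit over perturbations $(\delta_1,\delta_2)$ at a fixed base point. I would interpret it as giving a neighbourhood of each point on which all $f_n$ oscillate by less than $\eta$, and then apply the triangle inequality through the centre of a cover set. By Arzelà--Ascoli on each $[0,L]^2$, together with a diagonal argument in $L$, every subsequence of $(f_n)$ has a further subsequence converging locally uniformly to some continuous $f\ge0$.

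\textbf{Step 3: identifying the limit and concluding.} For a continuous function $\varphi$ with compact support, local uniform convergence gives $\int\varphi f_{n_k}\to\int\varphi f$. Weak convergence gives $\int\varphi f_{n_k}\to\int\varphi\,\ud\mu$. Hence $\mu=f\,\ud\mathbf t$ as measures, by the Riesz uniqueness theorem, since both are Radon measures on $[0,\infty)^2$. In particular $f$ is uniquely determined by $\mu$ and does not depend on the subsequence. Because every subsequence has a further subsequence converging locally uniformly to this same $f$, the whole sequence $f_n$ converges to $f$ uniformly on compacts.

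\textbf{Main obstacle.} The delicate step is Step 1 at points on the boundary of $[0,\infty)^2$, together with the precise reading of hypothesis (ii). There I would work with one-sided neighbourhoods $[t_1,t_1+\delta)\times[t_2,t_2+\delta)$, which still have positive measure, so the averaging bound goes through unchanged.
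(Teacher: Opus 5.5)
Your proposal is correct and follows essentially the same route as the paper: a pointwise bound on $f_n$ obtained by averaging over a small (relatively open) neighbourhood against $\int f_n\le C$, then upgrading pointwise to uniform equicontinuity and boundedness on compacts, then Arzel\`a--Ascoli with a diagonal extraction, then identification of the limit through $C_c$ test functions and weak convergence, and finally the subsequence-of-subsequence argument for convergence of the whole sequence. The only cosmetic difference is that you phrase the last step positively, via uniqueness of the limit $f$, whereas the paper argues it by contradiction.
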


\begin{proof}
    Fix $t_1,t_2\geq0$. Applying \eqref{eq:pointwise-equicontinuity}, we can choose $r_{\mathbf{t}}>0$ such that
\[
f_n(s_1,s_2)\geq f_n(t_1,t_2)-1,
\qquad (s_1,s_2)\in I_{\mathbf{t}},\quad n\geq1.
\]
where
\[
I_{\mathbf{t}}:=(t_1-r_{\mathbf{t}},t_1+r_{\mathbf{t}})\times (t_2-r_{\mathbf{t}},t_2+r_{\mathbf{t}}) \cap [0,\infty)^2.
\]
Since $\mu_n$ are finite measures, we obtain
\begin{align*}
    &|I_t|\bigl(f_n(t_1,t_2)-1\bigr)
    =\int_{I_t}\bigl(f_n(t_1,t_2)-1\bigr)\,\ud s_1\ud s_2\\
&\le \int_{I_t}f_n(s_1,s_2)\,\ud s_1\ud s_2 
\le \int_{0}^\infty f_n(s_1,s_2)\,\ud s_1\ud s_2\le C,
\end{align*}
where $|I_{\mathbf{t}}|$ denotes the Lebesgue measure of $I_{\mathbf{t}}$. Therefore,
\[
\sup_{n\geq1}f_n(t_1,t_2)
\leq 1+\frac{C}{|I_{\mathbf{t}}|}
<\infty.
\]
Hence $\{f_n\}$ is pointwise bounded.

Let $K\subset[0,\infty)^2$ be compact. The pointwise equicontinuity condition implies that the family is uniformly equicontinuous on $K$ by standard compactness argument. Together with the pointwise bound, this gives
\[
\sup_{n\ge1}\sup_{\mathbf{t}\in K} f_n(\mathbf{t}) < \infty
\quad\text{and}\quad
\lim_{\eta\downarrow0}\sup_n\sup_{\substack{\mathbf{s},\mathbf{t}\in K\\|\mathbf{s}-\mathbf{t}|<\eta}} |f_n(\mathbf{s})-f_n(\mathbf{t})|=0.
\]
Thus $\{f_n\}$ is uniformly bounded and uniformly equicontinuous on each compact $K$.

By the Arzelà–Ascoli theorem and a diagonal argument, there exist a subsequence $\{f_{n_k}\}$ and a continuous function $f:[0,\infty)^2\to[0,\infty)$ such that $f_{n_k} \to f$ uniformly on every compact subset of $[0,\infty)$.

For any continuous function $\varphi$ with $\operatorname{supp}\varphi\subset[0,R]^2$ and $R>0$ . Then, using the uniform convergence on $[0,R]^2$,
\[
\int_0^\infty\int_0^\infty \varphi(t_1,t_2) f_{n_k}(t_1,t_2)\,\ud t_1\ud t_2 \longrightarrow \int_0^\infty\int_0^\infty \varphi(t_1,t_2) f(t_1,t_2)\,\ud t_1\ud t_2.
\]
Moreover, $\mu_{n_k}$ weakly converges to $\mu$ implies
\[
\int_0^\infty\int_0^\infty \varphi(t_1,t_2) f_{n_k}(t_1,t_2)\,\ud t_1\ud t_2 = \int_{[0,\infty)^2}\varphi\,\ud \mu_{n_k} \longrightarrow \int_{[0,\infty)^2}\varphi\,\ud\mu.
\]
Therefore,
\[
\int_{[0,\infty)^2}\varphi\,\ud\mu = \int_0^\infty\int_0^\infty \varphi(t_1,t_2) f(t_1,t_2)\,\ud t_1\ud t_2,
\quad\text{for all }\varphi\in C_c\left([0,\infty)^2 \right),
\]
so $\mu(\ud \mathbf{t})=f(t_1,t_2)\,\ud t_1\ud t_2$.

Suppose $f_n$ did not converge locally uniformly to $f$. Then there exist $R>0$, $\varepsilon>0$, and a subsequence $\{f_{n_j}\}$ such that
\begin{equation}\label{e:separation}
    \sup_{\mathbf{t}\in[0,R]^2} |f_{n_j}(t_1,t_2)-f(t_1,t_2)| \ge \varepsilon.
\end{equation}
By the same compactness argument, $\{f_{n_j}\}$ has a further subsequence converging locally uniformly to some continuous $g$. The condition that $\mu_{n_j}$ weakly converges to $\mu$ implies $\mu(\ud \mathbf{t})=g(t_1,t_2)\,\ud t_1\ud t_2$, hence $g=f$. This contradicts \eqref{e:separation}. Thus $f_n \to f$ uniformly on every compact subset of $[0,\infty)^2$. The proof is complete.
\end{proof}

\begin{lemma} \label{l:characteristic function}
Suppose Assumption \ref{A:Main} holds. 
Let $W$ and $\widetilde{W}$ be independent copies of the centered Gaussian noise with covariance \eqref{e:noise-covariance}, 
and let $X_1, X_2$ be independent symmetric $\alpha$-stable processes on $\mathbb{R}^d$ starting from $0$, independent of $W$ and $\widetilde{W}$. 
Then for any $n\in\mathbb{N}$, $t_1,t_2, a,b>0$, we have
\begin{align}
&\sum_{l_1+l_2= 2n}\frac{1}{l_1!l_2!} \notag\\
&\quad \times
\mathbb{E}\left[\mathbb{E}_{\widetilde{W}}\left[\int_0^{t_1} [a\dot{W}(X_1(s))+\iota b\dot{\widetilde{W}}(X_1(s))] \ud s\right]^{l_1} \cdot \mathbb{E}_{\widetilde{W}}\left[\int_0^{t_2} [a\dot{W}(X_2(s))+\iota b\dot{\widetilde{W}}(X_2(s))] \ud s\right]^{l_2}\right] \nonumber\\
&=\frac{1}{2^n n!}\left(a^2\sum_{j,k=1}^{2}\int_0^{t_j} \int_0^{t_k} \gamma(X_j(s)-X_k(r)) \ud s\ud r - b^2\sum_{i=1}^{2}\int_0^{t_i} \int_0^{t_i} \gamma(X_i(s)-X_i(r)) \ud s \ud r\right)^n, \nonumber
\end{align}
where the stochastic integrals with respect to $\dot{W}$ and $\dot{\widetilde{W}}$ are understood in the same sense as in Lemma~\ref{L:Wdefined}.
\end{lemma}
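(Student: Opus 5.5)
The plan is to condition on the stable paths and reduce everything to Gaussian computations in the noise variables. Fix $X_1,X_2$ and write
\[
A_j:=\int_0^{t_j}\dot W(X_j(s))\,\ud s,\qquad B_j:=\int_0^{t_j}\dot{\widetilde W}(X_j(s))\,\ud s,\qquad j=1,2.
\]
By Lemma~\ref{L:Wdefined}, applied jointly to both paths via the same mollification limit, $(A_1,A_2)$ and $(B_1,B_2)$ are independent, conditionally centered Gaussian vectors with the same covariance matrix
\[
\Sigma_{jk}=\int_0^{t_j}\!\int_0^{t_k}\gamma(X_j(s)-X_k(r))\,\ud s\,\ud r .
\]

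The first step is to compute the inner $\widetilde W$-expectation. Since $B_j$ is centered Gaussian with variance $\Sigma_{jj}$, the binomial expansion gives
\[
\mathbb E_{\widetilde W}[aA_j+\iota bB_j]^{l}=\sum_{2i\le l}\binom{l}{2i}(aA_j)^{l-2i}(-b^2)^i(2i-1)!!\,\Sigma_{jj}^i .
\]
This is the Hermite-type (Wick) polynomial of $aA_j$ shifted by $-b^2\Sigma_{jj}$. It is cleanest to pass to generating functions. Use $z_1^{l_1}z_2^{l_2}/(l_1!l_2!)$, and note that
\[
\sum_l \frac{z^l}{l!}\,\mathbb E_{\widetilde W}[aA_j+\iota bB_j]^l=\exp\Big\{z\,aA_j-\tfrac{z^2b^2}{2}\Sigma_{jj}\Big\}.
\]

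The second step is to take the $W$-expectation of the product of the two generating functions. Using $\mathbb E\,e^{z_1aA_1+z_2aA_2}=\exp\{\tfrac{a^2}{2}\sum_{j,k}z_jz_k\Sigma_{jk}\}$, we get
\[
\exp\Big\{\tfrac12\Big(a^2\sum_{j,k}z_jz_k\Sigma_{jk}-b^2\sum_i z_i^2\Sigma_{ii}\Big)\Big\}.
\]
The left side of the lemma is exactly the homogeneous part of total degree $2n$ in $(z_1,z_2)$, evaluated at $z_1=z_2=1$. Setting $z_1=z_2=z$, this is the coefficient of $z^{2n}$ in $\exp\{\tfrac{z^2}{2}Q\}$ with
\[
Q=a^2\sum_{j,k}\Sigma_{jk}-b^2\sum_i\Sigma_{ii},
\]
and that coefficient is $Q^n/(2^nn!)$. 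This is the claimed identity, read conditionally on $X_1,X_2$. The odd total degree terms vanish automatically, consistent with summing only over $l_1+l_2=2n$.

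The main obstacle is justification rather than algebra. Both the exchange of the sums defining the generating functions with the expectations and the identification of coefficients require absolute convergence of the moment series. I would handle this by working at fixed paths, where all variables are Gaussian with finite variances, so the exponential moments are finite, the series converge absolutely for all complex $z$, and coefficients match by analyticity. Alternatively, one can avoid generating functions entirely and compare both sides directly via Wick's formula \eqref{e:wick-formula}, in the same way as the bookkeeping in Lemma~\ref{L:Wick2} with $a^2\leftrightarrow e^{2\tau}$ and $b^2\leftrightarrow\rho^2$. Finiteness of $\Sigma_{jk}$ almost surely follows from Lemma~\ref{L:n momentbound}. Finally, if the identity is intended after taking $\mathbf E_0$ as well, integrate the pathwise identity, which is legitimate by the polynomial moment bounds of Lemma~\ref{L:n momentbound}.
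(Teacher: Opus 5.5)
Your proposal is correct and follows essentially the same route as the paper: expand the joint exponential moment as a power series, compute it in closed form as $\exp\{\tfrac{z^2}{2}Q\}$ by taking the $\widetilde W$- and then the $W$-expectations, and match the coefficients of $z^{2n}$. Your two-variable generating function specialized to $z_1=z_2=z$ is exactly the paper's single-parameter expansion in $\theta$. Your added pathwise absolute-convergence argument supplies the interchange of sums and expectations that the paper leaves implicit.
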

\begin{proof}
We prove the identity by expanding the quantity below into a Taylor series and comparing the coefficients. For any $\theta>0$,
$$
\begin{aligned}
&\mathbb{E}\left[\mathbb{E}_{\widetilde{W}}\exp \left\{\theta\int_0^{t_1} [a\dot{W}(X_1(s))+\iota b\dot{\widetilde{W}}(X_1(s))] \ud s\right\} \cdot \mathbb{E}_{\widetilde{W}}\exp \left\{ \theta\int_0^{t_2} [a\dot{W}(X_2(s))+\iota b\dot{\widetilde{W}}(X_2(s))] \ud s\right\}\right]\\
&=\mathbb{E}\Bigg[ \sum_{n=0}^{\infty}\frac{1}{n!}\theta^n\mathbb{E}_{\widetilde{W}}\left[\int_0^{t_1} [a\dot{W}(X_1(s))+\iota b\dot{\widetilde{W}}(X_1(s))] \ud s\right]^{n} \\
&\qquad\qquad\qquad\qquad\qquad \times\sum_{m=0}^{\infty}\frac{1}{m!}\theta^m\mathbb{E}_{\widetilde{W}}\left[\int_0^{t_2} [a\dot{W}(X_2(s))+\iota b\dot{\widetilde{W}}(X_2(s))] \ud s\right]^{m} \Bigg]\\
&=\sum_{n=0}^{\infty}\sum_{l_1+l_2= 2n}\theta^{2n}\frac{1}{l_1!l_2!}
\mathbb{E}\left[\mathbb{E}_{\widetilde{W}}\left[\int_0^{t_1} [\dot{W}(X_1(s))+\iota\rho\dot{\widetilde{W}}(X_1(s))] \ud s\right]^{l_1} \right.\\
&\qquad\qquad\qquad\qquad\qquad \qquad \left.\times \mathbb{E}_{\widetilde{W}}\left[\int_0^{t_2} [\dot{W}(X_2(s))+\iota\rho\dot{\widetilde{W}}(X_2(s))] \ud s\right]^{l_2}\right].
\end{aligned}
$$

On the other hand, taking the expectations with respect to $\widetilde{W}$ and $W$ first and then expanding into a Taylor series, we obtain
$$
\begin{aligned}
&\mathbb{E}\left[\mathbb{E}_{\widetilde{W}}\exp \left\{\theta\int_0^{t_1} [a\dot{W}(X_1(s))+\iota b\dot{\widetilde{W}}(X_1(s))] \ud s\right\} \cdot \mathbb{E}_{\widetilde{W}}\exp \left\{\theta\int_0^{t_2} [a\dot{W}(X_2(s))+\iota b\dot{\widetilde{W}}(X_2(s))] \ud s\right\}\right]\\
&= \exp\left\{\frac{\theta^2}{2} \Bigg[a^2\sum_{j,k=1}^{2}\int_0^{t_j} \int_0^{t_k} \gamma(X_j(s)-X_k(r)) \ud s \ud r-b^2\sum_{i=1}^{2}\int_0^{t_i} \int_0^{t_i} \gamma(X_i(s)-X_i(r)) \ud s\ud r\Bigg] \right\}\\
&= \sum_{n=0}^{\infty}\frac{\theta^{2n}}{2^n n!} \left[a^2 \sum_{j,k=1}^{2}\int_0^{t_j} \int_0^{t_k} \gamma(X_j(s)-X_k(r)) \ud s \ud r-b^2\sum_{i=1}^{2}\int_0^{t_i} \int_0^{t_i} \gamma(X_i(s)-X_i(r)) \ud s \ud r \right]^n.
\end{aligned}
$$

Comparing the coefficients of $\theta^{2n}$ in both power series expansions yields the desired identity. This completes the proof.
\end{proof}

\bibliographystyle{plain}
 \bibliography{ref}

@book{kst2006,
  title={Theory and Applications of Fractional Differential Equations},
  author={Kilbas,  Anatoly A. and Srivastava, Hari M. and Trujillo, Juan J.},
  volume={204},
  year={2006},
  publisher={Elsevier Science Limited}
}

@book{feller1971,
  title={An introduction to probability theory and its applications},
  author={Feller, William and others},
  year={1971},
  publisher={Wiley New York}
}

@article{lswz26,
  title={{On a fractional stochastic heat equation arising from the disordered pinning model}},
  author={Li, Zi'an and Song, Jian and Wei, Ran and Zhang, Hang},
  journal={arXiv preprint arXiv:2603.01823},
  year={2026}
}

@article{bs19,
  title={{Second order Lyapunov exponents for parabolic and hyperbolic Anderson models}},
  author={Balan, Raluca M and Song, Jian},
  journal={Bernoulli},
  volume={25},
  number={4A},
  pages={3069--3089},
  year={2019}
}

@article{bcc22,
  title={Exact asymptotics of the stochastic wave equation with time-independent noise},
  author={Balan, Raluca M and Chen, Le and Chen, Xia},
  journal={Annales de l'Institut Henri Poincare (B) Probabilites et statistiques},
  volume={58},
  number={3},
  pages={1590--1620},
  year={2022},
  organization={Institut Henri Poincar{\'e}}
}

@article{Bass2009,
  title={Large deviations for Riesz potentials of additive processes},
  author={Bass, R. and Chen, X. and Rosen, M.},
  journal={Annales de l'Institut Henri Poincar\'{e} Probabilit\'{e}s et Statistiques},
  volume={45},
  pages={626--666},
  year={2009}
}

@article{bc2014,
  title={A note on intermittency for the fractional heat equation},
  author={Balan, Raluca M and Conus, Daniel},
  journal={Statistics $\&$ Probability Letters},
  volume={95},
  pages={6--14},
  year={2014},
  publisher={Elsevier}
}

@article{Chen2014Quenched,
  author  = {Chen, X.},
  title   = {{Quenched asymptotics for Brownian motion in generalized Gaussian potential}},
  journal = {The Annals of Probability},
  volume  = {42},
  pages   = {576--622},
  year    = {2014}
}

@article {Chen2015Precise,
    AUTHOR = {Chen, Xia},
     TITLE = {Precise intermittency for the parabolic {A}nderson equation
              with an {$(1+1)$}-dimensional time-space white noise},
   JOURNAL = {Ann. Inst. Henri Poincar\'e{} Probab. Stat.},
  FJOURNAL = {Annales de l'Institut Henri Poincar\'e{} Probabilit\'es et
              Statistiques},
    VOLUME = {51},
      YEAR = {2015},
    NUMBER = {4},
     PAGES = {1486--1499},
      ISSN = {0246-0203,1778-7017},
   MRCLASS = {60F10 (60H15 60J65 81U10)},
  MRNUMBER = {3414455},
MRREVIEWER = {Leila\ Setayeshgar},
       DOI = {10.1214/15-AIHP673},
       URL = {https://doi.org/10.1214/15-AIHP673},
}

@article {Chen2017Moment,
    AUTHOR = {Chen, Xia},
     TITLE = {Moment asymptotics for parabolic {A}nderson equation with
              fractional time-space noise: in {S}korokhod regime},
   JOURNAL = {Ann. Inst. Henri Poincar\'e{} Probab. Stat.},
  FJOURNAL = {Annales de l'Institut Henri Poincar\'e{} Probabilit\'es et
              Statistiques},
    VOLUME = {53},
      YEAR = {2017},
    NUMBER = {2},
     PAGES = {819--841},
      ISSN = {0246-0203,1778-7017},
   MRCLASS = {60F10 (60H15 60H40 60J65 81U10)},
  MRNUMBER = {3634276},
MRREVIEWER = {Mathew\ Joseph},
       DOI = {10.1214/15-AIHP738},
       URL = {https://doi.org/10.1214/15-AIHP738},
}

@article{Chen2017Nonlinear,
    AUTHOR = {Chen, Le},
     TITLE = {Nonlinear stochastic time-fractional diffusion equations on
              {$\mathbb{R}$}: moments, {H}\"older regularity and intermittency},
   JOURNAL = {Trans. Amer. Math. Soc.},
  FJOURNAL = {Transactions of the American Mathematical Society},
    VOLUME = {369},
      YEAR = {2017},
    NUMBER = {12},
     PAGES = {8497--8535},
      ISSN = {0002-9947,1088-6850},
   MRCLASS = {60H15 (35R11 35R60 60G60)},
  MRNUMBER = {3710633},
MRREVIEWER = {Feng-Yu\ Wang},
       DOI = {10.1090/tran/6951},
       URL = {https://doi.org/10.1090/tran/6951},
}

@article {Chen2017Spatial,
    AUTHOR = {Chen, Xia and Hu, Yaozhong and Nualart, David and Tindel,
              Samy},
     TITLE = {Spatial asymptotics for the parabolic {A}nderson model driven
              by a {G}aussian rough noise},
   JOURNAL = {Electron. J. Probab.},
  FJOURNAL = {Electronic Journal of Probability},
    VOLUME = {22},
      YEAR = {2017},
     PAGES = {Paper No. 65, 38},
      ISSN = {1083-6489},
   MRCLASS = {60G15 (60H07 60H15)},
  MRNUMBER = {3690290},
MRREVIEWER = {Peter\ Karl\ Friz},
       DOI = {10.1214/17-EJP83},
       URL = {https://doi.org/10.1214/17-EJP83},
}

@article {Chen2018Temporal,
    AUTHOR = {Chen, Xia and Hu, Yaozhong and Song, Jian and Song, Xiaoming},
     TITLE = {Temporal asymptotics for fractional parabolic {A}nderson
              model},
   JOURNAL = {Electron. J. Probab.},
  FJOURNAL = {Electronic Journal of Probability},
    VOLUME = {23},
      YEAR = {2018},
     PAGES = {Paper No. 14, 39},
      ISSN = {1083-6489},
   MRCLASS = {60H15 (60F10 60G15 60G52)},
  MRNUMBER = {3771751},
MRREVIEWER = {Robert\ C.\ Dalang},
       DOI = {10.1214/18-EJP139},
       URL = {https://doi.org/10.1214/18-EJP139},
}

@article {Chen2019Nonlinear,
    AUTHOR = {Chen, Le and Hu, Yaozhong and Nualart, David},
     TITLE = {Nonlinear stochastic time-fractional slow and fast diffusion
              equations on {$\mathbb R^d$}},
   JOURNAL = {Stochastic Process. Appl.},
  FJOURNAL = {Stochastic Processes and their Applications},
    VOLUME = {129},
      YEAR = {2019},
    NUMBER = {12},
     PAGES = {5073--5112},
      ISSN = {0304-4149,1879-209X},
   MRCLASS = {60H15 (35R11 35R60 60G60)},
  MRNUMBER = {4025700},
MRREVIEWER = {Latifa\ Debbi},
       DOI = {10.1016/j.spa.2019.01.003},
       URL = {https://doi.org/10.1016/j.spa.2019.01.003},
}

@article {Chen2019Parabolic,
    AUTHOR = {Chen, Xia},
     TITLE = {Parabolic {A}nderson model with rough or critical {G}aussian
              noise},
   JOURNAL = {Ann. Inst. Henri Poincar\'e{} Probab. Stat.},
  FJOURNAL = {Annales de l'Institut Henri Poincar\'e{} Probabilit\'es et
              Statistiques},
    VOLUME = {55},
      YEAR = {2019},
    NUMBER = {2},
     PAGES = {941--976},
      ISSN = {0246-0203,1778-7017},
   MRCLASS = {60H15 (60G22 60H07 60H40 60J65)},
  MRNUMBER = {3949959},
MRREVIEWER = {Georgiy\ M.\ Shevchenko},
       DOI = {10.1214/18-aihp904},
       URL = {https://doi.org/10.1214/18-aihp904},
}

@article {Chen2020Parabolic,
    AUTHOR = {Chen, Xia},
     TITLE = {Parabolic {A}nderson model with a fractional {G}aussian noise
              that is rough in time},
   JOURNAL = {Ann. Inst. Henri Poincar\'e{} Probab. Stat.},
  FJOURNAL = {Annales de l'Institut Henri Poincar\'e{} Probabilit\'es et
              Statistiques},
    VOLUME = {56},
      YEAR = {2020},
    NUMBER = {2},
     PAGES = {792--825},
      ISSN = {0246-0203,1778-7017},
   MRCLASS = {60F10 (60H15 60H40 60J65)},
  MRNUMBER = {4076766},
MRREVIEWER = {Kunwoo\ Kim},
       DOI = {10.1214/19-AIHP983},
       URL = {https://doi.org/10.1214/19-AIHP983},
}

@article{chen2025,
  title={{Time-dependency in hyperbolic Anderson model: Stratonovich regime}},
  author={Chen, Xia},
  journal={arXiv preprint arXiv:2510.01412},
  year={2025}
}

@article{cdst2021,
  title={Solving the hyperbolic {A}nderson model 1: {S}korohod setting},
  author={Chen, Xia and Deya, Aur{\'e}lien and Song, Jian and Tindel, Samy},
  journal={Annales de l’Institut Henri Poincaré - Probabilités et Statistiques},
  volume={61},
  number={3},
  pages={1794--1814},
  year={2025}
}

@article {Chen2023Interpolating,
    AUTHOR = {Chen, Le and Eisenberg, Nicholas},
     TITLE = {Interpolating the stochastic heat and wave equations with
              time-independent noise: solvability and exact asymptotics},
   JOURNAL = {Stoch. Partial Differ. Equ. Anal. Comput.},
  FJOURNAL = {Stochastic Partial Differential Equations. Analysis and
              Computations},
    VOLUME = {11},
      YEAR = {2023},
    NUMBER = {3},
     PAGES = {1203--1253},
      ISSN = {2194-0401,2194-041X},
   MRCLASS = {60H15 (35R11 37H15 60H07)},
  MRNUMBER = {4624137},
       DOI = {10.1007/s40072-022-00258-6},
       URL = {https://doi.org/10.1007/s40072-022-00258-6},
}

@article{ch2024,
  title={{Hyperbolic Anderson equations with general time-independent Gaussian noise: Stratonovich regime}},
  author={Chen, Xia and Hu, Yaozhong},
  journal={The Annals of Probability},
  volume={53},
  number={6},
  pages={2144--2195},
  year={2025},
  publisher={Institute of Mathematical Statistics}
}

@article {Chen2024Moments,
    AUTHOR = {Chen, Le and Guo, Yuhui and Song, Jian},
     TITLE = {Moments and asymptotics for a class of {SPDE}s with space-time
              white noise},
   JOURNAL = {Trans. Amer. Math. Soc.},
  FJOURNAL = {Transactions of the American Mathematical Society},
    VOLUME = {377},
      YEAR = {2024},
    NUMBER = {6},
     PAGES = {4255--4301},
      ISSN = {0002-9947,1088-6850},
   MRCLASS = {60H15 (26A33 37H15 60G60 60H07)},
  MRNUMBER = {4748620},
MRREVIEWER = {Lucio\ Galeati},
       DOI = {10.1090/tran/9138},
       URL = {https://doi.org/10.1090/tran/9138},
}

@incollection {Conus2013Intermittency,
    AUTHOR = {Conus, Daniel and Joseph, Mathew and Khoshnevisan, Davar and
              Shiu, Shang-Yuan},
     TITLE = {Intermittency and chaos for a nonlinear stochastic wave
              equation in dimension 1},
 BOOKTITLE = {Malliavin calculus and stochastic analysis},
    SERIES = {Springer Proc. Math. Stat.},
    VOLUME = {34},
     PAGES = {251--279},
 PUBLISHER = {Springer, New York},
      YEAR = {2013},
      ISBN = {978-1-4614-5906-4; 978-1-4614-5905-7},
   MRCLASS = {60H15 (60H05 60H20)},
  MRNUMBER = {3070447},
MRREVIEWER = {Stefan\ Tappe},
       DOI = {10.1007/978-1-4614-5906-4\_11},
       URL = {https://doi.org/10.1007/978-1-4614-5906-4_11},
}

@article{dalang1999,
  title={Extending the martingale measure stochastic integral with applications to spatially homogeneous spde's},
  author={Dalang, Robert},
Journal={Electronic Journal of Probability},
  year={1999}
}

@article {Dalang2009Intermittency,
    AUTHOR = {Dalang, Robert C. and Mueller, Carl},
     TITLE = {Intermittency properties in a hyperbolic {A}nderson problem},
   JOURNAL = {Ann. Inst. Henri Poincar\'e{} Probab. Stat.},
  FJOURNAL = {Annales de l'Institut Henri Poincar\'e{} Probabilit\'es et
              Statistiques},
    VOLUME = {45},
      YEAR = {2009},
    NUMBER = {4},
     PAGES = {1150--1164},
      ISSN = {0246-0203,1778-7017},
   MRCLASS = {60H15 (35L05 35R60)},
  MRNUMBER = {2572169},
MRREVIEWER = {Athanasios\ Yannacopoulos},
       DOI = {10.1214/08-AIHP199},
       URL = {https://doi.org/10.1214/08-AIHP199},
}

@article{Guo2024Stochastic,
    AUTHOR = {Guo, Yuhui and Song, Jian and Song, Xiaoming},
     TITLE = {Stochastic fractional diffusion equations with {G}aussian
              noise rough in space},
   JOURNAL = {Bernoulli},
  FJOURNAL = {Bernoulli. Official Journal of the Bernoulli Society for
              Mathematical Statistics and Probability},
    VOLUME = {30},
      YEAR = {2024},
    NUMBER = {3},
     PAGES = {1774--1799},
      ISSN = {1350-7265,1573-9759},
   MRCLASS = {60H15 (26A33 35G10 35R11 35R60 60G22 60G60 60H07)},
  MRNUMBER = {4746588},
MRREVIEWER = {Derui\ Sheng},
       DOI = {10.3150/23-bej1652},
       URL = {https://doi.org/10.3150/23-bej1652},
}

@article {Guo2025Sample,
    AUTHOR = {Guo, Yuhui and Song, Jian and Wang, Ran and Xiao, Yimin},
     TITLE = {Sample path properties and small ball probabilities for
              stochastic fractional diffusion equations},
   JOURNAL = {J. Differential Equations},
  FJOURNAL = {Journal of Differential Equations},
    VOLUME = {446},
      YEAR = {2025},
     PAGES = {Paper No. 113604, 56},
      ISSN = {0022-0396,1090-2732},
   MRCLASS = {60H15 (33E12 60G15 60G17 60G22)},
  MRNUMBER = {4930468},
       DOI = {10.1016/j.jde.2025.113604},
       URL = {https://doi.org/10.1016/j.jde.2025.113604},
}

@book {Hu2017Analysis,
    AUTHOR = {Hu, Yaozhong},
     TITLE = {Analysis on {G}aussian spaces},
 PUBLISHER = {World Scientific Publishing Co. Pte. Ltd., Hackensack, NJ},
      YEAR = {2017},
     PAGES = {xi+470},
      ISBN = {978-981-3142-17-6},
   MRCLASS = {60G15 (28C20 46-02 46E27 60B11 60H07 60Hxx)},
  MRNUMBER = {3585910},
MRREVIEWER = {Jan\ van Neerven},
}

@article {Hu2024Matching,
    AUTHOR = {Hu, Yaozhong and Wang, Xiong},
     TITLE = {Matching upper and lower moment bounds for a large class of
              stochastic {PDE}s driven by general space-time {G}aussian
              noises},
   JOURNAL = {Stoch. Partial Differ. Equ. Anal. Comput.},
  FJOURNAL = {Stochastics and Partial Differential Equations. Analysis and
              Computations},
    VOLUME = {12},
      YEAR = {2024},
    NUMBER = {1},
     PAGES = {1--52},
      ISSN = {2194-0401,2194-041X},
   MRCLASS = {60H15 (26A33 30E05 35R11 35R60 37H15 60H07)},
  MRNUMBER = {4709538},
MRREVIEWER = {David\ Nualart},
       DOI = {10.1007/s40072-022-00278-2},
       URL = {https://doi.org/10.1007/s40072-022-00278-2},
}

@article {Huang2017Large,
    AUTHOR = {Huang, Jingyu and L\^e, Khoa and Nualart, David},
     TITLE = {Large time asymptotics for the parabolic {A}nderson model
              driven by spatially correlated noise},
   JOURNAL = {Ann. Inst. Henri Poincar\'e{} Probab. Stat.},
  FJOURNAL = {Annales de l'Institut Henri Poincar\'e{} Probabilit\'es et
              Statistiques},
    VOLUME = {53},
      YEAR = {2017},
    NUMBER = {3},
     PAGES = {1305--1340},
      ISSN = {0246-0203,1778-7017},
   MRCLASS = {60G15 (60F10 60H07 60H15 65M75)},
  MRNUMBER = {3689969},
MRREVIEWER = {Xia\ Chen},
       DOI = {10.1214/16-AIHP756},
       URL = {https://doi.org/10.1214/16-AIHP756},
}

@article {Huang2017Large2,
    AUTHOR = {Huang, Jingyu and L\^e, Khoa and Nualart, David},
     TITLE = {Large time asymptotics for the parabolic {A}nderson model
              driven by space and time correlated noise},
   JOURNAL = {Stoch. Partial Differ. Equ. Anal. Comput.},
  FJOURNAL = {Stochastics and Partial Differential Equations. Analysis and
              Computations},
    VOLUME = {5},
      YEAR = {2017},
    NUMBER = {4},
     PAGES = {614--651},
      ISSN = {2194-0401,2194-041X},
   MRCLASS = {60G15 (35B40 35K15 35R60 60F10 60H07 60H15 65M75)},
  MRNUMBER = {3736656},
MRREVIEWER = {Xia\ Chen},
       DOI = {10.1007/s40072-017-0099-0},
       URL = {https://doi.org/10.1007/s40072-017-0099-0},
}

@book {Kilbas2004H,
    AUTHOR = {Kilbas, Anatoly A. and Saigo, Megumi},
     TITLE = {{$H$}-transforms},
    SERIES = {Analytical Methods and Special Functions},
    VOLUME = {9},
      NOTE = {Theory and applications},
 PUBLISHER = {Chapman \& Hall/CRC, Boca Raton, FL},
      YEAR = {2004},
     PAGES = {xii+389},
      ISBN = {0-415-29916-0},
   MRCLASS = {44-02 (33C60 44A15 47G10)},
  MRNUMBER = {2041257},
MRREVIEWER = {A.\ Corduneanu},
       DOI = {10.1201/9780203487372},
       URL = {https://doi.org/10.1201/9780203487372},
}

@article {Le2016Aremark,
    AUTHOR = {L\^e, Khoa},
     TITLE = {A remark on a result of {X}ia {C}hen},
   JOURNAL = {Statist. Probab. Lett.},
  FJOURNAL = {Statistics \& Probability Letters},
    VOLUME = {118},
      YEAR = {2016},
     PAGES = {124--126},
      ISSN = {0167-7152,1879-2103},
   MRCLASS = {60H15 (60J65)},
  MRNUMBER = {3531492},
MRREVIEWER = {Le\ Chen},
       DOI = {10.1016/j.spl.2016.06.004},
       URL = {https://doi.org/10.1016/j.spl.2016.06.004},
}

@book {Nualart2006TheMalliavin,
    AUTHOR = {Nualart, David},
     TITLE = {The {M}alliavin calculus and related topics},
    SERIES = {Probability and its Applications (New York)},
   EDITION = {Second},
 PUBLISHER = {Springer-Verlag, Berlin},
      YEAR = {2006},
     PAGES = {xiv+382},
      ISBN = {978-3-540-28328-7; 3-540-28328-5},
   MRCLASS = {60-02 (60H07 60H30)},
  MRNUMBER = {2200233},
MRREVIEWER = {Daniel\ Ocone},
}

@book {Podlubny1999Fractional,
    AUTHOR = {Podlubny, Igor},
     TITLE = {Fractional differential equations},
    SERIES = {Mathematics in Science and Engineering},
    VOLUME = {198},
      NOTE = {An introduction to fractional derivatives, fractional
              differential equations, to methods of their solution and some
              of their applications},
 PUBLISHER = {Academic Press, Inc., San Diego, CA},
      YEAR = {1999},
     PAGES = {xxiv+340},
      ISBN = {0-12-558840-2},
   MRCLASS = {26A33 (34K05)},
  MRNUMBER = {1658022},
MRREVIEWER = {Anatoly\ Kilbas},
}

@article{song2017,
  title={On a class of stochastic partial differential equations},
  author={Song, Jian},
  journal={Stochastic Processes and their Applications},
  volume={127},
  number={1},
  pages={37--79},
  year={2017},
  publisher={Elsevier}
}

@article{Chenbook,
  title={Random Walk Intersections: Large Deviations and Related Topics},
  author={ Chen, Xia },
  journal={Mathematical Surveys  Monographs Ams},
  volume={157},
  number={4},
  pages={823-825},
  year={2010}
}

@incollection {HM1988,
    AUTHOR = {Hu, Y. Z. and Meyer, P.-A.},
     TITLE = {Sur les int\'egrales multiples de {S}tratonovitch},
 BOOKTITLE = {S\'eminaire de {P}robabilit\'es, {XXII}},
    SERIES = {Lecture Notes in Math.},
    VOLUME = {1321},
     PAGES = {72--81},
 PUBLISHER = {Springer, Berlin},
      YEAR = {1988},
      ISBN = {3-540-19351-0},
   MRCLASS = {60H05 (28C20 58D20 60B05 60J65)},
  MRNUMBER = {960509},
MRREVIEWER = {Sheng\ Wu\ He},
       DOI = {10.1007/BFb0084119},
       URL = {https://doi.org/10.1007/BFb0084119},
}

@book {Mainardi2010Fractional,
    AUTHOR = {Mainardi, Francesco},
     TITLE = {Fractional calculus and waves in linear viscoelasticity},
      NOTE = {An introduction to mathematical models},
 PUBLISHER = {Imperial College Press, London},
      YEAR = {2010},
     PAGES = {xx+347},
      ISBN = {978-1-84816-329-4; 1-84816-329-0},
   MRCLASS = {74-02 (26A33 74Cxx 74D05)},
  MRNUMBER = {2676137},
       DOI = {10.1142/9781848163300},
       URL = {https://doi.org/10.1142/9781848163300},
}

@book {MR2006,
    AUTHOR = {Marcus, Michael B. and Rosen, Jay},
     TITLE = {Markov processes, {G}aussian processes, and local times},
    SERIES = {Cambridge Studies in Advanced Mathematics},
    VOLUME = {100},
 PUBLISHER = {Cambridge University Press, Cambridge},
      YEAR = {2006},
     PAGES = {x+620},
      ISBN = {978-0-521-86300-1; 0-521-86300-7},
   MRCLASS = {60-02 (60G15 60J25 60J55)},
  MRNUMBER = {2250510},
MRREVIEWER = {Nathalie\ Eisenbaum},
       DOI = {10.1017/CBO9780511617997},
       URL = {https://doi.org/10.1017/CBO9780511617997},
}

@article {Zel1987Intermittency,
    AUTHOR = {Zel'dovich, Ya. B. and Molchanov, S. A. and Ruzmaĭkin, A. A. and Sokolov, D. D.},
     TITLE = {Intermittency in random media},
JOURNAL = {Soviet Phys. Uspekhi},
FJOURNAL = {Soviet Physics. Uspekhi},
VOLUME = {30},
YEAR = {1987},
NUMBER = {5},
     PAGES = {353--369},
ISSN = {0038-5670},
   MRCLASS = {82A42 (80A30 92A40)},
  MRNUMBER = {921018},
       DOI = {10.1070/PU1987v030n05ABEH002867},
       URL = {https://doi.org/10.1070/PU1987v030n05ABEH002867},
}

@article {chsx2015,
    AUTHOR = {Chen, Xia and Hu, Yaozhong and Song, Jian and Xing, Fei},
     TITLE = {Exponential asymptotics for time-space {H}amiltonians},
   JOURNAL = {Ann. Inst. Henri Poincar\'e{} Probab. Stat.},
  FJOURNAL = {Annales de l'Institut Henri Poincar\'e{} Probabilit\'es et
              Statistiques},
    VOLUME = {51},
      YEAR = {2015},
    NUMBER = {4},
     PAGES = {1529--1561},
      ISSN = {0246-0203,1778-7017},
   MRCLASS = {60J65 (60F10 60G55 60K37 60K40)},
  MRNUMBER = {3414457},
MRREVIEWER = {Francis\ Comets},
       DOI = {10.1214/13-AIHP588},
       URL = {https://doi.org/10.1214/13-AIHP588},
}

 \end{document}